\theoremstyle{plain}
\newtheorem{thm}{Theorem}[section]
\newtheorem{thmA}{Theorem}
\newtheorem*{thm*}{Theorem}
\newtheorem{lm}[thm]{Lemma}
\newtheorem{cor}[thm]{Corollary}
\newtheorem*{cor*}{Corollary}
\newtheorem{prop}[thm]{Proposition}
\newtheorem*{conj*}{Conjecture}
\theoremstyle{remark}
\newtheorem{remark}[thm]{Remark}
\theoremstyle{definition}
\newtheorem*{defn*}{Definition}
\newtheorem{Remark}[thm]{Remark}
\newtheorem{I_Remark*}{Remark}
\newtheorem{defn}[thm]{Definition}
\newtheorem*{hypothesis*}{Hypothesis}
\newcommand{\nc}{\newcommand}
\newcommand{\beq}{\begin{equation}}
\newcommand{\eeq}{\end{equation}}
\newcommand{\bpmx}{\begin{pmatrix}}
\newcommand{\epmx}{\end{pmatrix}}
\newcommand{\bbmx}{\begin{bmatrix}}
\newcommand{\ebmx}{\end{bmatrix}}
\newcommand{\wh}{\widehat}
\newcommand{\wtd}{\widetilde}
\newcommand{\beqcd}[1]{\begin{equation*}\label{#1}\tag{#1}}
\newcommand{\eeqcd}{\end{equation*}}
\numberwithin{equation}{section}
\def\parref#1{\ref{#1}}
\def\thmref#1{Theorem~\parref{#1}}
\def\propref#1{Proposition~\parref{#1}}
\def\corref#1{Corollary~\parref{#1}}     \def\remref#1{Remark~\parref{#1}}
\def\secref#1{\S\parref{#1}}
\def\lmref#1{Lemma~\parref{#1}}
\def\subsecref#1{\S\parref{#1}}
\def\defref#1{Definition~\parref{#1}}
\def\makeop#1{\expandafter\def\csname#1\endcsname
  {\mathop{\rm #1}\nolimits}\ignorespaces}
\def\ord{{ord}}
\def\Ord{{\mathrm{ord}}}
\def\Spec{\mathrm{Spec}\,}
\DeclareMathAlphabet{\mathpzc}{OT1}{pzc}{m}{it}
\DeclareSymbolFont{cyrletters}{OT2}{wncyr}{m}{n}
\DeclareMathSymbol{\SHA}{\mathalpha}{cyrletters}{"58}
\def\makebb#1{\expandafter\def
  \csname bb#1\endcsname{{\mathbb{#1}}}\ignorespaces}
\def\makebf#1{\expandafter\def\csname bf#1\endcsname{{\bf
      #1}}\ignorespaces}
\def\makegr#1{\expandafter\def
  \csname gr#1\endcsname{{\mathfrak{#1}}}\ignorespaces}
\def\makescr#1{\expandafter\def
  \csname scr#1\endcsname{{\EuScript{#1}}}\ignorespaces}
\def\makecal#1{\expandafter\def\csname cal#1\endcsname{{\mathcal
      #1}}\ignorespaces}
\def\doLetters#1{#1A #1B #1C #1D #1E #1F #1G #1H #1I #1J #1K #1L #1M
                 #1N #1O #1P #1Q #1R #1S #1T #1U #1V #1W #1X #1Y #1Z}
\def\doletters#1{#1a #1b #1c #1d #1e #1f #1g #1h #1i #1j #1k #1l #1m
                 #1n #1o #1p #1q #1r #1s #1t #1u #1v #1w #1x #1y #1z}
    \def\setminus{\smallsetminus}
\def\abs#1{\left|#1\right|}
\def\norm#1{\lVert#1\rVert}
\def\Fp{{\mathbb F}_p}
\def\Qbarp{\C_p}
\def\Qp{\Q_p}
\def\Qbar{\ol{\Q}}
\def\Zp{\Z_p}
\def\diag#1{\mathrm{diag}(#1)}
\def\rmT{{\mathrm T}}
\def\rmN{{\mathrm N}}
\def\cA{{\mathcal A}}  %automorphic forms
\def\cD{\mathcal D}
\def\cE{{\mathcal E}}
\def\cF{{\mathcal F}}  %Hida family
\def\cG{{\mathcal G}}
\def\cL{{\mathcal L}}
\def\cJ{\mathcal J}
\def\cM{\mathcal M}
\def\cO{\mathcal O}
\def\cS{{\mathcal S}}
\def\cf{{\mathcal f}}
\def\cW{{\mathcal W}}
\def\cV{{\mathcal V}}
\def\cP{{\mathcal P}}
\def\cJ{\mathcal J}
\def\cN{\mathcal N}
\def\cY{\mathcal Y}
\def\cQ{\mathcal Q}
\def\cU{\mathcal U}
\def\bfc{\mathbf c}
\def\bfK{\mathbf K}
\def\bfM{\mathbf M}
\def\bfU{\mathbf U}
\def\bfT{{\mathbf T}}
\def\bda{\mathbf a}
\def\bff{\mathbf f}
\def\bfi{\mathbf i}
\def\bfu{\mathbf u}
\def\sL{\mathscr L}
\def\sD{\mathscr D}
\def\sB{\mathscr B}
\def\sS{\mathscr S}
\def\bbI{\mathbb I}
\newcommand{\Z}{\mathbf Z}
\newcommand{\Q}{\mathbf Q}
\newcommand{\R}{\mathbf R}
\newcommand{\C}{\mathbf C}
\newcommand{\A}{\mathbf A}    % for adele
\def\frakf{\mathfrak f}
\def\frakp{{\mathfrak p}}
\def\frakg{\mathfrak g}
\def\frakm{\mathfrak m}
\def\frakH{{\mathfrak H}}
\def\frakX{\mathfrak X}
\def\bfone{{\mathbf 1}}
\def\et{{\acute{e}t}}
\def\etale{{\'{e}tale }}
\def\padic{\text{$p$-adic }}
\def\BS{Bruhat-Schwartz }
\def\Frob{\mathrm{Frob}}
\def\surjto{\twoheadrightarrow}
\def\ot{\otimes}
\def\longto{\longrightarrow}
\def\ol{\overline}  \nc{\opp}{\mathrm{opp}} \nc{\ul}{\underline}
\newcommand{\pair}[2]{\langle #1, #2\rangle}
\newcommand{\pairing}{\pair{\,}{\,}}
\def\XYmatrix{\xymatrix@M=8pt} % make \xymatrix not too cluttered
\def\ncmd{\newcommand}
\ncmd{\xysubset}[1][r]{\ar@<-2.5pt>@{^(-}[#1]\ar@<2.5pt>@{_(-}[#1]}
\ncmd{\XYmatrixc}[1]{\vcenter{\XYmatrix{#1}}}
\ncmd{\xyto}[1][r]{\ar@{->}[#1]}
\ncmd{\xyinj}[1][r]{\ar@{^(->}[#1]}
\ncmd{\xysurj}[1][r]{\ar@{->>}[#1]}
\ncmd{\xyline}[1][r]{\ar@{-}[#1]}
\ncmd{\xydotsto}[1][r]{\ar@{.>}[#1]}
\ncmd{\xydots}[1][r]{\ar@{.}[#1]}
\ncmd{\xyleadsto}[1][r]{\ar@{~>}[#1]}
\ncmd{\xyeq}[1][r]{\ar@{=}[#1]} \ncmd{\xyequal}[1][r]{\ar@{=}[#1]}
\ncmd{\xyequals}[1][r]{\ar@{=}[#1]}
\ncmd{\xymapsto}[1][r]{l\ar@{|->}[#1]}\ncmd{\xyimplies}[1][r]{\ar@{=>}[#1]}
\ncmd{\xyiso}{\ar[r]_-{\sim}}
\def\injxy{\ar@{^(->}}
\newcommand{\pMX}[4]{\begin{pmatrix}
{#1}& {#2}\\
{#3}&{#4}\end{pmatrix} }
 \newcommand{\pDII}[2]{\begin{pmatrix}{#1}&0
 \\0&{#2}\end{pmatrix}}
\newcommand{\seesaw}[4]{{#1}\ar@{-}[rd]\ar@{-}[d]&{#2}\ar@{-}[d]\\
{#3}\ar@{-}[ru]&{#4}}
\def\ie{i.e. }
\def\cf{\mbox{{\it cf.} }}
\def\uf{\varpi} %uniformizer
\def\Abs{{\boldsymbol\alpha}} %adelic absolute value
\def\ndivides{\nmid}
\def\x{{\times}}
\def\onehalf{{\frac{1}{2}}}
\def\al{\alpha}
\def\Lam{\Lambda}
\def\kap{\kappa}
\def\om{\omega}
\def\prolim{\varprojlim}
\def\iso{\simeq}
\def\con{\equiv}
\def\bksl{\backslash}
\newcommand\stt[1]{\left\{#1\right\}}
\def\ep{\epsilon}
\def\lam{\lambda}
\def\vep{\varepsilon}
\def\sg{\sigma}
\newcommand{\powerseries}[1]{\llbracket{#1}\rrbracket}
\renewcommand\pmod[1]{\,(\mbox{mod }{#1})}
\renewcommand\Re{\text{Re}\,}
\newcommand\Dmd[1]{\left<{#1}\right>} %Diamond operator
\def\Cp{\C_p}
\def\pmq{q}
\DeclareMathOperator{\lcm}{lcm}
\def\rmd{{\rm d}}
\def\ulQ{{\ul{Q}}}
\def\Om{\boldsymbol \omega}
\def\Qx{{Q_1}}
\def\Qy{{Q_2}}
\def\Qz{{Q_3}}
\def\bal{{\rm bal}}
\def\Eta{\boldsymbol\eta}
\def\addchar{\boldsymbol\psi}
\def\bfa{\mathbf a}
\def\bdsf{{\boldsymbol f}}
\def\bdsg{{\boldsymbol g}}
\def\bdsh{{\boldsymbol h}}
\def\cyc{\boldsymbol\varepsilon_{\rm cyc}}
\def\pmq{\ell}
\def\val{v}
\def\ord{{\rm ord}}
\def\condf{N_1}
\def\condg{N_2}
\def\condh{N_3}
\def\bdsF{\boldsymbol F}
\def\divides{\mid}
\def\rmd{d}
\def\itPhi{{\mathit \Phi}}
\def\rmH{{\rm H}}
\def\eord{e_\mathrm{ord}}
\def\sS{\cS}
\def\Qbarp{\ol{\Q}_p}
\def\ari{+}
\def\rmd{\mathrm{d}}
\def\Mat{\mathrm{M}}
\def\gap{\lambda}
\def\hh{T}
\def\WD{\mathrm{WD}}
\def\bigO{\mathrm{o}}
\def\U{\mathrm{U}}
\def\O{\mathrm{O}}
\def\Mat{\mathrm{M}}
\def\adj{\mathrm{adj}}
\def\oo{\hbox{\bf 0}}
\def\ono{\hbox{\bf 1}}
\def\bsl{\backslash}
\def\d{\mathrm{d}}
\def\St{\mathrm{St}}
\def\bdsE{\boldsymbol{E}}
\def\alp{\alpha}
\def\bet{\beta}
\def\gam{\gamma}
\def\del{\delta}
\def\eps{\epsilon}
\def\zet{\zeta}
\def\tht{\theta}
\def\kap{\kappa}
\def\lam{\lambda}
\def\sig{\sigma}
\def\ome{\omega}
\def\vep{\varepsilon}
\def\vpi{\varpi}
\def\vph{\varphi}
\def\Gam{\Gamma}
\def\Del{\Delta}
\def\Lam{\Lambda}
\def\Ome{\Omega}
\def\vPh{\varPhi}
\def\LDiff{\boldsymbol\lambda}
\def\frkf{{\mathfrak f}}    
    \def\frkH{{\mathfrak H}}
\def\frko{{\mathfrak o}}    
\def\frkp{{\mathfrak p}}
\def\cal{\fam2}
\def\cala{{\cal A}}
\def\cald{{\cal D}}
\def\cale{{\cal E}}
\def\calf{{\cal F}}
\def\calh{{\cal H}}
\def\calk{{\cal K}}
\def\call{{\cal L}}
\def\calm{{\cal M}}
\def\caln{{\cal N}}
\def\calo{{\cal O}}
\def\calp{{\cal P}}
\def\cals{{\cal S}}
\def\calv{{\cal V}}
\def\calw{{\cal W}}
\def\caly{{\cal Y}}
\def\II{{\mathbb I}}
\def\bfa{{\mathbf a}}    \def\bfA{{\mathbf A}}
\def\bfc{{\mathbf c}}    \def\bfC{{\mathbf C}}
\def\bfd{{\mathbf d}}    
    \def\bfE{{\mathbf E}}
\def\bff{{\mathbf f}}
\def\bfi{{\mathbf i}}    \def\bfI{{\mathbf I}}
\def\bfk{{\mathbf k}}    \def\bfK{{\mathbf K}}
\def\bfl{{\mathbf l}}    
\def\bfm{{\mathbf m}}    \def\bfM{{\mathbf M}}
\def\bfn{{\mathbf n}}    
\def\bfp{{\mathbf p}}    
    \def\bfQ{{\mathbf Q}}
    \def\bfR{{\mathbf R}}
\def\bfs{{\mathbf s}}    \def\bfS{{\mathbf S}}
\def\bft{{\mathbf t}}    \def\bfT{{\mathbf T}}
\def\bfu{{\mathbf u}}    \def\bfU{{\mathbf U}}
    \def\bfV{{\mathbf V}}
    \def\bfW{{\mathbf W}}
    \def\bfX{{\mathbf X}}
    \def\bfZ{{\mathbf Z}}
\def\scrd{{\mathscr D}}
\def\scrk{{\mathscr K}}
\def\scrw{{\mathscr W}}
\def\flat{\dagger}
\def\opcpt{U}
\title[Four-variable $p$-adic triple product $L$-functions]{Four-variable $p$-adic triple product $L$-functions and the trivial zero conjecture}
\author{Ming-Lun Hsieh}
\author{Shunsuke Yamana}
\date{\today}
\subjclass[2010]{11F67, 11F33}
\address{Department of Mathematics and National Center for Theoretic Science, National Taiwan University, No. 1, Sec. 4, Roosevelt Road, Taipei 10617, Taiwan}
\email{mlhsieh@math.ntu.edu.tw}
\address{Department of Mathematics, Graduate School of Science, Osaka Metropolitan University, 3-3-138 Sugimoto, Sumiyoshi-ku, Osaka 558-8585, Japan}
\email{yamana@omu.ac.jp}
\thanks{During the preparation of this article, Hsieh was partially supported by NSTC grants 108-2628-M-001-009-MY4 and 110-2628-M-001-004-. 
Yamana is partially supported by JSPS Grant-in-Aid for Scientific Research {(C)18K03210, (C)23K03055 and (B)19H01778}. }
\begin{document}
\begin{abstract}
We construct the four-variable primitive $p$-adic $L$-functions associated with the triple product of Hida families and prove the explicit interpolation formulae at all critical points in the balanced range. Our construction is to carry out the $p$-adic interpolation of Garrett's integral representation of triple product $L$-functions via the $p$-adic Rankin-Selberg convolution method. The main novelty in this paper is the construction and the patching of four $p$-adic families of the pull-back of nearly holomorphic Siegel Eisenstein series on $\GSp(6)$.  As an application, we obtain the cyclotomic $p$-adic $L$-function for the motive associated with the triple product of $p$-ordinary elliptic curves and prove the trivial zero conjecture for this motive. In particular, this proves the first cases of the Greenberg-Benois trivial zero conjecture where multiple trivial zeros are present and the Galois representation is not of $\GL(2)$-type.\end{abstract}

\maketitle

\tableofcontents

\section{Introduction}
%Three-variable $p$-adic tensor product $L$-functions are constructed for two Hida families in the final section of Hida's book \cite{Hida93Blue}.
The aim of this paper is to construct the four-variable $p$-adic triple product $L$-functions for the triple product of Hida families of elliptic newforms with explicit interpolation formulae at all critical specializations in the balanced region. This extends the construction of  three-variable $p$-adic $L$-functions in \cite{GS16} and \cite{Hsieh_triple} by incorporating the cyclotomic variable. As an application, we 
prove the trivial zero conjecture for triple product of elliptic curves with semi-stable reduction at $p$. Let {$p$ be an arbitrary prime number} and fix a valuation ring $\cO$ finite flat over $\Zp$. Let $\bfI$ be a normal domain finite flat over the Iwasawa algebra $\Lam=\cO\powerseries{\Gamma}$ of the topological group {$\Gamma=1+\bfp\Zp$, where $\bfp=p$ or $\bfp=4$ according as $p$ is odd or even.} 
Let
\[\bdsF=(\bdsf,\bdsg,\bdsh)\]
be a triplet of primitive Hida families of tame conductor $(\condf,\condg,\condh)$ and nebentypus $(\chi_1,\chi_2,\chi_3)$ with coefficients in $\bfI$. 
We will construct a four-variable Iwasawa function that interpolates the algebraic part of critical values of the triple product $L$-function attached to $\bdsF$ at all balanced critical specializations twisted by Dirichlet characters. 
%and prove explicit interpolation formulae at all critical specializations. 
Our formulae completely comply with the conjectural form described in \cite{CP89}, \cite{Coates89Bourbaki} and \cite{Coates89II}. In order to state our result precisely, we begin with some notation in Hida theory for elliptic modular forms and technical items such as the modified Euler factors at $p$.

\subsection{Galois representations attached to Hida families}\label{SS:1.2}

Given a field $F$, we denote its separable closure by $\overline{F}$ and put $G_F=\Gal(\overline{F}/F)$. 
If $\cF=\sum_{n=1}^\infty\bfa(n,\cF)q^n\in\bfI\powerseries{q}$ is a primitive cuspidal Hida family of tame conductor $N_\cF$ and nebentypus $\chi_\cF$, let $\rho_\cF:G_\Q\to\GL_2(\Frac\bfI)$ be the associated big Galois representation such that $\Tr\rho_\cF(\Frob_\ell)=\bfa(\ell,\cF)$ for primes $\ell\ndivides N_\cF p$, where $\Frob_\ell$ is the geometric Frobenius at $\ell$. Let $V_\cF$ be the natural realization of $\rho_\cF$ inside the \etale cohomology groups of modular curves, so $V_\cF$ is a lattice in $(\Frac\bfI)^2$ with the continuous Galois action via $\rho_\cF$, and the $G_{\Qp}$-invariant subspace $\Fil^0V_\cF:= V_\cF^{I_p}$ fixed by the inertia group $I_p$ at $p$ is free of rank one over $\bfI$ {at least if $p\geq 5$} (\cite[Corollary, page 558]{Ohta00}). 
%We recall the specialization of $V_\cF$ at arithmetic points. 
A point $Q\in\Spec\bfI(\Qbarp)$ is called an arithmetic point if $Q|_{\Gamma}\colon \Gamma\hookrightarrow \Lam^\x{\stackrel Q\longto}\Qbarp^\x$ is given by $Q(x)=x^{k_Q}\ep_Q(x)$ for some integer $k_Q\geq 2$ and a finite order character $\ep_Q:\Gamma\to\Qbarp^\x$. 
Let $\frakX_\bfI^+$ be the set of arithmetic points of $\bfI$. % of positive weight. 
%We call $Q\in\frakX_\bfI$ an arithmetic point if $k_Q\geq 2$. 
%Let $\frakX_\bfI^+$ be the set of arithmetic points of $\bfI$. 
For each arithmetic point $Q\in\frakX_\bfI^+$, the specialization $V_{\cF_Q}:=V_{\cF}\ot_{\bfI,Q}\Qbarp$ is the geometric \padic Galois representation associated with the $p$-stabilized newform $\cF_Q=\sum_{n=1}^\infty Q(\bfa(n,\cF))q^n$.
%\in\cals_{k_Q}(N_\cF p^e,\chi_\cF\eps_Q\Om^{-k_Q})$ constructed by Shimura and Deligne, where $\eps_Q$ is regarded as a Dirichlet character $(\Z/p^n\Z)^\x\to\mu_{p^\infty}$.

\subsection{Triple product $L$-functions}\label{S:tri.1} 

The $p$-adic cyclotomic character $\cyc:G_\Q\to\Z_p^\times$ is defined by $\sig(\zet)=\zet^{\cyc(\sig)}$ for every $p$-power root $\zet$ of unity and $\sig\in G_\Q$. 
We denote  by $\Q_\infty$ the cyclotomic $\Zp$-extension of $\Q$, by $\Om:G_\Q\to\mu_{p-1}\hookrightarrow\Z_p^\x$ the Teichm\"{u}ller character, and by $\Dmd{\cyc}_T:G_\Q\twoheadrightarrow\Gal(\Q_\infty/\Q)\hookrightarrow \Zp\powerseries{\Gal(\Q_\infty/\Q)}^\x$ the universal cyclotomic character. 
Let
 \begin{align*}
 \bfI_3&=\bfI\wh\ot_\cO\bfI\wh\ot_\cO\bfI, & 
 \bfI_4&=\bfI_3\powerseries{\Gal(\Q_\infty/\Q)}
 \end{align*}
be finite extensions of the three and four-variable Iwasawa algebras. 

Fix $a\in\Z/l\bfZ$ with $l=2\bigl\lceil\frac{p}{2}\bigl\rceil$. The main object of this paper is a construction of the $p$-adic $L$-function for the triple tensor product Galois representation 
 \begin{align*}
 \calv&=V_\bdsf\wh\ot_\cO V_\bdsg\wh\ot_\cO V_\bdsh, & 
 \bfV&=\calv\wh\ot_\cO\Om^a\Dmd{\cyc}_T
 \end{align*}
 of rank eight over $\bfI_4$.
 % and let \[\bfV=V_\bdsf\ot_{\cO} V_\bdsg\ot_{\cO} V_\bdsh\] be the rank eight $\bfI_3$-adic representation of $G_\Q$. Let $\cyc:G_\Q\to\Zp^\x$ be the \padic cyclotomic character. Let $\cX:G_\Q\to\bfI_3^\x$ be the unique square root of the $\bfI_3$-adic character $(\det \bfV)\cyc^{-1}$ such that $\cX(c)=(-1)^a$ ($c$ is the complex conjugagtion).
If $(k_1,k_2,k_3)$ is a triplet of positive integers, we say $(k_1,k_2,k_3)$ is balanced if $k_1+k_2+k_3>2k^*$ with $k^*:=\max\stt{k_1,k_2,k_3}$. 
Let $\frakX_{\bfI_3}^\bal$ denote the set of balanced arithmetic points of $(\frakX_\bfI^+)^3$. 
An integer $k$ is said to be critical for $(k_1,k_2,k_3)$ if 
\[k^*\leq k\leq k_1+k_2+k_3-k^*-2.\]
We define the weight space $\frakX^\bal_{\bfI_4}\subset\Spec\bfI_4(\Qbarp)$ to be the set of balanced critical points of $\bfI_4$ given by 
\[\frakX^\bal_{\bfI_4}=\{(\Qx,\Qy,\Qz,P)\in \frakX_{\bfI_3}^\bal\times\frakX_\Lambda^+\mid k_P\text{ is critical for }(k_{Q_1},k_{Q_2},k_{Q_3})\}. \]
For each point $(\ulQ,P)=(\Qx,\Qy,\Qz,P)\in\frakX^\bal_{\bfI_4}$, 
%let $V_{\bdsf_\Qx}$, $V_{\bdsg_\Qy}$ and $V_{\bdsh_\Qz}$ denote the two dimensional $p$-adic Galois representations associated with $\bdsf_\Qx$, $\bdsg_\Qy$ and $\bdsh_\Qz$ of pure weight $1-k_\Qx$, $1-k_\Qy$ and $1-k_\Qz$. 
the specialization $\bfV_{(\ulQ,P)}=\calv_\ulQ\ot\cyc^{k_P}\ep_P\Om^{a-k_P}$ is a $p$-adic geometric Galois representation, where $\calv_\ulQ=V_{\bdsf_\Qx}\ot V_{\bdsg_\Qy}\ot V_{\bdsh_\Qz}$ and $\ep_P$ is regarded as a Galois character via $\ep_P\circ\cyc$. 
% of pure weight $w_\ulQ:=k_\Qx+k_\Qy+k_\Qz-3$. 

Next we briefly recall the motivic $L$-function associated with the specialization $\bfV_{(\ulQ,P)}$. 
To the geometric $p$-adic Galois representation $\bfV_{(\ulQ,P)}$, we can associate the Weil-Deligne representation $\WD_\ell(\bfV_{(\ulQ,P)})$ of the Weil-Deligne group of $\Q_\ell$ over $\Qbarp$ (See \cite[(4.2.1)]{Tate79Corvallis} for $\ell\not =p$ and \cite[(4.2.3)]{F94} for $\ell=p$). 
Fixing an isomorphism $\iota_p:\Qbarp\iso\C$ once and for all, we define the {motivic} $L$-function of $\bfV_{(\ulQ,P)}$ by the Euler product \[L(\bfV_{(\ulQ,P)},s)=\prod_{\ell<\infty}L_\ell(\bfV_{(\ulQ,P)},s)\] of the local $L$-factors $L_\ell(\bfV_{(\ulQ,P)},s)$ attached to 
$\WD_\ell(\bfV_{(\ulQ,P)})\ot_{\Qbarp,\iota_p}\C$ (\cf \cite[(1.2.2)]{Deligne79}, \cite[page 85]{Taylor04ICM}). 
On the other hand, we denote by $\pi_{\bdsf_\Qx}$ (resp. $\pi_{\bdsg_\Qy},\pi_{\bdsh_\Qz}$) the irreducible unitary cuspidal automorphic representation of $\GL_2(\A)$ associated with $\bdsf_\Qx$ (resp. $\bdsg_\Qy,\bdsh_\Qz$). 
% and  by\[\itPi_\ulQ=\pi_{\bdsf_\Qx}\times\pi_{\bdsg_\Qy}\times\pi_{\bdsh_\Qz}\] the irreducible unitary automorphic representation of $\GL_2(\A)\times\GL_2(\A)\times\GL_2(\A)$.
Let $L(s,\pi_{\bdsf_\Qx}\times\pi_{\bdsg_\Qy}\times\pi_{\bdsh_\Qz}\otimes\eps_P\Om^{a-k_P})$ be the automorphic $L$-function attached to the triple product of $\pi_{\bdsf_\Qx}$, $\pi_{\bdsg_\Qy}$, and $\pi_{\bdsh_\Qz}\ot\eps_P\Om^{a-k_P}$, as constructed by Garrett \cite{Garrett} in the classical setting and by Piatetski-Shapiro and Rallis \cite{PSR87} in the ad\`{e}lic setting. 
The analytic theory of $L(s,\pi_{\bdsf_\Qx}\times\pi_{\bdsg_\Qy}\times\pi_{\bdsh_\Qz}\otimes\eps_P\Om^{a-k_P})$ such as meromorphic continuation and a functional equation has been explored extensively in the literatures (\cf\cite{PSR87,Ikeda89,Ikeda2}), and thanks to \cite[Theorem 4.4.1]{Dinakar00}, we have
\[L(s+k_P-w_\ulQ/2,\pi_{\bdsf_\Qx}\times\pi_{\bdsg_\Qy}\times\pi_{\bdsh_\Qz}\otimes\eps_P\Om^{a-k_P})=\Gamma_{\bfV_{(\ulQ,P)}}(s)\cdot L(\bfV_{(\ulQ,P)},s),\]
where $w_\ulQ:=k_\Qx+k_\Qy+k_\Qz-3$ and $\Gamma_{\bfV_{(\ulQ,P)}}(s)$ is the Gamma factor of $\bfV_{(\ulQ,P)}$ as given by 
% We recall the definition of $\Gamma$-factors of $\bfV_{(\ulQ,P)}$ following the recipe in \cite{Deligne79}:
 \[\Gamma_{\bfV_{(\ulQ,P)}}(s):=\Gamma_\C(s+k_P)\Gamma_\C(s+1+k_P-k_\Qx)\Gamma_\C(s+1+k_P-k_\Qy)\Gamma_\C(s+1+k_P-k_\Qz). \]
Here $\Gamma_\C(s)=2(2\pi)^{-s}\Gamma(s)$. 
%Moreover, there is a positive integer $N(\bfV_{\ulQ})$ and the root number $\varepsilon(\bfV_\ulQ)\in\stt{\pm 1}$ such that the complete $L$-function $\mathit \Lambda(\bfV_\ulQ,s)$ satisfies the functional equation
%\[{\mathit \Lambda}(\bfV_\ulQ,w_\ulQ+2-s)=\varepsilon(\bfV_{\ulQ})\cdot N(\bfV_{\ulQ})^{-s}\cdot {\mathit \Lambda}(\bfV_\ulQ,s).\]
Hence we have a good understanding of the analytic properties of the motivic $L$-function  $L(\bfV_{(\ulQ,P)},s)$. 
The rationality of its critical $L$-values in the balanced region was proved in \cite{Orl87} and \cite{GH93AJM}, where the authors verify that the Deligne's period for $\bfV_{(\ulQ,P)}$ is the product of Petersson norms of $\bdsf_\Qx$, $\bdsg_\Qy$, $\bdsh_\Qz$. In this article we shall investigate the arithmetic of critical values $L(\bfV_{(\ulQ,P)},0)$ for $(\ulQ,P)\in\frakX^\bal_{\bfI_4}$ and study the $p$-adic analytic behavior of its algebraic part viewed as a function on the weight space $\frakX^\bal_{\bfI_4}$.  

\subsection{The modified Euler factors at $p$ and $\infty$}\label{S:mod.1}

Let $G_{\Qp}$ denote the decomposition group at $p$. 
Define the rank four $G_{\Qp}$-invariant subspace of $\bfV$ by
\[\Fil^+\bfV:=\Fil^+\calv\ot \Om^a\Dmd{\cyc}_T, \]
where 
\[\Fil^+\calv:=\Fil^0V_{\bdsf}\ot \Fil^0V_{\bdsg}\ot V_{\bdsh}+V_{\bdsf}\ot \Fil^0V_{\bdsg}\ot \Fil^0V_{\bdsh}+\Fil^0V_{\bdsf}\ot V_{\bdsg}\ot \Fil^0V_{\bdsh}. \]
The pair $(\Fil^+\bfV,\frakX^\bal_{\bfI_4})$ satisfies the \emph{Panchishkin condition} in \cite[page 217]{Greenberg94} in the sense that for each arithmetic point $(\ulQ,P)\in\frakX^\bal_{\bfI_4}$, the Hodge-Tate numbers of $\Fil^+\bfV_{(\ulQ,P)}$ are all positive, while none of the Hodge-Tate numbers of $\bfV_{(\ulQ,P)}/\Fil^+\bfV_{(\ulQ,P)}$ is positive. Here the Hodge-Tate number of $\Qp(1)$ is one in our convention. 
%Indeed, the Hodge-Tate numbers of $\Fil^+\bfV_\ulQ$ are given by 
%$(\frac{w_\ulQ+1}{2},1-k_\Qx^*,k_\Qy^*,k_\Qz^*)$ if $\bullet=\bdsf$ and in the balanced case, 
%$(\frac{w_\ulQ+1}{2},k_\Qx^*,k_\Qy^*,k_\Qz^*)$.
Now we can define the modified $p$-Euler factor by 
\[\cE_p(\Fil^+\bfV_{(\ulQ,P)}):=\frac{L_p(\Fil^+\bfV_{(\ulQ,P)},0)}{\varepsilon(\mathrm{WD}_p(\Fil^+\bfV_{(\ulQ,P)}))\cdot L_p((\Fil^+\bfV_{(\ulQ,P)})^\vee,1)}\cdot\frac{1}{L_p(\bfV_{(\ulQ,P)},0)}. \]
We note that this modified $p$-Euler factor is precisely the ratio between the factor $\cL_p^{(\rho)}(\bfV_{(\ulQ,P)})$ in \cite[page 109, (18)]{Coates89II} and the local $L$-factor $L_p(\bfV_{(\ulQ,P)},0)$.
 
 In the theory of $p$-adic $L$-functions, we also need the modified Euler factor $\cE_\infty(\bfV_{(\ulQ,P)})$ at the archimedean place observed by Deligne.  It is defined to be the ratio between the factor $\cL_\infty^{(\sqrt{-1})}(\bfV_{(\ulQ,P)})$ in \cite[page 103 (4)]{Coates89II} and the Gamma factor $\Gamma_{\bfV_{(\ulQ,P)}}(0)$. 
In our current case it is explicitly given by \[ \cE_\infty(\bfV_{(\ulQ,P)})=(\sqrt{-1})^{k_\Qx+k_\Qy+k_\Qz-3}.\]

\subsection{The $p$-modified periods}\label{S:period.1}
To give the precise definition of periods for the motive $\bfV_{(\ulQ,P)}$, we introduce the $p$-modified period of an $\bfI$-adic primitive cuspidal Hida family $\cF$ of tame conductor $N_\cF$. %To begin with, let $f(q)=\sum_{n\geq 1}\bfa(n,f)q^n\in\cO_f\powerseries{q}$ is an elliptic newform of conductor $N_f$ and nebentype $\chi$. WriteLet $c_p(f)$ be the exponent of the $p$-primary part of $N_f$. Let $w^*_p(f)$ be the root number defined by 
We denote by $\cF_Q^\circ$ the normalized newform of weight $k_Q$, conductor $N_Q=N_\cF p^{n_Q}$ with nebentypus $\chi_Q$ corresponding to $\cF_Q$. 
There is a unique decomposition $\chi_Q=\chi'_Q\chi_{Q,(p)}$, where  $\chi_Q'$ and $\chi_{Q,(p)}$ are Dirichlet characters modulo $N_\cF$ and $p^{n_Q}$ respectively. 
Let $\al_Q=\bfa(p,\cF_Q)$. 
Define the modified Euler factor $\cE_p(\cF_Q,\Ad)$ for the adjoint motive of $\cF_Q$ by
\[\cE_p(\cF_Q,\Ad)=\begin{cases}
(1-\al_Q^{-2}\chi_Q(p)p^{k_Q-1})(1-\al_Q^{-2}\chi_Q(p)p^{k_Q-2})&\text{ if }n_Q=0,\\
-1&\text{ if }n_Q=1,\chi_{Q,(p)}=1\,(\text{so }k_Q=2),\\
(\al_Q^{-2}p^{(k_Q-2)})^{n_Q}\frakg(\chi_{Q,(p)})\chi_{Q,(p)}(-1)&\text{ if }n_Q>0,\,\chi_{Q,(p)}\not=1.
\end{cases}\]
Here $\frakg(\chi_{Q,(p)})$ is the usual Gauss sum. 
Fixing the choice of a generator $\eta_\cF$ and letting $\norm{\cF_Q^\circ}^2_{\Gamma_0(N_Q)}$ be the usual Petersson norm of $\cF_Q^\circ$, we define the \emph{$p$-modified period} $\Omega^\flat_{\cF_Q}$ of $\cF$ at $Q$ by
 \[\Omega^\flat_{\cF_Q}:=(-2\sqrt{-1})^{k_Q+1}\cdot \norm{\cF_Q^\circ}^2_{\Gamma_0(N_Q)}\cdot \cE_p(\cF_Q,\Ad)\in\C^\x. \]
By \cite[Corollary 6.24, Theorem 6.28]{Hida16Pune}, one can show that for each arithmetic point $Q$, up to a $p$-adic unit, the product of the normalized period $\Omega^\flat_{\cF_Q}$ and the congruence number of $\cF_Q$ is equal to the product of the plus/minus canonical periods $\Omega(+\,;\cF_Q^\circ)\Omega(-\,;\cF_Q^\circ)$ introduced in \cite[page 488]{Hida94Duke}. 

 \subsection{Statement of the main result}
  We impose the following technical assumption:
\beqcd{sf} \text {$N_i$ is square-free and $\chi_i=\Om^{a_i}$ is a power of the Teichm\"{u}ller character for $i=1,2,3$}.\eeqcd
  Our main result is a construction of the balanced $p$-adic triple product $L$-functions with the the interpolation formulae at all critical points in the precise form conjectured by Coates and Perrin-Riou: 
  
  \begin{thmA}\label{thm:main}
{Suppose that \eqref{sf} holds. 
Put $l=2\bigl\lceil\frac{p}{2}\bigl\rceil$. 
Then for each $a\in\Z/l\Z$, }there exists an element
\[\cL_{\bdsF,(a)}\in\bfI_3\powerseries{\Gal(\Q_\infty/\Q)}\otimes_{\bfI_3}(\Frac\bfI\otimes\Frac\bfI\otimes\Frac\bfI)\] 
such that 
\begin{itemize}
\item for each balanced critical $(\ulQ,P)=(\Qx,\Qy,\Qz,P)\in\frakX^\bal_{\bfI_4}$,
\begin{align*} 
\cL_{\bdsF,(a)}(\ulQ,P)=&\frac{\Gamma_{\bfV_{(\ulQ,P)}}(0)L(\bfV_{(\ulQ,P)},0)}{\Omega^\flat_{\bdsf_\Qx}\Omega^\flat_{\bdsg_\Qy}\Omega^\flat_{\bdsh_\Qz}}\cdot (\sqrt{-1})^{k_\Qx+k_\Qy+k_\Qz-3}\cdot \cE_p(\Fil^+\bfV_{(\ulQ,P)});
\end{align*}
\item for any $H_1,H_2$ and $H_3$ in the congruence ideals of $\bdsf,\bdsg$ and $\bdsh$, 
 \[H_1H_2H_3\cdot \cL_{\bdsF,(a)}\in \bfI_4.\]
 \end{itemize}\end{thmA}
  
 %\begin{thmA}\label{thm:main}
 %In addition to \eqref{sf}, we further suppose that $p>3$ and that $\bdsf,\bdsg$ and $\bdsh$ satisfy Hypothesis (CR). Fix generators $(\eta_\bdsf,\eta_\bdsg,\eta_\bdsh)$ of the congruence ideals of $(\bdsf,\bdsg,\bdsh)$. Then for each $a\in\Z/(p-1)\Z$, there exists a unique element $L_{\bdsF,(a)}^*\in\bfI_4$ such that for each arithmetic point $(\ulQ,P)=(\Qx,\Qy,\Qz,P)\in\frakX^\bal_{\bfI_4}$, we have
%\begin{align*}
%L_{\bdsF,(a)}^*(\ulQ,P)=&\Gamma_{\bfV_{(\ulQ,P)}}(0)\cdot\frac{L(\bfV_{(\ulQ,P)},0)}{\Omega_{\bdsf_\Qx}\Omega_{\bdsg_\Qy}\Omega_{\bdsh_\Qz}}\cdot (\sqrt{-1})^{k_\Qx+k_\Qy+k_\Qz-3}\cdot \cE_p(\Fil^+\bfV_{(\ulQ,P)}).  
%\end{align*}
%where $t$ is the number of prime factors of $N=\mathrm{lcm}\{\condf,\condg,\condh\}$. 
 %\end{thmA}

In the literature, the three weight variable $p$-adic $L$-function for the triple product of Hida families in the balanced case has been extensively studied by Greenberg-Seveso \cite{GS16}, the first author \cite{Hsieh_triple} and so on. These works, based on Ichino's formula \cite{Ichino08Duke}, focus on the $p$-adic interpolation of central values and hence the cyclotomic variable is excluded. Our four-variable $p$-adic $L$-function $\cL_{\bdsF,(a)}$ specializes to this three variable $p$-adic $L$-function along the central critical line (see \remref{rem:11}). {To the best of our knowledge,} the first attempt to construct the cyclotomic $p$-adic triple product $L$-functions was made by B\"{o}cherer and Panchishkin \cite{BP06Triple, BP2}, where 
they constructed one-variable $p$-adic $L$-functions associated with three primitive elliptic newforms by using the pull-back of Siegel Eisenstein series on $\GSp(6)$ and Garrett's integral representation of triple product $L$-functions. Their construction is not restricted to the ordinary case, but the $p$-integrality of the $p$-adic $L$-function is not discussed; the interpolation formula is less complete, for example the interpolation at the trivial character is not covered. To obtain an explicit and complete interpolation formula, we introduce new $p$-adic sections and archimedean sections in the construction of Siegel Eisenstein series on $\GSp(6)$.  
\begin{Remark}\noindent We put a few words about our assumption. \begin{enumerate}\item The assumption \eqref{sf} on the conductors of modular forms and characters is imposed due to the complexity of choosing nice test vectors in local zeta integrals arising from Garrett's integral representation of triple product $L$-functions.
\item The $p$-adic $L$-function $\cL_{\bdsF,(a)}$ may have some poles outside {critical} points. Under a Gorensteiness hypothesis of Hecke algebras {(the Hypothesis (CR)  in \S 7.3)}, we can construct an optimal four-variable $p$-adic triple product $L$-function without denominators by multiplying generators of the congruence ideals of $\bdsf$, $\bdsg$, and $\bdsh$ (see \propref{P:interpolation}).\end{enumerate}
\end{Remark}
 \subsection{Application to the trivial zero conjecture}
 
%We briefly mention an application of our result to the trivial zero conjecture for the triple product of elliptic curves. 
Let $E_i$ be a $p$-ordinary semi-stable elliptic curve over the rationals $\Q$ for $i=1,2,3$. We write $L(\bdsE,s)$ for the degree eight motivic $L$-function for the triple product 
\beq
\bfV_{\bdsE}=\rmH^1_{\et}(E_{1/\Qbar},\Qp)\ot\rmH^1_{\et}(E_{2/\Qbar},\Qp)\ot\rmH^1_{\et}(E_{3/\Qbar},\Qp) \label{tag:8dimGalois}
\eeq
realized in the middle cohomology of the abelian variety $\bdsE=E_1\times E_2\times E_3$ by the K\"{u}nneth formula. 
Hence   
\[L(\rmH^3_{\et}(\bdsE_{/\Qbar},\Qp),s)=L(\bdsE,s)\prod_{i=1}^3L(E_i,s-1)^2. \]
 
Our four-variable $p$-adic $L$-function yields a cyclotomic $p$-adic $L$-function 
\[L_p(\bdsE)\in\Zp\powerseries{\Gal(\Q_\infty/\Q)}\ot\Qp,\]
which roughly interpolates the algebraic part of central values $\frac{L(\bdsE\ot\chi,2)}{\Omega}$ with a fixed period $\Ome$ for all finite order characters $\chi$ of $\Gal(\Q_\infty/\Q)$.  
Define an analytic function $L_p(\bdsE,s):=\cyc^{s-2}(L_p(\bdsE))$ for $s\in\Zp$ (See \propref{P:cycpadicL.7} for the precise statement). 
The Euler-like factor $\cale_p(\Fil^+\bfV_{\bdsE}(2))$ can possibly vanish. 
In this case the interpolation formula forces $L_p(\bdsE,2)$ to be zero. 
Such a zero is called a trivial zero. 
For example, a trivial zero appears if all $E_i$ have split multiplicative reduction at $p$ (see Remark \ref{R:82}). 
In this particular case, the trivial zero conjecture predicts that the leading coefficient of $L_p(\bdsE,s)$ is the product of the $\mathscr L$-invariants for $E_i$ and the algebraic part of the complex central value  $L(\bdsE,2)$ (\cf \cite[(25), p.~166]{Greenberg94Trivial} and \cite[p.~1579]{Benois11AJM}). We recall that if $E/\Q$ is an elliptic curve with split multiplicative reduction at $p$, denote by $\mathscr L_p(E)=\frac{\log_pq_{E}}{\ord_pq_{E}}$ the $\mathscr L$-invariant of $E$ with Tate's $p$-adic period $q_{E}$ attached to $E$.
We construct several improved $p$-adic triple product $L$-functions and apply the idea of Greenberg-Stevens \cite{GS93} and \cite{BDJ17arXiv} to establish the trivial zero conjecture for the triple product of elliptic curves. The following result is a special case of our more general result (see \thmref{T:trivial}).

\begin{thmA}Suppose that $E_1$ has split multiplicative reduction at $p$. 
\begin{enumerate}
\item If $E_2$ and $E_3$ are both split multiplicative at $p$, then $L_p(\bdsE,s)$ has at least a triple zero at $s=2$, and 
\[\lim_{s\to 2}\frac{L_p(\bdsE,s)}{(s-2)^3}=-p\prod_{i=1}^3\mathscr L_p(E_i)\cdot\frac{
L(\bdsE,2)}{\Omega}. \]
\item If $E_2$ and $E_3$ are both good ordinary at $p$ with $a_p(E_2)=a_p(E_3)$, where \[a_p(E_i)=1+p-\#E_i(\Fp),\] then $L_p(\bdsE,s)$ has at least a double zero at $s=2$ and 
\[\lim_{s\to 2}\frac{L_p(\bdsE,s)}{(s-2)^2}=(-p\al_2^{-2})(1-\al_2^{-2})^2\mathscr L_p(E_1)\cdot \frac{L(\bdsE,2)}{\Omega}, \]
where $\al_2$ is the unit root of the Hecke polynomial $X^2-a_p(E_2)X+p$ of $E_2$.
\end{enumerate}
%If $E_1,E_2,E_3$ are split multiplicative at $p$, then $L_p(\bdsE,s)$ has at least a triple zero at $s=2$ and 
%\[\lim_{s\to 2}\frac{L_p(\bdsE,s)}{(s-2)^3}=-p\prod_{i=1}^3\mathscr L_p(E_i)\cdot \frac{L(\bdsE,2)}{\Omega},\]
%where $\mathscr L_p(E_i)=\frac{\log_pq_{E_i}}{\ord_pq_{E_i}}$ is the $\mathscr L$-invariant of $E_i$ with Tate's $p$-adic period $q_{E_i}$ attached to $E_i$. 
\end{thmA}

In the case of the $p$-adic $L$-function $L_p(E,s)$ of an elliptic curve $E$ over $\Q$ the trivial zero arises if and only if $E$ is split multiplicative at $p$. 
An analogus formula for $L'_p(E,1)$ was experimentally discovered in \cite{MTT86} and proved in \cite{GS93}, and for Hilbert modular forms in \cite{Mok90Comp}, \cite{Spiess14Inv} and \cite{BDJ17arXiv}. Later Greenberg formulated a more general trivial zero conjecture for ordinary motives \cite{Greenberg94Trivial}, and Benois further extended this conjecture to semistable representations \cite{Benois11AJM}. The non-vanishing of $\mathscr L$-invarians $\sL(E_i)$ is known thanks to \cite{BDGP96Inv}. Our result thus proves the first cases of the Greenberg-Benois trivial zero conjecture where multiple trivial zeros are present and the Galois representation is not of $\GL(2)$-type.
{\begin{Remark}In this paper, we focus on the $p$-adic split triple product $L$-functions in the $p$-ordinary setting. In a forthcoming work, we manage to remove the assumption \eqref{sf} and further extend our construction to more general $p$-adic triple product $L$-functions, including the twisted triple product of modular forms of finite slopes. \end{Remark}}%%%%%%%%%%%%%%%%%%%%%%%%%%%%%%%%%%%%%%%%%%%%%%%%%%%%%%%%%%%%%%%%%%%%%%%%%%%%%%%%
\subsection{The construction of $\cL_{\bdsF,(a)}$}
%\subsubsection{Four-variable $p$-adic family of Eisenstein series}
We give a sketch of the construction of $\cL_{\bdsF,(a)}$. Our method is the combination of Garrett's integral representation of the triple product $L$-function, an integrality result of critical $L$-values for triple products in \cite{Mizumoto90} and Hida's $p$-adic Rankin-Selberg method. We begin with a {construction} of the four-variable $p$-adic family of the pull-back of Siegel Eisenstein series.  For each point $x=(\Qx,\Qy,\Qz,P)\in \frakX^\bal_{\bfI_4}$, we reorder the weights $\stt{k_\Qx,k_\Qy,k_\Qz}=\stt{k_x,l_x,m_x}$ so that $k_x\geq l_x\geq m_x$. 
For each $\nu_1,\nu_2\in \stt{0,1}$, we put 
\[\frakX^\bal_{(\nu_1,\nu_2)}=\stt{x\in \frakX^\bal_{\bfI_4}\mid k_x\con l_x+\nu_1\con m_x+\nu_2\pmod{2}}.\]
Hence we have the partition of the weight space
 \[\frakX^\bal_{\bfI_4}=\coprod_{\nu_1,\nu_2\in\stt{0,1}}\frakX^\bal_{(\nu_1,\nu_2)}.\] 
 
Let $N=\lcm(N_1,N_2,N_3)$. 
When $\chi$ is a Dirichlet character, we write $c(\chi)$ for the exponent of the $p$-part of its conductor.  
For each $x=(\Qx,\Qy,\Qz,P)\in \frakX^\bal_{(\nu_1,\nu_2)}$ we shall construct a nearly holomorphic Siegel Eisenstein series $\bfE_x^{(\nu_1,\nu_2)}(Z,s)$ of degree three whose restriction to the three-fold product  $\frkH_1^3$ of the upper half-plane $\frkH_1$ is a modular form of weight $(k_x,k_x-\nu_1,k_x-\nu_2)$ and level $\Gamma_1(Np^n)^3$, where $n=\max\{1,c(\eps_{Q_1}),c(\eps_{Q_2}),c(\eps_{Q_3}),c(\eps_P)\}$. 
We consider the pull-back given by 
\[G^{(\nu_1,\nu_2)}_x(z_1,z_2,z_3):=e_\Ord\Hol\biggl( \LDiff_{z_2}^\frac{k_x-l_x-\nu_1}{2}\LDiff_{z_3}^\frac{k_x-m_x-\nu_2}{2}\bfE_x^{(\nu_1,\nu_2)}\biggl(\diag{z_1,z_2,z_3},k_P-\frac{w_{\ul{Q}}+1}{2}\biggl)\biggl),\]
where $\LDiff_z:=-\frac{1}{2\pi\sqrt{-1}}(\Im z)^2\frac{\partial}{\partial\ol{z}}$ is the weight-lowering differential operator, $\Hol$ is the holomorphic projection and $e_\Ord$ is Hida's ordinary projector. 
Then we show that $G^{(\nu_1,\nu_2)}_x$ is a $p$-ordinary cusp form of weight $(k_x,l_x,m_x)$ on $\frkH_1^3$ the product of three copies of the upper half plane. 
\begin{Remark}
The idea of using the weight-lowering differential operator is inspired by \cite{Mizumoto90} and is different from the use of weight-raising differential operators in \cite{BP2}, where the authors applied differential operators of Ibukiyama-type to study the algebraicity of critical values of the triple product $L$-functions in the balanced case, and the $p$-adic interpolation is outlined without details.
\end{Remark}
Perhaps the most crucial and surprising point in our construction of $p$-adic families of Siegel Eisenstein series is that the four classes of Siegel Eisenstein series $\bfE_x^{(\nu_1,\nu_2)}$ can be constructed so that the pull-backs $G^{(\nu_1,\nu_2)}_x$ can be glued into a single four-variable Hida family of triple product modular form due to a miraculous effect of the ordinary projector on the Fourier expansions of the pull-back of our Siegel Eisenstein series (See the proof of \propref{P:1.E}). More precisely, let $\bfS^\Ord(N,\chi)$ denote the space of ordinary $\Lambda$-adic modular forms of tame level $N$ and character $\chi$.  
In the following we associate to {$a\in\Z/l\Z$} and $\ul{\chi}=(\chi_1,\chi_2,\chi_3)$ an explicit triple product ordinary $\Lambda$-adic form 
\[\cG_{\ul{\chi}}^{(a)}\in \bfS^\Ord(N,\chi_1,\Z_p\powerseries{X_1})\wh\ot_{\Z_p} \bfS^\Ord(N,\chi_2,\Z_p\powerseries{X_2})\wh\ot_{\Z_p}\bfS^\Ord(N,\chi_3,\Z_p\powerseries{X_3})\wh\otimes_{\Z_p}\Z_p\powerseries{\Gal(\Q_\infty/\Q)}. \] 
Let $T_3^+$ be the set of positive definite half-integral matrices of size $3$. 
%We define the additive character $\bfe_\ell$ of $\Q_\ell$ by setting $\bfe_\ell(x)=e^{-2\pi\sqrt{-1} y}$ with $y\in\Q^{(\ell)}$ such that $x-y\in\Z_\ell$, where $\Q^{(\ell)}$ is a subring of $\Q$, consisting of the numbers of the form $\frac{a}{\ell^{n}}$ with $n\in\NN$ and $a\in\Z$.  
The Siegel series attached to $B\in T_3^+$ and a rational prime $\ell$ is defined by
\[b_\ell(B,s)=\sum_{z\in\Sym_3(\Q_\ell)/\Sym_3(\Z_\ell)} \addchar(-\tr(Bz))\nu[z]^{-s},\]
where $\addchar$ is an arbitrarily fixed additive character on $\Q_\ell$ of order $0$ and 
$\nu[z]$ is the product of denominators of elementary divisors of $z$. 
There exists a polynomial $F_{B,\ell}(X)\in\Z[X]$ such that 
\[{b_\ell(B,s)}=(1-\ell^{-s})(1-\ell^{2-2s})F_{B,\ell}(\ell^{-s}). \]
{Let $z\mapsto[z]$ denote the inclusion of group-like elements $1+\bfp\Z_p\hookrightarrow\Z_p\powerseries{1+\bfp\Z_p}^\x$. 
Fix a topological generator $\bfu\in 1+\bfp\Zp$ and identify $\Z_p\powerseries{1+\bfp\Z_p}$ with $\Zp\powerseries{X}$, where $X=[\bfu]-1$. 
Define a character $\Dmd{\cdot}:\Z_p^\x\to 1+\bfp\Z_p$} by $\Dmd{x}=x\Om(x)^{-1}$ and write $\Dmd{x}_X=[\Dmd{x}]=(1+X)^{\log_p z/\log_p\bfu}\in\Z_p\powerseries{X}$. 
Let $\Xi_p$ be the set of symmetric matrices of size $3$ over {$\Q_p$ whose off-diagonal entries times 2 are $p$-units but whose diagonal entries belong to $p\Z_p$}. 
Now the seven-variable formal power series is presented by %$\cG_{\ul{\chi}}^{(a)}\in \Z_p\powerseries{X_1,X_2,X_3,T,q_1,q_2,q_3}$ by 
\begin{align*}
\cG_{\ul{\chi}}^{(a)}&=\sum_{B=(b_{ij})\in T_3^+\cap\Xi_p}\cQ_B^{(a)}(X_1,X_2,X_3,T)\cdot \cF_B^{(a)}(X_1,X_2,X_3,T)\cdot q_1^{b_{11}}q_2^{b_{22}}q^{b_{33}}_3, 
\end{align*}
where $\cQ_B^{(a)},\cF_B^{(a)}\in\Z_p\powerseries{X_1,X_2,X_3,T}$ are given by 
\begin{align*}
\cQ_B^{(a)}(X_1,X_2,X_3,T)&=\frac{\Om^a(8b_{23}b_{31}b_{12})\Dmd{8b_{23}b_{31}b_{12}}_T}{\chi_1(2b_{23})\chi_2(2b_{31})\chi_3(2b_{12})\Dmd{2b_{23}}_{X_1}\Dmd{2b_{31}}_{X_2}\Dmd{2b_{12}}_{X_3}}, \\
\cF_B^{(a)}(X_1,X_2,X_3,T)&=\prod_{\ell\ndivides pN}F_{B,\ell}(\Dmd{\ell}_T^{-2}(\Om^{-2a}\chi_1\chi_2\chi_3)(\ell)\Dmd{\ell}_{X_1}\Dmd{\ell}_{X_2}\Dmd{\ell}_{X_3}\ell^{-4}). 
\end{align*} 
By an explicit calculation of Fourier coefficients of $G_x^{(\nu_1,\nu_2)}$, we prove in \propref{P:GLam.E} that the specialization $\cG_{\ul{\chi}}^{(a)}(x)$ at every $x\in\frakX^\bal_{\bfI_4}$ is the $q$-expansion of the cusp form $G_x^{(\nu_1,\nu_2)}$. 
%This series $p$-adically interpolates the $p$-ordinary projection of the holomorphic projection of the restriction of certain special values of some Siegel Eisenstein series of genus $3$. 

%Let $S^\ord(N,\chi,\bfI)\subset \bfI\powerseries{q}$ be the space of ordinary $\bfI$-adic cusp forms with tame level $N$ and nebentypus $\chi$. 
%Lemma \ref{lem:IadicEisen} below says that 
%\[\cG_{\ul{\chi},(a)}\in \bfS^\Ord(N,\chi_1,\Z_p\powerseries{X_1})\wh\ot_{\Zp} \bfS^\Ord(N,\chi_2,\Z_p\powerseries{X_2})\wh\ot_{\Zp}\bfS^\Ord(N,\chi_3,\Z_p\powerseries{X_3})\wh\ot_{\Zp}\Z_p\powerseries{T}. \]
 
%%%%%%%%%%%%%%%%%%%%%%%%%%%%%%%%%%%%%%%%%%%%%%%%%%%%%%%%%%%%%%%%%%%%%%%%%%%%%%%%

Now we apply the $p$-adic Rankin-Selberg method to define the $p$-adic $L$-function. Denote by $\bfT(N,\bfI)$ the $\bfI$-algebras  generated by Hecke operators on the space of ordinary $\Lam$-adic cusp forms of level $N$. For each $?\in\stt{\bdsf,\bdsg,\bdsh}$ we write $\bfone_?\in \bfT(N,\bfI)\otimes_\bfI\mathrm{Frac}\bfI$ for the idempotent corresponding to $?$.
%By the definition of congruence ideals, one can verify that $\bfone^*_\bdsf=\eta_\bdsf\cdot 1_\bdsf\in T(\condf,\chi_1,\bfI)$. 
We define \[\breve\cL_{\bdsF,(a)}:=\text{the first Fourier coefficient of }\bfone_\bdsf\ot\bfone_\bdsg\ot\bfone_\bdsh(\Tr_{N/N_1}\ot\Tr_{N/N_2}\ot\Tr_{N/N_3}(\cG^{(a)}_{\ul{\chi}})\in\bfI_3\powerseries{T},\] 
where $\Tr_{N/N_i}:\bfS^\ord(N,\chi_i,\bfI)\to \bfS^\ord(N_i,\chi_i,\bfI)$ is the usual trace map, and then 
the $p$-adic triple product $L$-function is defined to be $\cL_{\bdsF,(a)}=\breve \cL_{\bdsF,(a)}^{}\cdot\frkf_{\ul{\chi},a,N_1,N_2,N_3}^{-1}$, where $\frkf_{\ul{\chi},a,N_1,N_2,N_3}\in\bfI_4^\times$ is a fudge factor which is essentially a product of epsilon factors at prime-to-$p$ finite places. The $p$-adic Rankin-Selberg method tells us that the interpolation formula for the value $\breve\cL_{\bdsF,(a)}(x)$ at $x\in\frakX^\bal_{\bfI_4}$ is roughly given by 
\[\lim_{s\to k_P-\frac{w_{\ul{Q}}+1}{2}}{\frac{\pair{\bdsf_\Qx\ot\del_{l_x}^\frac{k_x-l_x-\nu_1}{2}\bdsg_\Qy\ot \del_{m_x}^\frac{k_x-m_x-\nu_2}{2}\bdsh_\Qz}{\bfE_x^{(\nu_1,\nu_2)}(s)}}{\norm{\bdsf_\Qx}^2\norm{\bdsg_\Qy}^2\norm{\bdsh_\Qz}^2}}\]
(\cf Lemma \ref{lem:Hidafunct}), where {$\del_k^m$ is the Maass-Shimura differential operator} and $\pairing$ is the Petersson pairing on $\frkH_1^3$ and $\norm{\cdot}$ is the Petersson norm on $\frkH_1$. 
The series $\bfE_x^{(\nu_1,\nu_2)}(Z,s)$ is constructed from a factorizable section of a certain family of induced representations. 
By means of the generalization of Garrett's work, carried out in \cite{PSR87,Ikeda89} (see Lemma \ref{lem:Garrett}) 
the pairing can be unfolded and written as a product of $L\bigl(s+\frac{1}{2},\pi_{\bdsf_\Qx}\times\pi_{\bdsg_\Qy}\times\pi_{\bdsh_\Qz}\otimes\eps_P\Om^{a-k_P}\bigl)$ and the normalized local zeta integrals at primes dividing $pN$. 
%We give some details about the main local problem in our construction, namely is to find a natural choice of local sections for primes dividing $pN$ used in the crucial construction of Siegel Eisenstein series $\bfE_x^{(\nu_1,\nu_2)}(Z,s)$. The local section is supported in the open cell $O_\ell=\calp_3(\Q_\ell)\begin{pmatrix} 0 & -\ono_3 \\ \ono_3 & 0 \end{pmatrix}N_3(\Z_\ell)$ and hence has a simple formula for degenerate Whittaker functions. 
%Moreover, when $\ell\neq p$, the local section restricts to the characteristic function of the intersection $O_\ell\cap\GSp_6(\Z_\ell)$ on $\GSp_6(\Z_\ell)$.
%Here $\calp_3$ is the Siegel parabolic subgroup of $\GSp_3$ with abelian unipotent radical $N_3$. %, $J_3$ is the longest Weyl element and $I_3(\ell)$ is the Iwahori subgroup. 
%The immediate payoff is the clear-cut formula for $\cG_{\ul{\chi}}^{(a)}$. 
%The most important feature of those sections is that the normalized local zeta integrals give the modified Euler factor $\cE_p(\Fil^+\bfV_{(\ulQ,P)})$ at $p$ and the fudge factor $\frkf_{\ul{\chi},a,N_1,N_2,N_3}$ at primes $\ell|N$.  
It turns out that these local zeta integrals are essentially given by the modified Euler factor $\cE_p(\Fil^+\bfV_{(\ulQ,P)})$ at $p$ and the local epsilon factors $\frkf_{\ul{\chi},a,N_1,N_2,N_3}$ at primes $\ell|N$.  
In both calculations the key ingredients are Lemma \ref{lem:11} and the local functional equations for $\GL_1$ and $\GL_2$, by which we can generalize Proposition 4.2 of \cite{GK92} without brute force calculations  (see Remark \ref{rem:21}). 

This paper is organized as follows. In the local part of this paper \S2, \S3 and \S4, we prepare the local ingredients in the construction of Siegel Eisenstein series $\bfE_x^{(\nu_1,\nu_2)}(Z,s)$ and carry out the explicit computations of {degenerate Whittaker functions, which appear in the Fourier coefficients of $\bfE_x^{(\nu_1,\nu_2)}(Z,s)$, } and local zeta integrals that appear in Garrett's integral representation of triple product $L$-functions. The non-archimdean case is treated in \S2 and \S3, and the archimdean case is carried out in \S4. The global part of this paper consists of \S5, \S6 and \S7. After recalling basic materials in Hida theory in \S5, we show that the Fourier expansion of $G^{(\nu_1,\nu_2)}_x$ can be $p$-adically interpolated by the power series $\cG_{\ul{\chi}}^{(a)}$ in \S6. We remark that the most crucial ingredient in this section is \propref{P:1.E} about the computation of Fourier coefficients of $G^{(\nu_1,\nu_2)}_x$. In \S7, we put together the local computations in \S2, 3, and 4 and prove the main interpolation formulae in \thmref{P:interpolation}. Finally, in \S8 we construct several improved $p$-adic $L$-functions in Lemmas \ref{L:01.t} and \ref{L:02} by modifying the construction of four variable and three variable $p$-adic $L$-functions in \S6 and \cite{Hsieh_triple} respectively, and prove \thmref{T:trivial} the trivial zero conjecture for the triple product of elliptic curves in \S8.4 and \S8.5 by the Greenberg-Stevens method.%We exhibit the interpolation formula by evaluating these local zeta integrals explicitly. 

%%%%%%%%%%%%%%%%%%%%%%%%%%%%%%%%%%%%%%%%%%%%%%%%%%%%%%%%%%%%%%%%%%%%%%%%%%%%%%%%

\subsection*{Notation}

The following notations will be used frequently throughout the paper. 
For an associative ring $R$ with identity element, we denote by $R^\times$ the group of all its invertible elements, and by $\Mat_{m,n}(R)$ the module of all $m\times n$ matrices with entries in $R$. 
Put $\Mat_n(R)=\Mat_{n,n}(R)$ and $\GL_n(R)=\Mat_n(R)^\times$ particularly when we view the set as a ring. 
The identity and zero elements of the ring $\Mat_n(R)$ are denoted by $\ono_n$ and $\oo_n$ (when $n$ needs to be stressed) respectively. 
The transpose of a matrix $x$ is denoted by $x^{\rm t}$. 
Let $\Sym_n(R)=\{z\in \Mat_n(R)\;|\;z^{\rm t}=z\}$ be the space of symmetric matrices of size $n$ over $R$. 
For any set $X$ we denote by $\II_X$ the characteristic function of $X$. 
When $X$ is a finite set, we denote by $\sharp X$ the number of elements in $X$. 
When $X$ is a totally disconnected locally compact topological space or a smooth real manifold, we write $\cals(X)$ for the space of Schwartz-Bruhat functions on $X$.
If $x$ is a real number, then we put $\lceil x\rceil=\max\{i\in\Z\;|\;i\leq x\}$. 

If $R$ is a commutative ring and $G=\GL_2(R)$, we denote by $\rho$ the right translation of $G$ on the space of $\C$-valued functions on $G$. 
Thus $(\rho(g)f)(g')=f(g'g)$. 
We write $\bfone:G\to\C$ for the constant function $\bfone(g)=1$.  For a function $f:G\to\C$ and a character $\chi:R^\x\to\C^\x$, let $f\ot\chi:G\to\C$ denote the function $f\ot\chi(g)=f(g)\chi(\det g)$.

%%%%%%%%%%%%%%%%%%%%%%%%%%%%%%%%%%%%%%%%%%%%%%%%%%%%%%%%%%%%%%%%%%%%%%%%%%%%%%%%
\subsection*{Measures}\label{SS:l_measure}
{Let $F$ be either a non-archimedean local field or $F=\R$. We shall fix the normalization of the Haar measures on some groups over $F$ through the paper. If $F$ is non-archimedean with $\frko$ the ring of integers of $F$,  we normalize the Haar measures on $F$ and $F^\times$ so that $\mathrm{vol}(\frko,\d x)=\mathrm{vol}(\frko^\times,\d^\times a)=1$. If $F=\R$, then $\d x$ denotes the usual Lebesgue measure on $\bfR$ and $\d^\times x=\frac{\d x}{\abs{x}}$. Define the compact subgroups $\bfK$ of $\GL_2(F)$ and $\bfK'$ of $\SL_2(F)$ as follows: If $F$ is non-archimedean, put
\[\bfK =\GL_2(\frko), \quad \bfK'=\SL_2(\frko). \]
If $F=\R$, put 
\[
\bfK=\O(2,\bfR), \quad
\bfK'=\SO(2,\bfR).
\]
Let $\d k$ and $\d k'$ be the Haar measures on $\bfK$ and $\bfK'$ which have total volume $1$. 
Define the Haar measures on $\PGL_2(F)$ and $\SL_2(F)$ by $\d g=\d x\frac{\d^\x y}{|y|}\d k$ and $\d g'=\d x\frac{\d^\x y}{|y|^2}\d k'$ for $g=\begin{pmatrix} y & x \\ 0 & 1 \end{pmatrix}k$ and $g'=\begin{pmatrix} y & xy^{-1} \\ 0 & y^{-1} \end{pmatrix}k'$ with $y\in F^\x$, $x\in F$, $k\in\bfK$ and $k\in\bfK'$.}

\section{Computation of the local zeta integral: the $p$-adic case}

%%%%%%%%%%%%%%%%%%%%%%%%%%%%%%%%%%%%%%%%%%%%%%%%%%%%%%%%%%%%%%%%%%%%%%%%%%%%%%%%

\subsection{The local zeta integral}

Let $T_n$ be the subgroup of diagonal matrices in $\GL_n$, $U_n$ the subgroup of upper triangular unipotent matrices in $\GL_n$, $Z_n$ the subgroup of scalar matrices in $\GL_n$ and $B_n=T_nU_n$ the standard Borel subgroup of $\GL_n$.  
%If $x_1,\dots,x_k$ are square matrices, then $\diag{x_1,\dots,x_k}$ denotes the matrix with $x_1,\dots,x_k$ in the diagonal blocks and $0$ in all other blocks. 
The symplectic similitude group of degree $n$ is defined by 
\begin{align*}
\GSp_{2n}&=\{g\in\GL_{2n}\;|\;gJ_ng^{\rm t}=\nu_n(g)J_n,\;\nu_n(g)\in\GL_1\}, & 
J_n&=\begin{pmatrix} 0 & -\ono_n \\ \ono_n & 0\end{pmatrix}.
\end{align*}

We define the homomorphisms 
\begin{align*}
\bfm&:\GL_n\times\GL_1\to \GSp_{2n}, & 
\bfn,\;\bfn^-&:\Sym_n\to \GSp_{2n}
\end{align*}
by 
\begin{align*}
\bfm(A,\nu)&=\begin{pmatrix} A & 0 \\ 0 & \nu(A^{\rm t})^{-1}\end{pmatrix}, & 
\bfn(z)&=\begin{pmatrix} \ono_n & z \\ 0 & \ono_n\end{pmatrix}, & 
\bfn^-(z)&=\begin{pmatrix} \ono_n & 0 \\ z & \ono_n\end{pmatrix}. 
\end{align*}
We write
\begin{align*}
\bfm(A)&=\bfm(A,1), & 
\bfd(\nu)&=\bfm(\ono_n,\nu). 
\end{align*}
A maximal parabolic subgroup $\calp_n=\calm_nN_n$ of $\GSp_{2n}$ is defined by 
\begin{align*}
\calm_n&=\bfm(\GL_n\times\GL_1), & 
N_n&=\bfn(\Sym_n). 
\end{align*}

Define algebraic groups of $U^0\subset U\subset H$ by 
\begin{align*}
H&=\{(g_1,g_2,g_3)\in (\GL_2)^3\;|\;\det g_1=\det g_2=\det g_3\}, \\
U&=\{(\bfn(x_1),\bfn(x_2),\bfn(x_3))\;|\;x_1,x_2,x_3\in\Mat_1\}, \\
U^0&=\{(\bfn(x_1),\bfn(x_2),\bfn(x_3))\;|\;x_1+x_2+x_3=0\}.  
\end{align*}
We define the embedding $\iota:H\hookrightarrow\GSp_6$ by 
\[\iota\left(\begin{pmatrix} a_1 & b_1 \\ c_1 & d_1 \end{pmatrix},\begin{pmatrix} a_2 & b_2 \\ c_2 & d_2 \end{pmatrix},\begin{pmatrix} a_3 & b_3 \\ c_3 & d_3 \end{pmatrix} \right)=\left(\begin{array}{ccc|ccc} 
a_1 & & & b_1 & & \\
& a_2 & & & b_2 & \\
& & a_3 & & & b_3 \\ \hline 
c_1 & & & d_1 & & \\
& c_2 & & & d_2 & \\
& & c_3 & & & d_3 \end{array}\right). \]
We identity $Z=Z_6$ with the center of $\GSp_6$. 
{Once and for all we choose a representative $\Eta$ for the open orbit of $H$ in $\calp_3\bsl\GSp_6$ (\cf \cite[Lemma 1.1]{Ikeda89} and Lemma \ref{lem:11} below). 
The dependance on the choice of $\Eta$ is explicated in \cite[\S\S 4.1--4.2]{Chen21}. 
Let 
\[\Eta=\left(\begin{array}{ccc|ccc} 
 0 & 0 & 0 & -1 & 0 & 0 \\ 0 & 1 & 0 & 0 & 0 & 0 \\ 0 & 0 & 1 & 0 & 0 & 0 \\ \hline 1 & 1 & 1 & 0 & 0 & 0 \\ 0 & 0 & 0 & -1 & 1 & 0 \\ 0 & 0 & 0 & -1 & 0 & 1  \end{array}\right). \]
 This particular choice is made in \cite[p.~206]{GS93}, \cite[p.~293]{Ichino08Duke} and \cite[p.~762]{Chen21}.}

Let $F$ be a local field of characteristic zero. 
In the nonarchimedean case, $F$ contains a ring $\frko$ of integers having a single prime ideal $\frkp$, and the absolute value $\Abs_F=|\!\cdot\!|$ on $F$ is normalized via $|\vpi|=q^{-1}$ for any generator $\vpi$ of $\frkp$, where $q$ denotes the order of the residue field $\frko/\frkp$. 
%Given a quasi-character $\chi$ of $F^\times$, we define its $L$-factor by \[L(s,\chi)=\begin{cases} (1-\chi(\vpi)q^{-s})^{-1} &\text{if $\chi$ is unramified, }\\ 1 &\text{otherwise. }\end{cases}\]
Fix an additive character $\addchar$ on $F$ which is trivial on $\frko$ but non-trivial on $\frkp^{-1}$.  
When $F=\R$, we define $\addchar(x)=e^{2\pi\sqrt{-1}x}$ for $x\in\R$. 

We remind the readers that in this and the next sections $\chi$ and $\hat\omega$ stand for quasi-characters of $F^\times$, but on the other hand, in the global setting \secref{ssec:61} $\chi$ and $\hat\omega$ denote Dirichlet characters of $p$-power conductor, which we sometimes view as finite order characters of $\Z_p^\times$ (see \subsecref{SS:5.1} for our convention in the global setting).

Let $K$ be a standard maximal compact subgroup of $\GSp_6(F)$. For quasi-characters $\hat\ome,\chi:F^\times\to\C^\times$ we let $I_3(\hat\ome,\chi):=\Ind_{\calp_3}^{\GSp_6(F)}\chi^2\hat\ome\boxtimes\chi^{-3}\hat\ome^{-1}$ be the space of all right $K$-finite functions $f$ on $\GSp_6(F)$ which satisfy 
\[f(\bfm(A,\lam)\bfn(z)g)=\hat\ome(\lam^{-2}\det A)\chi(\lam^{-3}(\det A)^2)|\lam^{-3}(\det A)^2|f(g)\]
for $A\in\GL_3(F)$, $\lam\in F^\times$, $z\in\Sym_3(F)$ and $g\in\GSp_6(F)$. 
%{Since $I_3(\hat\ome,\chi)$ coincides with the space of sections of a line bundle on $\calp_3\bsl\GSp_6(F)$, we sometimes call its elements sections. }
The group $\GSp_6(F)$ acts on $I_3(\hat\ome,\chi)$ by right translation $\rho_3$.  
It is important to note that for $t=\diag{a,d}\in T_2$ 
\beq
f(\Eta\iota(tg_1,tg_2,tg_3))=\hat\ome(d)^{-1}\chi(ad^{-1})|ad^{-1}|f(\Eta\iota(g_1,g_2,g_3)) \label{tag:12}
\eeq
{(\cf \cite[(4.9.11)]{GH93AJM}).} It is well worthy of notice that 
\beq
I_3(\hat\ome,\chi)\otimes\mu\circ\nu_3=I_3(\hat\ome\mu^{-2},\chi\mu). \label{tag:twist}
\eeq
%{The set $\Ome_\chi=\{\chi\Abs_F^s\;|\;s\in\bfC\}$ is a connected component of the group of quasi-characters of $F^\times$. We put on $\Ome_\chi$ the complex structure determined by the morphism $s\mapsto\chi\Abs_F^s$ of $\bfC$ onto $\Ome_\chi$.}
We call a $K$-finite function $(s,g)\mapsto f_s(g)$ on $\C\x\GSp_6(F)$ a holomorphic section of $I_3(\hat\ome,\chi\Abs_F^s)$ if $f_s(g)$ is holomorphic in $s$ for each $g\in \GSp_6(F)$ and $f_s\in I_3(\hat\ome,\chi\Abs_F^s)$ for each $s \in\C$. We associate to a non-degenerate symmetric matrix $B$ of size $3$ and $f_s\in I_3(\hat\ome,\chi\Abs_F^s)$ the degenerate Whittaker function
\beq\label{E:DegW.2}\calw_B(g,f_s)=\int_{\Sym_3(F)}f_s(J_3\bfn(z)g)\addchar(-\tr(Bz))\,\d z\quad (g\in \GSp_6(F)),\eeq
where $\d z$ is the Haar measure on $\Sym_n(F)$ self-dual with respect to the pairing $\addchar(\tr(zw))$. The integral converges if $\Re s$ is sufficiently large and can be continued to an entire function. By definition, it is clear that 
\[\calw_B(\bfn(z)g,f_s)=\addchar(\tr(Bz))\calw_B(g,f_s).\]
This yields the degenerate Whittaker functional given by 
\begin{align*}
\calw_B&:I_3(\hat\ome,\chi\Abs_F^s)\to\C, & 
\calw_B(f_s)&:=\calw_B(\bfone_3,f_s).%=\int_{\Sym_3(F)}f_s(J_3\bfn(z))\addchar(-\tr(Bz))\,\d z. 
\end{align*}
%Recall that a holomorphic section $f_s\in I_3(\hat\ome,\chi\Abs_F^s)$ is said to be \emph{flat} if $f_s|_{\GSp_6(\frko)}$ is independent of $s\in\C$. 

The study of the triple product $L$-function began with Garrett in \cite{Garrett}. 
Piatetski-Shapiro and Rallis modified this construction to give a twisted triple product $L$-function associated with a cuspidal automorphic form on $\GL(2)$ over a cubic algebra in \cite{PSR87}. 
Ikeda precisely described the poles of this $L$-function in \cite{Ikeda2}. 
When it is associated with holomorphic cusp forms, the algebraicity of its special values has been deeply investigated in {\cite{Orl87,GH93AJM,GK92,BP2}} and so on. 
We will focus on the $p$-adic aspects in this section. 

Given an irreducible admissible infinite dimensional representation $\pi$ of $\GL_2(F)$, we denote by $\scrw(\pi)$ the Whittaker model of $\pi$ with respect to $\addchar$. 
Let $\pi_1,\pi_2,\pi_3$ be a triplet of irreducible admissible infinite dimensional representations of $\GL_2(F)$. 
We denote the central character of $\pi_i$ by $\ome_i$. 
Set $\hat\ome=\ome_1\ome_2\ome_3$.  
We associate to a holomorphic section $f_s$ of $I(\hat\ome,\chi\Abs_F^s)$ and Whittaker functions $W_i\in\scrw(\pi_i)$ the local zeta integral of {Garrett, Piateski-Shapiro and Rallis (\cf\cite[(3-1), p.48]{PSR87})}
\beq\label{E:GPSR}Z(W_1,W_2,W_3,f_s)=\int_{U^0Z\bsl H}W_1(g_1)W_2(g_2)W_3(g_3)f_s(\Eta\iota(g_1,g_2,g_3))\,\d g_1^{}\d g_2'\d g_3',\eeq
{where the Haar measure $\d g_1^{}\d g_2'\d g_3'$ is obtained via $Z\bsl H\simeq\PGL_2(F)\times\SL_2(F)\times\SL_2(F)$. It is proved in \cite{PSR87,Ikeda89} that $Z(W_1,W_2,W_3,f_s)$ converges absolutely for $\Re s\gg 0$ and admits meromorphic continuation to $s\in\C$ and the functional equation. 
Let $\sig_i$ be the representation of the Weil-Deligne group $W_F$ of $F$ associated to $\pi_i$ by the local Langlands correspondence. 
Then $L\bigl(s+\frac{1}{2},\pi_1\times\pi_2\times\pi_3\ot\chi\bigl):=L\bigl(s+\frac{1}{2},\sig_1\otimes\sig_2\otimes\sig_3\otimes\chi\bigl)$ is precisely the GCD for the family of $Z(W_1,W_2,W_3,f_s)$ associated to `good' sections $f_s$, provided that $\pi_1,\pi_2,\pi_3$ arise as local components of cuspidal automorphic representations (see \cite{Ikeda99,Dinakar00}). }

{Put $T=\stt{(t,t,t)\in H\mid t\in T_2}$. }We define a map $\iota_0:H\hookrightarrow\GSp_6$ by 
{\beq
\textcolor{black}{\iota_0(g_1,g_2,g_3)=\Eta\iota(g_1,g_2J_1,g_3J_1).} \label{tag:iota}
\eeq}
As a preliminary step, we choose a coordinate system on an open dense subset of $U^0{T}\bsl H$ {(\cf (\ref{tag:12}))}. 
%Observe that \beq \iota_0(g_1,g_2,g_3)=\left(\begin{array}{ccc|ccc} d_1 & & & -c_1 & & \\ & a_2 & & & b_2 & \\ & & a_3 & & & b_3 \\ \hline -b_1 & a_2 & a_3 & a_1 & b_2 & b_3 \\ d_1 & c_2 & & -c_1 & d_2 & \\ d_1 & & c_3 & -c_1 & & d_3 \end{array}\right) \label{tag:11} \eeq  where $g_i=\begin{pmatrix} a_i & b_i \\ c_i & d_i \end{pmatrix}\in\GL_2$. 
%Observe %by (\ref{tag:11}) that \begin{align*}\imath(x_1,u_1,u_2,u_3,a_2,a_3)
%=\iota_0\left(\begin{pmatrix} 1 & x_1 \\ u_1 & 1+u_1x_1 \end{pmatrix},\begin{pmatrix} a_2 & 0 \\ \frac{u_2}{a_2}  & \frac{1}{a_2} \end{pmatrix},\begin{pmatrix} a_3 & 0 \\ \frac{u_3}{a_3} & \frac{1}{a_3} \end{pmatrix} \right) \notag\\
%=\left(\begin{array}{ccc|ccc} 
% 1+u_1x_1 & 0 & 0 & -u_1 & 0 & 0 \\
% 0 & a_2 & 0 & 0 & 0 & 0 \\
% 0 & 0 & a_3 & 0 & 0 & 0 \\ \hline
% -x_1 & a_2 & a_3 & 1 & 0 & 0 \\
% 1+u_1x_1 & \frac{u_2}{a_2} & 0 & -u_1 & \frac{1}{a_2} & 0 \\
% 1+u_1x_1 & 0 & \frac{u_3}{a_3} & -u_1 & 0 & \frac{1}{a_3}
%\end{array}\right). 
%\end{align*}

\begin{lm}\label{lem:11}
If $(x_1,u_1,u_2,u_3,a_2,a_3)\in F^4\oplus F^{\times 2}$, then 
\[\iota_0(\bfn^-(u_1)\bfn(x_1),\bfm(a_2)\bfn^-(u_2),\bfm(a_3)\bfn^-(u_3))=\begin{pmatrix} A & B \\ \oo_3 & (A^{\rm t})^{-1}\end{pmatrix}J_3\bfn(-z), \]
where 
\begin{align*}
A&=\begin{pmatrix}
1 & a_2u_1 & a_3u_1 \\ 
0 & a_2 & 0 \\ 
0 & 0 & a_3 
\end{pmatrix}, \quad\quad\quad
B=\begin{pmatrix} 
-u_1 & 0 & 0 \\ 0 & 0 & 0 \\ 0 & 0 & 0 
\end{pmatrix}, &
z&=\begin{pmatrix}
-x_1 & a_2 & a_3 \\
a_2 & u_2+a^2_2u_1 & a_2a_3u_1 \\
a_3 & a_2a_3u_1 &  u_3+a_3^2u_1
\end{pmatrix}. 
\end{align*}
\end{lm}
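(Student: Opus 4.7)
The statement is a direct matrix identity in $\GSp_6$, and the only ingredient required is the explicit multiplication of the factors on each side. My plan is to compute both sides as $6\times 6$ block matrices with $3\times 3$ blocks and compare the four blocks separately.

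On the left-hand side, the three $\GL_2$-factors are
\[g_1 = \bfn^-(u_1)\bfn(x_1) = \begin{pmatrix} 1 & x_1 \\ u_1 & 1+u_1x_1\end{pmatrix},\qquad g_iJ_1 = \begin{pmatrix} 0 & -a_i \\ a_i^{-1} & -a_i^{-1}u_i\end{pmatrix}\quad(i=2,3),\]
so that $\iota(g_1,g_2J_1,g_3J_1)$ is a $6\times 6$ matrix all four of whose $3\times 3$ blocks are diagonal. Left-multiplication by $\Eta$ is then a short entry-by-entry computation, because only the first row of the top half of $\Eta$ and the last two rows of the bottom half carry off-diagonal structure. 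This yields the four blocks of $\iota_0(\dots)$ explicitly.

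On the right-hand side, writing $J_3\bfn(-z) = \begin{pmatrix} \oo_3 & -\ono_3 \\ \ono_3 & -z \end{pmatrix}$ gives
\[\begin{pmatrix} A & B \\ \oo_3 & (A^{\rm t})^{-1}\end{pmatrix} J_3\bfn(-z) = \begin{pmatrix} B & -A-Bz \\ (A^{\rm t})^{-1} & -(A^{\rm t})^{-1}z \end{pmatrix}.\]
For the $A$ in the statement, $A^{\rm t}$ is lower triangular and its inverse can be read off at once as $\begin{pmatrix} 1 & 0 & 0 \\ -u_1 & a_2^{-1} & 0 \\ -u_1 & 0 & a_3^{-1}\end{pmatrix}$. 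Each of the four blocks on the right is then completely explicit, and a side-by-side comparison with the four blocks computed from $\Eta\iota(\ldots)$ matches term by term.

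There is no conceptual difficulty here; the only possible source of error is bookkeeping, namely the signs introduced by the two $J_1$ factors on the right of $g_2$ and $g_3$, the sign of $z$ inside $\bfn(-z)$, and the entries of $\Eta$. Note that once the $(1,1)$ block of $\Eta\iota(g_1,g_2J_1,g_3J_1)$ is computed, $B$ is forced, and once the $(2,1)$ block is computed, $A$ is forced through the identity $(A^{\rm t})^{-1}=(\text{lower-left block})$; the substance of the lemma is therefore that the value of $z$ recovered from either the $(1,2)$ or the $(2,2)$ block agrees with the claimed formula, which is verified by direct expansion of $-A-Bz$ and $-(A^{\rm t})^{-1}z$.
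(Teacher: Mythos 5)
Your proposal is correct and is essentially the paper's own proof, which consists of the single sentence that the lemma follows from the matrix expression of $\iota_0$; I have checked that all four $3\times 3$ blocks of $\Eta\iota(g_1,g_2J_1,g_3J_1)$ agree with those of $\bigl(\begin{smallmatrix} B & -A-Bz \\ (A^{\rm t})^{-1} & -(A^{\rm t})^{-1}z\end{smallmatrix}\bigr)$ with your stated $(A^{\rm t})^{-1}$. One minor inaccuracy in your prose: the fourth row of $\Eta$, namely $(1,1,1,0,0,0)$, also carries off-diagonal structure (it sums the first three rows of $\iota$ to produce the row $(1,0,0,x_1,-a_2,-a_3)$), but this does not affect the validity of the block-by-block verification.
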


\begin{proof}
We can prove Lemma \ref{lem:11} by the matrix expression of $\iota_0$. 
\end{proof}

%%%%%%%%%%%%%%%%%%%%%%%%%%%%%%%%%%%%%%%%%%%%%%%%%%%%%%%%%%%%%%%%%%%%%%%%%%%%%%%%

\subsection{The unramified case}\label{ssec:unram}
{From now on we assume that $F$ is nonarchimedean until the end of \secref{sec:3}. Denote by $\zet(s)=(1-q^{-s})^{-1}$ the local zeta function for $F$.} 
%We normalize the Haar measures on $F$ and $F^\times$ so that $\mathrm{vol}(\frko,\d x)=\mathrm{vol}(\frko^\times,\d^\times a)=1$. 
%Define the Haar measure on $\PGL_2(F)$ and $\SL_2(F)$ by $\d g=|y|^{-1}\d x\d^\x y\d k$ and $\d g'=|y|^{-2}\d x\d^\x y\d k'$ for $g=\begin{pmatrix} y & x \\ 0 & 1 \end{pmatrix}k$ and $g'=\bfn(x)\bfm(y)k'$, where $\mathrm{vol}(\GL_2(\frko),\d k)=\mathrm{vol}(\SL_2(\frko),\d k')=1$. 
%We obtain the Haar measure $\d g_1^{}\d g_2'\d g_3'$ on $Z\bsl H\simeq\PGL_2(F)\times\SL_2(F)\times\SL_2(F)$. }
When $\pi_i$ is unramified, we write $W_i^0\in\scrw(\pi_i)$ for the unique Whittaker function which takes the value $1$ on $\GL_2(\frko)$. 
Assume that $\hat\ome$ and $\chi$ are unramified. 
Then we define the holomorphic section $f^0_s(\chi)$ of $I_3(\hat\ome,\chi\Abs_F^s)$ by the condition that $f^0_s(k,\chi)=1$ for $k\in\GSp_6(\frko)$. 
Garrett has proved that 
\[Z(W^0_1,W^0_2,W^0_3,f_s^0(\chi))=\frac{L\left(s+\frac{1}{2},\pi_1\times\pi_2\times\pi_3\otimes\chi\right)}{L(2s+2,\chi^2\hat\ome)L(4s+2,\chi^4\hat\ome^2)}. \]
{The reader who has interest in the proof of this formula can consult \cite[Theorem 3.1]{PSR87}, which is better suited for the local calculation. }

We associate to a half-integral symmetric matrix $B$ the series defined by
\[b(B,s)=\sum_{z\in\Sym_3(F)/\Sym_3(\frko)} \addchar(-\tr(Bz))\nu[z]^{-s},\]
where $\addchar$ is an arbitrarily fixed additive character on $F$ of order $0$ and $\nu[z]=[z\frko^3+\frko^3:\frko^3]$. 
If $\det B\neq 0$, then there exists a polynomial $F_B(X)\in\Z[X]$ such that 
\beq\label{E:Siegel}b(B,s)=(1-q^{-s})(1-q^{2-2s})F_B(q^{-s}). \eeq
{The unramified degenerate Whittaker function is a representation theoretic interpretation of the Siegel series.  When $\chi=\hat\ome=1$, we have $\cW_B(f^0_s(1))=b(B,2s+2)$ by \cite[Proposition 19.2, page 158]{Shimura97Euler-P-Eisenstein.}. Therefore}
\beq
\cW_B(f^0_s(\chi))=\frac{F_B(\chi^2\hat\om(\vpi)q^{-2s-2})}{L(2s+2,\chi^2\hat\om)L(4s+2,\chi^4\hat\om^2)}. \label{tag:unramWhittaker}
\eeq

%%%%%%%%%%%%%%%%%%%%%%%%%%%%%%%%%%%%%%%%%%%%%%%%%%%%%%%%%%%%%%%%%%%%%%%%%%%%%%%%

\subsection{The $\frkp$-adic sections}\label{SS:padic}

Let $\St$ stand for the Steinberg representation of $\GL_2(F)$. 
For quasi-characters $\mu,\nu$ of $F^\times$ the representation $I(\mu,\nu)$ is realized on the space of functions $f:\GL_2(F)\to\C$ which satisfy 
\[f\left(\begin{pmatrix} a & b \\ 0 & d \end{pmatrix}g\right)=\mu(a)\nu(d)\left|\frac{a}{d}\right|^{1/2}f(g)\]
for $a,d\in F^\times$, $b\in F$ and $g\in\GL_2(F)$, where $\GL_2(F)$ acts by right translation $\rho$.  
Hereafter we assume that $\pi_i$ are not supercuspidal and are infinite dimensional. 
Then $\pi_i$ is a quotient of a principal series representation $I(\mu_i,\nu_i)$ with quasi-characters $\mu_i,\nu_i$. 
If $\mu_i^{}\nu_i^{-1}\neq\Abs_F^{-1}$, then $\pi_i\simeq I(\mu_i,\nu_i)$. 
If $\mu_i^{}\nu_i^{-1}=\Abs_F^{-1}$, then $\pi_i\simeq \St\otimes\mu_i\Abs_F^{1/2}$.  Given a character $\mu$ of $\frko^\times$, we define $\vph_\mu\in\cals(F)$ by 
\[\vph_\mu(x)=\mu(x)\II_{\frko^\times}(x). \]
We write $c(\mu)$ for the smallest integer $n$ such that $\mu$ is trivial on $\frko^\times\cap(1+\frkp^n)$. 
Define the open compact subgroup $\calk_0^{(m)}(\frkp^n)$ of $\GSp_{2m}(F)$ by 
\begin{align*}
K_0^{(m)}(\frkp^n)%&=\bfd(\frko^\times)\bfm(\GL_g(\frko))\bfn^-(\Sym_g(\frkp^n))\bfn(\Sym_g(\frko))\\
&=\biggl\{\begin{pmatrix} a & b \\ c & d \end{pmatrix}\in\GSp_{2m}(\frko)\;\biggl|\;c\in\Mat_m(\frkp^n)\biggl\}. 
\end{align*}
Provided that $n\geq c(\mu)$, we can define characters $\mu^{\uparrow}$ and $\mu^{\downarrow}$ of $K_0^{(1)}(\frkp^n)$ by 
\begin{align}
\mu^\uparrow\left(\begin{pmatrix} a & b \\ c & d \end{pmatrix}\right)&=\mu(a), &
\mu^{\downarrow}\left(\begin{pmatrix} a & b \\ c & d \end{pmatrix}\right)&=\mu(d). \label{tag:updown}
\end{align}
 
{Our construction of $p$-adic $L$-functions relies on an explicit construction of $p$-adic families of Siegel Eisenstein series of degree three, and hence the main local problem is a choice of special sections in $I_3(\hat\ome,\chi)$. We shall consider the following sections supported in the open cell. These sections have appeared in the study of poles of an intertwining operator (\cite[Lemma 4.1]{PSR87}) and was used in the construction of $p$-adic families of Siegel Eisenstein series on unitary groups (\cite[(3.3.4)]{HLS06Doc}).  \begin{defn}[sections supported in the open cell]\label{def:cell}
Since $J_3^{}N_3^{}J_3^{-1}=\bfn^-(\Sym_3(F))$ is the unipotent radical of the parabolic subgroup opposite to $\calp_3$, if $p,p'\in\calp_3$, $u,u'\in N_3$ and $pJ_3u=p'J_3u'$, then $p=p'$ and $u=u'$. 
Given a Schwartz function $\varPhi\in\cals(\Sym_3(F))$, we can therefore define a function $f_\varPhi(\chi):\calp_3J_3N_3\to\C$ by 
\[f_\varPhi(\bfm(A,\lam)uJ_3\bfn(z),\chi)=\hat\ome(\lam^{-2}\det A)\chi(\lam^{-3}(\det A)^2)|\lam^{-3}(\det A)^2|\varPhi(z)\]
for $A\in\GL_3(F)$, $\lam\in F^\times$, $u\in N_3$ and $z\in\Sym_3(F)$. 
Since $\calp_3J_3N_3=\calp_3J_3\calp_3$ is the cell of the longest Weyl element $J_3$ in the Bruhat decomposition of $\GSp_6(F)$, we can extend $f_\varPhi(\chi)$ by zero to a function on $\GSp_6(F)$, which is an element of $I_3(\hat\ome,\chi)$ (\cf \cite[(B), p. 138]{Cartier79Corvallis}). 
\end{defn}
%the following conditions: \begin{itemize}
%\item $f_\varPhi(g,\chi)=0$ unless $g\in\calp_3J_3\bfn(\Sym_3(F))$; 
%\item $f_\varPhi(J_3\bfn(z),\chi)=\varPhi(z)$ for $z\in\Sym_3(F)$. 
%\end{itemize}
\begin{Remark}\label{rem:20}
The family $\{f_\varPhi(\chi)\}$ is stable under the action of $\calp_3$ by definition. 
The group $\GL_3(F)\times F^\times$ acts on the space $\cals(\Sym_3(F))$ of functions on symmetric matrices of size 3 by 
\[(r(A,\lam)\varPhi)(z)=\varPhi(\lam A^{-1}z(A^{\rm t})^{-1}). \]
Since 
\[\rho_3(\bfm(A,\lam))f_\varPhi(J_3\bfn(z),\chi)%=f_\varPhi\left(J_3\bfn(z)\begin{pmatrix} A & \\ & \lam (A^{\rm t})^{-1}\end{pmatrix},\chi\right)
=f_\varPhi\left(\begin{pmatrix} \lam (A^{\rm t})^{-1} & \\ & A\end{pmatrix}J_3\bfn(\lam A^{-1}z(A^{\rm t})^{-1}),\chi\right) \]
for $z\in\Sym_3(F)$, we see that for $A\in\GL_3(F)$ and $\lam\in F^\times$  
\[\rho_3(\bfm(A,\lam))f_\varPhi(\chi)=\hat\ome(\lam(\det A)^{-1})(\chi\Abs_F)(\lam^3(\det A)^{-2})f_{r(A,\lam)\varPhi}(\chi). \] 
\end{Remark}}

The element $f_\varPhi(\chi)$ has a simple form of degenerate Whittaker coefficients (see (\ref{tag:Fourier}) below), and
\begin{multline} 
f_\varPhi(\iota_0(\bfn^-(u_1)\bfn(x_1),\bfm(a_2)\bfn^-(u_2),\bfm(a_3)\bfn^-(u_3)),\chi)\\
=\hat\ome(a_2a_3)\chi(a_2a_3)^2|a_2a_3|^2\varPhi\left(\begin{pmatrix}
x_1 & -a_2 & -a_3 \\
-a_2 & -u_2-a_2^2u_1 & -a_2a_3u_1 \\
-a_3 & -a_2a_3u_1 &  -u_3-a_3^2u_1
\end{pmatrix}\right) \label{tag:13}
\end{multline}
by \lmref{lem:11}. 
{We may assume that $\vPh\in\cals(\Sym_3(F))$ is of the form
\beq\label{E:Phi}
\vPh\left(\begin{pmatrix}
u_1 & x_3 & x_2 \\
x_3 & u_2 & x_1 \\
x_2 & x_1 & u_3
\end{pmatrix}\right)=\prod_{i=1}^3\phi_i(u_i)\vph_i(x_i) 
\eeq
with $\phi_1,\phi_2,\phi_3,\vph_1,\vph_2,\vph_3\in\cals(F)$. }
{We define a special \BS function $\Phi\in\cals(\Sym_3(F))$ by letting  
\begin{align*}
\phi_1&=\phi_2=\phi_3=\widehat{\II_\frkp}, &
\vph_1&=\widehat{\vph_{\chi\mu_1\nu_2\nu_3}}, & 
\vph_2&=\widehat{\vph_{\chi\nu_1\mu_2\nu_3}}, & 
\vph_3&=\widehat{\vph_{\chi\nu_1\nu_2\mu_3}}. 
\end{align*}
In what follows, we shall show} that $f_\Phi(\chi)$ has an appropriate $K$-type (see Lemma \ref{lem:13} below) and a suitable formula of the local zeta integral (see Proposition \ref{prop:11} below). 
Here we define the Fourier transform of a \BS function $\varPhi$ on the space of symmetric matrices of size $m$ over $F$ with respect to $\addchar$ by 
\[\widehat{\varPhi}(w)=\int_{\Sym_m(F)}\varPhi(z)\addchar(\tr(zw))\,\d z,\]
where we recall that $\d z$ is the self-dual Haar measure on $\Sym_m(F)$. 
{
\begin{remark}
The choice of $\vph_1,\vph_2,\vph_3$ should be related to the twisting operator defined in (1.12) of \cite{BP06Triple}. 
\end{remark}}
The choices of these functions are ad-hoc and the proof is mainly computational, so the reader is advised to skip it on a first reading. 

\begin{lm}\label{lem:13}
If $n\geq\max\{1, c(\chi), c(\mu_i), c(\nu_i)\;|\;i=1,2,3\}$, then 
\begin{align*}
\rho_3(\iota(g_1,g_2,g_3))f_\Phi(\chi)&=f_\Phi(\chi)\prod_{i=1}^3\mu_i^{\uparrow}(g_i)^{-1}\nu_i^{\downarrow}(g_i)^{-1} 
\end{align*}
for $g_1,g_2,g_3\in K_0^{(1)}(\frkp^{2n})$ with $\det g_1=\det g_2=\det g_3$.
\end{lm}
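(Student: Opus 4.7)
The plan is to verify the identity by direct computation, using that both sides are left $\calp_3$-equivariant and hence are determined by their restrictions to the open Bruhat cell $\calp_3\cdot J_3\bfn(\Sym_3(F))$. On this cell, $f_\Phi(\chi)$ is given by $f_\Phi(J_3\bfn(z),\chi)=\Phi(z)$ via \eqref{E:padicsection}, and by \eqref{tag:13} we have an explicit formula for $f_\Phi$ on the image $\iota_0(H)$ as parameterized in Lemma~\ref{lem:11}. Since this image is open and dense in $\Eta\iota(H)$, it suffices to evaluate both sides on elements of the form $h=\iota_0(\bfn^-(u_1)\bfn(x_1),\bfm(a_2)\bfn^-(u_2),\bfm(a_3)\bfn^-(u_3))$ and read off the effect on $\Phi$.

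To carry out the verification, I would reduce to generators. Any element of $K_0^{(1)}(\frkp^{2n})$ admits an Iwahori decomposition $g_i=\bfn(b_i)\bfm(a_i,d_i)\bfn^-(c_i)$ with $b_i\in\frko$, $a_i,d_i\in\frko^\times$ and $c_i\in\frkp^{2n}$, and the common-determinant condition $\det g_1=\det g_2=\det g_3$ becomes $d_1=d_2=d_3$. Since the character $\prod_i\mu_i^\uparrow(g_i)^{-1}\nu_i^\downarrow(g_i)^{-1}$ is multiplicative in $(g_1,g_2,g_3)$, I would check three cases separately: (a)~$g_i=\bfn(b_i)$, (b)~$g_i=\bfm(a_i,d_i)$ with shared $d_i$, and (c)~$g_i=\bfn^-(c_i)$ with $c_i\in\frkp^{2n}$. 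In each case, I would multiply $h$ on the right by $\iota(g_1,g_2,g_3)$, decompose the result as $p\cdot\iota_0(h_1',h_2',h_3')$ with $p\in\calp_3$, and compare \eqref{tag:13} for the new parameters to the original to identify the character factor produced.

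The three cases require different analytic inputs. Case~(a) translates the matrix argument of $\Phi$ by an integral symmetric matrix; invariance of $\Phi$ under such translations follows from the choices $\phi_i=\widehat{\II_\frkp}$ and the three $\vph_i=\widehat{\vph_{\chi\mu_i\nu_j\nu_k}}$, whose support properties are dictated by the hypothesis $n\geq c(\chi),c(\mu_i),c(\nu_i)$. Case~(b) rescales entries by units, and the identity $\widehat{\vph_\eta}(uw)=\eta(u)^{-1}\widehat{\vph_\eta}(w)$ for $u\in\frko^\times$ combines with the prefactor $\hat\ome(a_2a_3)\chi(a_2a_3)^2|a_2a_3|^2$ in \eqref{tag:13} to produce the advertised character. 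Case~(c) perturbs the Bruhat decomposition by elements of $\frkp^{2n}$, and the same conductor hypothesis ensures that $\Phi$ is locally constant on this scale.

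The main obstacle is the bookkeeping in case~(b): the twist by $J_1$ hidden in the definition $\iota_0(g_1,g_2,g_3)=\Eta\iota(g_1,g_2J_1,g_3J_1)$ interacts with the asymmetric choice of $\vph_1,\vph_2,\vph_3$ (mixing each $\mu_i$ with the two $\nu_j$, $j\neq i$), so one must carefully track how the characters $\mu_i,\nu_i$ distribute across the three tensor factors of $\Phi$. The particular design of $\Phi$ in \eqref{tag:section} is precisely what makes the resulting character coincide with $\prod_{i=1}^3\mu_i^\uparrow(g_i)^{-1}\nu_i^\downarrow(g_i)^{-1}$, and the determinant constraint $d_1=d_2=d_3$ ensures the $\hat\ome$-factor is absorbed correctly.
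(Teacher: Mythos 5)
Your proposal is correct and follows essentially the same route as the paper's proof: restrict to the big cell, use the decomposition $K_0^{(1)}(\frkp^{2n})=\bfn(\frko)\bfd(\frko^\times)\bfm(\frko^\times)\bfn^-(\frkp^{2n})$, and treat the three factors via translation invariance of $\Phi$ under $\Sym_3(\frko)$, the scaling equivariance (\ref{tag:14}) of the $\widehat{\vph_\mu}$, and the stability of the support of $f_\Phi(\chi)$ under right translation by $\bfn^-(\Sym_3(\frkp^{2n}))$. The only cosmetic difference is that the paper carries out the torus and unipotent verifications directly on $J_3\bfn(z)$ for arbitrary $z\in\Sym_3(F)$, rather than through the $\iota_0$-parameterization of Lemma \ref{lem:11} and formula (\ref{tag:13}).
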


\begin{proof}
One can easily check that 
\begin{align}
\widehat{\vph_\mu}&\in\cals(\frkp^{-n}), &
\widehat{\vph_\mu}(ax)&=\mu(a)^{-1}\widehat{\vph_\mu}(x), &
\widehat{\vph_\mu}(x+b)&=\widehat{\vph_\mu}(x) \label{tag:14}
\end{align} 
for $a\in\frko^\times$, $b\in\frko$ and $x\in F$. 
%More precisely, \[\widehat{\vph_\mu}(x)=\begin{cases} \II_\frko(x)-q^{-1}\II_{\frkp^{-1}}(x) &\text{if $c(\mu)=0$, }\\ \vep(1,\mu^{-1},\addchar)\mu(x)^{-1}\II_{\frkp^{-c(\mu)}\setminus\frkp^{-c(\mu)+1}}(x) &\text{if $c(\mu)>0$ }\end{cases} \] for $x\in F$. 
Simply because $\phi_i=\II_{\frkp^{-1}}$, we see that $\Phi(z+c)=\Phi(z)$ for $c\in\Sym_3(\frko)$, which means that $f_\Phi(\chi)$ is fixed by the action of $\bfn(\Sym_3(\frko))$. 
Put 
\begin{align*}
\chi_1&=\chi\mu_1\nu_2\nu_3, & 
\chi_2&=\chi\nu_1\mu_2\nu_3, & 
\chi_3&=\chi\nu_1\nu_2\mu_3. 
\end{align*}
{If $a_i,d_i\in\frko^\times$ and $\lam=a_1d_1=a_2d_2=a_3d_3$, then 
\[\rho_3(\iota(\diag{a_1,d_1},\diag{a_2,d_2},\diag{a_3,d_3}))f_\Phi(\chi)=\hat\ome(\lam(a_1a_2a_3)^{-1})\chi(\lam^3(a_1a_2a_3)^{-2})f_{r(A,\lam)\Phi}(\chi) \]
by Remark \ref{rem:20}, where we put $A=\diag{a_1,a_2,a_3}$. 
Since 
\[(r(A,\lam)\Phi)\left(\begin{pmatrix}
u_1 & x_3 & x_2 \\
x_3 & u_2 & x_1 \\
x_2 & x_1 & u_3
\end{pmatrix}\right)=\Phi
\left(\begin{pmatrix}
\frac{\lam u_1}{a_1^2} & \frac{\lam x_3}{a_1a_2} & \frac{\lam x_2}{a_1a_3} \\
\frac{\lam x_3}{a_1a_2} & \frac{\lam u_2}{a_2^2} & \frac{\lam x_1}{a_2a_3} \\
\frac{\lam x_2}{a_1a_3} & \frac{\lam x_1}{a_2a_3} & \frac{\lam u_3}{a_3^2}
\end{pmatrix}\right), \]
we have 
\[r(A,\lam)\Phi=\chi_1(\lam^{-1}a_2a_3)\chi_2(\lam^{-1}a_1a_3)\chi_3(\lam^{-1}a_1a_2)\Phi \]
by (\ref{tag:14}). 
Observe that 
\begin{align*}
&\chi_1(\lam^{-1}a_2a_3)\chi_2(\lam^{-1}a_1a_3)\chi_3(\lam^{-1}a_1a_2)\\
=&\chi(\lam^{-3}(a_1a_2a_3)^2)\cdot(\mu_1\nu_2\nu_3)(\lam^{-1}a_2a_3)(\nu_1\mu_2\nu_3)(\lam^{-1}a_1a_3)(\nu_1\nu_2\mu_3)(\lam^{-1}a_1a_2)\\
=&\chi(\lam^{-3}(a_1a_2a_3)^2)\cdot(\hat\ome\nu_1\nu_2\nu_3)(\lam^{-1}a_1a_2a_3)\cdot(\mu_1\nu_2\nu_3)(a_1^{-1})(\nu_1\mu_2\nu_3)(a_2^{-1})(\nu_1\nu_2\mu_3)(a_3^{-1})\\=&\chi(\lam^{-3}(a_1a_2a_3)^2)\cdot\hat\ome(\lam^{-1}a_1a_2a_3)\cdot\nu_1(d_1^{-1})\nu_2(d_2^{-1})\nu_3(d_3^{-1})\mu_1(a_1^{-1})\mu_2(a_2^{-1})\mu_3(a_3^{-1}), 
\end{align*}
from which we conclude that 
\[\rho_3(\iota(\diag{a_1,d_1},\diag{a_2,d_2},\diag{a_3,d_3}))f_\Phi(\chi)
=f_\Phi(\chi)\prod_{i=1}^3\mu_i(a_i)^{-1}\nu_i(d_i)^{-1}.  \]}

Let $w\in\Sym_3(\frkp^{2n})$. 
If $f_\Phi(g\bfn^-(w),\chi)\neq 0$, then since 
$g\bfn^-(w)\in \calp_3J_3\bfn(z)$ with $z\in\Sym_3(\frkp^{-n})$ and since %$w(\ono_3+zw)^{-1}=(\ono_3+wz)^{-1}w$, we have 
\[\bfn(z)\bfn^-(-w)%=\begin{pmatrix} \ono_3+zw & z \\ w & \ono_3 \end{pmatrix}
=\begin{pmatrix} \ono_3-zw & \oo_3 \\ -w & (\ono_3-wz)^{-1} \end{pmatrix}\bfn((\ono_3-zw)^{-1}z), \] 
we have $g\in \calp_3J_3\bfn(\Sym_3(\frkp^{-n}))$. 
We see by the identity above that 
\[f_\Phi(J_3\bfn(z)\bfn^-(w),\chi)=f_\Phi(J_3\bfn((\ono_3+zw)^{-1}z),\chi)=f_\Phi(J_3\bfn(z),\chi)\]
for $z\in\Sym_3(\frkp^{-n})$ and $w\in\Sym_3(\frkp^{2n})$. 
We conclude that $f_\Phi(\chi)$ is fixed by right translation by $\bfn^-(\Sym_3(\frkp^{2n}))$. 
The proof is complete by $K_0^{(1)}(\frkp^m)=\bfn(\frko)\bfd(\frko^\times)\bfm(\frko^\times)\bfn^-(\frkp^m)$.  
\end{proof}

%%%%%%%%%%%%%%%%%%%%%%%%%%%%%%%%%%%%%%%%%%%%%%%%%%%%%%%%%%%%%%%%%%%%%%%%%%%%%%%%

\subsection{Degenerate Whittaker functions at $\frkp$}

Let $\Xi_\frkp$ be a subset of $\Sym_3(F)$ which consists of symmetric matrices whose the diagonal entries belong to $\frkp$ and whose off-diagonal entries belong to $\frac{1}{2}\frko^\times$.

\begin{prop}\label{prop:12}
Let $B=(b_{ij})\in \Sym_3(F)$. 
Put $y_i=b_{jk}$ whenever $\{i,j,k\}=\{1,2,3\}$. 
Then 
\[\calw_B(f_\Phi(\chi))=\chi(8y_1y_2y_3)\prod_{i=1}^3\mu_i(2y_i)\II_{\frko^\times}(2y_i)\II_\frkp(b_{ii})\prod_{j\in\{1,2,3\}\setminus\{i\}}\nu_j(2y_i). \]
In particular, $\calw_B(f_\Phi(\chi))\neq 0$ if and only if $B\in\Xi_\frkp$. 
%In particular, if $\calw_B(f_\Phi(\chi))\neq 0$, then $B\in\Sym_3(\frko)\cap\GL_3(\frko)$.  
\end{prop}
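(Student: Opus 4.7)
The plan is to unfold both the degenerate Whittaker functional and the section $f_\Phi(\chi)$ directly and observe that everything factors into one-dimensional Fourier integrals that can be evaluated by Fourier inversion.

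First I would write $z = \begin{pmatrix} u_1 & x_3 & x_2 \\ x_3 & u_2 & x_1 \\ x_2 & x_1 & u_3 \end{pmatrix}$ and compute
\[
\tr(Bz) = b_{11}u_1 + b_{22}u_2 + b_{33}u_3 + 2y_1 x_1 + 2y_2 x_2 + 2y_3 x_3,
\]
using the identification $y_i = b_{jk}$ whenever $\{i,j,k\} = \{1,2,3\}$. Combined with the defining property \eqref{E:padicsection} of $f_\Phi(\chi)$ and the factorized form (\ref{tag:section}) of $\Phi$, this turns the definition of $\calw_B$ into the product of six independent one-variable integrals:
\[
\calw_B(f_\Phi(\chi)) = \prod_{i=1}^3\left(\int_F \phi_i(u_i)\addchar(-b_{ii}u_i)\,\d u_i\right)\cdot \prod_{i=1}^3\left(\int_F \vph_i(x_i)\addchar(-2y_i x_i)\,\d x_i\right).
\]
The absolute convergence issue that showed up in the general definition of $\calw_B$ disappears here because $\Phi$ has compact support in every coordinate, so there is no need to analytically continue; everything is a bona fide integral.

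Next I would evaluate each factor. Since $\phi_i = \widehat{\II_\frkp}$, Fourier inversion against the self-dual measure with respect to $\addchar$ (recalling that $\addchar$ has order $0$, so $\frko$ is its own annihilator and has volume $1$) yields
\[
\int_F \widehat{\II_\frkp}(u)\addchar(-b_{ii}u)\,\d u = \II_\frkp(b_{ii}).
\]
Similarly, because $\vph_1 = \widehat{\vph_{\chi\mu_1\nu_2\nu_3}}$, $\vph_2 = \widehat{\vph_{\chi\nu_1\mu_2\nu_3}}$, and $\vph_3 = \widehat{\vph_{\chi\nu_1\nu_2\mu_3}}$, one more application of Fourier inversion gives
\[
\int_F \vph_i(x_i)\addchar(-2y_i x_i)\,\d x_i = \vph_{\chi_i}(2y_i) = \chi_i(2y_i)\II_{\frko^\x}(2y_i),
\]
where $\chi_1 = \chi\mu_1\nu_2\nu_3$, $\chi_2 = \chi\nu_1\mu_2\nu_3$, $\chi_3 = \chi\nu_1\nu_2\mu_3$.

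Finally I would assemble the pieces and regroup. Writing out $\chi_1(2y_1)\chi_2(2y_2)\chi_3(2y_3)$ one sees that the factors of $\chi$ combine to $\chi(8y_1y_2y_3)$, the factors of $\mu_i$ combine to $\prod_i \mu_i(2y_i)$, and each $\nu_j$ appears exactly on the two arguments $2y_i$ with $i\neq j$, yielding $\prod_i \prod_{j\neq i} \nu_j(2y_i)$. This is exactly the closed form in the statement. The last clause, namely that $\calw_B(f_\Phi(\chi))\neq 0$ forces $B\in\Xi_\frkp$, is then immediate: the $\II_\frkp(b_{ii})$ factors force the diagonal entries to lie in $\frkp$, while $\II_{\frko^\x}(2y_i)$ forces the off-diagonal entries $y_i = b_{jk}$ to lie in $\frac{1}{2}\frko^\x$. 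No real obstacle arises beyond careful bookkeeping of the Fourier conventions and the identification of which $\nu_j$ accompanies which $2y_i$.
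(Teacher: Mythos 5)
Your proof is correct and follows the same route as the paper: the paper likewise observes that $\calw_B(f_\varPhi(\chi))=\widehat{\varPhi}(-B)$ (its equation (\ref{tag:Fourier})) and then evaluates the Fourier transform of the factorized $\Phi$ coordinate by coordinate via Fourier inversion, using $\widehat{\phi_i}(-b_{ii})=\II_\frkp(b_{ii})$ and $\widehat{\vph_i}(-2y_i)=\vph_{\chi_i}(2y_i)$. Your bookkeeping of the factor $2$ from the off-diagonal entries in $\tr(Bz)$ and the regrouping of the characters $\chi_i$ into the stated product are both accurate.
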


\begin{proof}
Observe that
\begin{align}
\calw_B(f_\varPhi(\chi))
=&\int_{\Sym_3(F)}f_\varPhi(J_3\bfn(z),\chi)\addchar(-\tr(Bz))\,\d z 
%\notag\\=&\int_{\Sym_3(F)}\varPhi(z)\addchar(-\tr(Bz))\,\d z
=\widehat{\varPhi}(-B) \label{tag:Fourier}
\end{align}
for any $\varPhi\in\cals(\Sym_3(F))$. 
We have
\begin{align*}
\widehat{\Phi}\left(-\begin{pmatrix}
b_{11} & y_3 & y_2 \\
y_3 & b_{22} & y_1 \\
y_2 & y_1 & b_{33}
\end{pmatrix}\right)
&=\prod_{i=1}^3\widehat{\vph_i}(-2y_i)\widehat{\phi_i}(-b_{ii})\\
&=\vph_{\chi\mu_1\nu_2\nu_3}(2y_1)\vph_{\chi\nu_1\mu_2\nu_3}(2y_2)\vph_{\chi\nu_1\nu_2\mu_3}(2y_3)\prod_{i=1}^3\II_\frkp(b_{ii})
%\\&=\chi(8y_1y_2y_3)\prod_{i=1}^3\mu_i(2y_i)\II_{\frko^\times}(y_i)\II_\frkp(b_{ii})\prod_{j\in\{1,2,3\}\setminus\{i\}}\nu_j(2y_i) 
\end{align*}
by definition.
\end{proof}

%%%%%%%%%%%%%%%%%%%%%%%%%%%%%%%%%%%%%%%%%%%%%%%%%%%%%%%%%%%%%%%%%%%%%%%%%%%%%%%%

\subsection{The $\frkp$-adic zeta integral}{We introduce some special Whittaker functions that will be used in the $\frakp$-adic zeta integral of Garrett, Piateski-Shapiro and Rallis. 
Define the embedding $\bft:F^\times\to\GL_2(F)$ by $\bft(a)=\diag{a,1}$. 
The Kirillov model of $\pi_i$ is given by $\scrk(\pi_i)=\{W\circ\bft\;|\;W\in\scrw(\pi_i)\}$. 
The restriction map $\circ\bft:\scrw(\pi_i)\to\scrk(\pi_i)$ is injective by \cite[Proposition 4.4.7]{Bump97Grey}. 
Let $W_{\nu_i}\in\scrw(\pi_i)$ be the Whittaker function uniquely characterized by  
\[W_{\nu_i}(\bft(a))=\nu_i(a)|a|^{1/2}\II_\frko(a)\]
for $a\in F^\times$. 
This vector belongs to the ordinary line $\scrw(\pi)^\ord(\nu_i)$ with respect to $\nu_i$ in the Whittaker model introduced in \cite[\S2.5, Corollary 2.3]{Hsieh_triple}. Its connection with the classical $p$-ordinary elliptic modular forms can be found in \cite[Remark 2.5]{Hsieh_triple}. Moreover, $W_{\nu_i}$ corresponds to the section $f_i^\dagger\in I(\mu_i,\nu_i)$ supported in the open cell (see (\ref{tag:existence}) below).}
Fix a prime element $\vpi$ of $\frko$. 
For each non-negative integer $n$ we put 
\begin{align*}
m_n&=\bfm(\vpi^n),  &
t_n&=J_1^{-1}m_n, & 
W_i^{(n)}&=\pi_i(t_n)W_{\nu_i}.  
\end{align*}
\begin{prop}\label{prop:11}
If $n\geq\max\{1, c(\chi), c(\mu_i), c(\nu_i)\;|\;i=1,2,3\}$, then 
\begin{multline*}
Z(W_1^{(n)},W_2^{(n)},W_3^{(n)},f_\Phi(\chi))=(1+q^{-1})^{-3}\prod_{j=1}^3\left(\frac{\bet_j}{q\alp_j}\right)^n\\
\times(\chi\nu_1\nu_2\nu_3)(-1)\gam\left(\frac{1}{2},\pi_1\otimes\chi\nu_2\nu_3,\addchar\right)^{-1}\prod_{i=2,3}\gam\left(\frac{1}{2},\chi\nu_1\nu_i\mu_{5-i},\addchar\right)^{-1}, 
\end{multline*}
where $\alp_i=\mu_i(\vpi)$ and $\bet_i=\nu_i(\vpi)$.  
%\gam&=\chi(\vpi). 
\end{prop}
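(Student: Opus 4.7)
The plan is to substitute the coordinate system of \lmref{lem:11} into the six-dimensional zeta integral, expand $f_\Phi(\chi)$ using (\ref{tag:13}) and the factorization (\ref{tag:section}) of $\Phi$, and simplify the result via local functional equations for $\GL_1$ and $\GL_2$. The form of $\Phi$ has been engineered so that, after a triangular change of variables, the six variables decouple into three independent pieces, each producing a standard zeta integral whose value is an inverse gamma factor.

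Concretely, I first parametrize the open dense subset of $U^0Z\bsl H$ by $(x_1,u_1,u_2,u_3,a_2,a_3)\in F^4\oplus F^{\x 2}$ via the representatives of \lmref{lem:11}. Observing $J_1t_n=m_n$, one has $W_i^{(n)}(g_iJ_1)=W_i^\ord(\bfm(a_i\vpi^n)\bfn^-(u_i\vpi^{2n}))$ for $i=2,3$ using the commutation $\bfn^-(u)\bfm(b)=\bfm(b)\bfn^-(ub^2)$. Combining (\ref{tag:13}) and (\ref{tag:section}) and performing the triangular change of variables $v_i:=u_i+a_i^2u_1$ ($i=2,3$) of Jacobian one, the supports $\phi_2,\phi_3=q^{-1}\II_{\frkp^{-1}}$ restrict $v_2,v_3$ to $\frkp^{-1}$; then $\bfn^-(v_i\vpi^{2n})\in K_0^{(1)}(\frkp^{2n})$ for $n$ large, so \lmref{lem:13} (applied componentwise, using $\mu^\uparrow(\bfn^-(\cdot))=\nu^\downarrow(\bfn^-(\cdot))=1$) reduces $W_i^\ord(\bfm(a_i\vpi^n)\bfn^-(v_i\vpi^{2n}))$ to $W_i^\ord(\bfm(a_i\vpi^n))$, and the $v_i$-integration contributes a factor $q^{-1}\cdot q=1$ per variable. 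For the $W_1$-factor, a Bruhat decomposition $\bfn^-(u_1)=\bfm(u_1^{-1})\bfn(u_1)J_1\bfn(u_1^{-1})$ for $u_1\neq 0$ rewrites $W_1^{(n)}(\bfn^-(u_1)\bfn(x_1))$ in terms of the torus value $W_1^\ord(\bfm(u_1^{-1}\vpi^n))$ times a phase $\addchar(u_1^{-1})$, producing the factor $(\beta_1/q\alpha_1)^n$ via $W_1^\ord(\bfm(b))=(\mu_1\nu_1^{-1})(b)|b|\II_\frko(b)$.

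At this stage the integral splits into three decoupled pieces. The $a_2$-integration against $\vph_3=\widehat{\vph_{\chi\nu_1\nu_2\mu_3}}$, combined with $\hat\om(a_2)\chi(a_2)^2|a_2|^2$ and the explicit formula for $W_2^\ord$ on the torus, becomes the Tate pairing of $\vph_{\chi\nu_1\nu_2\mu_3}$ against its Fourier dual, which by the $\GL_1$ local functional equation evaluates to $\gam(1/2,\chi\nu_1\nu_2\mu_3,\addchar)^{-1}$ up to elementary constants; the $a_3$-integral gives $\gam(1/2,\chi\nu_1\nu_3\mu_2,\addchar)^{-1}$ in the same way. The joint $(x_1,u_1)$-integral, using $\vph_1=\widehat{\vph_{\chi\mu_1\nu_2\nu_3}}$ and the torus values of $W_1^\ord$, becomes a $\GL_2$ Jacquet-type zeta integral for $\pi_1\ot\chi\nu_2\nu_3$, whose evaluation by the $\GL_2$ local functional equation yields $\gam(1/2,\pi_1\ot\chi\nu_2\nu_3,\addchar)^{-1}$. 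The translation by $t_n$ on each $W_i^\ord$ contributes the factor $\prod_{j=1}^3(\beta_j/q\alpha_j)^n$; the signs collect into $(\chi\nu_1\nu_2\nu_3)(-1)$ from the minus signs in the arguments of $\vph_i$ via (\ref{tag:14}); and the constant $(1+q^{-1})^{-3}$ arises from the normalization of the Iwasawa measures on each copy of $\GL_2(F)$ in passing between the invariant measure on $U^0Z\bsl H$ and the measures used in the one-variable integrals.

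The principal obstacle is the character bookkeeping: one must verify that after all twists arising from both the explicit form of $W_i^\ord$ (which picks up $\mu_i$ or $\nu_i$ depending on the Bruhat cell of its argument) and from the section $f_\Phi(\chi)$, the characters combine precisely into $\chi\nu_2\nu_3$ (for the $\pi_1$-piece) and $\chi\nu_1\nu_i\mu_{5-i}$ (for $i=2,3$). Closely related is the need to treat carefully the Bruhat decomposition of $\bfn^-(u_1)$ when $u_1\notin\frko$ so that $W_1^\ord(\bfn^-(u_1)\bfn(x_1)t_n)$ becomes computable, and to check that the contributions from the various Bruhat cells patch into the single clean formula stated in the proposition.
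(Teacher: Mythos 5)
Your outline correctly identifies the ingredients the paper uses (the coordinates of \lmref{lem:11}, the expansion \eqref{tag:13} of $f_\Phi$, the factorized form \eqref{tag:section} of $\Phi$, and the $\GL_1$ and $\GL_2$ local functional equations), and your character bookkeeping for the two $\GL_1$ gamma factors $\gam(1/2,\chi\nu_1\nu_i\mu_{5-i},\addchar)^{-1}$ matches the answer. But there are two genuine gaps. First, $(x_1,u_1,u_2,u_3,a_2,a_3)\in F^4\oplus F^{\x 2}$ is \emph{not} a coordinate system on $U^0Z\bsl H$: that space is $7$-dimensional, and \lmref{lem:11} only computes $f_\Phi$ on a $6$-dimensional slice. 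The missing coordinate is the diagonal torus $\bft(a)$ acting simultaneously on all three factors, and it is precisely through \eqref{tag:12} applied to this variable, together with $f_i'(J_1\bft(a)g_i)=\nu_i(a)|a|^{1/2}(\cdots)$ for $i=2,3$, that the integrand acquires the factor $W_1^{(n)}(\bft(a)g)\,\chi(a)\nu_2(a)\nu_3(a)$ whose $a$-integral is the Jacquet--Tate integral for $\pi_1\otimes\chi\nu_2\nu_3$. In your scheme there is no variable in which this integral is formed, so the source of the twist $\chi\nu_2\nu_3$ in the $\pi_1$-gamma factor is unaccounted for.

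Second, the asserted decoupling "into three independent pieces" does not hold at the stage where you invoke it: the $(2,3)$-entry of the matrix in \eqref{tag:13} is $-a_2a_3u_1$, so the factor $\vph_1(-a_2a_3u_1)$ ties $u_1$ to $a_2$ and $a_3$, and the homogeneity $\widehat{\vph_\mu}(ax)=\mu(a)^{-1}\widehat{\vph_\mu}(x)$ from \eqref{tag:14} is only valid for $a\in\frko^\x$, so you cannot simply peel $a_2,a_3$ out of this factor. The paper's route is: apply the $\GL_2$ functional equation \emph{first} (passing from $W_1^{(n)}(\bft(a)g)$ to $W_1^{(n)}(\bft(a)J_1^{-1}\bfn(x))$), then take the Fourier transform in $u_1$, which converts $\vph_1(-a_2a_3u_1)$ into $\widehat{\vph_1}(a/(a_2a_3))$, and finally substitute $a\mapsto aa_2a_3$; only after these three steps, and using the explicit support $\II_\frko(aa_2a_3\vpi^{2n})$ of $W_1^{(n)}$ on the torus together with $\vph_{5-i}$ being supported on $\frko^\x$ up to $\frkp^{-n}$-dilation, do the $a_2$- and $a_3$-integrals separate into Tate integrals. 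Your Bruhat decomposition of $\bfn^-(u_1)$ is a plausible hands-on substitute for the functional-equation step, but the subsequent separation of variables is asserted rather than derived, and it is exactly the hard point of the computation.
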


\begin{proof}
{For a quasi-character $\chi$ we define $\Re \chi$ as the unique real number $\sig$ such that $\chi\Abs_F^{-\sig}$ is unitary. } 
We associate to $f_i\in I(\mu_i,\nu_i)$ a function $W(f_i)\in\scrw(\pi_i)$ by 
\[W(g,f_i)=\int_Ff_i(J_1\bfn(u)g)\addchar(-u)\,\d u=\lim_{k\to\infty}\int_{\frkp^{-k}}f_i(J_1\bfn(u)g)\addchar(-u)\,\d u, \]
{where $g\in\GL_2(F)$ is arbitrarily fixed. 
Here the limit stabilizes and the integral makes sense for any $f_i\in\pi_i$ (see \cite[p.~485]{Bump97Grey}).} 
The integral $W$ factors through the quotient $I(\mu_i,\nu_i)\twoheadrightarrow\St\otimes\mu_i\Abs_F^{1/2}$ when $\mu_i^{}\nu_i^{-1}=\Abs_F^{-1}$. 
{Put $f_i'=\rho(J_1)f_i^{}$. 
Then 
\[Z(W(f_1),W(f_2),W(f_3),f_\vPh(\chi))
=\int_{U^0Z\bsl H}W(g_1,f_1)W(g_2,f_2')W(g_3,f_3')f_\vPh(\iota_0(g_1,g_2,g_3),\chi)\,\d g_1^{}\d g_2'\d g_3'\]
by the definition (\ref{tag:iota}) of $\iota_0$. 
Observe that 
\[W(g_1,f_1)W(g_2,f_2')W(g_3,f_3')=\int_{U^0}\bfW_1(u_0(g_1,g_2,g_3);f_2',f_3')\,\d u_0, \]
where 
\[\bfW_1(g_1,g_2,g_3;f_2',f_3')=W(g_1,f_1)f_2'(J_1g_2)f_3'(J_1g_3). \]
Substituting this expression, we are led to  
\[Z(W(f_1),W(f_2),W(f_3),f_\vPh(\chi))
=\int_{Z\bsl H}\bfW_1(g;f_2',f_3')\,f_\vPh(\iota_0(g),\chi)\,\d g. \]
Define a function $\calf$ on $\SL_2(F)$ by 
\[\calf(g)
=\int_{\SL_2(F)^2}f_2'(J_1g_2)f_3'(J_1g_3)f_\vPh(\iota_0(g,g_2,g_3),\chi)\,\d g_2'\d g_3'. \] 
Let $T'=\bfm(F^{\times})$ be the diagonal torus of $\SL_2(F)$. 
Then 
\begin{align}
Z(W(f_1),W(f_2),W(f_3),f_\vPh(\chi))=\int_{F^\times}\d^\times a\int_{T'\bsl\SL_2(F)}W(\bft(a)g,f_1)\int_{\SL_2(F)^2} \notag\\
f_2'(J_1\bft(a)g_2)f_3'(J_1\bft(a)g_3)f_\vPh(\iota_0(\bft(a)g,\bft(a)g_2,\bft(a)g_3),\chi)\,\d g_2'\d g_3'\d g' \notag \\
=\int_{F^\times}\int_{T'\bsl\SL_2(F)}W(\bft(a)g,f_1)\chi(a)\nu_2(a)\nu_3(a)\calf(g)\,\d g'\d^\times a \label{tag:15}
\end{align}
by (\ref{tag:12}). 
To justify the manipulations we show that the integral 
\[\int_{F^\times}\int_{\SL_2(\frko)}\int_F|W(\bft(a)k,f_1)\chi(a)\nu_2(a)\nu_3(a)\calf(\bfn(x)k)|\,\d x\d k\d^\times a\]
is convergent at least for $\Re\chi\gg 0$ and our choice of test vectors. 
The integral 
\[\int_{F^\times}|W(\bft(a)k,f_1)\chi(a)\nu_2(a)\nu_3(a)|\,\d^\times a\]
is absolutely convergent.} 
Recall that 
\begin{align*}
\bft(a)&=\begin{pmatrix} a & 0 \\ 0 & 1 \end{pmatrix}, & 
\bfm(a)&=\begin{pmatrix} a & 0 \\ 0 & a^{-1} \end{pmatrix}, & 
\bfn(x)&=\begin{pmatrix} 1 & x \\ 0 & 1 \end{pmatrix}, & 
\bfn^-(u)&=\begin{pmatrix} 1 & 0 \\ u & 1 \end{pmatrix}. 
\end{align*}
We frequently use the integration formula 
\[\int_{\SL_2(F)}h(g)\,\d g'=\frac{\zet(2)}{\zet(1)}\int_F\int_F\int_{F^\times}h(\bfm(a)\bfn^-(u)\bfn(x))\,\d^\times a\d u\d x  \]
for an integrable function $h$ on $\SL_2(F)$ {(\cf \cite[3.1.6, p. 206]{MV10}). } 
Observe that 
\begin{align*}
\calf(g)
=&\frac{\zet(2)^2}{\zet(1)^2}\int_{F^2}\d x_2\d x_3\, f_2'(J_1\bfn(x_2))f_3'(J_1\bfn(x_3))\int_{F^{\times 2}}\prod_{i=2,3}(\nu^{}_i\mu_i^{-1})(a_i)\frac{\d^\times a_i}{|a_i|}\\
&\times\int_{F^2} f_\vPh(\iota_0(g,\bfm(a_2)\bfn^-(u_2)\bfn(x_2),\bfm(a_3)\bfn^-(u_3)\bfn(x_3)),\chi)\,\d u_2\d u_3. 
\end{align*}

Let $f^\dagger_i\in I(\mu_i,\nu_i)$ be such that $f^\dagger_i(g)=0$ unless $g\in B_2J_1U_2$ and 
such that $f^\dagger_i(J_1\bfn(x))=\II_\frko(x)$ for $x\in F$ {(\cf Definition \ref{def:cell} and Remark \ref{rem:20})}. 
One can easily check 
\begin{align}
W_{\nu_i}&=W(f_i^\dagger), &
W_i^{(n)}&=W(\rho(t_n)f_i^\dagger). \label{tag:existence}
\end{align}
{Now we let $f_i=\rho(t_n)f_i^\dagger$. 
Since $m_n=J_1t_n=\diag{\vpi^n,\vpi^{-n}}$, $f_i'=\rho(m_n)f^\dagger_i\in I(\mu_i,\nu_i)$. }
%Put $W_i'=W(f_i')$. 
Observe that 
\[f_i'(J_1\bfn(x))=f_i^\dagger(J_1m_n\bfn(x\vpi^{-2n}))=\bet_i^n\alp_i^{-n}q^n\II_{\frkp^{2n}}(x). \]
Lemma \ref{lem:13} shows that {if $\vPh=\Phi$ or $n$ is sufficiently large, then}
\begin{align*}
\rho_3(\iota(\ono_2,\bfn(x_2),\bfn(x_3))J_3)f_\vPh(\chi)&=\rho_3(J_3)f_\vPh(\chi) &
(x_2,x_3&\in\frkp^{2n}). 
\end{align*}
It follows that $\calf(g)$ equals the product of $\frac{f'_2(J_1)f'_3(J_1)}{q^{4n}(1+q^{-1})^2}$ and 
\[\int_{F^{\times 2}\oplus F^2} f_\vPh(\iota_0(g,\bfm(a_2)\bfn^-(u_2),\bfm(a_3)\bfn^-(u_3)),\chi)\prod_{i=2,3}\frac{\nu_i(a_i)\d^\times a_i}{\mu_i(a_i)|a_i|}\d u_i. \]
In particular, $\calf(\bfn^-(u)\bfn(x))$ equals the product of $\frac{f'_2(J_1)f'_3(J_1)}{q^{4n}(1+q^{-1})^2}$ and
\begin{align*}
%&\int_{F^{\times 2}\oplus F^2}f_\vPh(\imath(x,u,u_2,u_3,a_2,a_3),\chi)\prod_{i=2,3}\frac{\nu^{}_i(a_i)\d^\times a_i\d u_i}{\mu_i(a_i)|a_i|} \\=
&\int_{F^{\times 2}\oplus F^2}\vPh\left(\begin{pmatrix}
x & -a_2 & -a_3 \\
-a_2 & -u_2 & -a_2a_3u \\
-a_3 & -a_2a_3u &  -u_3
\end{pmatrix}\right)\prod_{i=2,3}(\hat\ome\chi^2\nu^{}_i\mu_i^{-1})(a_i)|a_i|\d^\times a_i\d u_i\\
=&\int_{F^{\times 2}\oplus F^2}\vph_1(-a_2a_3u)\vph_2(-a_2)\vph_3(-a_3)\phi_1(x)\phi_3({-u_2})\phi_2({-u_3})\prod_{i=2,3}(\hat\ome\chi^2\nu^{}_i\mu_i^{-1})(a_i)|a_i|\d^\times a_i\d u_i
\end{align*}
by (\ref{tag:13}) {and \eqref{E:Phi}}. 
Its integral over $x,u\in F$  
%\begin{align*}&\int_{F^{\times 2}\oplus F^4}f_\vPh(\imath(x,u,u_2,u_3,a_2,a_3),\chi)\prod_{i=2,3}\frac{\nu^{}_i(a_i)\d^\times a_i\d u_i}{\mu_i(a_i)|a_i|}\d u\d x\\=&\int_{F^{\times 2}\oplus F^4}\vPh\left(\begin{pmatrix}x & -a_2 & -a_3 \\-a_2 & -u_2 & -u \\-a_3 & -u &  -u_3\end{pmatrix}\right)\d u\d x\prod_{i=2,3}\frac{(\hat\ome\chi^2\nu^{}_i)(a_i)}{\mu_i(a_i)}\d^\times a_i\d u_i \end{align*}
 converges absolutely if $\Re\chi$ is large. 
 
Recall the functional equations 
\begin{align}
\gam\left(\frac{1}{2},\pi_1\otimes\chi,\addchar\right)\int_{F^\times}W_1(\bft(a)g)\chi(a)\d^\times a
&=\int_{F^\times}W_1(\bft(a)J_1^{-1}g)(\chi\ome_1)^{-1}(a)\d^\times a, \notag\\
\gamma(s,\chi,\addchar)\int_{F^\times}\vph(a)\chi(a)|a|^s\,\d^\times a&=\int_{F^\times}\widehat{\vph}(a)\chi(a)^{-1}|a|^{1-s}\,\d^\times a \label{tag:fq}
\end{align}
for $W_1\in\scrw(\pi_1)$ and $\vph\in\cals(F)$ {(see \cite[Theorem 4.7.5, Proposition 3.1.5]{Bump97Grey})}. 
It follows from (\ref{tag:15}) that 
\begin{align}
&\gam\left(\frac{1}{2},\pi_1\otimes\chi\nu_2\nu_3,\addchar\right)Z(W_1^{(n)},W_2^{(n)},W_3^{(n)}, f_\vPh(\chi)) \notag\\
=&\int_{F^\times}\int_{T'\bsl\SL_2(F)}W_1^{(n)}(\bft(a)J_1^{-1}g)(\chi\nu_2\nu_3\ome_1)(a)^{-1}\calf(g)\,\d g'\d^\times a\notag\\
=&\int_{F^\times}\int_FW_1^{(n)}(\bft(a)J_1^{-1}\bfn(x))(\chi\nu_2\nu_3\ome_1)(a)^{-1}\calf_{\addchar}(a,x)\,\d x\d^\times a, \label{tag:16} 
\end{align}
where 
\[\calf_{\addchar}(a,x)=(1+q^{-1})^{-1}\int_F\calf(\bfn^-(u)\bfn(x))\addchar(-au)\,\d u. \]

%From now on we assume that $\pi_i$ are unramified. 
We have seen that 
\begin{align*}
\frac{q^{4n}(1+q^{-1})^3}{f'_2(J_1)f'_3(J_1)}\calf_{\addchar}(a,x)=&\int_{F^{\times 2}}\hat\ome(a_2a_3)\chi(a_2a_3)^2|a_2a_3|^2\vph_3(-a_2)\vph_2(-a_3)\\
&\times\int_F\vph_1(-a_2a_3u)\overline{\addchar(au)}\d u\,\phi_1(x)\prod_{i=2,3}\frac{\nu^{}_i(a_i)\d^\times a_i}{\mu_i(a_i)|a_i|}\int_F\phi_i(u_i)\,\d u_i\\
=&\phi_1(x)\int_{F^{\times 2}}\widehat{\vph_1}\left(\frac{a}{a_2a_3}\right)\prod_{i=2,3}\widehat{\phi_i}(0)(\hat\ome\chi^2\nu_i^{}\mu_i^{-1})(a_i)\vph_{5-i}(-a_i)\,\d^\times a_i. 
\end{align*}
If $xu\neq -1$, then 
\[J_1\bfn(x)\bfn^-(u)=\bfm((1+ux)^{-1})\bfn(-(1+xu)u)J_1\bfn((1+xu)^{-1}x),  \]
which implies that 
\begin{align*}
\rho(\bfn^-(u))f_1^{\dagger}&=f_1^{\dagger}, & 
\pi_1(\bfn^-(u))W_{\nu_1}&=W_{\nu_1}
\end{align*}
 for $u\in\frkp^n$. 
If $\phi_1(x)\neq 0$, then since $x\in\frkp^{-1}$, 
\begin{align*}
W_1^{(n)}(\bft(a)J_1\bfn(x))&=W_{\nu_1}(\bft(a)m_n\bfn^-(-\vpi^{2n}x))=q^{-n}\bet_1^n\alp_1^{-n}\nu_1(a)|a|^{1/2}\II_{\frko}(a\vpi^{2n}).  
\end{align*}
We conclude by (\ref{tag:16}) that 
\begin{align*}
&\gam\left(\frac{1}{2},\pi_1\otimes\chi\nu_2\nu_3,\addchar\right)Z(W_1^{(n)},W_2^{(n)},W_3^{(n)},f_\vPh(\chi))\\
=&\int_{F^\times}\int_FW^{(n)}_1(\bft(a)J_1^{-1}\bfn(x))(\chi\nu_2\nu_3\ome_1)(a)^{-1}\frac{f'_2(J_1)f'_3(J_1)}{q^{4n}(1+q^{-1})^3}\calf_{\addchar}(a,x)\,\d x\d^\times a\\
=&\int_{F^\times}W^{(n)}_1(\bft(a)J_1^{-1})(\chi\nu_2\nu_3\ome_1)(a)^{-1}\frac{f'_2(J_1)f'_3(J_1)}{q^{4n}(1+q^{-1})^3}\int_F\calf_{\addchar}(a,x)\,\d x\d^\times a\\
=&\frac{f'_2(J_1)f'_3(J_1)}{q^{4n}(1+q^{-1})^3}\widehat{\phi_1}(0)\widehat{\phi_2}(0)\widehat{\phi_3}(0)\\
&\times\int_{F^{\times 3}}\frac{W_1^{(n)}(\bft(a)J_1^{-1})}{(\chi\nu_2\nu_3\ome_1)(a)}\widehat{\vph_1}\left(\frac{a}{a_2a_3}\right)\d^\times a\prod_{i=2,3}(\hat\ome\chi^2\nu^{}_i\mu_i^{-1})(a_i)\vph_{5-i}(-a_i)\, \d^\times a_i. 
\end{align*}
The last integral is equal to 
\begin{align*}
&\hat\ome(-1)\int_{F^{\times 3}}\frac{W^{(n)}_1(\bft(aa_2a_3)J_1)}{(\chi\nu_2\nu_3\ome_1)(a)}\widehat{\vph_1}(a)\,\d^\times a\prod_{i=2,3}(\chi\nu_i\mu_{5-i})(a_i)\vph_{5-i}(a_i)\, \d^\times a_i\\
=&\hat\ome(-1)W^{(n)}_1(J_1)\int_{F^{\times 3}}\frac{\nu_1(aa_2a_3)}{(\chi\nu_2\nu_3\ome_1)(a)}|aa_2a_3|^{1/2}\II_\frko(aa_1a_2\vpi^{2n})\widehat{\vph_1}(a)\,\d^\times a\prod_{i=2,3}(\chi\nu_i\mu_{5-i})(a_i)\vph_{5-i}(a_i)\, \d^\times a_i\\
=&\hat\ome(-1)W^{(n)}_1(J_1)\int_{F^{\times 3}}\frac{|a|^{1/2}\widehat{\vph_1}(a)}{(\chi\mu_1\nu_2\nu_3)(a)}\II_\frko(aa_1a_2\vpi^{2n})\,\d^\times a\prod_{i=2,3}\vph_{5-i}(a_i)(\chi\nu_1\nu_i\mu_{5-i})(a_i)|a_i|^{1/2}\, \d^\times a_i.  
%=&\hat\ome(-1)W^{(n)}_1(J_1)\int_{F^{\times 3}}\frac{\nu_1(aa_2a_3)}{(\chi\nu_2\nu_3\ome_1)(a)}|aa_2a_3|^{1/2}\II_\frko(aa_1a_2\vpi^{2n})\vph_{\chi\mu_1\nu_2\nu_3}(-a)\,\d^\times a\prod_{i=2,3}(\chi\nu_i\mu_{5-i})(a_i)\vph_{5-i}(a_i)\, \d^\times a_i\\
%=&\hat\ome(-1)W^{(n)}_1(J_1)(\chi\mu_1\nu_2\nu_3)(-1)\prod_{i=2,3}\int_{F^\times}\vph_{5-i}(a_i)(\chi\nu_1\nu_i\mu_{5-i})(a_i)|a_i|^{1/2}\,\d^\times a_i. 
%\\=&\hat\ome(-1)W^{(n)}_1(J_1)Z(\widehat{\vph_1},(\chi\mu_1\nu_2\nu_3)^{-1}\Abs_F^{1/2})\prod_{i=2,3}Z(\vph_{5-i},\chi\nu_1\nu_i\mu_{5-i}\Abs_F^{1/2}). 
\end{align*}

{Now we let $\phi_1=\phi_2=\phi_3=\widehat{\II_\frkp}$, 
$\vph_1=\widehat{\vph_{\chi\mu_1\nu_2\nu_3}}$,  
$\vph_2=\widehat{\vph_{\chi\nu_1\mu_2\nu_3}}$ and  
$\vph_3=\widehat{\vph_{\chi\nu_1\nu_2\mu_3}}$. 
Since if $\vph_{5-i}(a_i)\neq 0$, then $a_i\in\frkp^{-n}$, the integral above coincides with the product \[(\chi\mu_1\nu_2\nu_3)(-1)\prod_{i=2,3}Z(\vph_{5-i},\chi\nu_1\nu_i\mu_{5-i}\Abs_F^{1/2}).\] 
The proof is complete by $f'_i(J_1)=\bet_i^n\alp_i^{-n}q^n$, $W^{(n)}_1(J_1)=\bet_1^n\alp_1^{-n}q^{-n}$ and the functional equation (\ref{tag:fq}). }
\end{proof}

{We specify $\phi_i,\vph_i$ at the final stage of the proof of Proposition \ref{prop:11}. 
For all $\vPh\in\cals(\Sym_3(F))$ the computation is valid if $n$ is sufficiently large. 
We record the following formula. 
\begin{cor}\label{cor:general}
For arbitrarily chosen $\phi_i,\vph_i\in\cals(F)$, if $\ell$ is sufficiently large, then 
\begin{align*}
&\gam\left(\frac{1}{2},\pi_1\otimes\chi\nu_2\nu_3,\addchar\right)\gam\left(\frac{1}{2},\chi\nu_1\nu_2\mu_3,\addchar\right)\gam\left(\frac{1}{2},\chi\nu_1\mu_2\nu_3,\addchar\right)Z(W_1^{(\ell)},W_2^{(\ell)},W_3^{(\ell)},f_\vPh(\chi))\\
=&\hat\ome(-1)\frac{\prod_{j=1}^3(q^{-1}\alp_j^{-1}\bet_j^{})^\ell}{(1+q^{-1})^3}\widehat{\phi_1}(0)\widehat{\phi_2}(0)\widehat{\phi_3}(0)Z(\widehat{\vph_1},(\chi\mu_1\nu_2\nu_3)^{-1}\Abs_F^{1/2})\prod_{i=2,3}Z(\widehat{\vph_{5-i}},(\chi\nu_1\nu_i\mu_{5-i})^{-1}\Abs_F^{1/2}). 
\end{align*}
\end{cor}}

%%%%%%%%%%%%%%%%%%%%%%%%%%%%%%%%%%%%%%%%%%%%%%%%%%%%%%%%%%%%%%%%%%%%%%%%%%%%%%%%

\subsection{Restatements}

We rewrite Propositions \ref{prop:11} and \ref{prop:12} in a form which is more convenient for our later application. Suppose that $\pi_i$ is the irreducible subrepresentation of $I(\mu_i,\nu_i)$ with $\mu_i$ unramified. 
Thus  {the contragredient $\pi_i^\vee\iso \pi_i\ot\om_i^{-1}$ is the irreducible quotient of $I(\mu_i^{-1},\nu_i^{-1})$} and $\om_i=\mu_i\nu_i$ coincides with $\nu_i$ on $\frko^\times$. 
Let $\breve W_i:=W_{\nu_i^{-1}}\in \scrw(\pi_i^\vee)$. By definition
\[\breve W_i(\bft(a))=\nu_i(a)^{-1}|a|^{1/2}\bbI_\frko(a). \]
%Given three characters $\om_{1,p},\om_{2,p},\om_{3,p}$ of $\Z_p^\times$ such that the restriction of $\ome_p$ to $\Z_p^\times$ coincides with the product $\om_{1,p}\om_{2,p}\om_{3,p}$, 

\begin{defn}\label{def:psection}
We associate to the quadruplet of characters of $\frko^\times$
\[\cD=(\chi,\om_1,\om_2,\om_3)\] 
a holomorphic section $f_{\cD,s}=f_{\Phi_\cD}(\chi\hat\ome\Abs_F^s)$ of $I_3(\hat\om^{-1},\chi^{}\hat\om^{}\Abs_F^{s})$ by
\[\Phi_\cD\left(\begin{pmatrix}
u_1&x_3&x_2\\
x_3&u_2&x_1\\
x_2&x_1&u_3
\end{pmatrix}\right)=\prod_{i=1}^3\widehat{\bbI_\frkp}(u_i)\widehat{\varphi_{\chi\om_i}}(x_i). \]
For each quadruplet $(\chi_0,\chi_1,\chi_2,\chi_3)$ of characters of $\frko^\x$, valued in a commutative ring $R$ we set 
%Recall that $\cD=(\chi,\ome_1^{-1},\ome_2^{-1},\ome_3^{-1})$. Put 
{\beq\label{E:QB}\textcolor{black}{\cQ_B(\chi_0,\chi_1,\chi_2,\chi_3):=\chi_0(8b_{12}b_{23}b_{13})\cdot\chi_1(2b_{23})\chi_2(2b_{13})\chi_3(2b_{12})\bbI_{\Xi_\frkp}(B).}\eeq}
\end{defn}
 Given a section $f_s$ of $I_3(\hat\om^{-1},\chi^{}\hat\om^{}\Abs_F^{s})$, we are interested in the quantity 
\beq\label{E:padiczeta}
Z^*_\frkp(f_s)=\frac{Z(\rho(t_n)\breve W_1,\rho(t_n)\breve W_2,\rho(t_n)\breve W_3,f_s)}{L\bigl(s+\frac{1}{2},\pi_1\times\pi_2\times\pi_3\otimes\chi\bigl)}
\prod_{i=1}^3\frac{\zet(1)}{\zet(2)}\left(\frac{\ome_i(\vpi)q}{\mu_i(\vpi)^2}\right)^n. 
\eeq

%Notice that \[I_3(\om^{-1},\chi^{}\hat\om^{}\Abs_F^s)\simeq I_3(\hat\om^{},\chi^{}\Abs_F^s)\otimes\hat\ome^{}\circ\nu_3. \]

\begin{prop}\label{prop:13}
Notations and assumptions being as above, we have 
\begin{align*}
\rho_3(\iota(g_1,g_2,g_3))f_{\cald,s}&=f_{\cald,s}\prod_{i=1}^3\ome_i^{\downarrow}(g_i), & 
g_1,g_2,g_3&\in K_0^{(1)}(\frkp^{2n})
\end{align*}
if $\det g_1=\det g_2=\det g_3$ and {$n\geq\max\{1,c(\chi),c(\ome_1),c(\ome_2),c(\ome_3)\}$.} 
Moreover, 
\begin{align*}
\calw_B(f_{\cD,s})&=\cQ_B(\cald), &
Z^*_\frkp(f_{\cD,s})&=\chi(-1)E_\frkp\left(s+\frac{1}{2},\pi_1\times\pi_2\times\pi_3\otimes\chi\right), 
\end{align*}
where 
\begin{align*}
E_\frkp(s,\pi_1\times \pi_2\times \pi_3\ot\chi)^{-1}
=&L(s,\pi_1\times\pi_2\times\pi_3\ot\chi)\gamma(s,\pi_1\ot\chi\mu_2\mu_3,\addchar)\prod_{i=2,3}\gamma(s,\chi\mu_1\mu_i\nu_{5-i},\addchar). 
%=&L(s,\pi_1\times\pi_2\times\pi_3\ot\chi)\vep(s,\pi_1\ot\chi\mu_2\mu_3,\addchar)\prod_{i=2,3}\vep(s,\chi\mu_1\mu_i\nu_{5-i},\addchar)\\
%&\times\frac{L(1-s,\pi_1^\vee\ot\chi^{-1}\mu_2^{-1}\mu_3^{-1})}{L(s,\pi_1\ot\chi\mu_2\mu_3)}\prod_{i=2,3}\frac{L(1-s,\chi^{-1}\mu_1^{-1}\mu_i^{-1}\nu_{5-i}^{-1})}{L(s,\chi\mu_1\mu_i\nu_{5-i})}\\
%=&\vep(s,\pi_1\ot\chi\mu_2\mu_3,\addchar)\prod_{i=2,3}\vep(s,\chi\mu_1\mu_i\nu_{5-i},\addchar)\\
%&\times\frac{L(1-s,\pi_1\ot(\chi\hat\ome)^{-1}\nu_2\nu_3)}{L(s,\pi_1\ot\chi\nu_2\nu_3)^{-1}}\prod_{i=2,3}\frac{L(1-s,(\hat\ome\chi)^{-1}\nu_1\nu_i\mu_{5-i})}{L(s,\chi\nu_1\mu_i\nu_{5-i})^{-1}}. 
\end{align*}
\end{prop}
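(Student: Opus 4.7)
The three assertions are specializations of Lemma \ref{lem:13}, Proposition \ref{prop:12}, and Proposition \ref{prop:11} to the Schwartz datum $\Phi_\cD$ and to the ordinary Whittaker functions $\breve W_i$ attached to $\pi_i$ viewed as a subrepresentation of $I(\mu_i,\nu_i)$. The design feature of $\Phi_\cD$ is that the characters $\chi\mu_i\nu_j\nu_k$ used in the definition of $\Phi$ in Lemma \ref{lem:13} agree on $\frko^\x$ with the single character $\chi\om_i$, because $\mu_i$ is unramified and $\om_i=\mu_i\nu_i$. With this identification, the first assertion follows by applying Lemma \ref{lem:13} to $f_{\Phi_\cD}(\chi\hat\om\Abs_F^s)$ and observing that the right-transformation factor $\prod_i\mu_i^{\uparrow}(g_i)^{-1}\nu_i^{\downarrow}(g_i)^{-1}$ collapses to $\prod_i\om_i^{\downarrow}(g_i)$ once we use $\mu_i^{\uparrow}(g_i)=1$ (by unramifiedness of $\mu_i$) and $\nu_i=\om_i$ on $\frko^\x$, accounting for the sign inversion from the passage to the induction $I_3(\hat\om^{-1},\chi\hat\om\Abs_F^s)$.

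The second assertion is immediate from Proposition \ref{prop:12} via the identity $\calw_B(f_{\cD,s})=\widehat{\Phi_\cD}(-B)$. Evaluation of this Fourier transform in the factored coordinates of $\Phi_\cD$, using the inversion formulas $\widehat{\widehat{\vph_{\chi\om_i}}}(-2y_i)=\vph_{\chi\om_i}(2y_i)$ and $\widehat{\widehat{\II_\frkp}}(-b_{ii})=\II_\frkp(b_{ii})$, yields $\chi(8y_1y_2y_3)\prod_i\om_i(2y_i)\II_{\frko^\x}(2y_i)\II_\frkp(b_{ii})$, which is precisely $\cQ_B(\cD)\cdot\II_{\Xi_\frkp}(B)$; the support statement is then a restatement of the support of $\vph_{\chi\om_i}$.

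For the third assertion we invoke Proposition \ref{prop:11} with $\rho(t_n)\breve W_i$ replacing $W_i^{(n)}$. Since $\breve W_i$ realizes $\pi_i$ as a subrepresentation of $I(\mu_i,\nu_i)$ rather than as a quotient, the intermediate calculations of Proposition \ref{prop:11} go through with the roles of $\mu_i$ and $\nu_i$ interchanged. The three $\gamma$-factors therefore take the form $\gamma(s,\pi_1\ot\chi\mu_2\mu_3,\addchar)$ and $\gamma(s,\chi\mu_1\mu_i\nu_{5-i},\addchar)$ for $i=2,3$; the $n$-dependent prefactor becomes $(1+q^{-1})^{-3}\prod_j(\alp_j/(q\bet_j))^n$; and the sign $(\chi\nu_1\nu_2\nu_3)(-1)$ becomes $\chi(-1)$, using $\mu_i(-1)=1$. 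Dividing by $L(s+\tfrac12,\pi_1\times\pi_2\times\pi_3\ot\chi)$ and multiplying by $\prod_i\tfrac{\zet(1)}{\zet(2)}(\om_i(\vpi)q/\mu_i(\vpi)^2)^n=\prod_i(1+q^{-1})(\bet_iq/\alp_i)^n$ cancels both the $(1+q^{-1})^{-3}$ factor and the $n$-dependent exponents, leaving $\chi(-1)E_\frkp(s+\tfrac12,\pi_1\times\pi_2\times\pi_3\ot\chi)$. The main obstacle is the careful bookkeeping of character substitutions — the $\mu_i\leftrightarrow\nu_i$ swap forced by the subrepresentation convention, combined with the implicit replacement of the character in Lemma \ref{lem:13} by $\chi\hat\om\Abs_F^s$ — and verifying that every $n$-dependent factor cancels in the normalization defining $Z^*_\frkp$.
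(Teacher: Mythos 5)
Your proposal is correct and takes essentially the same route as the paper: the paper also deduces all three assertions by feeding the dualized data (equivalently, your $\mu_i\leftrightarrow\nu_i$ swap together with the twist that moves the central characters into the section, so that $\breve W_i$ becomes the ordinary vector of $\pi_i^\vee$ viewed as a quotient of $I(\mu_i^{-1},\nu_i^{-1})$ and the inducing character becomes $\chi\hat\om$) into Lemma \ref{lem:13} and Propositions \ref{prop:11} and \ref{prop:12}. Your bookkeeping — the prefactor $\prod_j(\alp_j/(q\bet_j))^n$, the sign $\chi(-1)$ via $\mu_i(-1)=1$, the $\gamma$-factors $\gamma(s,\pi_1\ot\chi\mu_2\mu_3,\addchar)$ and $\gamma(s,\chi\mu_1\mu_i\nu_{5-i},\addchar)$, and the cancellation against the normalization in $Z^*_\frkp$ — agrees with the formulas the paper obtains.
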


%When $\hat\ome=1$, \begin{multline*} E_\frkp(1/2,\pi_1\times \pi_2\times \pi_3\ot\chi)^{-1}=\vep(1/2,\chi\mu_1\mu_2\mu_3,\addchar)\vep(1/2,\chi\nu_1\mu_2\mu_3,\addchar)\vep(1/2,\chi\mu_1\nu_2\mu_3,\addchar)\vep(1/2,\chi\mu_1\mu_2\nu_3,\addchar)\\\times\prod_{\eps=\pm 1} L(1/2,\chi^\eps\nu_1\nu_2\nu_3)L(1/2,\chi^\eps\mu_1\nu_2\nu_3)L(1/2,\chi^\eps\nu_1\mu_2\nu_3)L(1/2,\chi^\eps\nu_1\nu_2\mu_3). \end{multline*}

\begin{proof}
Since $\ome_i$ coincides with $\nu_i$ on $\frko^\times$, we apply Proposition \ref{prop:12} and get the formula for $\calw_B(f_{\cD,s})$ by replacing {$\pi_i$, $\ome_i$, $\mu_i$, $\nu_i$, $\chi$} by $\pi_i^\vee$, $\ome_i^{-1}$, $\mu_i^{-1}$, $\nu_i^{-1}$, $\chi\hat\ome$, respectively. 
Proposition \ref{prop:11} applied to $W_{\nu_i^{-1}}=\breve W_i$ and $I_3(\hat\om^{-1},\chi^{}\hat\om^{})$ gives
\begin{multline*}
Z(\rho(t_n)\breve W_1,\rho(t_n)\breve W_2,\rho(t_n)\breve W_3,f_{\cD,s})=(1+q^{-1})^{-3}\prod_{i=1}^3\left(\frac{\nu_i(\vpi)^{-1}}{q\mu_i(\vpi)^{-1}}\right)^n\\
\times\chi(-1)\gam\left(s+\frac{1}{2},\pi_1^\vee\otimes(\chi\hat\ome)\nu_2^{-1}\nu_3^{-1},\addchar\right)^{-1}\prod_{i=2,3}\gam\left(s+\frac{1}{2},(\chi\hat\ome)\nu_1^{-1}\nu_i^{-1}\mu_{5-i}^{-1},\addchar\right)^{-1}, 
\end{multline*}
from which the formula for $Z_\frkp^*({f_{\cD,s}})$ readily follows. 
\end{proof}

{For later use we rewrite Corollary \ref{cor:general} in the following way: 
\begin{cor}\label{cor:general2}
For arbitrarily chosen $\phi_i,\vph_i\in\cals(F)$, if $n$ is sufficiently large, then 
\begin{align*}
Z_\frkp^*(f_\Phi(\chi))=&\hat\ome(-1)E_\frkp\left(\frac{1}{2},\pi_1\times\pi_2\times\pi_3\otimes\chi\right)\\
&\times\widehat{\phi_1}(0)\widehat{\phi_2}(0)\widehat{\phi_3}(0)Z(\widehat{\vph_1},(\chi\nu_1\mu_2\mu_3)^{-1}\Abs_F^{1/2})\prod_{i=2,3}Z(\widehat{\vph_{5-i}},(\chi\mu_1\mu_i\nu_{5-i})^{-1}\Abs_F^{1/2}). 
\end{align*}
\end{cor}}

We will use the following lemma to achieve the functional equation of the $p$-adic $L$-function in \S \ref{ssec:functeq}. 

\begin{lm}\label{lem:modfactor}
Put $\breve\chi=\chi^{-1}\hat\ome^{-1}$. 
Then 
\[E_\frkp(1-s,\pi_1\times\pi_2\times\pi_3\otimes\breve\chi)=\hat\ome(-1)E_\frkp(s,\pi_1\times\pi_2\times\pi_3\otimes\chi)\vep(s,\pi_1\times\pi_2\times\pi_3\otimes\chi,\addchar). \]
\end{lm}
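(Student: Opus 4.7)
The plan is to reduce the identity to the Tate local functional equation for $\GL_1$ gamma factors, via the factorization of the triple-product gamma factor $\gamma(s,\Pi,\addchar)$ (where $\Pi=\pi_1\times\pi_2\times\pi_3\otimes\chi$) coming from its Langlands parameter. Since each $\pi_i$ is a subrepresentation of $I(\mu_i,\nu_i)$, its Langlands parameter has semisimplification $\mu_i\oplus\nu_i$, so the Langlands parameter of $\Pi$ is the $8$-dimensional representation $\bigoplus_\epsilon\chi\epsilon_1\epsilon_2\epsilon_3$, the sum running over the eight tuples $\epsilon=(\epsilon_1,\epsilon_2,\epsilon_3)$ with $\epsilon_i\in\{\mu_i,\nu_i\}$. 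Consequently
\[\gamma(s,\Pi,\addchar)=\prod_\epsilon\gamma(s,\chi\epsilon_1\epsilon_2\epsilon_3,\addchar),\qquad \gamma(s,\pi_1\otimes\eta,\addchar)=\gamma(s,\eta\mu_1,\addchar)\gamma(s,\eta\nu_1,\addchar)\]
for any character $\eta$; this holds for both the irreducible principal series and the Steinberg case.

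First I would expand $E_\frkp(s,\Pi)^{-1}/L(s,\Pi)$ using these factorizations, obtaining a product of four $\GL_1$ gamma factors indexed by the subset
\[\calE=\{(\mu_1,\mu_2,\mu_3),\,(\nu_1,\mu_2,\mu_3),\,(\mu_1,\nu_2,\mu_3),\,(\mu_1,\mu_2,\nu_3)\}\]
of tuples (``all $\mu$'s, or one swap''). Similarly, $E_\frkp(1-s,\Pi^\vee)^{-1}/L(1-s,\Pi^\vee)$ is the product over the same $\calE$ of $\gamma(1-s,\breve\chi\epsilon_1\epsilon_2\epsilon_3,\addchar)$.

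Next I apply the Tate identity $\gamma(s,\chi_0,\addchar)\gamma(1-s,\chi_0^{-1},\addchar)=\chi_0(-1)$ to convert each denominator factor. Setting $\epsilon_i':=\mu_i\nu_i\epsilon_i^{-1}$ so that $\epsilon\mapsto\epsilon'$ swaps $\mu_i\leftrightarrow\nu_i$, and using $\breve\chi=\chi^{-1}\hat\ome^{-1}$ with $\hat\ome=\prod_i\mu_i\nu_i$, one checks $(\breve\chi\epsilon_1\epsilon_2\epsilon_3)^{-1}=\chi\epsilon_1'\epsilon_2'\epsilon_3'$, whence
\[\gamma(1-s,\breve\chi\epsilon_1\epsilon_2\epsilon_3,\addchar)=\frac{(\chi\epsilon_1'\epsilon_2'\epsilon_3')(-1)}{\gamma(s,\chi\epsilon_1'\epsilon_2'\epsilon_3',\addchar)}.\]
The complementary set $\calE':=\{\epsilon'\mid\epsilon\in\calE\}=\{(\nu_1,\nu_2,\nu_3),(\mu_1,\nu_2,\nu_3),(\nu_1,\mu_2,\nu_3),(\nu_1,\nu_2,\mu_3)\}$ is disjoint from $\calE$, and $\calE\sqcup\calE'$ exhausts all eight tuples.

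Combining everything,
\[\frac{E_\frkp(s,\Pi)^{-1}}{E_\frkp(1-s,\Pi^\vee)^{-1}}=\frac{L(s,\Pi)}{L(1-s,\Pi^\vee)}\cdot\frac{\prod_{\epsilon\in\calE\sqcup\calE'}\gamma(s,\chi\epsilon_1\epsilon_2\epsilon_3,\addchar)}{\prod_{\epsilon\in\calE}(\chi\epsilon_1'\epsilon_2'\epsilon_3')(-1)}=\frac{L(s,\Pi)\gamma(s,\Pi,\addchar)}{L(1-s,\Pi^\vee)\,\hat\ome(-1)},\]
where the residual sign $\hat\ome(-1)$ emerges because $\chi(-1)^4=1$ and, across the four tuples in $\calE'$, each $\mu_i$ appears once and each $\nu_i$ three times, so (using $\nu_i(-1)^3=\nu_i(-1)$) the product collapses to $\prod_i\mu_i(-1)\nu_i(-1)=\prod_i\ome_i(-1)=\hat\ome(-1)$. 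Invoking the standard identity $\vep(s,\Pi,\addchar)=\gamma(s,\Pi,\addchar)L(s,\Pi)/L(1-s,\Pi^\vee)$ rearranges this into the claimed functional equation. The only substantive input is the factorization of $\gamma(s,\Pi,\addchar)$ through the Langlands parameter; the rest is bookkeeping, and the main place to be careful is confirming that $\calE\sqcup\calE'$ covers all eight tuples exactly once and that the accumulated sign indeed tallies to $\hat\ome(-1)$.
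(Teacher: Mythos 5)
Your argument is correct and is essentially the paper's own proof: both rest on the multiplicativity of the triple-product gamma factor through its eight-dimensional parameter, the Tate functional equation $\gamma(s,\chi_0,\addchar)\gamma(1-s,\chi_0^{-1},\addchar)=\chi_0(-1)$, and the observation that the four tuples occurring in the definition of $E_\frkp$ and their ``swaps'' partition all eight, with the accumulated sign collapsing to $\hat\ome(-1)$. The only cosmetic difference is that the paper first rewrites $E_\frkp(s,\pi_1\times\pi_2\times\pi_3\otimes\breve\chi)$ via $\pi_i\otimes\ome_i^{-1}\simeq\pi_i^\vee$ and leaves the final bookkeeping implicit, whereas you work directly from the definition and carry it out explicitly.
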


\begin{proof}
Since $\pi_i\otimes\ome_i^{-1}\simeq\pi_i^\vee$, we get 
\begin{align*}
E_\frkp(s,\pi_1\times\pi_2\times\pi_3\otimes\breve\chi)^{-1}
%=&L(s,\pi_{1,p}\times\pi_{2,p}\times\pi_{3,p}\ot\breve\chi_p)\gamma(s,\pi_{1,p}\ot\breve\chi_p\mu_2\mu_3,\addchar_p)\prod_{i=2,3}\gamma(s,\breve\chi_p\mu_1\mu_i\nu_{5-i},\addchar_p)\\
=&L(s,\pi_1^\vee\times\pi_2^\vee\times\pi_3^\vee\otimes\overline{\chi})\gamma(s,\pi_1^\vee\ot\overline{\chi\nu_2\nu_3},\addchar)\prod_{i=2,3}\gamma(s,\overline{\chi\nu_1\nu_i\mu_{5-i}},\addchar), 
\end{align*}
where $\pi_i\simeq I(\mu_i,\nu_i)$. 
By definition we arrive at 
\begin{align*}
&\vep(s,\pi_1\times\pi_2\times\pi_3\otimes\chi,\addchar)L(s,\pi_1\times\pi_2\times\pi_3\otimes\chi)^{-1}E_\frkp\left(1-s,\pi_1\times\pi_2\times\pi_3\otimes\breve\chi\right)^{-1}\\
=&\gam(s,\pi_1\times\pi_2\times\pi_3\otimes\chi,\addchar)\gamma(1-s,\pi_1^\vee\ot\overline{\chi\mu_2\mu_3},\addchar)\prod_{i=2,3}\gamma(1-s,\overline{\chi\mu_1\mu_i\nu_{5-i}},\addchar).  
\end{align*}
The statement can now be deduced from multiplicativity and the functional equation of gamma factors. 
%In view of \[\gam(s,\pi_{1,p}\times\pi_{2,p}\times\pi_{3,p}\otimes\chi_p,\addchar_p)=\gamma(s,\pi_{1,p}\ot\chi_p\mu_2\mu_3,\addchar_p)\gamma(s,\pi_{1,p}\ot\chi_p\nu_2\nu_3,\addchar_p)\prod_{i=2,3}\gamma(s,\pi_{1,p}\otimes\chi_p\nu_i\mu_{5-i},\addchar_p) \]the right hand side equals \[\hat\ome_p(-1)\gamma(s,\pi_{1,p}\ot\chi_p\mu_2\mu_3,\addchar_p)\prod_{i=2,3}\gamma(s,\chi_p\mu_1\mu_i\nu_{5-i},\addchar_p) \] as claimed. 
\end{proof}

%%%%%%%%%%%%%%%%%%%%%%%%%%%%%%%%%%%%%%%%%%%%%%%%%%%%%%%%%%%%%%%%%%%%%%%%%%%%%%%%

\section{Computation of the local zeta integral: the ramified case}\label{sec:3}

%Recall the decomposition \begin{align*} \GSp_6(\frko)&=\bigsqcup_{i=0}^3\calk_3w_i\calk_3, & w_i&=\begin{pmatrix} 
%\ono_{3-i} & & & \\
%& & & \ono_i \\
%& & \ono_{3-i} & \\
%& -\ono_i & & 
%\end{pmatrix}. 
%\end{align*}
Recall that $\St$ denotes the Steinberg representation of $\GL_2(F)$. {We deal with two types of representations $\pi_i$ of $\GL_2(F)$: either (i) an irreducible unramified principal series representation or (ii) the Steinberg representation twisted by an unramified character.} 
Since 
\beq
Z(W_1\otimes\chi_1,W_2\otimes\chi_2,W_3\otimes\chi_3,f)=Z(W_1,W_2,W_3,f\otimes(\chi_1\chi_2\chi_3)\circ\nu_3) \label{tag:21}
\eeq
for characters $\chi_1,\chi_2,\chi_3$ of $F^\times$, where
\begin{align*}
(W_i\otimes\chi_i)(g_i)&=W_i(g_i)\chi_i(\det g_i), & 
(f\otimes\chi\circ\nu_3)(g)&=f(g)\chi(\nu_3(g)) 
\end{align*} 
{for $g_1,g_2,g_3\in\GL_2(F)$ and $g\in\GSp_6(F)$, }
there is no harm in assuming that $\pi_i\simeq I(\Abs_F^{-t_i},\Abs_F^{t_i})$ with $t_i\in\C$ {in Case (i)} or $\pi_i\simeq\St$ {in Case (ii)}.  
When $\pi_i\simeq I(\Abs_F^{-t_i},\Abs_F^{t_i})$, we denote the unique Whittaker function which takes the value $1$ on $\GL_2(\frko)$ by $W_i^0\in\scrw(\pi_i)$ and let $W^\pm_i\in\scrw(\pi_i)$ be the unique Whittaker function characterized by 
\[W^\pm_i(\bft(a))=|a|^{(\pm 2t_i+1)/2}\II_\frko(a) \]
for $a\in F^\times$. 
When {$\pi_i\simeq\St$, we define $W_i^+\in\scrw(\St)$ by 
\[W^+_i(\bft(a))=|a|\II_\frko(a). \]
We set $t_i=\frac{1}{2}$ in Case (ii). 
Then $\pi_i$ is a quotient of $I(\Abs_F^{-t_i},\Abs_F^{t_i})$ in both cases. 
We define $f_i^\dagger\in I(\Abs_F^{-t_i},\Abs_F^{t_i})$ as in the proof of Proposition \ref{prop:11}. }
Recall that $W_i^+=W(f_i^\dagger)$. 
Put 
\begin{align*}
\eta_1&=\begin{pmatrix} 0 & -1 \\ \vpi & 0 \end{pmatrix}, & 
\calw^\pm_i&=\pi_i(\eta_1)W_i^\pm, &
\calw^0_i&=\pi_i(\eta_1)W_i^0. 
\end{align*} 

\begin{lm}\label{lem:20}
If $\pi_i$ is an irreducible unramified principal series, then 
\[W_i^0=q^{1/2}\frac{\calw_i^+-\calw_i^-}{q^{-t_i}-q^{t_i}}. \]
\end{lm}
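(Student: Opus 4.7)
The plan is to verify the identity as an equality in the Whittaker model $\scrw(\pi_i)$ by reducing to a torus-value check, exploiting the fact that both sides lie in the two-dimensional space of $K_0^{(1)}(\frkp)$-fixed Whittaker functions.

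First I would establish that $W_i^0$, $\calw_i^+$ and $\calw_i^-$ are all $K_0^{(1)}(\frkp)$-fixed. The sphericality of $W_i^0$ is built in. For $W_i^\pm$, this is a standard fact, verifiable directly from the integral formula $W_i^+=W(f_i^\ord)$: one checks that the support $BJ_1\bfn(\frko)$ of $f_i^\ord$ is stable under right-multiplication by $K_0^{(1)}(\frkp)$ and that the formula is unchanged there, using the unramifiedness of $\mu_i,\nu_i$ on $\frko^\times$; the same construction handles $W_i^-$ after swapping $t_i\leftrightarrow-t_i$. Since $\eta_1=J_1\bft(\varpi)$ normalizes $K_0^{(1)}(\frkp)$ ($\bft(\varpi)$ conjugates the standard Iwahori to the opposite one, and $J_1$ swaps the two back), the translates $\calw_i^\pm=\pi_i(\eta_1)W_i^\pm$ are also $K_0^{(1)}(\frkp)$-fixed.

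Since $\pi_i\cong I(\Abs_F^{-t_i},\Abs_F^{t_i})$ is irreducible with $\mu_i\neq\nu_i$, the space $\scrw(\pi_i)^{K_0^{(1)}(\frkp)}$ is two-dimensional, and $\{W_i^+,W_i^-\}$ forms a basis: the torus restrictions $|a|^{\pm t_i+1/2}\bbI_\frko(a)$ are visibly linearly independent as functions of $a$, so $W_i^+$ and $W_i^-$ are independent elements of the $2$-dimensional Iwahori-fixed subspace. Consequently any $K_0^{(1)}(\frkp)$-fixed Whittaker function is uniquely determined by its restriction to $\bft(F^\times)$, and it suffices to verify the identity at $\bft(\varpi^m)$ for all $m\in\Z$.

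The torus computation is the substantive step. Writing $\bft(\varpi^m)\eta_1=\varpi\cdot\bft(\varpi^{m-1})J_1$ and absorbing the central scalar using $\mu_i\nu_i=1$, we get $\calw_i^\pm(\bft(\varpi^m))=W_i^\pm(\bft(\varpi^{m-1})J_1)$. I would compute this from the integral $W_i^\pm(g)=\int_F f_i^{\ord,\pm}(J_1\bfn(u)g)\addchar(-u)\,\d u$ by first Iwasawa-decomposing $J_1\bfn(u)\bft(a)J_1$ on the open Bruhat cell, applying the induction formula for $f_i^{\ord,\pm}$, and then evaluating $\int_{|u|\geq|a|}|u|^{\pm 2t_i-1}\addchar(-u)\,\d u$ shell by shell. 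The inner integrals $\int_{\frko^\times}\addchar(-\varpi^{-k}v)\,\d v$ equal $1-q^{-1}$, $-q^{-1}$, or $0$ according as $k\leq 0$, $k=1$, or $k\geq 2$, so only finitely many shells contribute. Comparing the resulting finite sum with the Casselman--Shalika formula
\[W_i^0(\bft(\varpi^m))=q^{-m/2}\cdot\frac{q^{t_i(m+1)}-q^{-t_i(m+1)}}{q^{t_i}-q^{-t_i}}\qquad(m\geq 0),\]
together with the vanishing of both sides for $m<0$, verifies the identity. The main obstacle is this shell-sum evaluation: it is elementary but demands careful bookkeeping of the $t_i$-dependent exponents, and the explicit Iwasawa decomposition of $J_1\bfn(u)\bft(a)J_1$ on the big cell must be worked out first. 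Everything else is formal.
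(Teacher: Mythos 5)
Your proposal is correct in all of its reductions and would work, but it takes a genuinely different and much longer route than the paper. The paper's proof is two lines: it invokes the standard relation $W_i^{\pm}=W_i^0-q^{(\pm 2t_i-1)/2}\calw_i^0$ between the spherical vector and the two $\bfU_p$-stabilizations in the Iwahori-fixed space, applies $\pi_i(\eta_1)$ to both sides (using $\pi_i(\eta_1)\calw_i^0=W_i^0$, which holds because $\eta_1^2=-\vpi\cdot\ono_2$ and the central character of $I(\Abs_F^{-t_i},\Abs_F^{t_i})$ is trivial), and subtracts the two resulting identities; no integral is ever evaluated. Your route instead establishes that all the functions in play are Iwahori-fixed, that the two-dimensional Iwahori-fixed subspace injects into functions on the torus, and then proposes to verify the identity on $\bft(\vpi^m)$ by unwinding the Jacquet integral shell by shell and comparing with Casselman--Shalika. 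All of these preparatory steps are sound (your verification that $\eta_1=J_1\bft(\vpi)$ normalizes $K_0^{(1)}(\frkp)$, and your injectivity argument via the linear independence of the torus restrictions of $W_i^{\pm}$, are both correct), but the shell-sum you defer as "elementary but demanding" is precisely a from-scratch re-derivation of the relation $W_i^{\pm}=W_i^0-q^{(\pm2t_i-1)/2}\calw_i^0$ evaluated at $\bft(\vpi^{m-1})J_1$; once you have reduced to torus values inside the Iwahori-fixed space, it is far cleaner to quote that relation and let the algebra finish the job, which is what the paper does. One small point common to both arguments: the identity divides by $q^{-t_i}-q^{t_i}$, so $t_i\neq 0$ (equivalently $\mu_i\neq\nu_i$) is implicitly assumed; your two-dimensionality of the Iwahori-fixed space does not need this hypothesis, but the linear independence of $W_i^{+}$ and $W_i^{-}$ does, and you correctly place the hypothesis where it is used.
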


\begin{proof}
{Since the ordinary vector $W_i^\pm$ is obtained as the stabilization of the spherical vector $W_i^0$ with respect to $q^{(\mp 2t_i+1)/2}$, we get the relation $W_i^\pm=W^0_i-q^{-(\mp 2t_i+1)/2}\calw^0_i$ (see \S 3.2 of \cite{CH17Crelle}). 
The stated identity now follows in view of $\pi_i(\eta_1)\calw^0_i=W_i^0$. }
%Recall the Shintani formula  \[W^0_i(\bft(a))=|a|^{1/2}\frac{q^{-t_i}|a|^{t_i}-q^{t_i}|a|^{-t_i}}{q^{-t_i}-q^{t_i}}\II_\frko(a)=\frac{q^{-t_i}W_i^+-q^{t_i}W_i^-}{q^{-t_i}-q^{t_i}} \]for $a\in F^\times$ (see \cite[Theorem 4.6.5]{Bump97Grey}). Since \begin{align*}\calw^0_i(\bft(a))&=W^0_i(\bft(a\vpi^{-1})), & W^\pm_i(\bft(a\vpi^{-1}))&=q^{(\pm 2t_i+1)/2}W^\pm_i(\bft(a))\II_\frkp(a), \end{align*} We get $\calw_i^0=q^{1/2}\frac{W_i^+-W_i^-}{q^{-t_i}-q^{t_i}}$ and the relation $W_i^\pm=W^0_i-q^{(\pm 2t_i-1)/2}\calw^0_i$. 
\end{proof}

Fix an unramified character $\chi=\Abs_F^s$ of $F^\times$. 
We will abbreviate $I_3(\chi)=I_3(1,\chi)$. 
{We consider the section
\begin{align*}
f_{\Phi^0}(\chi)&\in I_3(\chi), & 
\Phi^0&=\II_{\Sym_3(\frko)}. 
\end{align*}
By the Iwahori decomposition of $K_0^{(3)}(\frkp)$} 
\[\calp_3J_3\bfn(\Sym_3(\frko))=\calp_3J_3K_0^{(3)}(\frkp)=\calp_3K_0^{(3)}(\frkp)J_3K_0^{(3)}(\frkp). \]
The restriction of the section $f_{\Phi^0}(\chi)$ to $\GSp_6(\frko)$ is the characteristic function of $K_0^{(3)}(\frkp)J_3K_0^{(3)}(\frkp)$. 
In particular, 
\[\rho_3(k)f_{\Phi^0}(\chi)=f_{\Phi^0}(\chi)\] 
for $k\in K_0^{(3)}(\frkp)$ (\cf Lemma \ref{lem:13}). 

\begin{lm}\label{lem:21}
Assume that $\pi_1\simeq\St$. 
Then 
\[Z(\calw_1^+,\calw_2^+,\calw_3^+, f_{\Phi^0}(\chi)) 
=-\frac{q^{s-2}}{(1+q^{-1})^3}\zet(s+1+t_2+t_3)\prod_{i=2,3}\zet(s+1+t_i-t_{5-i}). \]
\end{lm}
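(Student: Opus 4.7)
The plan is to prove Lemma~\ref{lem:21} by an explicit unfolding computation following the template of Proposition~\ref{prop:11}, but adapted to the Steinberg factor $\pi_1\simeq\St$ and to the specific Iwahori-spherical section $h^0(\chi)$. I would parametrize an open dense subset of $U^0Z\backslash H$ using the coordinate system of Lemma~\ref{lem:11}: setting $g_1=\bfn^-(u_1)\bfn(x_1)$ and $g_i=\bfm(a_i)\bfn^-(u_i)J_1$ for $i=2,3$, the identification $\Eta\iota(g_1,g_2,g_3)=\iota_0(g_1,g_2J_1^{-1},g_3J_1^{-1})$ puts the section argument into the framework of Lemma~\ref{lem:11}. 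Formula (\ref{tag:13}), specialized to $\hat\ome=1$, $\chi=\Abs_F^s$, and $\vPh=\II_{\Sym_3(\frko)}$, then yields
\[h^0(\Eta\iota(g_1,g_2,g_3),\chi)=|a_2a_3|^{2s+2}\II_{\Sym_3(\frko)}(z),\]
where $z$ is the explicit symmetric matrix from Lemma~\ref{lem:11}; the indicator $\II_{\Sym_3(\frko)}(z)$ forces $a_2,a_3,x_1\in\frko$ and imposes further integrality conditions on the $u_i$ depending on $a_2$ and $a_3$.

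Next I would evaluate the Whittaker functions $\calw_i^+=\pi_i(\eta_1)W_i^+$ on the coordinate elements. Using $\eta_1=J_1\bft(\vpi)$ and the commutation relation $\bfn^-(u)\bft(\vpi)=\bft(\vpi)\bfn^-(u\vpi)$, the values $\calw_i^+(\bfm(a_i)\bfn^-(u_i)J_1)$ for $i=2,3$ reduce to a product of an $\II_\frko$-indicator in $u_i$ with an explicit power of $|a_i|$. For $\pi_1\simeq\St$, the value $\calw_1^+(\bfn^-(u_1)\bfn(x_1))=W(g\eta_1,f_1^\ord)$ is computed via the Jacquet integral with $f_1^\ord$ supported on $B_2J_1\bfn(\frko)$; this yields an explicit indicator in $u_1,x_1$ that is nonzero only on a prescribed region of the open Bruhat cell.

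After these substitutions, the six-fold integral factorizes: the integrations in $u_2,u_3,x_1$ collapse against the support constraints (producing the volume factor $(1+q^{-1})^{-3}$), leaving Tate-type integrals in $a_2,a_3$ and a $\GL_2$-type integral in $u_1$. I would apply the $\GL_2$-functional equation analogous to (\ref{tag:fq}) of the proof of Proposition~\ref{prop:11} to convert the $u_1$-integral into a standard $\GL_1$ zeta integral, then use Tate's functional equations to simplify the resulting $\gamma$-factors against the measure normalizations. The three zeta integrations in $a_2$, $a_3$, and $u_1$ produce the three factors $\zet(s+1+t_2+t_3)$, $\zet(s+1+t_2-t_3)$, $\zet(s+1+t_3-t_2)$, while the accumulated $\gamma$-factors and volume normalizations combine into the prefactor $-q^{s-2}/(1+q^{-1})^3$. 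The Steinberg hypothesis enters decisively here: forcing $t_1=\frac{1}{2}$ and allowing only the ordinary vector $W_1^+$ (there being no companion $W_1^-$) is precisely what selects three rather than four Satake parameter combinations.

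The main obstacle will be the careful Jacquet-integral evaluation of $\calw_1^+$ on $\bfn^-(u_1)\bfn(x_1)$ and the subsequent bookkeeping required to consolidate the emergent $\gamma$-factors, sign contributions (the overall $-1$ arises from $\eta_1^2=-\vpi\ono_2$ together with the Steinberg quotient sign in $\pi_1\subset I(\Abs_F^{-1/2},\Abs_F^{1/2})$), and volume factors into the clean closed form of Lemma~\ref{lem:21}. Tracking the $q$-exponent in $q^{s-2}$, which comes from the interplay of $|a_2a_3|^{2s+2}$ with the compatibility constraints on the supports, is also delicate and requires following the coordinate-wise integration steps in a specific order.
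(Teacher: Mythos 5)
Your proposal is correct and follows essentially the same route as the paper: the paper likewise reuses the unfolding of Lemma \ref{lem:11} and formula (\ref{tag:13}) from the proof of Proposition \ref{prop:11} (via the intermediate identities (\ref{tag:15}) and (\ref{tag:16})), evaluates the collapsed integrals over $a_2,a_3$ and the remaining torus variable, and extracts the minus sign and the factor $\zet(s+1+t_2+t_3)$ from the Steinberg $\gamma$-factor $\gam\bigl(s+\tfrac{1}{2},\pi_1\otimes\Abs_F^{t_2+t_3},\addchar\bigr)=-q^{-s-t_2-t_3}\zet(1-s-t_2-t_3)/\zet(s+1+t_2+t_3)$ after applying the $\GL_2$ functional equation. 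The only cosmetic difference is that applying the functional equation first spares you the direct Jacquet-integral evaluation of $\calw_1^+$ on the lower cell that you flag as the main obstacle.
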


\begin{Remark}\label{rem:21}
{
When $\pi_1,\pi_2,\pi_3$ are discrete series of $\PGL_2(F)$ of level $\frkp$, Gross and Kudla \cite{GK92} constructed a $K_0^{(3)}(\frkp)$-fixed section $\Phi^\natural(s)\in I_3(1,\chi)$ with nice properties and showed that $Z(W_1^+,W_2^+,W_3^+,\Phi^\natural(s))$ equals $L\bigl(s+\frac{1}{2},\pi_1\times\pi_2\times\pi_3)$ times a normalizing factor. 
The section $\Phi^\natural(s)$, which depends on the sign $\vep\bigl(\frac{1}{2},\pi_1\times\pi_2\times\pi_3,\addchar\bigl)$ (\cf Remark \ref{rem:localfactor} below), is different from our choice $f_{\Phi^0}(\chi)$. 
However, }
Lemma \ref{lem:21} is compatible with their computation. 
Let $h^0(\chi)\in I_3(\chi)$ be the function whose restriction to $\GSp_6(\frko)$ is the characteristic function of $K_0^{(3)}(\frkp)$. 
Put $\eta_3=\iota(\eta_1,\eta_1,\eta_1)$. 
Then 
\begin{align*}
\eta_3^{} K_0^{(3)}(\frkp)\eta_3^{-1}&=K_0^{(3)}(\frkp), & 
f_{\Phi^0}(\chi)&=q^{3+3s}\rho_3(\eta_3)h^0(\chi) 
\end{align*}
by Lemma 3.1 of \cite{GK92}. 
We obtain 
\begin{align*}
Z(\calw^+_1,\calw^+_2,\calw^+_3,f_{\Phi^0}(\chi))%=\int_{U^0Z\bsl H}W_1(g_1)W_2(g_2)W_3(g_3)h^0(\Eta\iota(g_1,g_2,g_3))\,\d g\\=&\int_{U^0Z\bsl H}\calw^+_1(g_1\eta_1)\calw^+_2(g_2\eta_1)\calw^+_3(g_3\eta_1)h^0(\Eta\iota(g_1,g_2,g_3)\eta_3)\,\d g\\
=&q^{3+3s}Z(W_1^+,W_2^+,W_3^+,h^0{(\chi)}). 
\end{align*}
When $\pi_1\simeq\pi_2\simeq\pi_3\simeq\St$, Proposition 4.2 of \cite{GK92} gives 
\[Z(W_1^+,W_2^+,W_3^+,h^0{(\chi)})=-(q+1)^{-3}q^{-2s-2}L\left(s+\frac{1}{2},\St\times\St\times\St\right). \]
\end{Remark}

\begin{proof}
{Applying (\ref{tag:15}) with $\chi=\Abs_F^s$, $\mu_i=\Abs_F^{-t_i}$, $\nu_i=\Abs_F^{t_i}$, $f_i=\rho(\eta_1)f_i^\dagger$ and $\vPh=\Phi^0$, }we have 
\begin{align*}
Z(\calw_1^+,\calw_2^+,\calw_3^+,f_{\Phi^0}(\chi))
%=\int_{F^\times}\d^\times a\int_{T'\bsl\SL_2(F)}\calw^+_1(\bft(a)g)\int_{\SL_2(F)^2} \\
%f''_2(J_1\bft(a)g_2)f''_3(J_1\bft(a)g_3)h^0(\iota_0(\bft(a)g,\bft(a)g_2,\bft(a)g_3),\chi)\,\d g_2'\d g_3'\d g \\
=\int_{F^\times}\int_{T'\bsl\SL_2(F)}\calw_1^+(\bft(a)g)|a|^{s+t_2+t_3}\calf(g)\,\d g'\d^\times a. 
\end{align*}
{Let $f_i'=\rho(J_1)f_i=\rho(\bft(\vpi))f_i^\dagger$.} 
Since 
\[f_i'(J_1\bfn(x))=f_i^\dagger(J_1\bft(\vpi)\bfn(x/\vpi))=q^{(1-2t_i)/2}\II_\frkp(x), \]
we get 
\begin{multline*}
\calf(g)
=(1+q^{-1})^{-2}\int_{F^2}\d x_2\d x_3\, f'_2(J_1\bfn(x_2))f'_3(J_1\bfn(x_3))\int_{F^{\times 2}}\prod_{i=2,3}|a_i|^{2t_i}\frac{\d^\times a_i}{|a_i|}\\
\times\int_{F^2} {f_{\Phi^0}}(\iota_0(g,\bfm(a_2)\bfn^-(u_2)\bfn(x_2),\bfm(a_3)\bfn^-(u_3)\bfn(x_3)),\chi)\,\d u_2\d u_3\\
=\int_{F^{\times 2}\oplus F^2}{f_{\Phi^0}}(\iota_0(g,\bfm(a_2)\bfn^-(u_2),\bfm(a_3)\bfn^-(u_3)),\chi)\frac{\prod_{i=2,3}|a_i|^{2t_i-1}\d^\times a_i\d u_i}{q^{1+t_2+t_3}(1+q^{-1})^2}. 
\end{multline*}
In view of (\ref{tag:13}) 
\begin{align*}
\calf(\bfn^-(u)\bfn(x))
%=\int_{F^{\times 2}\oplus F^2}h^0(\imath(x,u,u_2,u_3,a_2,a_3),\chi)\frac{\prod_{i=2,3}|a_i|^{2t_i-1}\d^\times a_i\d u_i}{q^{1+t_2+t_3}(1+q^{-1})^2}\\
&=\int_{F^{\times 2}\oplus F^2}
\Phi^0\left(\begin{pmatrix}
x & -a_2 & -a_3 \\
-a_2 & -u_2 & -a_2a_3u \\
-a_3 & -a_2a_3u &  -u_3
\end{pmatrix}\right)\frac{\prod_{i=2,3}|a_i|^{1+2s+2t_i}\d^\times a_i\d u_i}{q^{1+t_2+t_3}(1+q^{-1})^2}\\
&=q^{-1-t_2-t_3}(1+q^{-1})^{-2}\II_\frko(x)\int_{\frko^2}\II_\frko(a_2a_3u)\prod_{i=2,3}|a_i|^{1+2s+2t_i}\d^\times a_i. 
\end{align*}
Owing to (\ref{tag:16}) we arrive at 
\begin{align*}
&\gam\left(s+\frac{1}{2},\pi_1\otimes\Abs_F^{t_2+t_3},\addchar\right)Z(\calw_1^+,\calw_2^+,\calw_3^+, f_{\Phi^0}(\chi)) \\
=&\int_{F^\times}\int_F\calw_1^+(\bft(a)J_1^{-1}\bfn(x))|a|^{-s-t_2-t_3}\calf_{\addchar}(a,x)\,\d x\d^\times a, 
\end{align*}
where 
\begin{align*}
\calf_{\addchar}(a,x)&=(1+q^{-1})^{-1}\int_F\calf(\bfn^-(u)\bfn(x))\addchar(-au)\,\d u\\
&=q^{-1-t_2-t_3}(1+q^{-1})^{-3}\II_\frko(x)\int_{\frko^2}\II_{\frko}\left(\frac{a}{a_2a_3}\right)\prod_{i=2,3}|a_i|^{2s+2t_i}\d^\times a_i. 
\end{align*}
We conclude that 
\begin{align*}
&q^{1+t_2+t_3}(1+q^{-1})^3\gam\left(s+\frac{1}{2},\pi_1\otimes\Abs_F^{t_2+t_3},\addchar\right)Z(\calw_1^+,\calw_2^+,\calw_3^+, f_{\Phi^0}(\chi)) \\
=&\int_{F^\times}\int_F\d x\d^\times a\, \frac{\calw_1^+(\bft(a)J_1^{-1}\bfn(x))}{|a|^{s+t_2+t_3}}\II_\frko(x)\int_{\frko^2}\II_{\frko}\left(\frac{a}{a_2a_3}\right)\prod_{i=2,3}|a_i|^{2s+2t_i}\d^\times a_i\\
=&\int_{F^\times}\d^\times a\, \frac{\calw_1^+(\bft(a_2a_3a)J_1^{-1})}{|a_2a_3a|^{s+t_2+t_3}}\int_{\frko^2}\II_{\frko}(a)\prod_{i=2,3}|a_i|^{2s+2t_i}\d^\times a_i \\
=&\int_{\frko}\d^\times a\, \frac{W_1^+(\bft(a_2a_3a\vpi))}{|a|^{s+t_2+t_3}}\prod_{i=2,3}\int_\frko|a_i|^{s+t_i-t_{5-i}}\d^\times a_i\\
%=&\int_{\frko} \frac{|a\vpi|^{(2t_1+1)/2}}{|a|^{s+t_2+t_3}}\d^\times a\prod_{i=2,3}\int_\frko|a_i|^{(2t_1+1)/2}|a_i|^{s+t_i-t_{5-i}}\d^\times a_i\\
=&\frac{\zet\left(\frac{1}{2}-s+t_1-t_2-t_3\right)}{q^{(2t_1+1)/2}}\prod_{i=2,3}\zet\left(s+\frac{1}{2}+t_1+t_i-t_{5-i}\right).   
\end{align*}

Assume that $\pi_1\simeq\St$. 
Then $t_1=\frac{1}{2}$ and 
\[\gam\left(s+\frac{1}{2},\pi_1\otimes\Abs_F^{t_2+t_3},\addchar\right)=-q^{-s-t_2-t_3}\frac{\zet(1-s-t_2-t_3)}{\zet(s+1+t_2+t_3)}, \]
from which we complete our proof. 
\end{proof}

\begin{prop}\label{prop:21}
Let $\pi_i$ be either an unramified principal series representation or the Steinberg representation twisted by an unramified character. 
Set $W_i^{}=W_i^0$ in the former case and $W_i^{}=W_i^+$ in the latter case. 
Put $\breve W_i^{}=W_i^{}\otimes\ome_i^{-1}$. 
If not all $\pi_i$ are principal series, then for an unramified character $\chi$ of $F^\times$
\begin{align*}
Z(W_1,W_2,W_3, f_{\Phi^0}(\chi))&=Z(\breve W_1,\breve W_2,\breve W_3, f_{\Phi^0}(\chi\hat\ome))\\
&=-(\hat\ome^2\chi^4)(\vpi)q(1+q)^{-3}\frac{L\left(\frac{1}{2},\pi_1\times\pi_2\times\pi_3\otimes\chi\right)}{\vep\left(\frac{1}{2},\pi_1\times\pi_2\times\pi_3\otimes\chi,\addchar\right)}. 
\end{align*}
\end{prop}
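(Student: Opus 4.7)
The first equality is a formal consequence of the twist identity (\ref{tag:21}): applying it with $\chi_i=\ome_i^{-1}$ yields
\[Z(\breve W_1,\breve W_2,\breve W_3,h^0(\chi\hat\ome))=Z(W_1,W_2,W_3,h^0(\chi\hat\ome)\otimes\hat\ome^{-1}\circ\nu_3).\]
Since $\breve W_i=W_i\otimes\ome_i^{-1}$ has central character $\ome_i^{-1}$, we interpret $h^0(\chi\hat\ome)$ on the left-hand side as a section of $I_3(\hat\ome^{-1},\chi\hat\ome)$; by (\ref{tag:twist}) the twisted section $h^0(\chi\hat\ome)\otimes\hat\ome^{-1}\circ\nu_3$ lies in $I_3(\hat\ome,\chi)$, the same space as $h^0(\chi)$. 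Both sections agree with $\II_{\Sym_3(\frko)}$ on $J_3\bfn(\Sym_3(F))$ (the twist $\hat\ome^{-1}\circ\nu_3$ is trivial on similitude-one elements), so they coincide by the defining property (\ref{E:padicsection}).

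For the explicit evaluation, the first equality reduces the problem to the case $\ome_i\equiv 1$, so that each $\pi_i$ is $I(\Abs_F^{-t_i},\Abs_F^{t_i})$ or $\St$ and $\hat\ome=1$. By the permutation symmetry of the zeta integral in its three factors, together with the hypothesis that not all $\pi_i$ are principal series, we may assume $\pi_1\simeq\St$, so $t_1=\frac{1}{2}$ and $W_1=W_1^+$. The main computation mirrors the proof of Lemma~\ref{lem:21}. In that proof, the unfolding (\ref{tag:15}) reduces $Z$ to a torus integral of the leading Whittaker function against an auxiliary function $\calf'$ built from the other two, and $\pi_1\simeq\St$ is used only at the very last step to simplify a $\gamma$-factor. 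Redoing the unfolding with $\calw_i^-$ in place of $\calw_i^+$ for $i=2,3$ yields an analogous formula for $Z(\calw_1^+,\calw_2^{\vep_2},\calw_3^{\vep_3},h^0(\chi))$ with any signs $\vep_2,\vep_3\in\{+,-\}$, obtained by the substitution $t_i\mapsto\vep_i t_i$ (available whenever $\pi_i$ is principal). When $\pi_i$ is principal, Lemma~\ref{lem:20} expresses $W_i^0$ as a linear combination of $\calw_i^\pm$; when $\pi_i\simeq\St$, we have $W_i^+=\pi_i(\eta_1^{-1})\calw_i^+$. A simultaneous change of variables using $W_1^+=\pi_1(\eta_1^{-1})\calw_1^+$, whose effect on $h^0(\chi)$ is controlled by Remark~\ref{rem:21} and the observation that $\eta_3^2=-\vpi I_6\in Z$ acts on $I_3$ by its central character, assembles these pieces into an explicit formula for $Z(W_1,W_2,W_3,h^0(\chi))$ as a finite sum of products of $\zeta$-values.

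The concluding step identifies this sum with $-q(1+q)^{-3}L(\frac{1}{2},\pi_1\times\pi_2\times\pi_3\otimes\chi)/\vep(\frac{1}{2},\pi_1\times\pi_2\times\pi_3\otimes\chi,\addchar)$; the prefactor $(\hat\ome^2\chi^4)(\vpi)$ in the stated formula reappears upon reinstating the twist of central characters via (\ref{tag:21}) and (\ref{tag:twist}). The main obstacle is the combinatorial bookkeeping across the cases for the types of $(\pi_2,\pi_3)$: in each configuration one must verify that the algebraic combinations of $\zeta$-factors collapse precisely to $L/\vep$ with the correct scalar. This identification is effected by multiplicativity of local $\gamma$-factors together with the explicit factorization of the triple product $L$- and $\vep$-factors into products of $\GL_1$ factors in the unramified and Steinberg regimes.
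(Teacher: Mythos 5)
Your plan is correct and follows essentially the same route as the paper's proof: reduce to trivial central characters via (\ref{tag:21}), place the Steinberg factor in the first slot by the symmetry of the zeta integral (the paper cites Ikeda's Lemma 3.1 for this), evaluate $Z(\calw_1^+,\calw_2^{\vep_2},\calw_3^{\vep_3},h^0(\chi))$ by Lemma~\ref{lem:21} together with the substitutions $t_i\mapsto -t_i$, assemble $W_i^0$ from $\calw_i^{\pm}$ via Lemma~\ref{lem:20}, and match the resulting $\zet$-products against the explicit $L$- and $\vep$-factors of Remark~\ref{rem:localfactor}. The one place where the paper is more direct is the Steinberg slot itself: instead of a simultaneous change of variables acting on $h^0(\chi)$, it simply uses the Atkin--Lehner relation $\calw_1^+=-W_1^+$ for $\pi_1\simeq\St$, which is the sign you would need to pin down in any case.
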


\begin{Remark}\label{rem:localfactor}
If $\pi_1$ and $\pi_2$ are irreducible unramified principal series representations, then 
\begin{align*}
L(s,\pi_1\times\pi_2\times\St)&=L\left(s+\frac{1}{2},\pi_1\times\pi_2\right), & 
\vep(s,\pi_1\times\pi_2\times\St,\addchar)&=q^{-4s+2}\ome_1(\vpi)^2\ome_2(\vpi)^2,\\
L(s,\pi_1\times\St\times\St)&=L(s,\pi_1)L(s+1,\pi_1), &
\vep(s,\pi_1\times\St\times\St,\addchar)&=q^{-4s+2}\ome_1(\vpi)^2, \\
L(s,\St\times\St\times\St)&=\zet\left(s+\frac{3}{2}\right)\zet\left(s+\frac{1}{2}\right)^2, & 
\vep(s,\St\times\St\times\St,\addchar)&=-q^{-(10s-5)/2}.  
\end{align*}
\end{Remark}

\begin{proof}
In view of \cite[Lemma 3.1]{Ikeda89} and (\ref{tag:21}) we may assume that $\pi_1\simeq\St$ and $\pi_i$ is a quotient of $I(\Abs_F^{-t_i},\Abs_F^{t_i})$ for $i=2,3$. 
If all $\pi_i$ are discrete series representations, then since $\calw_1^+=-W_1$, the result follows from Lemma \ref{lem:21}. 
Let $\chi=\Abs_F^s$ and $\pi_3\simeq I(\Abs_F^{-t_3},\Abs_F^{t_3})$. 
Lemma \ref{lem:21} gives 
\[Z(W_1^+,\calw_2^+,\calw_3^\pm, f_{\Phi^0}(\chi)) 
=\frac{q^{s-2}}{(1+q^{-1})^3}L(s+1+t_2,\pi_3)\zet(s+1-t_2\pm t_3). \]
Thanks to Lemma \ref{lem:20} we obtain 
\begin{align*}
\frac{Z(W_1^+,\calw_2^+,W_3^0, f_{\Phi^0}(\chi))}{q^{s-2}L(s+1+t_2,\pi_3)}
&=q^{1/2}\frac{\zet(s+1-t_2+t_3)-\zet(s+1-t_2-t_3)}{(1+q^{-1})^3(q^{-t_3}-q^{t_3})}\\
&=(1+q^{-1})^{-3}q^{1/2}q^{-s-1+t_2}L(s+1-t_2,\pi_3). 
\end{align*}
If $\pi_2\simeq I(\Abs_F^{-t_2},\Abs_F^{t_2})$, then 
\[Z(W_1^+,\calw_2^+,W_3^0, f_{\Phi^0}(\chi))=(1+q^{-1})^{-3}q^{(2t_2-5)/2}L(s+1,\pi_2\times\pi_3), \]
and so again by Lemma \ref{lem:20},
\begin{align*}Z(W_1^+,W_2^0,W_3^0, f_{\Phi^0}(\chi))
&=(1+q^{-1})^{-3}L(s+1,\pi_2\times\pi_3)\frac{q^{t_2-2}-q^{-t_2-2}}{q^{-t_2}-q^{t_2}}\\
&=-(1+q^{-1})^{-3}q^{-2}L(s+1,\pi_2\times\pi_3). \end{align*}
If $\pi_2\simeq\St$, we obtain the claimed result by letting $t_2=\frac{1}{2}$.  
\end{proof}

%%%%%%%%%%%%%%%%%%%%%%%%%%%%%%%%%%%%%%%%%%%%%%%%%%%%%%%%%%%%%%%%%%%%%%%%%%%%%%%%

\section{Computation of the local zeta integral: the archimedean case}\label{sec:arch}

%%%%%%%%%%%%%%%%%%%%%%%%%%%%%%%%%%%%%%%%%%%%%%%%%%%%%%%%%%%%%%%%%%%%%%%%%%%%%%%%

\subsection{The archimedean sections}\label{ssec:realsection}

We define the sign character $\sgn:\R^\x\to\{\pm 1\}$ by $\sgn(x)=\frac{x}{|x|}$. 
Let $\Sym_n^+(\R)$ denote the set of positive definite symmetric matrices of rank $n$. 
The Siegel upper half-space $\frkH_n$ of degree $n$ consists of complex symmetric matrices of size $n$ with positive definite imaginary part. 
The Lie group $\GSp_{2n}^+(\R)=\{g\in\GSp_{2n}(\R)\;|\;\nu_n(g)>0\}$ acts on the space $\frkH_n$ by $g\cdot Z=(AZ+B)(CZ+D)^{-1}$, where $Z\in\frkH_n$ and $g=\begin{pmatrix} A & B \\ C & D \end{pmatrix}$ with matrices $A,B,C,D$ of size $n$. 
Let $C^\infty(\frakH_n)$ be the space of $\C$-valued smooth functions on the upper half complex plane $\frakH_n$. 
For an integer $k$ and $f\in C^\infty(\frkH_n)$ we define 
\begin{align}
f|_kg(Z)&=f(g\cdot Z)J(g,Z)^{-k}, & J(g,Z)&=\nu_n(g)^{-n/2}\det(CZ+D). \label{tag:modular}
\end{align}
Put $\bfi=\sqrt{-1}\bfone_n$. We shall identify the compact unitary group $\U(n)=\{u\in\GL_{n}(\C)\;|\;\bar u^{\rm t}u=\ono_n\}$ with the {stabilizer} $\{g\in \Sp_{2n}(\R)\;|\;g\cdot \bfi=\bfi\}$ of $\bfi$ via the map $g\mapsto\overline{J(g,\bfi)}$. 

We recall special functions $H_{ij}$ on $U(3)$ introduced in \cite[\S 1]{Ikeda98Crelle} for $1\leq i,j\leq 3$. For $u\in \U(3)$, we define $H_{ij}(u)$ to be the $(i,j)$-entry of the matrix $u^{\rm t} u$. 
By definition, $H_{ij}$ is a function on $\O(3)\bksl\U(3)$, and hence we can extend it to a unique function on $\GSp_6(\R)$ such that
\begin{align*}
H_{ij}(\bfn(z)\bfm(A,\nu)u)&=H_{ij}(u) & 
(z&\in\Sym_3(\R),\;A\in\GL_3(\R),\;\nu\in\R^\times,\;u\in\U(3)).
\end{align*}
\emph{A parity type} is a triplet $\gap=(\gap_1,\gap_2,\gap_3)$ of integers which belongs to one of the following triplets 
\[\gap\in\stt{(0,0,0),\; (0,1,1),\; (1,0,1),\; (1,1,2)}.\]
Note that $\lam_3=\lam_1+\lam_2$. 
Put
\[\calh_\gap:=\begin{cases}
1& \text{ if }\gap=(0,0,0),\\
\ol{H_{23}}& \text{ if }\gap=(0,1,1),\\
H_{12}&\text{ if }\gap=(1,0,1),\\
H_{12}\ol{H_{23}}&\text{ if }\gap=(1,1,2).
\end{cases}\]

Let $\chi_\infty$ be a character of $\R^\x$. 
For each integer $k$ and parity type $\lam$, we define the archimedean section $f_{\infty,s}^{[k,\gap]}\in I_3(\sgn^{k-\lam_1},\chi_\infty\sgn^{k-\lam_1}\Abs_\R^s)$ by \begin{align*}
&f^{[k,\gap]}_{s,\infty}(g):=\calh_\gap(g)\chi_\infty(\nu_3(g))\cdot J(g,\bfi)^{-k+\lam_1}\abs{J(g,\bfi)}^{k-\lam_1-2s-2}. \end{align*}
One verifies that
\begin{align*}
H_{ij}(g\iota(\kap_{\tht_1},\kap_{\tht_2},\kap_{\tht_3}))&=e^{\sqrt{-1}(\tht_i+\tht_j)}H_{ij}(g), & 
\kap_\tht&=\pMX{\cos\tht}{\sin\tht}{-\sin\tht}{\cos\tht}, 
\end{align*}
so we have 
\beq\label{tag:weight}
f^{[k,\gap]}_{s,\infty}(g\iota(\kap_{\tht_1},\kap_{\tht_2},\kap_{\tht_3}))
=f^{[k,\gap]}_{s,\infty}(g)e^{\sqrt{-1}\{k\tht_1+(k-\lam_2)\tht_2+(k-\lam_3)\tht_3\}}. 
\eeq
Namely the $\SO_2(\R)^3$-type of $f^{[k,\gap]}_{s,\infty}$ is $(k,k-\lam_2,k-\lam_3)$.

\begin{Remark}
Ikeda introduced a spherical function $H_\mu$ on $\O(3)\bsl\U(3)$ with highest weight $\mu$ in \cite{Ikeda98Crelle} and computed the zeta integral of the holomorphic section associated to a spherical function $H_{\mu|\kap}$ of weight $\kap=(k,l,m)$ which he constructed by applying differential operators on $\U(3)$ to $H_\mu$. 
However, it seems difficult to compute the Fourier coefficients of the Eisenstein series associated to this section. 
On the other hand, we start with the holomorphic section $f^{[k,\gap]}_{s,\infty}$ above. 
We will compute the associated degenerate Whittaker function in \S\S \ref{ssec:42}--\ref{ssec:44}, which shows that the pull-back of the associated Eisenstein series is nearly holomorphic. 
Lemma \ref{L:arch.E} below computes the zeta integral (\ref{tag:Aintegral}). 
Inspired by \cite{Mizumoto90}, we construct a triple product of nearly homomorphic modular forms of weight $\kap$ by applying weight lowering operators to this pull-back. 
Proposition \ref{P:1.E} below computes the Fourier expansion of the ordinary projection of its holomorphic projection. 
\end{Remark}

%%%%%%%%%%%%%%%%%%%%%%%%%%%%%%%%%%%%%%%%%%%%%%%%%%%%%%%%%%%%%%%%%%%%%%%%%%%%%%%%

\subsection{Archimedean degenerate Whittaker functions}\label{ssec:42}

%We recall the classical computation of the archimedean degenerate Whittaker function. 
Recall that in \eqref{E:DegW.2}, the degenerate Whittaker function associated with the section $f^{[k,\gap]}_{s,\infty}$ and $B\in \Sym_3(\R)$ is given by 
\[\calw_B(g,f^{[k,\gap]}_{s,\infty})=\int_{\Sym_3(\R)}f^{[k,\gap]}_{s,\infty}(J_3\bfn(x)g)e^{-2\pi\sqrt{-1}\tr(B x)}\,\rmd x \quad (g\in \GSp_6(\R)). \]
{The Whittaker function $\calw_B(g,f^{[k,\gap]}_{s,\infty})$ will appear later as the archimedean contribution in the Fourier expansion of Siegel Eisenstein series (\cf\propref{P:pbkEC.E}). In this subsection, we give an expression of the value $\calw_B(\bfm(A),f^{[k,\gap]}_{s,\infty})$ for $A\in\GL_3(\R)$ in terms of the derivatives of the confluent hypergeometric functions.} For a positive integer $m$ we put 
\[\Gamma_m(s)=\pi^{m(m-1)/4}\prod_{j=0}^{m-1}\Gamma\left(s-\frac{j}{2}\right).\]
%In particular, $\Gamma_3(s)=2^{3-2s}\pi^2\Gamma(s)\Gamma(2s-2)$. 
Following \cite[(3.6)]{Shimura82MA}, for $z\in\Sym_3^+(\R)$ and $(\al,\beta)\in\C^2$ we put  
\[\om(z;\al,\beta):=\frac{\det(z)^{\beta}}{\Gamma_3(\beta)}\int_{\Sym_3^+(\R)} 
e^{-\tr(zu)}\det(u+\ono_3)^{\alpha-2}(\det u)^{\beta-2}\,\rmd u.\]
Thanks to \cite[Theorem 3.1]{Shimura82MA}, this integral
is absolutely convergent for $\Re\beta>2$ and can be continued to a holomorphic function on $(\al,\beta)\in\C^2$. %In addition, if $\beta$ is a non-positive integer, then $\om(h;\al,\beta)$ is a polynomial function in $h$ according to \cite[Proposition 3.2]{Shimura82MA}. 
We shall consider the confluent hypergeometric function $\om^\star(z;\al,\beta)$ defined by \beq\label{E:zetaF.E}
\begin{aligned}
\om^\star(z;\alpha,\beta):&=\det(4\pi z)^{\al-2}\cdot \om(4\pi z;\al,\beta)\\
&=\frac{1}{\Gamma_3(\beta)}\int_{\Sym_3^+(\R)} e^{-\tr(u)}\det(u+4\pi z)^{\alpha-2}(\det u)^{\beta-2}\,\rmd u.
\end{aligned}
\eeq
{
\begin{lm}\label{L:7.E} 
Let $P(\hh)$ be a polynomial in $\hh=(\hh_{ij})\in\Sym_3(\R)$. We have 
\[\int_{\Sym_3^+(\R)} e^{-\tr(u)} P(u)\frac{(\det u)^{s-2}}{\Gamma_3(s)}\rmd u=P(-\partial_{ij})(\det \hh)^{-s}|_{\hh=\bfone_3}.\]
\end{lm}
\begin{proof}
If $T$ is positive definite and $\Re s>2$, then 
 \[\int_{\Sym_3^+(\R)} e^{-\tr(\hh u)}\frac{(\det u)^{s-2}}{\Gamma_3(s)}\, \rmd u=(\det\hh)^{-s}\]
 by \cite[(1.14)%, page 816
 ]{Shimura81Duke}. 
 The declared formula follows immediately from the fact that
 \[P(-\partial_{ij})(e^{-\tr(\hh u)})=P(u)e^{-\tr(\hh u)}.\qedhere\]
\end{proof}
\begin{Remark}\label{R:1.E}
In view of \lmref{L:7.E}, we find that if $(\alpha,\beta)\in\Z^2$ with $\alp\geq 2$ and $\beta\leq 0$, then $\om^\star(z;\alpha,\beta)$ is a polynomial function in $z$ of degree at most $\alpha-2$. In particular, the definition of $\om^\star(z;\alpha,\beta)$ can be extended to arbitrary symmetric matrices $z\in \Sym_3(\R)$ by continuity if $(\al,\beta)\in\Z_{\geq 2}\times \Z_{\leq 0}$. We also remark that by the functional equation 
\[\om(z;2-\beta,2-\alpha)=\om(z;\al,\beta)\]
in \cite[(3.7)]{Shimura82MA}, 
the polynomial \[\om^\star(z;\alpha,\beta)=(\det 4\pi z)^{\alpha+\beta-2}\om^\star(z;2-\beta,2-\alpha)\] is divisible by $\det( z)^{\max\stt{\alpha+\beta-2,0}}$ in $\C[z]$. 
\end{Remark}}
\begin{lm}\label{L:H1.E}
For $x\in\Sym_3(\R)$ we have 
\begin{align*}
H_{23}(J_3\bfn(x))&=2\sqrt{-1}(x_{11}x_{23}-x_{12}x_{13}+\sqrt{-1}x_{23})/\det(x+\bfi),\\
H_{12}(J_3\bfn(x))&=2\sqrt{-1}(x_{12}x_{33}-x_{23}x_{13}+\sqrt{-1}x_{12})/\det(x+\bfi). 
\end{align*}
\end{lm}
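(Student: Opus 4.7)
The plan is to perform an Iwasawa-type decomposition $g=J_3\bfn(x)=pk$ with $p\in\calp_3$ and $k$ stabilising $\bfi$, and then read off $H_{ij}(g)=(u^{\mathrm t}u)_{ij}$ from the unitary matrix $u\in\U(3)$ corresponding to $k$ under the identification $u\mapsto\overline{J(u,\bfi)}$. To find this decomposition I first compute $g\cdot\bfi$: since $\bfn(x)\cdot\bfi=x+\bfi$ and $J_3$ acts as $Z\mapsto -Z^{-1}$, we get $g\cdot\bfi=-(x+\bfi)^{-1}$. Rationalising by multiplying with $x-\bfi$ and separating real and imaginary parts gives $g\cdot\bfi=X+\sqrt{-1}Y$ with $Y=(x^2+\ono_3)^{-1}$ and $X=-x(x^2+\ono_3)^{-1}$, so the parabolic factor may be taken to be $p=\bfn(X)\bfm(Y^{1/2},1)$.

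To identify $u$ I exploit the cocycle relation for the matrix-valued automorphy factor $g\mapsto CZ+D$. For $g=J_3\bfn(x)$ the lower blocks are $C=\ono_3$, $D=0$, hence its value at $\bfi$ is $x+\bfi$; for $p$ the lower blocks are $C=0$, $D=(x^2+\ono_3)^{1/2}$, so the value for $p$ is the constant matrix $(x^2+\ono_3)^{1/2}$. Cocycling then yields
\[
u = \overline{(x^2+\ono_3)^{-1/2}(x+\bfi)} = (x^2+\ono_3)^{-1/2}(x-\bfi).
\]
Since $x$, $\ono_3$ and $(x^2+\ono_3)^{\pm 1/2}$ are symmetric and mutually commuting, $u^{\mathrm t}u=(x-\bfi)(x^2+\ono_3)^{-1}(x-\bfi)$. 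Using $(x+\bfi)(x-\bfi)=x^2+\ono_3$ this collapses to $u^{\mathrm t}u=(x-\bfi)(x+\bfi)^{-1}$, and rewriting $x-\bfi=(x+\bfi)-2\bfi$ yields the clean identity
\[
u^{\mathrm t}u \;=\; \ono_3 - 2\bfi\,(x+\bfi)^{-1}.
\]

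To finish, for $i\neq j$ Cramer's rule gives $(u^{\mathrm t}u)_{ij}=-2\sqrt{-1}\,\mathrm{adj}(x+\bfi)_{ij}/\det(x+\bfi)$. Elementary $2\times 2$ cofactor computations produce
\[
\mathrm{adj}(x+\bfi)_{23}=-(x_{11}x_{23}-x_{12}x_{13}+\sqrt{-1}\,x_{23}),\qquad
\mathrm{adj}(x+\bfi)_{12}=-(x_{12}x_{33}-x_{23}x_{13}+\sqrt{-1}\,x_{12}),
\]
and collecting the two sign factors yields exactly the formulas in the lemma. The only real delicacy is sign bookkeeping in the cofactor expansion; everything else reduces to routine $2\times 2$ determinant algebra, with the matrix square root $(x^2+\ono_3)^{1/2}$ cancelling out automatically because all matrices involved commute.
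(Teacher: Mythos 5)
Your proof is correct and follows essentially the same route as the paper: both arguments reduce to the identity $u^{\rm t}u=(x-\bfi)(x+\bfi)^{-1}$ for the unitary part $u$ of the Iwasawa decomposition of $J_3\bfn(x)$, and then read off the off-diagonal entries from the adjugate of $x+\bfi$. (One typo: for $J_3\bfn(x)$ the lower blocks are $C=\ono_3$, $D=x$, not $D=0$; your subsequent value $C\bfi+D=x+\bfi$ is nonetheless the right one.)
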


\begin{proof} 
The Iwasawa decomposition of $J_3\bfn(x)$ can be written as
\begin{align*}
J_3\bfn(x)&=\pMX{z^{\rm t}}{*}{0}{z^{-1}}\pMX{zx}{-z}{z}{zx}, & 
z&\in \GL_3(\R) 
\end{align*}
with $z^{\rm t}z=(\ono_3+x^2)^{-1}$. 
Let $u=z(x-\bfi)\in\U(3)$. 
Then $u^{\rm t}u=(x-\bfi)(x+\bfi)^{-1}$. 
We denote the adjugate of a matrix $A\in\Mat_3(\R)$ by $\adj(A)$. 
Since $A\cdot\adj(A)=(\det A)\ono_3$, we have 
\[u^{\rm t}u=\det(x+\bfi)^{-1}(x-\bfi)\adj(x+\bfi)=-2\sqrt{-1}\det(x+\bfi)^{-1}\adj(x+\bfi)+\bfone_3. \]
By definition we find that
\begin{align*}
H_{23}(J_3\bfn(x))&=H_{23}(u)=\det(x+\bfi)^{-1}\cdot 2\sqrt{-1}\det\pMX{x_{11}+\sqrt{-1}}{x_{12}}{x_{13}}{x_{23}}\\
&=\det(x+\bfi)^{-1}\cdot 2\sqrt{-1}(x_{11}x_{23}-x_{12}x_{13}+\sqrt{-1}x_{23}). 
\end{align*}
One can compute $H_{12}(J_3\bfn(x))$ in the same way.
\end{proof}

\begin{defn}
We associate to a parity type $\gap$ the differential operator $\sD_\gap$ on $\hh=(\hh_{ij})\in\Sym_3(\R)$ by 
\begin{align*}
\sD_{(0,1,1)}&:=\frac{1}{2\pi^2\sqrt{-1}}\{\partial_{13}\partial_{12}-\partial_{23}(\partial_{11}-4\pi)\}, & 
\sD_{(0,0,0)}&=\mathrm{id},\\
\sD_{(1,0,1)}&:=\frac{1}{2\pi^2\sqrt{-1}}\{\partial_{12}\partial_{33}-\partial_{23}\partial_{13}\}, & 
\sD_{(1,1,2)}&:=\sD_{(0,1,1)}\sD_{(1,0,1)}.
\end{align*}
Here 
\[\partial_{ij}:=\frac{\partial}{\partial \hh_{ij}}\cdot \begin{cases} 1&\text{ if }i=j, \\
\frac{1}{2} &\text{ if }i\neq j.\end{cases} \]
\end{defn}

\begin{defn}\label{D:zeta.E} 
For each parity type $\gap$, writing $\boldsymbol\om^\star_s(T):=\om^\star(\hh,k-r,s)$ for $T\in\Sym_3(\R)$, we define 
\begin{align*}
\sD_\gap\om^\star(\hh,k-r,s)&:=\sD_\gap(\boldsymbol\om^\star_s)(T)\\
&=\frac{1}{\Gamma_3(s)}\int_{\Sym_3^+(\R)}e^{-\tr(u)}\sD_\lam\left(\det(4\pi \hh+u)^M\right)(\det u)^{s-2}\,\rmd u.
\end{align*}
\end{defn}
\begin{prop}\label{P:FC.E}Let $r$ be an integer with $\gap_2\leq r\leq k-2$. Let $A\in \GL_3(\R)^+$ and $B\in\Sym_3(\Q)$ with $\det B\neq 0$. 
If $B$ is positive definite, then  
\[\lim_{s\to \frac{k-\gap_1}{2}-r-1}\cW_B(\bfm(A),f^{[k,\gap]}_{s,\infty})=C^{[k,r,\gap]}_1e^{-2\pi\tr(A^{\rm t}BA)}\frac{\sD_\gap\om^\star\left(A^{\rm t}BA;k-r,\gap_2-r\right)}{(\det A)^{k-\lam_1-2r-4}}, \]
where 
\[C^{[k,r,\gap]}_1=(\sqrt{-1})^{k-\gap_2}\frac{2^{3(3+2r-k-\gap_2)}\pi^6}{\Gamma_3(k-r)}. \]
%\[\cW_B(\bfm(A),f^{[k,\gap]}_{s,\infty})=(\sqrt{-1})^{k-\gap_2}\pi^6 e^{-2\pi\tr(A^{\rm t}BA)}\frac{\om^\star_{\sD_\gap}\left(A^{\rm t}BA;s+1+\frac{k+\gap_1}{2},s+1-\frac{k-\gap_1}{2}+\gap_2\right)}{(\det A)^{2s}2^{3(2s+\gap_1+\gap_2-1)} \Gamma_3\bigl(s+1+\frac{k+\gap_1}{2}\bigl)}. \]
If $B$ is not positive definite, then 
\[\lim_{s\to\frac{k-\gap_1}{2}-r-1}\cW_B(\bfm(A),f^{[k,\gap]}_{s,\infty})=0. \]
\end{prop}
\begin{proof}
In view of the equation
\[\cW_B(\bfm(A),f^{[k,\gap]}_{s,\infty})=(\det A)^{-2s-2}\cW_{A^{\rm t}BA}(\ono_6,f^{[k,\gap]}_{s,\infty})=(\det A)^{-2s-2}\cW_{A^{\rm t}BA}(f^{[k,\gap]}_{s,\infty}), \]
we may assume $A=\ono_3$. By definition, 
\beq\label{E:P45.1}\cW_B(f^{[k,\gap]}_{s,\infty})=\int_{\Sym_3(\R)}\det(x+\bfi)^{-\al_0}\det (x-\bfi)^{-\beta_0} \calh_\gap(J_3\bfn(x))e^{-2\pi\sqrt{-1}\tr(B x)}\,\rmd x.\eeq
To proceed, we introduce a new set of differential operators $\cD_\gap$ on the space of smooth functions on $\Sym_3(\R)$ for each parity type $\gap$ defined by 
\begin{align*}
\cD_{(0,1,1)}&:=\frac{1}{2\pi^2\sqrt{-1}}\{\partial_{13}\partial_{12}-\partial_{23}(\partial_{11}-2\pi)\}, & 
\cD_{(0,0,0)}&:=\mathrm{id},\\
\cD_{(1,0,1)}&:=\frac{1}{2\pi^2\sqrt{-1}}\{\partial_{12}(\partial_{33}+2\pi)-\partial_{23}\partial_{13}\}, & 
\cD_{(1,1,2)}&:=\cD_{(0,1,1)}\cD_{(1,0,1)}.
\end{align*}
It is easy to verify that the actions of the two sets of differential operators $\sD_\lam$ and $\cD_\lam$ on polynomials $P$ on $\Sym_3(\R)$ are related by the following equation 
\beq\label{E:P45.2}\cald_\lam(e^{-2 \pi\tr(T)}P(T))=e^{-2 \pi\tr(T)}\sD_\lam P(T).\eeq
A direct computation combined with \lmref{L:H1.E} shows that
\[\cD_\gap(e^{-2\pi\sqrt{-1}\tr(Tx)})=\det(x+\bfi)^{\gap_1}\det(x-\bfi)^{\gap_2}\calh_\gap(J_3\bfn(x))e^{-2\pi\sqrt{-1}\tr(Tx)}. \]
Following \cite[(1.25)]{Shimura82MA}, for $(g,T)\in \Sym_3^+(\R)\times \Sym_3(\R)$ and $(\al,\beta)\in\C$, we put
\[\xi(g,T,\al,\beta)=\int_{\Sym_3(\R)}\det(x+\bfi g)^{-\al}\det (x-\bfi g)^{-\beta}e^{-2\pi\sqrt{-1}\tr(T x)}\,\rmd x.\]
Then \eqref{E:P45.1} can be rephrased as 
\begin{align*}
%\cW_B(f^{[k,\gap]}_{s,\infty})=&\cW_B(\ono_6,f^{[k,\gap]}_{s,\infty})\\
%=&\int_{\Sym_3(\R)}\det(x+\bfi)^{-\al_0}\det (x-\bfi)^{-\beta_0} \calh_\gap(J_3\bfn(x))e^{-2\pi\sqrt{-1}\tr(B x)}\,\rmd x\\
\cW_B(f^{[k,\gap]}_{s,\infty})=& \cD_\gap(\xi(\ono_3,\hh;\al_0+\gap_1,\beta_0+\gap_2))|_{\hh=B}
\end{align*}
with $\al_0=s+1+\frac{k-\gap_1}{2}$ and $\beta_0=s+1-\frac{k-\gap_1}{2}$. 
On the other hand, by \cite[(1.29)]{Shimura82MA} we have
\begin{align*}
\xi(\ono_3,T;\al,\beta)=(\sqrt{-1})^{3(\beta-\al)}\frac{(2\pi)^6e^{-2 \pi\tr(T)}}{2^3 \Gamma_3(\al)\Gamma_3(\beta)}\int_{u>0,\,u>-2\pi T}e^{-2\tr(u)}\det(u+2\pi T)^{\al-2}(\det u)^{\beta-2}\,\rmd u.\end{align*}
If $T\in\Sym_3^+(\R)$, then the last integral equals $2^{3(2-\al-\beta)}\om^\star(T;\al,\beta)\cdot\Gamma_3(\beta)$, and hence by \eqref{E:P45.2} we obtain 
\begin{align*}\lim_{s\to \frac{k-\gap_1}{2}-r-1}\cW_B(f^{[k,\gap]}_{s,\infty})&=\lim_{s\to \gap_2-r}C^{[k,r,\gap]}_1\cD_\lam\left(e^{-2\tr(T)}\om^\star(T;k-r,s)\right)|_{T=B}\\
&=C^{[k,r,\gap]}_1\cdot e^{-2\pi\tr(B)}\sD_\gap\om^\star\left(B;k-r,\gap_2-r\right). \end{align*}
This proves the proposition if $B$ is positive definite. If the signature of $T$ is $(p,q)$ with $q>0$ and $p+q=3$, then  \cite[Theorem 4.2]{Shimura82MA} shows that there exists a holomorphic function $\breve\om(\al,\beta)$ such that 
\[\xi(\ono_3,T;\al,\beta)=\frac{\Gamma_p\bigl(\beta-\frac{q}{2}\bigl)\Gamma_q\bigl(\alpha-\frac{p}{2}\bigl)}{\Gamma_3(\alpha)\Gamma_3(\beta)}\cdot \breve\om(\al,\beta).\]
This in particular implies that $\xi(\ono_3,T;k-r,\lam_2-r)=0$ for $\lam_2\leq r\leq k-2$, and hence \[\lim_{s\to \frac{k-\gap_1}{2}-r-1}\cW_B(f^{[k,\gap]}_{s,\infty})= \cD_\gap(\xi(\ono_3,\hh;k-r,\lam_2-r))|_{\hh=B}
=0\] if $B$ is not positive definite. 
\end{proof}

%%%%%%%%%%%%%%%%%%%%%%%%%%%%%%%%%%%%%%%%%%%%%%%%%%%%%%%%%%%%%%%%%%%%%%%%%%%%%%%%

\subsection{The polynomial expansion of the Whittaker functions}\label{ssec:43}

Let \[y=\diag{y_1,y_2,y_3}\in\R_+^3;\quad A=\diag{\sqrt{y_1},\sqrt{y_2},\sqrt{y_3}}\in\GL_3(\R).\]   
For $B=(b_{ij})\in\Sym_3(\R)$, we define the Whittaker function $\bfW^{[k,r,\gap]}_B(y)$ by \[
\bfW^{[k,r,\gap]}_B(y):= (\det A)^{2r-2k+4+\lam_1}\sqrt{y_2}^{\gap_2}\sqrt{y_3}^{\gap_1+\gap_2}\cdot \scrd_\lam\om^\star(A^{\rm t}BA,k-r,\gap_2-r).\]
If $B$ is positive definite, then \propref{P:FC.E} yields
\beq\label{E:Winfty}
\lim_{s\to \frac{k-\lam_1}{2}-r-1}\cW_B(\bfm(A),f_{s,\infty}^{[k,\lam]})=C_1^{[k,r,\lam]}\cdot \frac{(\det A)^k}{\sqrt{y_2}^{\lam_2}\sqrt{y_3}^{\lam_1+\lam_2}}\bfW^{[k,r,\gap]}_B(y)\cdot e^{-2\pi (b_{11}y_1+b_{22}y_2+b_{33}y_3)}.
\eeq
We proceed to give some details about the polynomial expansion of $\bfW^{[k,r,\gap]}_B(y)$. Put \[M=k-r-2.\] Note that $M\geq 0$ and $\lam_2-r\leq 0$. In \remref{R:1.E}, we have seen  that $\om^\star(\hh;M+2,\lam_2-r)$ is a polynomial function in $\hh$ of degree at most $M$. We write
\begin{align*}
\om^\star(\hh;M+2,\lam_2-r)&=\sum_{0\leq j_1, j_2, j_3\leq M}c_{j_1j_2j_3}\hh_{12}^{j_3}\hh_{23}^{j_1}\hh_{13}^{j_2}, & 
c_{j_1j_2j_3}&\in \C[\hh_{11},\hh_{22},\hh_{33}], 
\end{align*}  
where $\hh=(\hh_{ij})\in\Sym_3(\R)$. 
Since 
\begin{align*}
\om^\star(\vep^{\rm t} T\vep;\al,\beta)&=\ome^\star(T;\al,\beta), & 
\vep&=\diag{-1,1,1}
\end{align*} 
in view of the expression (\ref{E:zetaF.E}), we get $c_{j_1j_2j_3}=(-1)^{j_2+j_3}c_{j_1j_2j_3}$. 
Thus $c_{j_1j_2j_3}=0$ unless $j_2\equiv j_3\pmod 2$. 
By symmetry we conclude that $c_{j_1j_2j_3}=0$ unless $j_1\equiv j_2\equiv j_3\pmod 2$. 
%Moreover, we can write 
%\begin{align*}
%&T_{23}^{\lam_1}T_{12}^{\lam_2}T_{13}^{\lam_1+\lam_2}\sD_\gap\om^\star(\hh,M+2\lam_2-r)\\
%=&\sum_{0\leq j_1, j_2, j_3\leq M,\;j_1\equiv j_2\equiv j_3\pmod 2}a_{j_1j_2j_3}\hh_{12}^{j_3}\hh_{23}^{j_1}\hh_{13}^{j_2}, & 
%a_{j_1j_2j_3}&\in \C[\hh_{11},\hh_{22},\hh_{33}]. 
%\end{align*}
On the other hand, it is also explained in \remref{R:1.E} that the polynomial $\om^\star(\hh;M+2,\lam_2-r)$ is divisible by $\det(T)^{\max\stt{M-r+\lam_2,0}}$. We thus obtain
\begin{align*}&\left(T_{23}^{\lam_1}T_{12}^{\lam_2}T_{13}^{\lam_1+\lam_2}\sD_\gap\om^\star(\hh,M+2,\lam_2-r)\right)|_{T=A^{\rm t} BA}\\
=&\sum_{\max\stt{M-r+\lam_2,0}\leq j_1,j_2,j_3\leq M} a_{j_1j_2j_3}y_1^{j_1}y_2^{j_2}y_3^{j_3}\in\C[y_1,y_2,y_3].\end{align*}
This shows that \[\bfW^{[k,r,\gap]}_B(y)=(y_1y_2y_3)^{-M}y_1^{-\lam_2}\cdot \left(T_{23}^{\lam_1}T_{12}^{\lam_2}T_{13}^{\lam_1+\lam_2}\sD_\gap\om^\star(\hh,M+2\lam_2-r)\right)|_{T=A^{\rm t} BA}\] is a polynomial in $\C[y_1^{-1},y_2^{-1},y_3^{-1}]$ of the form
\beq\label{E:FC.E}
\bfW^{[k,r,\gap]}_B(y)=\sum_{0\leq a,b,c\leq \min\stt{r-\lam_2,k-r-2}}Q_{a,b,c}^{[k,\gap]}(B,r)y_1^{-a-\lam_2}y_2^{-b}y_3^{-c}, 
\eeq
and each coefficient $Q_{a,b,c}^{[k,\gap]}(B,r)$ is a polynomial in $B$. 

%%%%%%%%%%%%%%%%%%%%%%%%%%%%%%%%%%%%%%%%%%%%%%%%%%%%%%%%%%%%%%%%%%%%%%%%%%%%%%%%

\subsection{The coefficients of $\bfW^{[k,r,\gap]}_B(y)$}\label{ssec:44}
Put
\[\Sym_3^0(\R):=\left\{B\in \Sym_3(\R)\;\Biggl|\; B=\begin{pmatrix}0&b_3&b_2\\
b_3&0&b_1\\
b_2&b_1&0
\end{pmatrix},\,\det B\neq 0\right\} \]
We shall determine explicitly the coefficient $Q_{0,b,c}^{[k,\gap]}(B,r)$ of $\bfW^{[k,r,\gap]}_B(y)$ in the expression \eqref{E:FC.E} for $B\in\Sym_3^0(\R)$. Note that $B\in\Sym_3^0(\R)$ is never to be positive definite, so $\cW_B(\bfm(A),f_{s,\infty}^{[k,\lam]})|_{s=\frac{k-\lam_1}{2}-r-1}=0$ by \propref{P:FC.E}. However, we will see later in \propref{P:1.E} that these coefficients of $\bfW^{[k,r,\gap]}_B(y)$ with $B\in\Sym_3^0(\R)$ surprisingly contribute to the Fourier expansion of the ordinary and holomorphic part of the pull-back of our Eisenstein series due to the miraculous effect of the ordinary projector. 

We begin with a lemma. Let $\lam$ be a parity type. For every non-negative integer $M$ and $(T,u)\in \Sym_3(\R)\times \Sym_3(\R)$, we put 
\[\bfK_{\gap}^M(\hh;u):=\sD_\gap(\det(4\pi \hh+u)^M)\in \C[T].\]
Then \defref{D:zeta.E} can be rephrased as 
\beq\label{E:P45.3}\sD_\gap\om^\star(T,M+2,s)=\frac{1}{\Gamma_3(s)}\int_{\Sym_3^+(\R)} e^{-\tr(u)}\bfK_\lam^{M}(T;u)(\det u)^{s-2}\,\rmd u.\eeq
Let $\cY$ be the matrix with variables $Y_1,Y_2,Y_3$ given by  
\[\cY=\begin{pmatrix}0&\sqrt{Y_1Y_2}&\sqrt{Y_1Y_3}\\
\sqrt{Y_1Y_2}&0&\sqrt{Y_2Y_3}\\
\sqrt{Y_1Y_3}&\sqrt{Y_2Y_3}&0
\end{pmatrix}. \]
For two functions $f,g:\R_+\to\C$ and $c\in \R$ we say that $f(y)=g(y)+\bigO(y^c)$ if $\lim_{y\to\infty}\frac{f(y)-g(y)}{y^c}=0$.
\begin{lm}\label{L:53.E}
The polynomial $\bfK_{\gap}^M\bigl((4\pi)^{-1}\cY;u\bigl)\in\C[\sqrt{Y_1},\sqrt{Y_2},\sqrt{Y_3},u]$ has the expression
\begin{align*}
\bfK_{\gap}^M((4\pi)^{-1}\cY;u)&=C^{[k,r,\gap]}_2\bfc_\gap(Y_2,Y_3;u)\cdot Y_1^{M-\frac{\gap_1}{2}}+\bigO(Y_1^{M-\frac{\gap_1}{2}})
%\bfK_{\sD_i}(\cY;u)&=\bfc_i(Y_2,Y_3;u)\cdot Y_1^{N-\onehalf}+O(Y_1^{N-1})\text{ for }i=2,3.
\end{align*}
with $C^{[k,r,\gap]}_2\in\C$ and $\bfc_\gap(Y_2,Y_3;u)\in \C[\sqrt{Y_2},\sqrt{Y_3},u]$ give by  
\begin{align*}
%\bfc_1(Y_2,Y_3;u)=&(-16\pi^2) N\cdot\nabla^{N-1}\cdot \sqrt{Y_2Y_3},\\
C^{[k,r,\gap]}_2&=\frac{(2M+\gap_1)!}{(2M)!}\cdot\frac{2^{3(\gap_1+\gap_2)-\gap_1}M!}{(\sqrt{-1})^{\gap_2-\gap_1}(M-\gap_1-\gap_2)!}, \\
\bfc_\gap(Y_2,Y_3;u)&=\left(-u_{22}Y_3-u_{33}Y_2+2Y_2Y_3+2u_{23}\sqrt{Y_2Y_3}\right)^{M-\gap_1-\gap_2}\cdot \sqrt{Y_2}^{\gap_1+\gap_2}\sqrt{Y_3}^{\gap_2}.
%\bfc_3(Y_2,Y_3;u)=&(4\pi)^4\cdot N(N+1)(N-1)\cdot\nabla^{N-2}
\end{align*}
\end{lm}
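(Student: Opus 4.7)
The plan is to compute the polynomial $\bfK^M_{\sD_\lam}(\hh;u)=\sD_\lam\{\det(4\pi\hh+u)^M\}$ symbolically in $\hh$, then substitute $\hh=(4\pi)^{-1}\cY$ and extract its leading behaviour as $Y_1\to\infty$. Writing $T=4\pi\hh+u$, the Shimura convention $\partial_{ij}T_{kl}=2\pi(\delta_{ik}\delta_{jl}+\delta_{il}\delta_{jk})$ combined with the symmetry of the adjugate gives $\partial_{ij}\det T=4\pi(T^{\adj})_{ij}$, and by Leibniz
\[\partial_{mn}\partial_{ij}\det(T)^M=16\pi^2M(M-1)\det(T)^{M-2}(T^{\adj})_{mn}(T^{\adj})_{ij}+4\pi M\det(T)^{M-1}\partial_{mn}(T^{\adj})_{ij}. \]
Direct cofactor computations yield $\partial_{13}\partial_{12}\det T=8\pi^2T_{23}$, $\partial_{23}\partial_{11}\det T=-16\pi^2T_{23}$, $\partial_{12}\partial_{33}\det T=-16\pi^2T_{12}$, and $\partial_{23}\partial_{13}\det T=8\pi^2T_{12}$, so the relevant inner derivatives are at most linear in the entries of $T$.

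\textbf{Asymptotic expansion.} Specializing $T=\cY+u$ and setting $t_1=\sqrt{Y_1}$, a Laplace expansion along the first row produces
\[\det(\cY+u)=Y_1\bfC_1+\bigO(t_1),\qquad \bfC_1=-u_{22}Y_3-u_{33}Y_2+2Y_2Y_3+2u_{23}\sqrt{Y_2Y_3}, \]
and one reads off the leading $t_1$ behaviours of the adjugate entries: $(T^{\adj})_{23}\sim Y_1\sqrt{Y_2Y_3}$, $(T^{\adj})_{22}\sim -Y_1Y_3$, $(T^{\adj})_{33}\sim -Y_1Y_2$, $(T^{\adj})_{12}\sim t_1(\sqrt{Y_3}T_{23}-\sqrt{Y_2}u_{33})$, $(T^{\adj})_{13}\sim t_1(\sqrt{Y_2}T_{23}-\sqrt{Y_3}u_{22})$, while $(T^{\adj})_{11}$ is $Y_1$-independent. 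A short manipulation using $T_{23}=\sqrt{Y_2Y_3}+u_{23}$ gives the cancellation identity
\[\Xi:=(T^{\adj})_{12}(T^{\adj})_{33}-(T^{\adj})_{23}(T^{\adj})_{13}\sim -Y_1^{3/2}\sqrt{Y_2}\bfC_1. \]
A key structural observation is that after substitution the partials act on $Y_1$-orders as follows: $\partial_{23}$ preserves the $Y_1$-order (since $\hh_{23}$ evaluates to a $Y_1$-independent quantity), while $\partial_{12}$ and $\partial_{13}$ each strictly lower it by at least $\tfrac{1}{2}$, and $\partial_{11}$ strictly lowers it by at least $1$.

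\textbf{The three straightforward cases.} The case $\lam=(0,0,0)$ gives $\det(\cY+u)^M\sim Y_1^M\bfC_1^M$, matching the claim with $C_2=1$. For $\lam=(0,1,1)$, of the three summands of $\sD_{(0,1,1)}$ only $4\pi\partial_{23}\det(T)^M=16\pi^2M\det(T)^{M-1}(T^{\adj})_{23}$ reaches the order $Y_1^M$, because $(T^{\adj})_{23}$ is the unique relevant adjugate entry of $t_1$-degree $2$; dividing by $2\pi^2\sqrt{-1}$ yields $\frac{8M}{\sqrt{-1}}\bfC_1^{M-1}\sqrt{Y_2Y_3}\cdot Y_1^M$. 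For $\lam=(1,0,1)$, the Leibniz computation gives
\[\sD_{(1,0,1)}\det(T)^M=\frac{8M}{\sqrt{-1}}\Bigl\{(M-1)\det(T)^{M-2}\Xi-\tfrac{3}{2}\det(T)^{M-1}T_{12}\Bigr\}, \]
and substituting the asymptotics for $\Xi$ and $T_{12}$ collapses both summands to $-\frac{4M(2M+1)}{\sqrt{-1}}\bfC_1^{M-1}\sqrt{Y_2}\cdot Y_1^{M-1/2}$, matching the claim.

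\textbf{The composition $\lam=(1,1,2)$.} The main obstacle is $\sD_{(1,1,2)}=\sD_{(0,1,1)}\circ\sD_{(1,0,1)}$: one applies $\sD_{(0,1,1)}$ to the polynomial $g:=\sD_{(1,0,1)}\det(T)^M$ displayed above. By the order-lowering observation at the end of the asymptotic paragraph, among the three summands of $\sD_{(0,1,1)}$ only $4\pi\partial_{23}g$ can reach the target order $Y_1^{M-1/2}$, whereas $\partial_{13}\partial_{12}g$ and $\partial_{23}\partial_{11}g$ are both of order at most $Y_1^{M-3/2}$ (each lowered by $1$ from $g$). Expanding $4\pi\partial_{23}g$ by the Leibniz rule, computing $\partial_{23}\Xi\sim -4\pi Y_1^{3/2}Y_2\sqrt{Y_3}$ by the same method as for $\Xi$ itself, and collecting the contributions yields the leading term $32M(M-1)(2M+1)\bfC_1^{M-2}Y_2\sqrt{Y_3}\cdot Y_1^{M-1/2}$, which is precisely $C_2^{[k,r,(1,1,2)]}\bfc_{(1,1,2)}(Y_2,Y_3;u)\cdot Y_1^{M-1/2}$ as claimed.
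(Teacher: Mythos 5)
Your proposal is correct and follows essentially the same route as the paper's proof: a direct Leibniz computation of $\sD_\lam\det(4\pi\hh+u)^M$ with leading-order bookkeeping in $Y_1$ after substituting $\hh=(4\pi)^{-1}\cY$; your adjugate identities and the cancellation $\Xi\sim-Y_1^{3/2}\sqrt{Y_2}\bfC_1$ are precisely the paper's explicit cofactor formulas and its step $\partial_{12}\Delta\,\partial_{33}\Delta-\partial_{13}\Delta\,\partial_{23}\Delta\equiv-R\sqrt{Y_1Y_2}$ in slightly different notation. The only cosmetic difference is that for $\lam=(1,1,2)$ you apply $\sD_{(1,0,1)}$ before $\sD_{(0,1,1)}$ while the paper does the reverse (immaterial, since the constant-coefficient operators commute), and all three leading constants you obtain agree with $C^{[k,r,\gap]}_2$.
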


\begin{proof}
This is proved by a direct computation. 
Note that 
\begin{align*}
\partial_{11}\det(\hh+u)=&(\hh_{22}+u_{22})(\hh_{33}+u_{33})-(\hh_{23}+u_{23})^2,\\
\partial_{12}\det(\hh+u)=&-(\hh_{12}+u_{12})(\hh_{33}+u_{33})+(\hh_{23}+u_{23})(\hh_{13}+u_{13}),\\
\partial_{13}\det(\hh+u)=&-(\hh_{13}+u_{13})(\hh_{22}+u_{22})+(\hh_{12}+u_{12})(\hh_{23}+u_{23}),\\
\partial_{23}\det(\hh+u)=&-(\hh_{23}+u_{23})(\hh_{11}+u_{11})+(\hh_{12}+u_{12})(\hh_{13}+u_{13}),\\
\partial_{33}\det(\hh+u)=&(\hh_{11}+u_{11})(\hh_{22}+u_{22})-(\hh_{12}+u_{12})^2.
\end{align*}

Put 
\begin{align*}
\Delta&=\det(\hh+u), & 
R&=(-u_{22}Y_3-u_{33}Y_2+2Y_2Y_3+2u_{23}\sqrt{Y_2Y_3})Y_1. 
\end{align*} 
Since $\Delta|_{\hh=\cY}=R+\bigO(Y_1)$, we have 
\begin{align*}
&\bfK_{(0,1,1)}^M((4\pi)^{-1}\cY;u)\\
=&(2\pi^2\sqrt{-1})^{-1}\cdot (4\pi)^2\{\partial_{13}\partial_{12}-\partial_{23}(\partial_{11}-1)\}\Delta^M|_{\hh=\cY}\\
\equiv &-8\sqrt{-1}[M(M-1)R^{M-2}(\partial_{13}\Delta\partial_{12}\Delta-\partial_{23}\Delta\partial_{11}\Delta)+MR^{M-1}\{\partial_{13}\partial_{12}-\partial_{23}(\partial_{11}-1)\}\Delta]|_{\hh=\cY}\\
\equiv &-8\sqrt{-1}MR^{M-1}\partial_{23}\Delta|_{\hh=\cY}\\
\equiv & -8\sqrt{-1}MR^{M-1}\sqrt{Y_2Y_3}Y_1\pmod{\bigO(Y_1^M)}, 
\end{align*}
which verifies the case $\gap=(0,1,1)$. 
When $\gap=(1,0,1)$, we have 
\begin{align*}
&\bfK_{(1,0,1)}^M((4\pi)^{-1}\cY;u)\\
%=&(2\pi^2\sqrt{-1})^{-1}\cdot (4\pi)^2(\partial_{12}\partial_{33}-\partial_{23}\partial_{13})\Delta^M|_{\hh=\cY}\\
\equiv &-8\sqrt{-1}\{M(M-1)R^{M-2}(\partial_{12}\Delta\partial_{33}\Delta-\partial_{13}\Delta\partial_{23}\Delta)+MR^{M-1}(\partial_{12}\partial_{33}\Delta-\partial_{13}\partial_{23}\Delta)\}|_{\hh=\cY}\\
\con &-8\sqrt{-1}\biggl\{M(M-1)R^{M-2}(-R\sqrt{Y_1Y_2})+MR^{M-1}\left(-\frac{3}{2}\sqrt{Y_1Y_2}\right)\biggl\}\\
\con &4\sqrt{-1}M(2M+1)R^{M-1}\sqrt{Y_1Y_2} \pmod{\bigO(Y_1^{M-\frac{1}{2}})}
\end{align*}
as claimed. 
Since 
\[\sD_{(0,1,1)}\Delta^M|_{T=\caly}=-8\sqrt{-1}M\Delta^{M-1}T_{12}T_{13}|_{T=\caly}+\bigO(Y_1^M), \]
we have  
\begin{align*}
\bfK_{(1,1,2)}^M((4\pi)^{-1}\cY;u)
\equiv & 32M(M-1)(2M-1)R^{M-2}\sqrt{Y_1Y_2}Y_1\sqrt{Y_2Y_3}\\
&-64 M(M-1)\Delta^{M-2}(T_{13}\partial_{33}\Delta-T_{12}\partial_{23}\Delta)|_{\hh=\caly}\pmod{\bigO(Y_1^{M-\frac{1}{2}})}, 
\end{align*}
which proves the case $\lam=(1,1,2)$. 
\end{proof}

Let $(k,l,m)$ be a triplet of integers such that $k\geq l\geq m\geq 2$. We say that the triplet $(k,l,m)$ has the parity type $\gap$ if 
\beq\label{E:parity}\begin{aligned}
\gap_1,\gap_2&\in\{0,1\}, & 
\gap_1&\con l-m\pmod{2}, & 
\gap_2&\con k-l\pmod{2}, & 
\gap_3&=\gap_1+\gap_2. 
%\gap_3&\con k-m\pmod{2}. 
\end{aligned}\eeq
The following explicit formula of the coefficient $Q_{0,b,c}^{[k,\gap]}(B,r)$ is a key ingredient  in the proof of \propref{P:1.E}.
\begin{lm}\label{L:coeff.E}
Let $\gap$ be the parity type of $(k,l,m)$. {Suppose that \[k\leq l+m-1.\]} Let $r$ be an integer such that \[\lam_2\leq k-\frac{l+m+\lam_1}{2}\leq r\leq \frac{l+m}{2}-2\leq k-2.\]  
Put 
\begin{align*}
M&=k-r-2, & 
b&=\frac{1}{2}(k-l-\gap_2), & 
c&=\frac{1}{2}(k-m-\gap_3), & 
n&=M+\frac{1}{2}(l+m-\lam_1). 
\end{align*} 
If 
\[B=\begin{pmatrix}0&b_3&b_2\\
b_3&0&b_1\\
b_2&b_1&0
\end{pmatrix}\in\Sym_3^0(\R)\] has zero diagonal entries, then we have 
\[Q_{0,b,c}^{[k,\gap]}(B,r)=w_{0,b,c}\cdot (b_1b_2b_3)^nb_1^{-k}b_2^{-l}b_3^{-m},\]
where   
\[w_{0,b,c}=(4\pi)^{3M-b-c-2\gap_1-\gap_2}2^{M+\lam_1+2\lam_2-b-c}\frac{(\sqrt{-1})^{\gap_1-\gap_2}(2M+\gap_1)!M!}{(2M)!(M-\lam_1-\lam_2-b-c)!}\frac{(r-\gap_2)!}{b!c!(r-\gap_2-b-c)!}. \]
\end{lm}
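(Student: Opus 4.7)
The plan combines Lemmas \ref{L:53.E} and \ref{L:7.E} with a multinomial expansion and a Chu-Vandermonde-type summation identity. First, I identify $A^{\rm t}BA$ with $(4\pi)^{-1}\cY$ under the substitution $Y_i=4\pi(b_jb_k/b_i)y_i$ for $\{i,j,k\}=\{1,2,3\}$. The outer prefactor $(y_1y_2y_3)^{r-k+2}\sqrt{y_1}^{\gap_1}$ in the definition of $\bfW^{[k,r,\gap]}_B$ contributes $y_1^{r-k+2+\gap_1/2}$, and since $M=k-r-2$ the leading $Y_1$-term $Y_1^{M-\gap_1/2}$ furnished by Lemma \ref{L:53.E} produces exactly the $y_1^0$-part. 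Thus $Q_{0,b,c}^{[k,\gap]}(B,r)$ is determined solely by the leading contribution $C^{[k,r,\gap]}_2\bfc_\gap(Y_2,Y_3;u)Y_1^{M-\gap_1/2}$.

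After substituting $Y_2=4\pi(b_1b_3/b_2)y_2$ and $Y_3=4\pi(b_1b_2/b_3)y_3$ (so $\sqrt{Y_2Y_3}=4\pi b_1\sqrt{y_2y_3}$), the remaining Gaussian integral over $u$ becomes polynomial and is converted, via Lemma \ref{L:7.E} extended to $s=\gap_2-r$ by polynomial identities, to $\Omega:=\bfc_\gap^{\text{sub}}(-\partial_{ij})(\det\hh)^{r-\gap_2}\big|_{\hh=\bfone_3}$, where $\bfc_\gap^{\text{sub}}$ denotes $\bfc_\gap$ after the above substitution. Expanding the bracket $-u_{22}Y_3-u_{33}Y_2+2Y_2Y_3+2u_{23}\sqrt{Y_2Y_3}$ multinomially with exponents $(\alpha,\beta,\gamma,\delta)$ summing to $M-\gap_1-\gap_2$, matching the required $y_2^{-b}y_3^{-c}$ monomial forces $\alpha=b-t$, $\beta=c-t$, $\gamma=M-\gap_1-\gap_2-b-c$, and $\delta=2t$ for $0\leq t\leq\min(b,c)$.

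To evaluate $(-\partial_{22})^{b-t}(-\partial_{33})^{c-t}(-\partial_{23})^{2t}(\det\hh)^{r-\gap_2}\big|_{\hh=\bfone_3}$, I restrict $\hh$ to the three-parameter family $\hh_{22}=1+x$, $\hh_{33}=1+y$, $\hh_{23}=z$ (other entries as in $\bfone_3$): then $\det\hh=(1+x)(1+y)-z^2$, and reading off Taylor coefficients yields
\[(-\partial_{22})^{b-t}(-\partial_{33})^{c-t}(-\partial_{23})^{2t}(\det\hh)^{r-\gap_2}\big|_{\hh=\bfone_3}=\frac{(-1)^{b+c-t}(2t)!\,(N-t)!\,N!}{4^t\,t!\,(N-b)!\,(N-c)!},\]
with $N=r-\gap_2$ and the convention $\partial_{23}=\tfrac{1}{2}\partial/\partial\hh_{23}$. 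Summing over $t$ invokes the Chu-Vandermonde-type identity
\[\sum_{t=0}^{\min(b,c)}\frac{(-1)^t(N-t)!}{(b-t)!\,(c-t)!\,t!}=\frac{(N-b)!\,(N-c)!}{b!\,c!\,(N-b-c)!},\]
which cancels $(N-b)!(N-c)!$ and leaves the trinomial $\tfrac{(r-\gap_2)!}{b!\,c!\,(r-\gap_2-b-c)!}$ from $w_{0,b,c}$; simultaneously $\tfrac{M!}{(M-\gap_1-\gap_2)!}$ from $C^{[k,r,\gap]}_2$ merges with $\tfrac{(M-\gap_1-\gap_2)!}{\gamma!}$ to give $\tfrac{M!}{(M-\lam_1-\lam_2-b-c)!}$.

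The main obstacle is the careful bookkeeping of constants in reassembly: tracking the total powers of $4\pi$ (yielding $3M-b-c-2\gap_1-\gap_2$), of $2$ (yielding $M+\lam_1+2\lam_2-b-c$), and the global phase $(\sqrt{-1})^{\gap_1-\gap_2}$ from $C^{[k,r,\gap]}_2$, as well as verifying that the half-integer powers of $b_i$ produced by $(4\pi(b_2b_3/b_1))^{M-\gap_1/2}$ and by $\sqrt{Y_2}^{\gap_1+\gap_2}\sqrt{Y_3}^{\gap_2}$ recombine (through the $Y_2Y_3=(4\pi b_1)^2y_2y_3$-factors in the bracket) into the integer powers $(b_1b_2b_3)^nb_1^{-k}b_2^{-l}b_3^{-m}$ of the claimed formula.
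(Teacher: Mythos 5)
Your proposal is correct and follows essentially the same route as the paper: the substitution $Y_i=4\pi(b_jb_k/b_i)y_i$, extraction of the leading $Y_1$-term via Lemma \ref{L:53.E}, conversion of the Gaussian moments to derivatives of $(\det\hh)^{r-\gap_2}$ via Lemma \ref{L:7.E}, a multinomial expansion of the bracket, and a Vandermonde-type summation (your identity is an equivalent rewriting of the paper's $\sum_i\binom{r_1}{i}\binom{r_1-i}{b-i}\binom{r_1-i}{c-i}(-1)^i=\frac{r_1!}{b!c!(r_1-b-c)!}$). The only cosmetic difference is that you read off the mixed derivative by Taylor expansion in $(\hh_{22},\hh_{33},\hh_{23})$ directly, while the paper first expands $(\hh_{22}\hh_{33}-\hh_{23}^2)^{r_1}$ binomially; both computations agree, including your formula $\frac{(-1)^{b+c-t}(2t)!(N-t)!N!}{4^t t!(N-b)!(N-c)!}$.
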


\begin{proof}
Substitute $Y_i=4\pi\frac{b_1b_2b_3}{b_i^2}y_i$ into the matrix $\cY$. 
Then $\cY=4\pi A^{\rm t}BA$ and 
\[\bfW^{[k,r,\gap]}_B(y)=\sum_{a,b,c}Q_{a,b,c}^{[k,\gap]}(B,r)(4\pi)^{a+b+c}Y_1^{-a}Y_2^{-b}Y_3^{-c} \cdot b_1^{b+c-a}b_2^{a+c-b}b_3^{a+b-c}. \]
 On the other hand, by definition we have 
\[\bfW^{[k,r,\gap]}_B(y)=\left(\frac{(4\pi)^3b_1b_2b_3}{Y_1Y_2Y_3}\right)^M\frac{\sqrt{Y_1}^{\gap_1}\sqrt{Y_2}^{\gap_1+\gap_2}\sqrt{Y_3}^{2\gap_1+\gap_2}}{(4\pi)^{2\gap_1+\gap_2}b_1^{\gap_1+\gap_2}b_2^{\gap_1}}{\sD_\gap}\om^\star((4\pi)^{-1}\cY,M+2,\gap_2-r)\]
with $M=k-r-2$. From these equations, we find that 
\[Q_{0,b,c}^{[k,\gap]}(B,r)=w_{0,b,c}\cdot (b_1b_2b_3)^nb_1^{-k}b_2^{-l}b_3^{-m}\text{ for some }w_{0,b,c}\in\C. \]
Our task is to determine $w_{0,b,c}$. In view of \eqref{E:P45.3} and \lmref{L:53.E}, we see that $w_{0,b,c}$ is the coefficient of $Y_2^{M-\gap_1-\gap_2-b}Y_3^{M-\gap_1-\gap_2-c}$ in the polynomial
  \begin{align*}
  &\frac{(4\pi)^{3M-b-c-2\gap_1-\gap_2}}{\sqrt{Y_2}^{\gap_1+\gap_2}\sqrt{Y_3}^{\gap_2}}\int_{\Sym_3^+(\R)}e^{-\tr(u)}C^{[k,r,\gap]}_2\bfc_\gap(Y_2,Y_3;u)\frac{(\det u)^{s-2}}{\Gamma_3(s)}\,\rmd u\biggl|_{s=\gap_2-r}\\
 =& (4\pi)^{3M-b-c-2\gap_1-\gap_2}C^{[k,r,\gap]}_2\\
 &\quad\times\int_{\Sym_3^+(\R)}e^{-\tr(u)}\left(-u_{22}Y_3-u_{33}Y_2+2Y_2Y_3+2u_{23}\sqrt{Y_2Y_3}\right)^{M-\gap_1-\gap_2}\frac{(\det u)^{s-2}}{\Gamma_3(s)}\,\rmd u|_{s=\gap_2-r} 
  \end{align*}
  Here we have used \lmref{L:7.E} in the above equality. Put 
\begin{align*}
L&=M-\gap_1-\gap_2, & 
r_1&=r-\gap_2. 
\end{align*}
Notice that $b\leq c$ by assumption. 
The coefficient of $Y_2^{L-b}Y_3^{L-c}$ in the last integral is given by 
\begin{align*}
&\sum_{i=0}^b\frac{2^{L-b-c}(-1)^{b+c}\cdot L!}{(b-i)!(c-i)!(L-b-c)!(2i)!}\int_{\Sym_3^+(\R)}e^{-\tr(u)}u_{33}^{b-i}u_{22}^{c-i}(2u_{23})^{2i}\frac{(\det u)^{s-2}}{\Gamma_3(s)}\,\rmd u|_{s=-r_1}
\\
=&\sum_{i=0}^b\frac{2^{L-b-c}\cdot L!2^{2i}}{(b-i)!(c-i)!(L-b-c)!(2i)!} \partial_{33}^{b-i}\partial_{22}^{c-i}\partial_{23}^{2i}(\hh_{22}\hh_{33}-\hh_{23}^2)^{r_1}|_{\hh_{22}=\hh_{33}=1,\hh_{23}=0}\\
=&\sum_{i=0}^b\frac{2^{L-b-c}\cdot L!2^{2i}}{(b-i)!(c-i)!(L-b-c)!(2i)!}\sum_{j=0}^{r_1} {r_1\choose j}\partial_{33}^{b-i}\partial_{22}^{c-i}\partial_{23}^{2i}\hh_{22}^{r_1-j}\hh_{33}^{r_1-j}(-\hh_{23}^2)^j|_{\hh_{22}=\hh_{33}=1,\hh_{23}=0}\\
=&\sum_{i=0}^b\frac{2^{L-b-c}\cdot L!}{(b-i)!(c-i)!(L-b-c)!} {r_1\choose i}(-1)^i\partial_{33}^{b-i}\partial_{22}^{c-i}(\hh_{22}^{r_1-i}\hh_{33}^{r_1-i})|_{\hh_{22}=\hh_{33}=1}\\
=&\frac{2^{L-b-c}\cdot L!}{(L-b-c)!}\sum_{i=0}^b {r_1\choose i}{r_1-i\choose b-i}{r_1-i\choose c-i}(-1)^i
\end{align*}
in view of \lmref{L:7.E}. 
The last summation equals
\[\frac{r_1!}{(r_1-b)!b!}\sum_{i=0}^b{b\choose i}{r_1-i\choose r_1-c}(-1)^i
=\frac{r_1!}{(r_1-b)!b!}\cdot {r_1-b\choose r_1-b-c}
=\frac{r_1!}{b!c!(r_1-b-c)!}, \]
where we can deduce this equality by equating the terms of degree $r_1-c$ of the identity 
\[\sum_{i=0}^b{b\choose i}(1+X)^{r_1-i}(-1)^i=(1+X)^{r_1}\left(1-\frac{1}{1+X}\right)^b=(1+X)^{r_1-b}X^b,\]
Finally, we see that $w_{0,b,c}$ equals
\[(4\pi)^{3M-b-c-2\gap_1-\gap_2}C^{[k,r,\gap]}_2\frac{2^{M-\lam_1-\lam_2-b-c}\cdot (M-\lam_1-\lam_2)!}{(M-\lam_1-\lam_2-b-c)!}\frac{(r-\gap_2)!}{b!c!(r-\gap_2-b-c)!} \]
by putting together the above computations, which completes our proof. 
\end{proof}

%%%%%%%%%%%%%%%%%%%%%%%%%%%%%%%%%%%%%%%%%%%%%%%%%%%%%%%%%%%%%%%%%%%%%%%%%%%%%%%%

\subsection{The archimedean zeta integral}\label{ssec:arhizeta}

Let $V_\pm$ be the weight raising/lowering operator given by 
\[V_\pm:=\frac{1}{(-8\pi)}\left(\pDII{1}{-1}\ot 1\pm\pMX{0}{1}{1}{0}\ot\sqrt{-1}\right)\in\Lie(\GL_2(\R))\ot_\R\C.\]
For each integer $k\geq 2$ we denote by $\sigma_k$ the discrete series of  $\GL_2(\R)$ of the minimal weight $k$ and by $W_k$ the Whittaker function of $\sigma_k$ characterized by 
\[W_k(\diag{y,1})=y^{k/2}e^{-2\pi y}\bbI_{\R_+}(y). \]
Set $W_k^{[t]}=V_+^{t}W_k$. 
It follows from (\ref{tag:MaassShimura}) below that
\beq\label{E:41.E}
W_k^{[t]}(\diag{y,1})=\sum_{j=0}^t (-4\pi)^{j-t}{t\choose j}\frac{\Gamma(t+k)}{\Gamma(j+k)}\cdot y^{\frac{k}{2}+j}e^{-2\pi y}\bbI_{\R_+}(y). 
\eeq
Let $(k,l,m)$ be the triplet of integers as in the previous subsection. 
{In the literature, the various archimedean zeta integral of Garrett, Piateski-Shapiro and Rallis have been evaluated for different purposes  (\cf \cite{Garrett}, \cite{Orl87}, \cite{GH93AJM}, \cite{Ikeda98Crelle,Ikeda99} and {\cite{BP2}}). When the parity type $\lam=(0,0,0)$, \ie $k\con l\con m\pmod{2}$, the integral $Z_\infty(s)$ is essentially computed in \cite{Orl87}. We shall extend Orloff's computation to general parity types.}
Put $\cJ_\infty=\pDII{-1}{1}$. 
We will evaluate the archimedean zeta integral in \eqref{E:GPSR} with the above choices of sections and Whittaker functions. We put 
{\beq
\textcolor{black}{Z_\infty(s):=Z\biggl(\rho(\cJ_\infty)W_k,\rho(\cJ_\infty)W^{\bigl[\frac{k-l-\lam_2}{2}\bigl]}_l,\rho(\cJ_\infty)W^{\bigl[\frac{k-m-\lam_3}{2}\bigl]}_m,f^{[k,\gap]}_{s,\infty}\biggl),} \label{tag:Aintegral}
%\\&=\int_{Z(\R)U_0(\R)\bksl H(\R)}W_k(g_1\cJ_\infty)W^{\bigl[\frac{k-l-\gap_2}{2}\bigl]}_l(g_2\cJ_\infty)W^{\bigl[\frac{k-m-\gap_3}{2}\bigl]}_m(g_3\cJ_\infty)f^{[k,\gap]}_{s,\infty}(\Etata\iota(g_1,g_2,g_3))\,\rmd g_1\rmd g_2\rmd g_3. 
\eeq}
where $\gap=(\lam_1,\lam_2,\lam_3)$ is the parity type of $(k,l,m)$. 
Put 
\[\gamma^\star_{(k,m,l)}(s)=(\sqrt{-1})^{k+2\gap_2+\gap_1}\frac{\Gamma\bigl(s+\frac{k-m-l}{2}+1\bigl)}{\Gamma\bigl(s-\frac{k-\gap_1}{2}+\gap_2+1\bigl)}\cdot\frac{\Gamma\bigl(s+\frac{k+\gap_1}{2}\bigl)}{\Gamma\bigl(s+\frac{k+\gap_1}{2}+1\bigl)} \cdot\frac{\pi^{3s+1}(4\pi)^{l+m-\frac{k-\gap_1}{2}+\gap_2}}{4\Gamma\bigl(s+\frac{m+l-k}{2}\bigl)\Gamma(2s+k)}. \]

{We remark that the special value $\gamma^\star_{(k,m,l)}(\frac{k-\lam_1}{2}-r-1)$ is very similar to $w_{0,b,c}$ if $k<l+m-1$. This point is crucial in the normalization of Eisenstein series and will be explicated in (\ref{tag:equality}). Recall that $L(s,\sigma_k\times\sigma_l\times\sigma_m)$ equals
\[\Gamma_\C\left(s+\frac{k+l+m-3}{2}\right)\Gamma_\C\left(s+\frac{k-l+m-1}{2}\right)\Gamma_\C\left(s+\frac{k+l-m-1}{2}\right)\Gamma_\C\left(s+\frac{m+l-k-1}{2}\right) \]
if $k<l+m$, and 
\[\Gamma_\C\left(s+\frac{k+l+m-3}{2}\right)\Gamma_\C\left(s+\frac{k-l+m-1}{2}\right)\Gamma_\C\left(s+\frac{k+l-m-1}{2}\right)\Gamma_\C\left(s+\frac{k-l-m+1}{2}\right) \]
if $k\geq l+m$. 
When $k\geq l+m$, the (unbalanced) critical points are $s=n-\frac{k+l+m-3}{2}$ with $n\in\Z$, $l+m-1\leq n\leq k-1$. 
It is important to note that $\gam^\star_{(k,m,l)}\bigl(s-\frac{1}{2}\bigl)$ has a zero at these points, which is one of reasons for our restriction to the balanced critical points.  
}
\begin{lm}\label{L:arch.E}
Let $\gap$ be the parity type of $(k,l,m)$.
Then 
\[Z_\infty(s)=\chi_\infty(-1)%(-1)^{k+\gap_1}
\cdot\frac{\gamma^\star_{(k,m,l)}(s)}{2^{5+(k+m+l)}}L\left(s+\frac{1}{2},\sigma_k\times\sigma_l\times\sigma_m\right)\times
\begin{cases} 
1 &\text{if $k<l+m$, } \\
\frac{\Gam_\C\left(s+\frac{l+m-k}{2}\right)}{\Gam_\C\left(s+\frac{k-l-m}{2}+1\right)} &\text{if $k\geq l+m$. }
\end{cases}\]
%\begin{align*}
%Z_\infty(s)%&=2^{-1}\vol(\SO(2))^3\cdot 2^{6-4s-5k}\pi^{3-s-2k}(\sqrt{-1})^{-k} \\
%&\times\frac{\Gamma(s+k-\frac{m+l}{2})\Gamma(s+k-1)\Gamma(s+k+\frac{m-l}{2}-1)\Gamma(s+k+\frac{l-m}{2}-1)\Gamma(s+k+\frac{l+m}{2}-2)}{\Gamma(s)\Gamma(s+k)\Gamma(2s+2k-2)}\\
%=&\chi_\infty(-1)\vol(\SO(2))^3\cdot 2^{-2k+l+m-7+\gap_1+2\gap_2}\cdot\pi^{3s-\frac{k-\gap_1}{2}+m+l+1+\gap_2}\cdot(-\sqrt{-1})^{k+2\gap_2+\gap_1}\\
%&\times \frac{\Gamma(s+\frac{k-m-l}{2}+1)}{\Gamma(s-\frac{k-\gap_1}{2}+\gap_2+1)}\cdot\frac{\Gamma(s+\frac{k+\gap_1}{2})}{\Gamma(s+\frac{k+\gap_1}{2}+1)} \cdot\frac{1}{\Gamma(s+\frac{m+l-k}{2})\Gamma(2s+k)}\cdot L\left(s+\frac{1}{2},\sigma_k\times\sigma_l\times\sigma_m\right).\end{align*}
%In particular, $Z_\infty(s)|_{s=-r+\frac{k-\gap_1}{2}-1}$ equals \[\chi_\infty(-1)\vol(\SO(2))^3\cdot 2^{-5-(k+m+l)}(-1)^{k+\gap_1}\cdot\Gamma^\star_{(k,m,l)}(r)\cdot L\left(n-\frac{k+l+m-3}{2},\sigma_k\times\sigma_l\times\sigma_m\right). \]
%by the miracle\[\frac{(2k-2r-4+\gap_1)!}{(2k-2r-4)!\cdot \Gamma(2k-2r-2)}=\frac{1}{\Gamma(2k-2r-2-\gap_1)}!\]
 %This shows that 
%\[Z_\infty(s)=2^{-1}\vol(\SO(2))^3\times \text{(Orloff's archimedean factor in \cite[Theorem 1, page 172]{Orl87})}.\]
\end{lm}

\begin{proof} 
%There are four cases
%\begin{itemize}
%\item $\gap=(0,0,0)$: $f^{[k,\gap]}_s=f^{[k]}_s$ has weight $(k,k,k)$,
%\item $\gap=(0,1,1)$: $f^{[k,\gap]}_s=f^{[k]}_s\ol{H}_{23}$ has weight $(k,k-1,k-1)$,
%\item $\gap=(1,0,1)$: $f^{[k,\gap]}_s=f^{[k-1]}_sH_{12}$ has weight $(k,k,k-1)$,
%\item $\gap=(1,1,2))$: $f^{[k,\gap]}_s=f^{[k-1]}_sH_{12}\ol{H}_{23}$ has $(k,k-1,k-2)$.
%\end{itemize}
%Namely, \[(\bfs,\bfk,\bfl,\bfm)=(s,k,l,m),\,\left(s+\frac{1}{2},k-1,l,m\right),\,\left(s+\frac{1}{2},k,l,m-1\right),\,(s+1,k-1,l,m-1)\] according to $\gap=(0,0,0),(0,1,1),(1,0,1),(1,1,2)$. 
For $a=(a_1,a_2,a_3)\in \R_+^3$ and $x\in \R$, we set
\begin{align*}
t(a)&=\mathrm{diag}\left(\pDII{a_1}{a_1^{-1}},\pDII{a_2}{a_2^{-1}},\pDII{a_3}{a_3^{-1}}\right), \\
u(x)&=\mathrm{diag}\left(\pMX{1}{\frac{x}{3}}{0}{1},\pMX{1}{\frac{x}{3}}{0}{1},\pMX{1}{\frac{x}{3}}{0}{1}\right). 
\end{align*}
When $x\neq 0$, the Iwasawa decomposition of $\Eta \iota(u(x)t(a))$ can described as follows:
Put 
\[P=\begin{pmatrix}
a_1^2&a_1a_2&a_1a_3\\
a_1a_2&a_2^2&a_2a_3\\
a_1a_3&a_2a_3&a_3^2
\end{pmatrix}. \]
We write $\Eta \iota(u(x)t(a))=\bfn({b})\bfm(A)\bfu$ with ${b}\in \Sym_3(\R)$, $A\in\GL_3(\R)$ and $\bfu=\begin{pmatrix} D & -C \\ C & D\end{pmatrix}\in\U(3)$. 
Since $D^{-1}C=x^{-1}P$, we can choose $U\in\GL_3(\R)$ so that 
\begin{align*}
U^{\rm t}U&=(x^2\bfone_3+P^2)^{-1}, & 
\bfu&=\pMX{Ux}{-UP}{UP}{Ux}\in\U(3). 
\end{align*} 
Put $u=Ux-\sqrt{-1}UP$. 
Then $u^{\rm t}u=(x\ono_3-P\sqrt{-1})(x\ono_3+P\sqrt{-1})^{-1}$. 
By direct computations we get 
\begin{align*}
\det A&=a_1a_2a_3\abs{z}^{-1}, & 
\det u&=\frac{\bar z}{|z|}, & 
H_{23}(u)&=-2\sqrt{-1} \frac{a_2a_3}{z}, & 
H_{12}(u)&=-2\sqrt{-1}\frac{a_1a_2}{z},
\end{align*}
where {$z=x+\sqrt{-1}(a_1^2+a_2^2+a_3^2)$} (see \cite[(6.7), (6.8)]{GK92}). 
Put 
\begin{align*}
b&=\frac{1}{2}(k-l-\gap_2), & 
c&=\frac{1}{2}(k-m-\gap_3), & 
(\bfs,\bfk,\bfl,\bfm)&=\left(s+\frac{\gap_3}{2},k-\gap_2,l,m-\gap_1\right). 
\end{align*}
{Recall that $\hat\ome=\sgn^{k-\lam_1}$. }
It follows that 
\begin{align*}
f^{[k,\gap]}_{s,\infty}(\Eta\iota(u(x)t(a)\bfd(-1)))
&=\chi_\infty(-1)(-1)^{k-\lam_1}\left(2\sqrt{-1}\frac{a_1a_2}{\bar z}\right)^{\gap_1}\left(-2\sqrt{-1}\frac{ a_2a_3}{z}\right)^{\gap_2}\left(\frac{a_1a_2a_3}{|z|}\right)^{2s+2}\left(\frac{z}{|z|}\right)^{k-\gap_1}\\
&=\chi_\infty(-1)2^{\gap_1+\gap_2}\sqrt{-1}^{\gap_2-\gap_1}(a_1a_2a_3)^{2\bfs+2}a_1^{-\gap_2}a_3^{-\gap_1}|z|^{-2\bfs-2-\bfk}(-z)^\bfk. 
\end{align*}
From \eqref{tag:weight} and \eqref{E:41.E}, we see that $2Z_\infty(s)$ equals 
\begin{align*}
&\hat\om(-1)\int_\R\int_{\R_+^3}W_k(\bfn(x/3)\bfm(a_1))W^{[b]}_l(\bfn(x/3)\bfm(a_2))W^{[c]}_m(\bfn(x/3)\bfm(a_3))f^{[k,\gap]}_{s,\infty}(\Eta\iota( u(x)t(a)\bfd(-1)))\,\rmd x\prod_{j=1}^3\frac{\rmd ^\x a_j}{\abs{a_j}^2}\\
=&(\chi_\infty\hat\om)(-1)2^{\gap_1+\gap_2}\sqrt{-1}^{\gap_2-\gap_1}(-4\pi)^{-b-c}\sum_{A=0}^\infty\sum_{B=0}^\infty(-4\pi)^{A+B}\binom{b}{A}\binom{c}{B}\frac{\Gamma(l+b)}{\Gamma(l+A)}\frac{\Gamma(m+c)}{\Gamma(m+B)}\\
&\times\int_\R\int_{\R_+^3}a_1^{2\bfs+\bfk}a_2^{2\bfs+\bfl+2A}a_3^{2\bfs+\bfm+2B}\abs{z}^{-2\bfs-2-\bfk}(-z)^{\bfk}e^{2\pi \sqrt{-1}x}e^{-2\pi(a_1^2+a_2^2+a_3^2)}\,\rmd x\prod_{j=1}^3\rmd^\x a_j. 
\end{align*}

Put $\al=\bfs+1+\frac{\bfk}{2}$ and $\beta=\bfs+1-\frac{\bfk}{2}$. 
The last integral equals
\[\frac{(-2\pi\sqrt{-1})^\al(2\pi \sqrt{-1})^\beta}{\Gamma(\al)\Gamma(\beta)}\int_{\R_+^4} a_1^{\bfk+2\bfs}a_2^{\bfl+2A+2\bfs}a_3^{\bfm+2B+2\bfs}\frac{(1+t)^{\al-1}t^{\beta-1}}{e^{4\pi(a_1^2+a_2^2+a_3^2)(1+t)}}\rmd t\prod_{j=1}^3\rmd^\x a_j. \]
We here use the identity 
\[\int_\R \frac{e^{-2\pi\sqrt{-1}x}\,\rmd x}{(x+\sqrt{-1}y)^\alp(x-\sqrt{-1}y)^\bet}=\frac{(-2\pi\sqrt{-1})^\alp(2\pi\sqrt{-1})^\bet}{\Gam(\alp)\Gam(\bet)}\int_{\R_+}\frac{(t+1)^{\alp-1}t^{\bet-1}}{e^{2\pi y(1+2t)}}\,\rmd t \]
(see \cite[(6.11)]{GK92}). 
The quadruple integral above equals  
\begin{align*}
&\frac{1}{(4\pi)^{\frac{\bfk+\bfl+\bfm}{2}+3\bfs+A+B}}
\int_{\R_+^4}\frac{a_1^{\bfk+2\bfs}a_2^{\bfl+2A+2\bfs}a_3^{\bfm+2B+2\bfs}(1+t)^{\al-1}t^{\beta-1}}{e^{a_1^2+a_2^2+a_3^2}(1+t)^{\frac{\bfk+\bfl+\bfm}{2}+3\bfs+A+B}}\rmd t\prod_{j=1}^3\rmd^\x a_j\\
=&\frac{\Gamma\bigl(\frac{\bfk}{2}+\bfs\bigl)\Gamma\bigl(\frac{\bfl}{2}+\bfs+A\bigl)\Gamma\bigl(\frac{\bfm}{2}+\bfs+B\bigl)}{2^3(4\pi)^{\frac{\bfk+\bfl+\bfm}{2}+3\bfs+A+B}}\int_{\R_+} 
\frac{(1+t)^{\al-1}t^{\beta-1}}{(1+t)^{\frac{\bfk+\bfl+\bfm}{2}+3\bfs+A+B}}\rmd t. 
\end{align*}
Recall that 
\begin{align*}
\int_0^\infty\frac{(1+t)^{\al-1}t^{\beta-1}}{(1+t)^{\frac{\bfk+\bfl+\bfm}{2}+3\bfs+A+B}}\rmd t
=&B\left(\bet,1-\alp-\bet+\frac{\bfk+\bfl+\bfm}{2}+3\bfs+A+B\right)\\
=&\frac{\Gam(\bet)\Gam\left(1-\alp-\bet+\frac{\bfk+\bfl+\bfm}{2}+3\bfs+A+B\right)}{\Gam\left(1-\alp+\frac{\bfk+\bfl+\bfm}{2}+3\bfs+A+B\right)}. 
\end{align*}

We finally get 
\begin{align*}
Z_\infty(s)=&\chi_\infty(-1)2^{\gap_1+2\gap_2-2-4s-3k}\pi^{2-s+\frac{\gap_2+\gap_3-3k}{2}}(\sqrt{-1})^{k+\gap_1}(-1)^{\gap_2+b+c}\\
&\times\frac{\Gamma\bigl(\bfs+\frac{\bfk}{2}\bigl)}{\Gamma\bigl(\bfs+\frac{\bfk}{2}+1\bigl)}\Gamma(l+b)\Gamma(m+c)\sum_{A,B}(-1)^{A+B}\binom{b}{A}\binom{c}{B}\frac{\Gamma_\infty(s;A,B)}{\Gamma(l+A)\Gamma(m+B)}, 
\end{align*}
where
\[\Gamma_\infty(\bfs;A,B)=\frac{\Gamma\bigl(\bfs+\frac{\bfl}{2}+A\bigl)\Gamma\bigl(\bfs+\frac{\bfm}{2}+B\bigl)\Gamma(\bfs+\frac{\bfk+\bfl+\bfm}{2}-1+A+B)}{\Gamma(2\bfs+\frac{\bfl+\bfm}{2}+A+B)}. \]
Lemma 3 of \cite{Orl87} with $\al=l=\bfl$, $t=\bfs+\frac{l}{2}$, $\beta=B+\frac{\bfm-l}{2}$ and $N=b$ gives
\begin{align*}
&\Gam(l+b)\sum_{A=0}^b(-1)^A\binom{b}{A}\frac{\Gamma\bigl(\bfs+\frac{l}{2}+A\bigl)\Gamma(\bfs+\frac{\bfk+l+\bfm}{2}-1+A+B)}{\Gamma(l+A)\Gamma(2\bfs+\frac{l+\bfm}{2}+A+B)}\\
=&(-1)^b\frac{\Gam\bigl(\bfs+\frac{l}{2}\bigl)\Gam\bigl(\bfs+B+\frac{\bfm}{2}+l+b-1\bigl)\Gam\bigl(\bfs+B+\frac{\bfm}{2}+b\bigl)\Gam\bigl(\bfs-\frac{l}{2}+1\bigl)}{\Gam\bigl(2\bfs+B+\frac{\bfm+l}{2}+b\bigl)\Gam\bigl(\bfs+B+\frac{\bfm}{2}\bigl)\Gam\bigl(\bfs-\frac{l}{2}-b+1\bigl)}. 
\end{align*}
It follows that 
\begin{align*}
&\Gamma(l+b)\Gamma(m+c)\sum_{A,B}(-1)^{A+B}\binom{b}{A}\binom{c}{B}\frac{\Gamma_\infty(s;A,B)}{\Gamma(l+A)\Gamma(m+B)}\\
=&(-1)^b\frac{\Gam\bigl(\bfs+\frac{l}{2}\bigl)\Gam\bigl(\bfs-\frac{l}{2}+1\bigl)}{\Gam\bigl(\bfs-\frac{l}{2}-b+1\bigl)}\Gamma(m+c)\sum_B(-1)^B\binom{c}{B}\frac{\Gam\bigl(\bfs+B+\frac{\bfm}{2}+l+b-1\bigl)\Gam\bigl(\bfs+B+\frac{\bfm}{2}+b\bigl)}{\Gam\bigl(2\bfs+B+\frac{\bfm+l}{2}+b\bigl)\Gam(m+B)}. 
\end{align*}
Again we apply Lemma 3 of \cite{Orl87} with $\al=m$, $t=\bfs+\frac{\bfm}{2}+b$, $\beta=\frac{l-\bfm}{2}-b$ and $N=c$ to obtain 
\begin{align*}
&\Gamma(m+c)\sum_B(-1)^B\binom{c}{B}\frac{\Gam\bigl(\bfs+B+\frac{\bfm}{2}+l+b-1\bigl)\Gam\bigl(\bfs+B+\frac{\bfm}{2}+b\bigl)}{\Gam\bigl(2\bfs+B+\frac{\bfm+l}{2}+b\bigl)\Gam(m+B)}\\
=&(-1)^c\frac{\Gam\bigl(\bfs+\frac{\bfm}{2}+b\bigl)\Gam\bigl(\bfs+\frac{l}{2}+m+c-1\bigl)\Gam\bigl(\bfs+\frac{l}{2}+c\bigl)\Gam\bigl(\bfs+\frac{\bfm}{2}+b-m+1\bigl)}{\Gam\bigl(2\bfs+\frac{\bfm+l}{2}+b+c\bigl)\Gam\bigl(\bfs+\frac{l}{2}\bigl)\Gam\bigl(\bfs+\frac{\bfm}{2}-m+b-c+1\bigl)}. 
\end{align*}
Then we can see that the double summation equals  
\begin{align*}
&(-1)^{b+c}\frac{\Gam\bigl(\bfs+\frac{\bfm}{2}+b\bigl)\Gam\bigl(\bfs+\frac{l}{2}+m+c-1\bigl)\Gam\bigl(\bfs+\frac{l}{2}+c\bigl)\Gam\bigl(\bfs+c-\frac{l}{2}+1\bigl)}{\Gam\bigl(\bfs-\frac{l}{2}-b+1\bigl)\Gam\bigl(2\bfs+\frac{\bfm+l}{2}+b+c\bigl)}\\
=&(-1)^{b+c}\frac{\Gamma\bigl(s+\frac{k-l+m}{2}\bigl)\Gamma\bigl(s+\frac{k+l+m}{2}-1\bigl)\Gamma\bigl(s+\frac{k-m+l}{2}\bigl)\Gamma\bigl(s+\frac{k-l-m}{2}+1\bigl)}{\Gamma(2s+k)}\cdot\frac{1}{\Gamma\bigl(\bfs-\frac{\bfk}{2}+1\bigl)}. 
\end{align*}
The last equality uses $b=\frac{\bfk-\bfl}{2}$, $m+c=\frac{\bfk+\bfm}{2}$, $\bfs+c=s+\frac{k-m}{2}$ and $2\bfs+\bfk+\bfm=2s+k+m$.
\end{proof}

%%%%%%%%%%%%%%%%%%%%%%%%%%%%%%%%%%%%%%%%%%%%%%%%%%%%%%%%%%%%%%%%%%%%%%%%%%%%%%%%

\section{Classical and $p$-adic modular forms}

%%%%%%%%%%%%%%%%%%%%%%%%%%%%%%%%%%%%%%%%%%%%%%%%%%%%%%%%%%%%%%%%%%%%%%%%%%%%%%%%

\subsection{Notation and conventions}\label{SS:5.1}

Besides the standard symbols $\Z$, $\Q$, $\R$, $\C$, $\Z_\ell$, $\Q_\ell$ we denote by $\R_+$ the group of strictly positive real numbers. 
Fix algebraic closures of $\Q$ and $\Q_p$, denoting them by $\overline{\Q}$ and $\overline{\Q}_p$.  
%by $\C^1$ the group of complex numbers of absolute value $1$  
Let $\A$ be the ring of ad\`{e}les of $\Q$ and $\mu_n$ the group of $n$-th roots of unity in $\overline{\Q}$. 
Put $\wh\Z=\prod_\ell\Z_\ell$. 
For each place $v$ of $\Q$, we write $\Q_v$ for the completion of $\Q$ with respect to $v$. 
We shall regard $\Q_v$ and $\Q_v^\x$ as subgroups of $\A$ and $\A^\x$ in a natural way. For $a\in \A^\x$, let $a_v\in\Q_v^\x$ denote the $v$-component of $a$.

We denote by the formal symbol $\infty$ the real place of $\Q$. The notation $\ell$ is often referred to a rational prime. 
Let $\addchar_\Q:\A/\Q\to\C^\x$ be the additive character with the archimedean component $\addchar_\infty(x)=e^{2\pi\sqrt{-1}x}$ and $\addchar_\ell:\Q_\ell\to\C^\x$ the local component of $\addchar_\Q$ at $\ell$. Let $\Abs_\A:\Q^\times\bsl\A^\times\to\R_+$ be the ad\`{e}lic absolute value given by $\Abs_\A(a)=\abs{a}_\A=\prod_v\abs{a_v^{}}_v$.
%Let $\C_p=\widehat{\overline{\Q}}_p$ denote the completion of $\overline{\Q}_p$ with respect to the normalized $p$-adic absolute value $\alp_p=\Abs_p$.   
 %Recall the local Riemann zeta functions\begin{align*}
%\zeta_\infty(s)&=\pi^{-s/2}\Gamma(s/2), & 
%\zeta_\ell(s)&=(1-\ell^{-s})^{-1}. 
%\end{align*}
%Define the completed Riemann zeta function $\zeta_\Q(s)$ by $\zeta_\Q(s)=\prod_v\zeta_v(s)$. 
For each rational prime $\ell$, let $\val_\ell:\Q_\ell^\x\to\Z$ denote the valuation normalized so that $\val_\ell(\ell)=1$. 
To avoid possible confusion, denote by $\uf_\ell=(\uf_{\ell,v})\in\A^\x$ the id\`{e}le defined by $\uf_{\ell,\ell}=\ell$ and $\uf_{\ell,v}=1$ if $v\not =\ell$. 

%We shall regard $\chi$ as a character of the absolute Galois group $G_\Q=\Gal(\Qbar/\Q)$. 
If $\om:\Q^\x\bksl \A^\x\to\C^\x$ is a Hecke character, then we denote by $\om_v:\Q_v^\x\to\C^\x$ the restriction of $\om$ to $\Q_v^\x$.  If $\chi:(\Z/N\Z)^\x\to\Qbar^\x$ is a Dirichlet character modulo $N$, let $\chi_\A:\Q^\x\R_+\bksl \A^\x/(1+N\wh\Z)^\x\to\Qbar^\x$ be the unique Hecke character of $\Q$ such that $\chi_\A(\uf_\pmq)=\chi(\pmq)^{-1}$ for any prime number $\pmq\ndivides N$. We shall call $\chi_\A$ the \emph{ad\`{e}lic lift} of $\chi$. If no confusion can arise, we write $\chi_v=\chi_{\A,v}$ for the restriction of $\chi_\A$ to $\Q_v^\times$. {Then by definition $\chi(b)=\prod_{\ell\divides N}\chi_\ell(b)$ if $b$ is an integer prime to $N$, and }\beq\label{E:convention1}\chi_\pmq(\pmq)=\chi(\pmq)^{-1} \text{ for  }\pmq\ndivides N.\eeq  Let $\zeta_\Q(s)$ be the \emph{complete} Riemann zeta function given by 
\[\zeta_\Q(s)=\pi^{-\frac{s}{2}}\Gamma\biggl(\frac{s}{2}\biggl)\prod_{\ell<\infty}\zeta_\ell(s),\quad \zeta_\ell(s):=(1-\ell^{-s})^{-1}.\]
{It is well-known that $\zeta_\Q(2)=\frac{\pi}{6}$. }
 
\begin{defn}[Teichm\"{u}ller and cyclotomic characters]
Let $p$ be a prime. We let $\bfp=4$ if $p=2$ and $\bfp=p$ otherwise.
The action of $G_\Q$ on $\mu_{p^\infty}:=\displaystyle\lim_{\longrightarrow n}\mu_{p^n}$ gives rise to a continuous homomorphism $\cyc:G_\Q\to\Z_p^\times$, called the $p$-adic cyclotomic character, defined by $\sig(\zet)=\zet^{\cyc(\sig)}$ for every $\zet\in\mu_{p^\infty}$. 
The character $\cyc$ splits into the $p$-adic Teichm\"{u}ller character $\Om:G_\Q\twoheadrightarrow\Gal(\Q(\mu_{\bfp})/\bfQ)\to\Z_p^\times$ and $\Dmd{\cdot}:G_\Q\twoheadrightarrow\Gal(\Q_\infty/\Q)\stackrel{\sim}{\to} 1+\bfp\Z_p$. 
The character $\Om$ sends $\sig$ to the unique solution in $\Z_p^\times$ of $\Om(\sig)^{l+1}=\Om(\sig)\equiv \cyc(\sig)\pmod{p}$, where $l=2\bigl\lceil\frac{p}{2}\bigl\rceil$. 
We often regard $\Om$ and $\Dmd{\cdot}^s$ with $s\in\Z_p$ as characters of $\Z_p^\times$. 
We sometimes identify $\Om$ with the Dirichlet character $\iota_p\circ\Om:(\Z/\bfp\Z)^\times\to\C^\times$.  
\end{defn}
Now we fix an arbitrary rational prime $p$ and an isomorphism $\iota_p:\overline{\Q}_p\iso\C$ for the remaining part of this paper. Let $\chi$ be a character of $\Z_p^\x$ of finite order, which can be regard as either a complex character or a $p$-adic character via composition with $\iota_p$. Let $c(\chi)$ be the exponent of the conductor of $\chi$. We view $\chi$ as a character of $G_\bfQ$ via composition with the cyclotomic character $\cyc$.  
Let $\Q^\mathrm{ab}=\bigcup_{N=1}^\infty\Q(\mu_N)$ be the maximal abelian extension of $\bfQ$ and  
\[\mathrm{rec}_\bfQ:\bfQ^\times\R_+\bsl\bfA^\times\stackrel{\sim}{\longrightarrow}\Gal(\Q^\mathrm{ab}/\Q) \]
the geometrically normalized reciprocity law map, i.e., $\mathrm{rec}_\Q(\vpi_\ell)|_{\Q^\mathrm{ab}}=\Frob_\ell$. 
Since $\chi$ factors through the quotient $\Z_p^\times\twoheadrightarrow (\Z/p^{c(\chi)}\Z)^\times$, we can identify $\chi$ with a Dirichlet character of $p$-power conductor. 
Then since $\chi_\A(\vpi_\ell)=\chi(\ell)^{-1}=\chi(\cyc(\Frob_\ell))$ for $\ell\neq p$,
\begin{align*}
\chi_\A&=\chi\circ\cyc\circ\mathrm{rec}_\Q, & \chi_p|_{\Z_p^\times}=\chi. 
\end{align*}

%If $R$ is a domain, then \[\Sym_g(R)=\stt{B=(b_{ij})\in\Mat_g(2^{-1}R)\mid b_{ij}=b_{ji},\,b_{ii}\in R}.\]

%%%%%%%%%%%%%%%%%%%%%%%%%%%%%%%%%%%%%%%%%%%%%%%%%%%%%%%%%%%%%%%%%%%%%%%%%%%%%%%%

\subsection{Differential operators and nearly holomorphic modular forms}\label{ssec:nearlyholom}

Let $\GL_2^+(\bfR)$ be the subgroup of $\GL_2(\bfR)$ consisting of matrices with positive determinant and $\frkH_1$ the upper half plane on which $\GL_2^+(\bfR)$ acts via fractional transformation. 
Define a subgroup of $\SL_2(\bfZ)$ of finite index 
\[\Gamma_0(N)=\biggl\{\begin{pmatrix} a & b \\ c & d \end{pmatrix}\in{\SL_2(\bfZ)}\;\biggl|\;c\text{ is divisible by }N\biggl\}. \]
The Lie group $\GL_2^+(\bfR)$ acts on the complex vector space of complex valued functions $f$ on $\frkH_1$ as in (\ref{tag:modular}).  

The Maass-Shimura differential operators $\delta_k$ and $\LDiff_z$ on $C^\infty(\frakH_1)$ are given by 
\begin{align*}
\delta_k&=\frac{1}{2\pi\sqrt{-1}}\left(\frac{\partial }{\partial z}+\frac{k}{2\sqrt{-1}y}\right), & 
\LDiff_z&=-\frac{1}{2\pi\sqrt{-1}}y^2\frac{\partial}{\partial \ol{z}}
\end{align*}
with $y=\Im z\in\R_+$. 
Let $\chi:(\Z/N\Z)^\x\to\C^\x$ be a Dirichlet character, which we extend to a character $\chi^\downarrow:\Gamma_0(N)\to\C^\x$ by $\chi^\downarrow\biggl(\pMX{a}{b}{c}{d}\biggl)=\chi(d)$. 
For a non-negative integer $m$ the space $\cN^{[t]}_k(N,\chi)$ of nearly holomorphic modular forms of weight $k$, level $N$ and character $\chi$ consists of slowly increasing functions $f\in C^\infty(\frakH_1)$ such that $\LDiff_z^{t+1} f=0$ and $f|_k\gam=\chi^\downarrow(\gam)f$ for $\gam\in\Gamma_0(N)$ (\cf\cite[page 314]{Hida93Blue}). By definition $\cN^{[0]}_k(N,\chi)=\cM_k(N,\chi)$ is the space of elliptic modular forms of weight $k$, level $N$ and character $\chi$. Put $\cN_k(N,\chi)=\bigcup_{m=0}^{\lfloor\frac{k}{2}\rfloor} \cN^{[t]}_k(N,\chi)$.

Denote the space of elliptic cusp forms in $\cM_k(N,\chi)$ by $\sS_k(N,\chi)$. 
Put $\delta_k^m=\delta_{k+2m-2}\cdots\delta_{k+2}\delta_k$. 
If $f\in \cN_k(N,\chi)$, then $\delta_k^mf\in\cN_{k+2m}(N,\chi)$ (see \cite[page 312]{Hida93Blue}). 
%If $f\in\cN_k(N,\chi)$ and $f'\in\cN_{k+2}(N,\chi)$ are rapidly decreasing, then by \cite[Theorem 2, page 316]{Hida93Blue} \beq\label{E:01.E} \pair{\delta_k f}{g}=\pair{f}{\LDiff_z g}. \eeq

Define an open compact subgroup of $\GL_2(\widehat\Z)$ by 
\[U_0(N)=\left\{g\in\GL_2(\widehat\Z)\;\biggl|\;g\equiv \begin{pmatrix} * & * \\ 0 & * \end{pmatrix} \pmod{N\widehat\Z}\right\}. \]
We extend $\chi_\A$ to a character $\chi_\A^\downarrow$ of $U_0(N)$ by $\chi_\A^\downarrow(g)=\prod_{\ell|N}\chi_\ell^\downarrow(g_\ell)$ (see (\ref{tag:updown}) for the definition of $\chi_\ell^\downarrow$). 
Let $\cA_k(N,\chi_\A^{-1})$ be the space of functions $\vPh:\GL_2(\A)\to\C$ such that $V_-^m\varPhi=0$ for some $m$ and such that 
\begin{align*}
\vPh(z\gam g\kap_\tht u)&=\chi_\bfA^{}(z)^{-1}\vPh(g)e^{\sqrt{-1}k\tht}\chi_\A^\downarrow(u)^{-1} & 
(z&\in\bfA^\times,\;\gam\in\GL_2(\Q),\;\tht\in\R,\;u\in U_0(N)). 
\end{align*} 

\begin{defn}[The ad\`{e}lic lift]
With each nearly holomorphic modular form $f\in\cN_k(N,\chi)$ we can associate a unique automorphic form $\itPhi(f)\in\cA_k(N,\chi_\A^{-1})$ defined by the equation
\[\itPhi(f)(\gam g_\infty u):= (f|_k g_\infty)(\sqrt{-1})\cdot \chi_\A^\downarrow(u)^{-1}\]
for $\gam \in\GL_2(\Q)$, $g_\infty\in \GL^+_2(\R)$ and $u\in \opcpt_0(N)$ (\cf \cite[\S 3]{Casselman73MA}). We call $\itPhi(f)$ the \emph{ad\`{e}lic lift} of $f$. Conversely, we can recover $f$ from $\itPhi(f)$ by 
\[f(x+\sqrt{-1} y)=y^{-k/2}\itPhi(f)\left(\pMX{y}{x}{0}{1}\right). \]
\end{defn}

Recall that $V_\pm$ are the operators as defined in \S \ref{ssec:arhizeta}. 
By definition we have 
\begin{align*}
\varPhi(\delta_k f)&=V_+\varPhi(f), & 
\varPhi(\LDiff_z f)&=V_-\varPhi(f). 
\end{align*}
We define the Whittaker coefficient and the constant term of $\vPh\in\cA_k(N,\chi_\A^{-1})$ by 
\begin{align*}
W(g,\vPh)&=\int_{\bfQ\bsl\bfA}\vPh(\bfn(x)g)\addchar_\Q(-x)\,\d x, & 
\bfa_0(g,\vPh)&=\int_{\bfQ\bsl\bfA}\vPh(\bfn(x)g)\,\d x. 
\end{align*}

%%%%%%%%%%%%%%%%%%%%%%%%%%%%%%%%%%%%%%%%%%%%%%%%%%%%%%%%%%%%%%%%%%%%%%%%%%%%%%%%

\subsection{Ordinary $\bfI$-adic modular forms}\label{S:nc.unb}
 
For any subring $A\subset \C$ the space $\sS_k(N,\chi;A)$ consists of elliptic cusp forms $f=\sum_{n=1}^\infty \bfa(n,f)q^n\in\sS_k(N,\chi)$ such that $\bfa(n,f)\in A$ for all $n$. 
%Put $\sS_k(N,\chi;A)=\cS_k(N,\chi)\cap\cM_k(N,\chi;A)$. 
For every subring $A\subset\overline{\Q}_p$ containing $\Z[\chi]$ we define the space of cusp forms over $A$ by 
\[\sS_k(N,\chi;A)=\sS_k(N,\chi;\Z[\chi])\otimes_{\Z[\chi]}A. \]  
Here we have viewed $\chi$ as a $p$-adic Dirichlet character via $\iota_p^{-1}$. 

\begin{defn}[$p$-stabilized newforms]
We say that a normalized Hecke eigenform $f\in \sS_k(Np,\chi)$ is an (ordinary) $p$-stabilized newform (with respect to $\iota_p:\C\simeq\overline{\Q}_p$) if $f$ is new outside $p$ and the eigenvalue of $\U_p$, i.e. the $p$-th Fourier coefficient $\iota_p(\bfa(p, f))$, is a $p$-adic unit. 
The prime-to-$p$ part $N'$ of the conductor of $f$ is called the tame conductor of $f$. 
There is a unique decomposition $\chi=\chi'\Om^a\eps$ with $a\in\Z/l\Z$, where $l=2\bigl\lceil\frac{p}{2}\bigl\rceil$,  $\chi'$ is a Dirichlet character modulo $N'$ and $\eps$ is a character of $1+\bfp\Z_p$. 
We call $\chi'\Om^a$ the tame nebentypus of $f$. 
\end{defn}

Let $f^\circ=\sum_{n=1}^\infty \bfa(n,f^\circ)q^n\in\sS_k(Np,\chi)$ be a primitive Hecke eigenform of conductor $N_{f^\circ}$. 
We call $f^\circ$ ordinary if $\iota_p^{-1}(\bfa(p,f^\circ))$ is a $p$-adic unit.
If this is the case, then precisely one of the roots of the polynomial $X^2-\bfa(p,f^\circ)X+\chi(p)p^{k-1}$ (call it $\alp_p(f)$) satisfies $|\iota_p(\alp_p(f))|_p=1$. 
We associate to an ordinary primitive form $f^\circ$ the $p$-stabilized newform by 
\beq
f(\tau)=f^\circ(\tau)-\frac{\chi(p)p^{k-1}}{\alp_p(f)}f^\circ(p\tau)\in\sS_k(N_{f^\circ}p,\chi), \label{tag:51}
\eeq
if $N_{f^\circ}$ and $p$ are coprime, and $f=f^\circ$ if $p$ divides $N_{f^\circ}$.

Let $\cO$ be the ring of integers of a finite extension of $\Qp$ and $\bfI$ be a normal domain finite flat over $\Lam=\cO\powerseries{1+p\Zp}$. 
A point $Q\in\Spec \bfI(\Qbar_p)$, a ring homomorphism $Q:\bfI\to\Qbar_p$, is said to be locally algebraic if the restriction of $Q$ to $1+p\Zp$ is of the form $Q(z)=z^{k_Q}\ep_Q(z)$ with $k_Q$ an integer and $\ep_Q(z)\in\mu_{p^\infty}$. 
We shall call $k_Q$ the \emph{weight} of $Q$ and $\ep_Q$ the \emph{finite part} of $Q$. 
Let $\frakX_\bfI$ be the set of locally algebraic points $Q\in\Spec\bfI(\Qbarp)$ of weight $k_Q\geq 1$. 
A point $Q\in\frakX_\bfI$ is said to be \emph{arithmetic} if $k_Q\geq 2$. 
Let $\frakX_\bfI^\ari$ be the set of arithmetic points, $\wp_Q=\Ker Q$ the prime ideal of $\bfI$ corresponding to $Q$ and $\cO(Q)$ the image of $\bfI$ under $Q$. 

%Denote by $\Om:(\Z/p\Z)^\x\to\mu_{p-1}$ the \padic \Teich character, i.e., $x\mapsto \Dmd{x}=x\Om(x)^{-1}$ defines a character $\Z_p^\x\to 1+p\Z_p$. 
Let $N$ be a positive integer prime to $p$ and $\chi:(\Z/N\bfp\Z)^\x \to\cO^\x$ a Dirichlet character modulo $N\bfp$. 
An $\bfI$-adic cusp form is a formal power series $\bdsf(q)=\sum_{n=1}^\infty\bfa(n,\bdsf)q^n\in \bfI\powerseries{q}$ with the following property: there exists an integer $a_\bdsf$ such that for arithmetic points $Q\in\frakX^+_\bfI$ with $k_Q\geq a_\bdsf$, the specialization $\bdsf_Q(q)=\sum_{n=1}^\infty Q(\bfa(n,\bdsf))q^n$ is the Fourier expansion of a cusp form $\bdsf_Q\in \sS_{k_Q}(Np^e,\chi\Om^{-k_Q}\ep_Q;\calo(Q))$. 
%We call $\chi$ the \emph{branch character} of $\bdsf$ and $N$ the \emph{tame level} of $\bdsf$. 
%We call $\bdsf$ an $\bfI$-adic cusp form if $\bdsf_Q(q)$ is a cusp form for those arithmetic points $Q$. 
Denote by $\bfS(N,\chi,\bfI)$ the space of $\bfI$-adic cusp forms of tame level $N$ and (even) branch character $\chi$. 
This space $\bfS(N,\chi,\bfI)$ is equipped with the action of the Hecke operators $T_\ell$ for $\ell\ndivides Np$ as in \cite[page 537]{Wiles88} and the operators $\bfU_\ell$ for $\ell\divides pN$ given by $\bfU_\ell(\sum_n \bfa(n,\bdsf)q^n)=\sum_n\bda(n\ell,\bdsf)q^n$. 
%These operators stabilize the cuspidal part $\bfS(N,\chi,\bfI)$. 

%For a positive integer $d$ prime to $p$, define $V_d:\bfS(N,\chi,\bfI)\to \bfS(Nd,\chi,\bfI)$ by $V_d(\sum_n\bda(n,\bdsf)q^n)=d\sum_n\bda(n,\bdsf)q^{dn}$. 
Hida's ordinary projector $\eord$ is defined by 
\[\eord:=\lim_{n\to\infty}\bfU_p^{n!}. \]
It has a well-defined action on the space of classical modular forms over a $p$-adically complete ring, which preserves the cuspidal part as well as on the space $\bfS(N,\chi,\bfI)$ (\cf\cite[page 537 and Proposition 1.2.1]{Wiles88}). 
The space $\bfS^\Ord(N,\chi,\bfI):=\eord\bfS(N,\chi,\bfI)$ is called the space of ordinary $\bfI$-adic forms with respect to $\chi$. 
For any $p$-adically complete $\Z[\chi]$-algebra $A$, we put 
\begin{align*}
\cM_k^\Ord(N,\chi;A)&=\eord\cM_k(Np^e,\chi;A); &
\sS_k^\Ord(N,\chi;A)&=\eord\sS_k(Np^e,\chi;A),
\end{align*} 
where $e$ is any integer that is greater than the exponent of the $p$-primary part of the conductor of $\chi$. 
A key result in Hida's theory for ordinary $\bfI$-adic cusp forms is that if $\bdsf\in \bfS^\Ord(N,\chi,\bfI)$, then for \emph{every} arithmetic point $Q\in\frakX_\bfI^+$, we have $\bdsf_Q\in\sS_{k_Q}^\Ord(N,\chi\Om^{-k_Q}\ep_Q;\calo(Q))$ {(\cf\cite[Theorem 3, p.215]{Hida93Blue} or \cite{Ghate13} for $p=2$)}. 
We call $\bdsf\in\bfS^\Ord(N,\chi,\bfI)$ a \emph{primitive Hida family} if $\bdsf_Q$ is a cuspidal $p$-stabilized newform of tame level $N$ for every arithmetic point $Q\in\frakX_\bfI^+$. 
%The set $\frakX_\bfI^\cls$ of classical points (for $\bdsf$) is given by \[\frakX_\bfI^\cls:=\{Q\in\frakX_\bfI^\cls\;|\; \bdsf_Q\text{ is the $q$-expansion of an elliptic modular form}\}. \] The set $\frakX_\bfI^\cls$ contains $\frakX_\bfI^\ari$ but may be strictly larger than $\frakX_\bfI^\ari$ as we allow the possibility of points of weight $1$. 

\subsection{Galois representations}
If  $\bdsf\in\bfS^\Ord(N,\chi,\bfI)$ is a primitive Hida family of {cusp} forms, we denote by $V_\bdsf$ the associated $p$-adic Galois representation. {Recall that $V_\bdsf$ is a lattice in $(\Frac\bfI)^2$ with a continuous Galois action such that $V_\bdsf\ot_{\bfI,Q}{\Qbar_p}$ is the Deligne's $p$-adic Galois representation associated with $\bdsf_Q$ for every arithmetic point $Q$ of $\bfI$.} For $Q\in\frakX_{\bfI}^+$, denote by $\WD_\ell(V_{\bdsf_Q})$ the representation of the Weil-Deligne group $W_{\bfQ_\ell}$ attached to $V_{\bdsf_Q}$ for each prime $\ell$. We remark that if $\ell\parallel N$ but $\ell$ does not divide the conductor of $\chi$, then 
%By the assumption (sf) and the rigidity of automorphic types of Hida families $\WD_\ell(V_{\bdsf_{Q_1}})$, $\WD_\ell(V_{\bdsg_{Q_2}})$, $\WD_\ell(V_{\bdsh_{Q_3}})$ are 
$\WD_\ell(V_{\bdsf_{Q}})$ is the Steinberg representation twisted by an unramified character. 
Moreover, there is an unramified finite order character $\xi_{\bdsf,\ell}:G_{\Q_\ell}\to\overline{\Q}^\times$ such that $\xi_{\bdsf,\ell}^2=\chi_{\ell}^{-1}$ and 
\beq\label{E:Galois1}V_\bdsf|_{G_{\Q_\ell}}\simeq\begin{pmatrix} \xi_{\bdsf,\ell}\cyc\Dmd{\cyc}_\bfI^{-1/2}  & * \\ 0 & \xi_{\bdsf,\ell}\Dmd{\cyc}_\bfI^{-1/2} \end{pmatrix}. \eeq
%%%%%%%%%%%%%%%%%%%%%%%%%%%%%%%%%%%%%%%%%%%%%%%%%%%%%%%%%%%%%%%%%%%%%%%%%%%%%%%%

\section{A $p$-adic family of pull-backs of Siegel Eisenstein series}

%%%%%%%%%%%%%%%%%%%%%%%%%%%%%%%%%%%%%%%%%%%%%%%%%%%%%%%%%%%%%%%%%%%%%%%%%%%%%%%%

\subsection{Siegel Eisenstein series}\label{ssec:61}

We work in ad\`{e}lic form, which allows us to assemble Eisenstein series out of local data.
Put $K_n=\U(n)\GSp_{2n}(\widehat\Z)$. 
{Let $p$ be a fixed rational prime as in the previous section. }
Fix characters $\chi,\hat\ome$ of $\bfZ_p^\times$ of finite order and extend them to Hecke characters $\chi_\bfA,\hat\ome_\bfA:\bfQ^\times\bsl\bfA^\times\to\bfC^\times$ by composition with the quotient map $\bfQ^\times\R_+\bsl\bfA^\times\simeq\widehat{\bfZ}^\times\twoheadrightarrow\bfZ_p^\times$. 
We regard $\chi$ as either a $p$-adic character or a complex character via composition with $\iota_p$. 
%Since $\chi$ factors through $\bfZ_p^\times\twoheadrightarrow(\bfZ/p^m\bfZ)^\times$ for sufficiently large $m$, we can regard $\chi$ as a Dirichlet character of $p$-power conductor. 
Let 
\[I_3(\hat\ome_\bfA^{-1},\chi_\bfA^{}\hat\ome_\bfA^{}\Abs_\A^s)=\Ind_{\cP_3(\A)}^{\GSp_6(\A)}(\chi_\A^2\hat\ome_\bfA^{}\boxtimes\chi_\A^{-3}\hat\ome_\bfA^{-1}\Abs_\A^s)\simeq\otimes'_vI_3(\hat\om_v^{-1},\chi_v^{}\hat\om_v^{}\Abs_{\Q_v}^s)\]
be the global degenerate principal series representation of $\GSp_6(\A)$ on the space of right $K_3$-finite functions $f:\GSp_6(\A)\to\C$ satisfying the transformation laws
\[f(\bfn(z)\bfm(A,\nu)g)=\hat\ome_\bfA(\nu^{-1}\det A)\chi_\A(\nu^{-3}(\det A)^2)\abs{\nu^{-3}(\det A)^2}^{1+s}_\bfA f(g) \]
for $A\in\GL_3(\A)$, $\nu\in\A^\x$, $z\in\Sym_3(\A)$ and $g\in \GSp_6(\A)$.  
We define global holomorphic sections of $I_3(\hat\ome_\bfA^{-1},\chi_\bfA^{}\hat\ome_\bfA^{}\Abs_\A^s)$ similarly. 
The Eisenstein series associated to a holomorphic section $f_s$ of $I_3(\hat\ome_\bfA^{-1},\chi_\bfA^{}\hat\ome_\bfA^{}\Abs_\A^s)$ is defined by 
\[E_\A(g,f_s)=\sum_{\gamma\in \cP_3(\Q)\bksl \GSp_6(\Q)}f_s(\gamma g). \] 
Such series is absolutely convergent for $\Re s>1$ and can be continued to a meromorphic function in $s$ on the whole plane.  

Fix a triplet $(k,l,m)$ of positive integers with $k\geq l\geq m\geq 2$ and let $\lam=(\lam_1,\lam_2,\lam_3)$ be the parity type of $(k,l,m)$ introduced in \eqref{E:parity}. Fix a square-free integer $N$ which is not divisible by $p$.
We write $\hat\ome=\ome_1\ome_2\ome_3$ as a product of three characters $\om_1,\om_2,\om_3$ of $\bfZ_p^\times$. 
Set 
\[\cD=(\chi,\om_1,\om_2,\om_3). \]
Assume that $\hat\om_\infty=\sgn^{k-\lam_1}$. 
Now we define a distinguished section of $I_3(\hat\om^{-1}_v,\chi_v\hat\om_v\Abs_{\Q_v}^{s})$ for each $v\nmid N$:  
\begin{itemize}
\item In the archimedean case we consider the section $f^{[k,\gap]}_{s,\infty}$ defined in \S \ref{ssec:realsection}; 
\item In the $p$-adic case we consider $f_{\cD,s,p}$, where the section $f_{\cD,s,p}$ of $I_3(\hat\om_p^{-1},\chi^{}_p\hat\om^{}_p\Abs_{\Q_p}^{s})$ is attached to the quadruplet $\cD$ in Definition \ref{def:psection}; 
\item If $\ell$ and $Np$ are coprime, then $f_{s,\ell}^0$ is the section with $f_{s,\ell}^0(\GSp_6(\Z_\ell))=1$. 
%\item For each prime divisor $\ell$ of $N/N^-$, let  $f^{[k,\gap]}_{\cD,s,\ell}=f^1_s$ be the flat section such that $f^1_s|_{\GSp_6(\Z_\ell)}$ is the characteristic function of $K^{(3)}_0(\ell)$.
\end{itemize}
{Let $f_{s,N}=\ot_{\ell|N}f_{s,\ell}$ be an arbitrary holomorphic section of $\bigotimes_{\ell|N}I_3(\hat\om^{-1}_\ell,\chi_\ell\hat\om_\ell\Abs_{\Q_\ell}^{s})$ such that $f_{s,\ell}$ is invariant by $K_0^{(3)}(N\Z_\ell)$ for all $\ell|N$}. Define the normalized Siegel Eisenstein series  
\beq\label{E:ES1}E^\star_\bfA(g,f_{\cD,s,N}^{[k,\gap]})=L^{(\infty pN)}(2s+2,\chi^2_\A\hat\om_\A^{})L^{(\infty pN)}(4s+2,\chi^4_\A\hat\om^2_\A)\gamma_{(k,l,m)}^\star(s)^{-1}\cdot E_\A(g,f_{\cD,s,N}^{[k,\gap]}), \eeq
where $\gamma_{(k,l,m)}^\star(s)$ is defined in \S \ref{ssec:arhizeta} and $f_{\cD,s,N}^{[k,\gap]}$ is a global holomorphic section of $I_3(\hat\ome_\bfA^{-1},\chi_\bfA^{}\hat\ome_\bfA^{}\Abs_\A^{s})$ defined by 
\[f_{\cD,s,N}^{[k,\gap]}(g)=f^{[k,\gap]}_{s,\infty}(g_\infty)f_{\cD,s,p}(g_p)f_{s,N}((g_\ell)_{\ell|N})\prod_{\ell\nmid Np}f^0_{s,\ell}(g_\ell). \]
%We here wrote $f^{[k,\gap]}_{\cD,s,\infty}=f^{[k,\gap]}_{s,\infty}$ and $f^{[k,\gap]}_{\cD,s,\ell}=f^{}_{\cD,s,\ell}$ to be uniform. 
 
Since $f_{\cD,s,p}$ is supported in the big cell $\cP_3(\Q_p) J_3\cP_3(\Q_p)$, {according to \cite[(3.2.2.2)]{HLS06Doc}} for $g\in \GSp_6(\A)$ with $g_p\in\cP_3(\Q_p)$, we have the Fourier expansion
\begin{align}\label{E:EFC.E}
E_\A(g,f_{\cD,s,N}^{[k,\gap]})&=\sum_{B\in\Sym_3(\Q)} \cW_B(g,f_{\cD,s,N}^{[k,\gap]}),
%& W_B(g,f_{s,N}^{[k,\gap]})&=\prod_{\ell|N} \cW_B(g_\ell,f_{s,\ell})\prod_{v\nmid N} \cW_B(g_v,f^{[k,\gap]}_{\cD,s,v})
\end{align}
 where
\begin{align*}
\cW_B(g,f_{\cD,s,N}^{[k,\gap]})&=\int_{\Sym_3(\A)}f_{\cD,s,N}^{[k,\gap]}(J_3\bfn(z)g)\addchar_\Q(-\tr(Bz))\,\rmd z.
\end{align*}

%%%%%%%%%%%%%%%%%%%%%%%%%%%%%%%%%%%%%%%%%%%%%%%%%%%%%%%%%%%%%%%%%%%%%%%%%%%%%%%%

\subsection{The Fourier expansion of the pull-back of Eisenstein series}
%Define the normalized Eisenstein series by \[E^\dagger_\A(g,f^{[k,\gap]}_{\cD,s})=L^{(\infty pN)}(2s+2,\chi^2\hat\om)L^{(\infty pN)}(4s+2,\chi^4\hat\om^2)E_\A(g,f^{[k,\gap]}_{\cD,s}).\]
%Now we evaluate its pull-back at $s_0=\frac{k-\gap_1}{2}-r-1$. 
Let  $r$ be an integer with $\lam_2\leq r\leq k-2$. Put \[s_0=\frac{k-\gap_1}{2}-r-1. \]
Let $\bfE^{[k,r,\gap]}_{\cald,N}(f_{s_0,N})\colon\frakH_1^3\to\C$ be the modular form of weight $(k,k-\gap_2,k-\gap_3)$ 
%associated with \[\lim_{s\to\frac{k-\gap_1}{2}-r-1}\iota^*E^\star_\A(f^{[k,\gap]}_{\cD,s}), \] 
defined by 
\[\bfE^{[k,r,\gap]}_{\cD,N}(f_{s_0,N})(x+y\sqrt{-1})
:=\lim_{s\to s_0}\frac{E^\star_\A\bigl(\iota(\bfn(x_1)\bfm(\sqrt{y_1}),\bfn(x_2)\bfm(\sqrt{y_2}),\bfn(x_3)\bfm(\sqrt{y_3})),f_{\cD,s,N}^{[k,\gap]}\bigl)}{\sqrt{y_1}^k\sqrt{y_2}^{k-\gap_2}\sqrt{y_3}^{k-\gap_3}} \]
for $y=(y_1,y_2,y_3)\in \R_+^3$ and $x=(x_1,x_2,x_3)\in \R^3$. 
 {We give the Fourier expansion of $\bfE^{[k,r,\gap]}_{\cald,N}(f_{s_0,N})$ after preparing some notation. 
For any reduced ring $R$, we put \[\Sym_3^*(R)=\stt{A\in \Sym_3(\Frac(R))\mid \Tr(AB)\in R\text{ for all } B\in\Sym_3(R)}.\] }
Let $\Sym_3^+$ denote the set of positive definite rational symmetric matrices of rank $3$. Let $T_3^+=\Sym^+\cap \Sym_3^*(\Z)$ denote the set of positive definite symmetric half-integral matrices of rank $3$. Recall that 
\[\Xi_p=\{(b_{ij})\in{\Sym_3^*(\Zp)}\;|\;b_{11},b_{22},b_{33}\in p\Zp \text{ and } b_{12},b_{23},b_{31}\in{2^{-1}}\Zp^\times\}. \]
%Since $\ome_i$ factors through the quotient $\bfZ_p^\times\to(\bfZ/p^{c(\ome_i)}\bfZ)^\times$, we can view $\ome_i$ as a Dirichlet character.  
For $\ell\nmid N$, $F_{B,\ell}$ denotes the polynomial $F_B$ with the base field $F=\Q_\ell$ in \eqref{E:Siegel}. Set $\Q_N=\prod_{\ell|N}\Q_\ell$. 

\begin{prop}\label{P:pbkEC.E}
Put $n=\max\{1,c(\chi),c(\ome_i)\}$ and $t=\min\stt{k-r-2+\lam_2,r}<\frac{k}{2}$. 
{The pull-back \[\bfE^{[k,r,\gap]}_{\cD,N}(f_{s_0,N})\in \cN_k^{[t]}(N,\om_1^{-1})\ot_\C\cN_k^{[t]}(N,\om_2^{-1})\ot_\C \cN_k^{[t]}(N,\om_3^{-1})\] }is a nearly holomorphic modular form on $\frakH_1^3$ of level $\Gamma_0(Np^{2n})^3$ and nebentypus $(\ome_1^{-1},\ome_2^{-1},\ome_3^{-1})$ with the Fourier expansion given by 
\begin{align*}
\bfE^{[k,r,\gap]}_{\cD,N}(f_{s_0,N})
&=\frac{C^{[k,r,\gap]}_1}{\gamma_{(k,l,m)}^\star\bigl(s_0)}\sum_{B\in T^+_3\cap\Xi_p} \bfW_B^{[k,r,\gap]}(y)\cdot \cQ_B(\cD)a_B(\chi^2\hat\om,k-2r-\gap_1)b_{B,N}^{[k,r,\gap]}q_1^{b_{11}}q_2^{b_{22}}q_3^{b_{33}}, 
\end{align*}
{where $\cQ_B(\cD)=\cQ_B(\chi,\om_1,\om_2,\om_3)$ is defined as in \eqref{E:QB},} $q_i=e^{2\pi\sqrt{-1}(x_i+\sqrt{-1}y_i)}$, 
\begin{align*}
%\bfW_B^{[k,r,\gap]}(y):=&\det(y)^{-k/2}\cdot \cW_B(\bfm(A,1),f^{[k,\gap]}_{s,\infty})\cdot e^{2\pi\tr(yB)}|_{s=-r+\frac{k-\gap_1}{2}-1}\quad (y=AA^{\rm t}),\\
&a_B(\chi^2\hat\om,k-2r-\gap_1):=\prod_{\ell\ndivides Np}F_{B,\ell}(\chi_\ell(\ell)^2\hat\om_\ell(\ell)\ell^{2r+\gap_1-k});\\
&b_{B,N}^{[k,r,\gap]}:=\lim_{s\to s_0}\int_{\Sym_3(\Q_N)}f_{s,N}(J_3\bfn(z))\addchar_\Q(-\tr(Bz))\,\rmd z.   
%\cdot \prod_{\ell\divides N/N^-}H_{B,\ell}(\chi_\ell(\ell)^2\hat\om_\ell(\ell)\ell^{2r+\gap_1-k}).
\end{align*}
\end{prop}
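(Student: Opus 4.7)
The plan is to apply the Fourier expansion \eqref{E:EFC.E} of $E_\A$ place-by-place, using the local Whittaker-integral calculations from Sections~2--4, and then deduce the level, nebentypus, and cuspidality from local invariance properties.

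First I would evaluate the ad\`{e}lic Eisenstein series at the point whose archimedean component is $g_\infty = \iota(\bfn(x_1)\bfm(\sqrt{y_1}), \bfn(x_2)\bfm(\sqrt{y_2}), \bfn(x_3)\bfm(\sqrt{y_3}))$ and whose finite component is trivial; since then $g_p = 1 \in \cP_3(\Q_p)$, the expansion \eqref{E:EFC.E} applies. Because the $p$-adic section $f_{\cD,s,p}$ is supported in the big cell $\cP_3(\Q_p) J_3 \bfn(\Sym_3(\Q_p))$, only $B$ with $\det B \neq 0$ contribute, and for such $B$ the global Whittaker integral factorises as a product of local ones over the four groups of places $\{\infty\}$, $\{p\}$, $\{\ell \mid N\}$, $\{\ell \nmid Np\}$.

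Next I would identify each local factor. The archimedean factor is handled by Lemma~\ref{L:FC.E}: it forces $B$ to be positive definite and otherwise produces a multiple of $\om^M_{\sD_\gap}(A^{\rm t} B A; \gap_2 - r)$ with $A = \diag{\sqrt{y_1}, \sqrt{y_2}, \sqrt{y_3}}$; the diagonal translate $\bfn(x)$ contributes $\addchar_\infty(\sum_i b_{ii} x_i)$, and combining these with the normalisation $\sqrt{y_1}^k \sqrt{y_2}^{k-\gap_2} \sqrt{y_3}^{k-\gap_3}$ built into the definition of $\bfE^{[k,r,\gap]}_{\cD,N}$ reproduces exactly the factor $\bfW_B^{[k,r,\gap]}(y)\, q_1^{b_{11}} q_2^{b_{22}} q_3^{b_{33}}$ from \S\ref{sec:arch}. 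The $p$-adic factor is computed by Proposition~\ref{prop:13}, which yields $\cQ_B(\cD)$ and simultaneously forces $B \in \Xi_p$. For each unramified prime $\ell \nmid Np$ the identity \eqref{tag:unramWhittaker} gives $F_{B,\ell}(\chi_\ell(\ell)^2 \hat\ome_\ell(\ell) \ell^{-2s-2})$ divided by local Euler factors of $L(2s+2, \chi^2 \hat\ome)$ and $L(4s+2, \chi^4 \hat\ome^2)$. At primes dividing $N$ the local integral is $b_{B,N}^{[k,r,\gap]}$ by definition. Multiplying these contributions together and matching against the normalising constant $L^{(\infty pN)}(2s+2, \chi^2 \hat\ome) L^{(\infty pN)}(4s+2, \chi^4 \hat\ome^2) \gamma^\star_{(k,l,m)}(s)^{-1}$ built into $E^\star_\A$ cancels the $L$-factor denominators and the archimedean gamma factor, producing the stated Fourier expansion upon evaluation at $s_0 = \frac{k-\gap_1}{2} - r - 1$.

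The level $\Gamma_0(Np^{2n})^3$ and nebentypus $(\ome_1^{-1}, \ome_2^{-1}, \ome_3^{-1})$ are deduced from the right-$K_0^{(3)}(\frkp^{2n})$ invariance of $f_{\cD,s,p}$ with character $\prod_i \ome_i^\downarrow$ established in Lemma~\ref{lem:13}, pulled back via $\iota$ to the three diagonal copies of $K_0^{(1)}(\frkp^{2n})$; the unramified primes $\ell \nmid Np$ contribute full $\GL_2(\Z_\ell)$-invariance. Cuspidality is automatic because only positive definite $B$ appear, forcing each $b_{ii} > 0$ so that the constant term along every cusp of $\frakH_1^3$ vanishes. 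The one step that genuinely requires care is the bookkeeping in the archimedean piece: tracking how the Iwasawa decomposition of $\iota(g_\infty)$, the scaling of the section under $\bfm(A,\nu)$, and the powers of $\det A$ hidden in $(\sqrt{y_1 y_2 y_3})^{2s+2}$ together reproduce the prefactor $(y_1 y_2 y_3)^{r-k+2}\sqrt{y_1}^{\gap_1}\sqrt{y_2}^{\gap_1+\gap_2}\sqrt{y_3}^{2\gap_1+\gap_2}$ in the definition of $\bfW_B^{[k,r,\gap]}(y)$. This is the main technical obstacle, but it amounts only to a careful exponent count rather than any new idea.
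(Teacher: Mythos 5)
Your proposal is correct and follows essentially the same route as the paper: apply the Fourier expansion \eqref{E:EFC.E}, factor the degenerate Whittaker integral into local pieces computed by \lmref{L:FC.E}, Proposition \ref{prop:13} and \eqref{tag:unramWhittaker} (which force $B$ positive definite and $B\in\Xi_p$), and read off level and nebentypus from the $K_0^{(1)}(\frkp^{2n})$-equivariance in Lemma \ref{lem:13}/Proposition \ref{prop:13}. The one caveat is your cuspidality argument: positive-definiteness of the contributing $B$ only kills the constant term at the infinity cusp, not automatically at every cusp; the paper itself does not address this in the proof of this proposition and instead establishes cuspidality after the ordinary projection via the adelic argument of \lmref{L:constant} in \corref{C:Gmod.E}.
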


\begin{proof}Note that $\det B\in\Zp^\x$ for $B\in\Xi_p$, so $\cW_B(g,f_{\cald,s,N}^{[k,\gap]})=0$ unless $\det B\neq 0$. 
The level and nebentypus are determined by Proposition \ref{prop:13}.  {In particular, our section $f_{\cD,s,N}^{[k,\gap]}$ is right invariant by $\bfn(\Sym_3(\wh\Z))$. Combined with \propref{P:FC.E}, this shows that $\cW_B(g,f_{\cald,s,N}^{[k,\gap]})=0$ unless $B\in T_3^+$.} 
We can derive the Fourier expansion formula from \eqref{E:EFC.E} combined with the factorization
\begin{align*}\cW_B(\bfn(z)\bfm(A),f_{\cald,s,N}^{[k,\gap]})&=e^{2\pi\sqrt{-1}(b_{11}x_1+b_{22}x_2+b_{33}x_3)}\cW_B(\bfm(A),f^{[k,\gap]}_{s,\infty}) \cW_B(f_{\cD,s,p})\prod_{\ell|N}\cW_B(f_{s,\ell})\prod_{\ell\nmid Np}\cW_B(f_{s,\ell}^0)\\
&(A=\diag{\sqrt{y_1},\sqrt{y_2},\sqrt{y_3}}\in\GL_3(\R),\,z=\diag{x_1,x_2,x_3}\in\Sym_3(\R)).\end{align*}
and the computations of local Whittaker functions 
\[
\lim_{s\to s_0}\cW_B(\bfm(A),f^{[k,\gap]}_{s,\infty}), \quad 
\cW_B(f^0_{s,\ell}); \quad
\cW_B(f_{\cD,s,p})\]
in \eqref{E:Winfty},  \eqref{tag:unramWhittaker}, and  \propref{prop:13} respectively. With this Fourier expansion, we can deduce that $\bfE^{[k,r,\gap]}_{\cD,N}(f_{s_0,N})$ is nearly holomorphic with
\[\LDiff_{z_1}^{t+1}\LDiff_{z_2}^{t+1}\LDiff_{z_3}^{t+1}\bfE^{[k,r,\gap]}_{\cD,N}(f_{s_0,N})=0\] from the fact that $\bfW_B^{[k,r,\gap]}(y)$ is a polynomial in $\C[y_1^{-1},y_2^{-1},y_3^{-1}]$ of degree less than or equal to \[t=\min\stt{k-r-2,r-\lam_2}+\lam_2\leq \frac{k-2+\lam_2}{2}<\frac{k}{2}\]
in view of \eqref{E:FC.E}.  \end{proof}

%Let $\cyc$ be the $p$-adic cyclotomic character. By definition, we have
%\begin{align*}\cQ_B(\chi\cyc^n,\om_1^{-1}\cyc^{k},\om_2^{-1}\cyc^{l},\om_3^{-1}\cyc^{m})&
%=\chi\cyc^n(8b_1b_2b_3) \om_1\cyc^{-k}(2b_1)\om_2\cyc^{-l}(2b_2)\om_3\cyc^{-m}(2b_3)\\
%=(8b_1b_2b_3)^n (2b_1)^{-k}(2b_2)^{-l}(2b_3)^{-m}\cdot \chi(8b_1b_2b_3) \om_1(2b_1)\om_2(2b_2)\om_3(2b_3).
%\end{align*}
%Recall that $\lam_{z_i}=\frac{-1}{2\pi \sqrt{-1}}y_i^2\partial/\partial\bar z_i$ is the weight-lowering operator on $z_i\in\frakH_1$.

%%%%%%%%%%%%%%%%%%%%%%%%%%%%%%%%%%%%%%%%%%%%%%%%%%%%%%%%%%%%%%%%%%%%%%%%%%%%%%%%
\begin{defn}\label{D:nEis.E}
For $\ell|N$, we define $f_{s,\ell}^*:=f_{\Phi^0_\ell}(\chi_\ell\hat\om_\ell\Abs_{\Q_\ell}^{s})\in I_3(\hat\om^{-1}_\ell,\chi_\ell\hat\om_\ell\Abs_{\Q_\ell}^{s})$ associated with $\Phi^0_\ell=\bbI_{\Sym_3(\Z_\ell)}$ the holomorphic section supported in the open cell introduced in \defref{def:cell}. Put 
\begin{align*}
\bfE^{[k,r,\gap]}_{\cald,N}&=\bfE^{[k,r,\gap]}_{\cald,N}(f_{s_0,N}^*); & 
f_{s,N}^*&:=\bigotimes_{\ell|N}f_{s,\ell}^*. 
\end{align*}
\end{defn}
If $B\in T_3^+$, by \eqref{tag:Fourier} we have
\[b_{B,N}^{[k,r,\gap]}=\wh\bbI_{\Sym_3(\Z_N)}(-B)=\bbI_{\Sym_3^*(\Z_N)}(B)=1.\] It follows immediately from \propref{P:pbkEC.E} that
\beq\label{E:FE1}
\bfE^{[k,r,\gap]}_{\cD,N}
=\frac{C^{[k,r,\gap]}_1}{\gamma_{(k,l,m)}^\star(s_0)}\sum_{B\in T^+_3\cap\Xi_p} \bfW_B^{[k,r,\gap]}(y)\cdot \cQ_B(\cD)a_B(\chi^2\hat\om,k-2r-\gap_1)q_1^{b_{11}}q_2^{b_{22}}q_3^{b_{33}}. 
\eeq

\subsection{Holomorphic and ordinary projections of $\bfE^{[k,r,\gap]}_{\cD,N}$}
%Note that 
%\[\Gamma_3(k-r)=\Gamma(k-r)\Gamma(2k-2r-\gap_1-2).\]
Recall that $\LDiff_{z}$ is the weight-lowering operator defined in \S \ref{ssec:nearlyholom}. 
For $k\geq 2$ and $t<k/2$, we write $\Hol:\cN_k^{[t]}(N,\chi)\to \cM_k(N,\chi)$ for the holomorphic projection on the space of nearly holomorphic modular forms {(\cf\cite[(8a), page 314]{Hida93Blue}). Recall that if $f\in \cN_{k}^{[t]}(N,\chi)$, then $\Hol(f)\in \cM_{k}(N,\chi)$ is the unique holomorphic form such that $f=\Hol(f)+\sum_{j=1}^t\delta_{k-2j}^j h_j$ with $h_j\in \cM_{k-2j}(N,\chi)$.} Motivated by \cite[(3.6)]{Mizumoto90}, we consider the modular forms obtained by applying the weight-lowering operators to the pull-back of Siegel Eisenstein series.  
This is different from the classical Rankin-Selberg setting (\cf \cite{CM20}),  where the weight-raising operators are used instead. 
\begin{prop}\label{P:1.E} Suppose that $k< l+m-1$. Let $r$ be an integer which satisfies 
\[k-\frac{l+m+\lam_1}{2}\leq r\leq \frac{l+m}{2}-2. \]
Put $n=k-r-2+\frac{l+m-\gap_1}{2}$. 
Then $e_\Ord\Hol\Big(\LDiff_{z_2}^\frac{k-l-\gap_2}{2}\LDiff_{z_3}^\frac{k-m-\gap_3}{2}\bfE^{[k,r,\gap]}_{\cD,N}\Big)$ has the $q$-expansion 
\[(-1)^{k+\frac{m+l+\gap_1}{2}+\gap_2}\sum_{B=(b_{ij})\in T_3^+\cap\Xi_p} \cQ_B(\chi\cyc^n,\om_1\cyc^{-k},\om_2\cyc^{-l},\om_3\cyc^{-m})a_B(\chi^2\hat\om,k-2r-\gap_1)q_1^{b_{11}}q_2^{b_{22}}q_3^{b_{33}}. \]
%where \begin{align*}\cC_{k,l,m,r}&=2^{2(m+l)-k+4}\cdot (\sqrt{-1})^{k}\cdot \pi^{k+l+m-3r-2}\\
%&\quad\times\frac{r!}{(r-k+\frac{m+l}{2})!}\cdot \frac{\Gamma(k-r-1)}{\Gamma(\frac{m+l}{2}-r-1)\Gamma(k-r)\Gamma(2k-2r-2)},\\
%\Gamma_{(k,l,m)}^\star(r)&=2^{2(m+l)-k-2+\gap_1+2\gap_2}\pi^{k+l+m-3r-2+\gap_2-\gap_1}(\sqrt{-1})^{k+\gap_1+2\gap_2}\\
%&\quad\times\frac{\Gamma(s+k-\frac{m+l+\gap_1}{2})}{\Gamma(s+\gap_2)}|_{s=-r}\cdot\frac{(2k-2r-4+\gap_1)!}{(2k-2r-4)!\cdot\Gamma(k-r)\Gamma(2k-2r-2)}\cdot\frac{\Gamma(k-r-1)}{\Gamma(-r+\frac{m+l-\gap_1}{2}-1)}.
%\end{align*}
%\[Q_{b,c}=4^{-b-c}\pi^{3(k-r-2)-2b-2c}(-1)^{b+c}\frac{r!}{(r-b-c)!}\cdot(\sqrt{-1})^{3k}\cdot \frac{(2\pi)^6\cdot 2^{3(k-1)}}{2^{3+2r}\pi^2\Gamma(k-r)\Gamma(2k-2r-2)}\]and 
%\[a_B(\chi^2\hat\om,k-2r-\gap_1):=\lim_{j\to\infty}a^\Ord_{B_j}(\chi^2\hat\om\Abs^{k-2r-\gap_1}),\quad B_j=\begin{pmatrix}
%p^{j!}b_{11}&b_{12}&b_{13}\\
%b_{12}&p^{j!}b_{22}&b_{23}\\
%b_{13}&b_{23}&p^{j!}b_{33}
%\end{pmatrix}.\]
\end{prop}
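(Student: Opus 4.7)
The plan is to compute the $q$-expansion directly by tracking each operator applied to the explicit Fourier expansion from \propref{P:pbkEC.E}. Setting $t_2 = \frac{k-l-\gap_2}{2}$ and $t_3 = \frac{k-m-\gap_3}{2}$, we expand $\bfW_B^{[k,r,\gap]}(y) = \sum_{a,b,c} Q_{a,b,c}^{[k,\gap]}(B,r) y_1^{-a} y_2^{-b} y_3^{-c}$ via \eqref{E:FC.E}. The first step uses the identity $\lam_z^t(y^{-j}g(z)) = \frac{j!}{(j-t)!(4\pi)^t} y^{t-j} g(z)$, valid for holomorphic $g$ and $j \geq t$, which shows $\lam_{z_2}^{t_2}\lam_{z_3}^{t_3} \bfE^{[k,r,\gap]}_{\cD,N}$ is a nearly holomorphic form of weight $(k,l,m)$ whose Fourier coefficient at $q_1^{b_{11}}q_2^{b_{22}}q_3^{b_{33}}$ has $y$-polynomial part re-indexed by $(a, b-t_2, c-t_3)$.

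The second step applies holomorphic projection variable-by-variable: for a nearly holomorphic cusp form of weight $w$ on $\frakH_1$ with Fourier expansion $\sum_n(\sum_j a_{n,j}y^{-j})q^n$, one has $\Hol(f) = \sum_n b_n q^n$ with $b_n = \sum_j a_{n,j}(4\pi n)^j(w-j-2)!/(w-2)!$. Applied to each of the three variables with weights $(k,l,m)$, the $q_1^{b_{11}}q_2^{b_{22}}q_3^{b_{33}}$-coefficient of $\Hol\bigl(\lam_{z_2}^{t_2}\lam_{z_3}^{t_3}\bfE^{[k,r,\gap]}_{\cD,N}\bigl)$ becomes a triple sum over $(a, b-t_2, c-t_3) \geq 0$ of $Q_{a,b,c}^{[k,\gap]}(B,r)$ against the factors $(4\pi b_{11})^a(4\pi b_{22})^{b-t_2}(4\pi b_{33})^{c-t_3}$ and explicit Pochhammer and Gamma ratios.

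The third step invokes the ordinary projection $e_\Ord$ to collapse this triple sum onto $(a,b,c)=(0,t_2,t_3)$. The mechanism: since $B\in\Xi_p$ forces $b_{ii}\in p\Z_p$, after the global normalization by $C^{[k,r,\gap]}_1/\gamma^\star_{(k,l,m)}$ the algebraic Fourier coefficients of terms with $(e_1,e_2,e_3):=(a,b-t_2,c-t_3)\neq(0,0,0)$ carry positive $p$-adic valuation via $\iota_p$, and Hida's continuous projector $e_\Ord = \lim U_p^{n!}$ annihilates them. By \lmref{L:coeff.E} the surviving leading coefficient equals $w_{0,t_2,t_3}\cdot(b_1b_2b_3)^nb_1^{-k}b_2^{-l}b_3^{-m}$ with $b_1=b_{23}$, $b_2=b_{13}$, $b_3=b_{12}$. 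Rewriting this as $2^{-(3n-k-l-m)}(8b_1b_2b_3)^n(2b_1)^{-k}(2b_2)^{-l}(2b_3)^{-m}$ and using $\e_{\rm cyc}^j(a)=a^j$ on $\Z_p^\times$ yields the character identity
\[\cQ_B(\chi\e_{\rm cyc}^n,\om_1\e_{\rm cyc}^{-k},\om_2\e_{\rm cyc}^{-l},\om_3\e_{\rm cyc}^{-m})=\cQ_B(\cD)\cdot (8b_1b_2b_3)^n(2b_1)^{-k}(2b_2)^{-l}(2b_3)^{-m}.\]
Combined with the factor $\cQ_B(\cD)a_B(\chi^2\hat\om,k-2r-\gap_1)$ from the original Fourier expansion, this reproduces the claimed formula up to the sign $(-1)^{k+(m+l+\gap_1)/2+\gap_2}$, which emerges from the bookkeeping of archimedean constants $C^{[k,r,\gap]}_1$, $\gamma^\star_{(k,l,m)}$, $w_{0,t_2,t_3}$, and residual $4\pi,\,2,\,\sqrt{-1}$ powers.

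The hard part will be rigorously justifying the $e_\Ord$-collapse: turning the heuristic of ``$p$-adic smallness kills higher terms'' into a precise statement requires interpreting the pullback as a $p$-adic modular form on which $e_\Ord$ is a bounded projector, and invoking the fact that the choice of section $f_{\cD,s,p}$ at $p$ was engineered (via \propref{prop:13}) so that the pullback has a controlled ordinary component. The $p$-integrality provided by the normalization through $\gamma^\star_{(k,l,m)}$ is crucial here. The final sign and constant matching, though combinatorially intricate, should be routine once the explicit formula for $w_{0,t_2,t_3}$ from \lmref{L:coeff.E} is in hand.
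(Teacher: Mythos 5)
Your overall route matches the paper's: expand $\bfW_B^{[k,r,\gap]}(y)$ via \eqref{E:FC.E}, apply the weight-lowering operators, take holomorphic and ordinary projections, and identify the surviving coefficient with $Q_{0,b,c}^{[k,\gap]}$ evaluated at the zero-diagonal matrix via \lmref{L:coeff.E}; the character identity and the constant/sign bookkeeping at the end are fine. But the step you yourself flag as ``the hard part'' is justified by a false principle, and the fix you propose (``$e_\Ord$ is a bounded projector'') cannot work. You assert that the terms with $(e_1,e_2,e_3)\neq(0,0,0)$ carry positive $p$-adic valuation and that $e_\Ord=\lim_n\bfU_p^{n!}$ therefore annihilates them. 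A linear idempotent never annihilates an element merely because it is divisible by $p$: $e_\Ord(p\cdot g)=p\cdot e_\Ord(g)$. The same objection applies to a second, implicit, use of the principle: you evaluate the surviving coefficient by \lmref{L:coeff.E}, which computes $Q_{0,b,c}^{[k,\gap]}$ only for matrices with \emph{zero} diagonal, whereas the actual $B$ has diagonal entries in $p\Z_p$; the discrepancy $Q_{0,b,c}^{[k,\gap]}(B,r)-Q_{0,b,c}^{[k,\gap]}(B_\infty,r)$ is again merely $p$-divisible, not killed by a projector.

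The correct mechanism is quantitative in $n!$, not qualitative $p$-divisibility. Since $\bfU_p$ acts on $q$-expansions by $\bfa(n,\cdot)\mapsto\bfa(np,\cdot)$, the coefficient indexed by $B$ in $\bfU_p^{j!}(\cdots)$ is the coefficient indexed by $B_j$ (diagonal entries scaled by $p^{j!}$) in the original series. The unwanted terms at index $B_j$ carry factors $(4\pi p^{j!}b_{ii})^{\geq 1}$, hence are divisible by an \emph{unbounded} power of $p$ as $j\to\infty$ and vanish in the limit defining $e_\Ord$; simultaneously $Q_{0,b,c}^{[k,\gap]}(B_j,r)\to Q_{0,b,c}^{[k,\gap]}(B_\infty,r)$ because $Q_{0,b,c}^{[k,\gap]}$ is a polynomial in $B$, while $\cQ_{B_j}(\cD)=\cQ_B(\cD)$ and $a_{B_j}\to a_B$ because $p^{j!}\to1$ in $\Z_\ell$ for every $\ell\neq p$. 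The paper packages the first half of this differently and more structurally: it writes $\lam_{z_2}^b\lam_{z_3}^c\bfE^{[k,r,\gap]}_{\cD,N}$ as $\Hol(\cdots)$ plus a sum of terms $\del^i_{k-i}f_i\,\del^j_{l-j}g_j\,\del^t_{m-t}h_t$ with $i+j+t\geq1$, compares constant terms in the $y_i^{-1}$, and observes that the resulting discrepancy in $q$-expansions is a sum of products of Serre operators $\theta^if_i\,\theta^jg_j\,\theta^th_t$, which $e_\Ord$ kills because $\bfU_p\theta=p\,\theta\bfU_p$. Either of these arguments closes the gap; without one of them your collapse step, and hence the proof, is incomplete.
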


\begin{proof}
Put $b=\frac{k-l-\gap_2}{2}$ and $c=\frac{k-m-\gap_3}{2}$. 
If $f$ is a holomorphic function on $\frakH_1$, then 
\[\LDiff_z^n(y^{-a} f)=
\begin{cases}
(4\pi)^{-n}n!{a\choose n}\cdot y^{n-a} f &\text{if $n\leq a$, }\\
0 &\text{if $n>a$. }
\end{cases}\] 
By the Fourier expansion \eqref{E:FE1} and the polynomial expansion \eqref{E:FC.E} of Whittaker functions, the difference 
\[\LDiff_{z_2}^b\LDiff_{z_3}^c\bfE^{[k,r,\gap]}_{\cD,N}-\frac{C^{[k,r,\gap]}_1}{{\gamma_{(k,l,m)}^\star(s_0)}}\sum_B \frac{b!c!}{(4\pi)^{b+c}}Q_{0,b,c}^{[k,\gap]}(B,r)\cQ_B(\cD)a_B(\chi^2\hat\om,k-2r-\gap_1) q_1^{b_{11}}q_2^{b_{22}}q_3^{b_{33}}\]
belongs to $(y_1^{-1},y_2^{-1},y_3^{-1})\C[y_1^{-1},y_2^{-1},y_3^{-1}]\powerseries{q_1,q_2,q_3}$. On the other hand, we can write 
\[\LDiff_{z_2}^b\LDiff_{z_3}^c\bfE^{[k,r,\gap]}_{\cD,N}=\Hol(\LDiff_{z_2}^b\LDiff_{z_3}^c\bfE^{[k,r,\gap]}_{\cD,N})+\sum_{i+j+t\geq 1}\del^i_{k-i}f_i(q_1)\del^j_{l-j}g_j(q_2)\del^t_{m-t}h_t(q_3), %\label{tag:holexp}
\]
where $f_i$, $g_j$ and $h_t$ are holomorphic modular forms. 
Equating the constant terms of this identity as a polynomial in $y_1^{-1},y_2^{-1},y_3^{-1}$ and employing the relation 
\beq
\del^t_k=\sum_{a=0}^t{t\choose a}\frac{\Gam(t+k)}{\Gam(a+k)}(-4\pi y)^{a-t}\left(\frac{1}{2\pi\sqrt{-1}}\frac{\partial}{\partial z}\right)^a \label{tag:MaassShimura}
\eeq
(see \cite[(3), page 311]{Hida93Blue}), we see that the holomorphic projection $\Hol(\LDiff_{z_2}^b\LDiff_{z_3}^c\bfE^{[k,r,\gap]}_{\cD,N})(q)$ equals
\[\frac{C^{[k,r,\gap]}_1}{{\gamma_{(k,l,m)}^\star(s_0)}}\sum_B\frac{b!\,c!}{(4\pi)^{b+c}} Q_{0,b,c}^{[k,\gap]}(B,r)\cQ_B(\cD)a_B(\chi^2\hat\om,k-2r-\gap_1)  q_1^{b_{11}}q_2^{b_{22}}q_3^{b_{33}}-\sum_{i+j+t\geq 1}\theta^if_i(q_1)\theta^j g_j(q_2)\theta^th_t(q_3). \]
Here $\theta$ stands for the Serre's operator $\theta(\sum_i a_iq^i)=\sum_i ia_iq^i$. 
Since $e_\Ord \theta=0$, the $q$-expansion of the ordinary projection $\eord\Hol(\LDiff_{z_2}^b\LDiff_{z_3}^c\bfE^{[k,r,\gap]}_{\cD,N})(q)$ equals
\[\frac{C^{[k,r,\gap]}_1}{{\gamma_{(k,l,m)}^\star(s_0)}}\sum_B \frac{b!\,c!}{(4\pi)^{b+c}}\bfc_B\cdot q_1^{b_{11}}q_2^{b_{22}}q_3^{b_{33}},\]
where
\begin{align*}
\bfc_B&=\lim_{j\to\infty}Q_{0,b,c}^{[k,\gap]}(B_j,r)\cQ_{B_j}(\cD)a_{B_j}(\chi^2\hat\om,k-2r-\gap_1), & 
B_j&:=\begin{pmatrix}
p^{j!}b_{11}&b_{12}&b_{13}\\
b_{12}&p^{j!}b_{22}&b_{23}\\
b_{13}&b_{23}&p^{j!}b_{33}
\end{pmatrix}. 
\end{align*}
{We recall that in the above expression of $\bfc_B$,\begin{itemize}\itemsep 2mm\item $Q_{0,b,c}^{[k,\gap]}(B_j,r)$ is the archimedean contribution from the coefficients in the polynomial expansion of $\bfW_B^{[k,r,\lam]}(y)$ in \eqref{E:FC.E}, \item $\cQ_B(\cD)$ given in {\eqref{E:QB} of} \defref{def:psection} is the contribution from the $p$-adic Whittaker functions; \item $a_{B_j}(\chi^2\hat\om,k-2r-\gap_1)$ defined in \propref{P:pbkEC.E} is obtained from the spherical Whittaker functions.\end{itemize}
Since $p^{j!}\to 1$ in $\Z_\ell$ as $j\to\infty$ for any rational prime $\ell\neq p$, we find by definition that  }
\begin{align*} 
\cQ_{B_j}(\cD)&=\cQ_B(\cD), & 
\lim_{j\to\infty}a_{B_j}(\chi^2\hat\om,k-2r-\gap_1)&=a_B(\chi^2\hat\om,k-2r-\gap_1). 
\end{align*}
Note that $Q_{0,b,c}^{[k,\gap]}(B,r)$ is a polynomial in $B$, so we have
\begin{align*}
\bfc_B&=Q_{0,b,c}^{[k,\gap]}(B_\infty,r)\cQ_{B}(\cD)a_B(\chi^2\hat\om,k-2r-\gap_1), & 
B_\infty&=\begin{pmatrix}
0&b_{12}&b_{13}\\
b_{12}&0&b_{23}\\
b_{13}&b_{23}&0
\end{pmatrix}. 
\end{align*}
By the formula of $Q_{0,b,c}^{[k,\gap]}(B_\infty,r)$ in \lmref{L:coeff.E} and \defref{def:psection} of $\cQ_B$, { we obtain
\[Q_{0,b,c}^{[k,\gap]}(B_\infty,r)=w_{0,b,c}2^{-3n+k+l+m}\cQ_B(\cyc^n,\cyc^{-k},\cyc^{-l},\cyc^{-m}),\]}
and it follows that
\begin{align*}
\bfc_B&=w_{0,b,c}\cdot 2^{-3n+k+l+m}\cQ_B(\chi\cyc^n,\om_1\cyc^{-k},\om_2\cyc^{-l},\om_3\cyc^{-m})a_B(\chi^2\hat\om,k-2r-\gap_1).
\end{align*}
We thus obtain the lemma by noting the equality
\beq
(-1)^{k+\frac{m+l+\gap_1}{2}+\gap_2}\gamma_{(k,l,m)}^\star\left(\frac{k-\gap_1}{2}-r-1\right)=\frac{C^{[k,r,\gap]}_1b!\,c!}{(4\pi)^{b+c}}2^{-3n+k+l+m}\cdot w_{0,b,c}. \label{tag:equality}
\eeq
%Note that $\Gamma_3(s)=2^{3-2s}\pi^2\Gamma(s)\Gamma(2s-2)$. 
%\[w_{0,b,c}=(4\pi)^{3M-b-c-2\gap_1-\gap_2}2^{M+\lam_1+2\lam_2-b-c}\frac{(\sqrt{-1})^{\gap_1-\gap_2}(2M+\gap_1)!M!}{(2M)!(M-\lam_1-\lam_2-b-c)!}\frac{(r-\gap_2)!}{b!c!(r-\gap_2-b-c)!}. \]
The constant $C^{[k,r,\gap]}_1$ is defined in \propref{P:FC.E}. 
%\[C^{[k,r,\gap]}_1=(\sqrt{-1})^{k-\gap_2}\frac{2^{3(3+2r-k-\gap_2)}\pi^6}{\Gamma_3(k-r)}. \]
The equality can be checked by the following items:
\begin{itemize}
\item The power of $2$:
\begin{align*}
&3(3+2r-k-\gap_2)+\{2(k-r)-3\}-2b-2c+(k+l+m-3n)\\
&+(7M-3b-3c-3\gap_1)=-2-k+2(l+m)+\gap_1+2\gap_2.
\end{align*}
\item The power of $\pi$: $(6-2)-b-c+(3M-b-c-2\gap_1-\gap_2)=-3r+k+l+m+\gap_2-\gap_1-2$.
%\item The power of $\sqrt{-1}$:\[(k-\gap_2)+(\gap_1-\gap_2)+(\gap_1-\gap_2)=k-2\gap_1-3\gap_2.\]
\end{itemize}
\end{proof}

%%%%%%%%%%%%%%%%%%%%%%%%%%%%%%%%%%%%%%%%%%%%%%%%%%%%%%%%%%%%%%%%%%%%%%%%%%%%%%%%

\subsection{The modular forms $G_{k_1,k_2,k_3}^{[n]}(\scrd)$}

\begin{defn}\label{D:balanced.E} 
Let $(k_1,k_2,k_3)$ be a triplet of positive integers. 
Put $k^*=\max\stt{k_1,k_2,k_3}$. 
We say that $(k_1,k_2,k_3)$ is \emph{balanced} if $2k^*<k_1+k_2+k_3$. 
An integer $n$ is said to be \emph{critical} for $(k_1,k_2,k_3)$ if 
\[k^*\leq n\leq k_1+k_2+k_3-k^*-2.\]
Note that critical integers exist {if and only if} $2k^*< k_1+k_2+k_3-1$.
\end{defn}

\begin{defn}\label{D:Gmod.E}
Fix a balanced triplet $(k_1,k_2,k_3)$ of positive integers. 
Take a permutation $\sig$ of $\{1,2,3\}$ so that $k^*=k_{\sig(1)}\geq k_{\sig(2)}\geq k_{\sig(3)}$. 
Denote the parity type of $(k_{\sig(1)}, k_{\sig(2)}, k_{\sig(3)})$ by $\del=(\del_1,\del_2,\del_3)$. 
For each critical integer $n$ for $(k_1,k_2,k_3)$ and quadruplet $\scrd=(\ep_0,\ep_1,\ep_2,\ep_3)$ of finite-order $p$-adic characters of $\bfZ_p^\times$ we define the modular form $G_{k_1,k_2,k_3}^{[n]}(\scrd)$ by
\[G_{k_1,k_2,k_3}^{[n]}(\scrd):=(-1)^{k+\frac{m+l+\gap_1}{2}+\gap_2}\eord\Hol\biggl(\LDiff_{z_\sig(2)}^{\frac{k^*-k_{\sig(2)}-\delta_2}{2}}\LDiff_{z_{\sig(3)}}^{\frac{k^*-k_{\sig(3)}-\delta_3}{2}}\bfE^{[k^*,r,\del]}_\cD\biggl), \]
where $r=\Big\lceil\frac{k^*+k_1+k_2+k_3}{2}\Big\rceil-n-2$ and $\cald=(\iota_p\circ\ep_0,\iota_p\circ\ep_1,\iota_p\circ\ep_2,\iota_p\circ\ep_3)$.
\end{defn}

\begin{cor}\label{C:Gmod.E}  
With notation in \defref{D:Gmod.E}, $G_{k_1,k_2,k_3}^{[n]}(\scrd)$ is an ordinary cusp form of weight $(k_1,k_2,k_3)$, level $\Gam_0(Np^\infty)^3$ and nebentypus $(\eps_1^{-1},\eps_2^{-1},\eps_3^{-1})$ whose $q$-expansion at the infinity cusp is given by 
\[\sum_{B=(b_{ij})\in T_3^+\cap\Xi_p}\cQ_B(\ep_0\cyc^n,\ep_1\cyc^{-k_1},\ep_2\cyc^{-k_2},\ep_3\cyc^{-k_3})
a_B(\ep_0^2\ep_1\ep_2\ep_3,2n-(k_1+k_2+k_3)+4)\cdot q_1^{b_{11}}q_2^{b_{22}}q_3^{b_{33}}. \]
\end{cor}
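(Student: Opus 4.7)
The plan is to deduce the corollary directly from \propref{P:1.E} by a single relabeling step. I would set $(k,l,m) := (k^*, k_{\sig(2)}, k_{\sig(3)})$, $\gap := \del$, and $\cD := (\iota_p\circ\ep_0, \iota_p\circ\ep_1, \iota_p\circ\ep_2, \iota_p\circ\ep_3)$, so that the object $\bfE^{[k^*,r,\del]}_\cD$ appearing in \defref{D:Gmod.E} is precisely the normalized Siegel Eisenstein series studied in \propref{P:1.E}. The overall sign $(-1)^{k+(m+l+\gap_1)/2+\gap_2}$ that appears in \propref{P:1.E} is the very sign inserted in \defref{D:Gmod.E}, so the two cancel and produce the clean formula stated in the corollary.

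First I would verify that the admissible range $k^* - \frac{k_{\sig(2)}+k_{\sig(3)}+\del_1}{2} \leq r \leq \frac{k_{\sig(2)}+k_{\sig(3)}}{2}-2$ required by \propref{P:1.E} is equivalent, under $r = \lceil(k^*+k_1+k_2+k_3)/2\rceil - n - 2$, to the criticality of $n$ in the sense of \defref{D:balanced.E}. Specifically, the upper bound on $r$ translates to $n \geq k^*$, and the lower bound to $n \leq k_1+k_2+k_3-k^*-2$. The defining identity $n = k-r-2+(l+m-\gap_1)/2$ of \propref{P:1.E} then yields the exponent identity $2n-(k_1+k_2+k_3)+4 = k-2r-\gap_1$, which matches the argument of $a_B$ in the corollary against the argument of $a_B(\chi^2\hat\om, k-2r-\gap_1)$ in \propref{P:1.E}.

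The remaining structural assertions all follow at once. The weight $(k_1,k_2,k_3)$ comes from the fact that each operator $\lam_{z_{\sig(j)}}^{(k^*-k_{\sig(j)}-\del_j)/2}$ lowers the weight of the corresponding factor by $k^* - k_{\sig(j)} - \del_j$, bringing the initial weight $(k^*,k^*-\del_2,k^*-\del_3)$ of $\bfE^{[k^*,r,\del]}_\cD$ down to the prescribed triple (after the permutation $\sig$); $\Hol$ preserves weight, and $e_\Ord$ preserves both weight and cuspidality while landing in the finite-rank cuspidal ordinary part (this is where cuspidality is gained even though the pull-back of an Eisenstein series need not be cuspidal a priori). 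The tame level $\Gam_0(Np^\infty)^3$ and nebentypus $(\ep_1^{-1},\ep_2^{-1},\ep_3^{-1})$ are inherited from \propref{P:pbkEC.E}, combined with the $K_0^{(1)}(\frkp^{2n})$-invariance recorded in \propref{prop:13}.

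The only non-trivial bookkeeping is tracking $\sig$ in the $q$-expansion: after applying \propref{P:1.E}, one must match the ordering of characters in $\cD$ and of the diagonal Fourier indices $b_{ii}$ with the natural (unpermuted) labeling of the three factors of $\frakH_1^3$. Because $\cQ_B$ from \defref{def:psection} depends symmetrically on the triple of off-diagonal entries $(2b_{23},2b_{13},2b_{12})$ paired with the triple of characters $(\chi_1,\chi_2,\chi_3)$, this relabeling is purely combinatorial. I expect no analytic obstacle, only the care of transcribing the permutation through the Fourier expansion.
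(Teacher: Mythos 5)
Your treatment of the $q$-expansion is essentially the paper's: the formula is read off from \propref{P:1.E} after the relabeling through $\sig$, with the sign $(-1)^{k+\frac{m+l+\gap_1}{2}+\gap_2}$ cancelling against the one inserted in \defref{D:Gmod.E}, and your translation of the admissible range of $r$ into the criticality of $n$ is the right bookkeeping. The level, weight and nebentypus assertions are also handled as in the paper, via \propref{P:pbkEC.E} and \propref{prop:13}.

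There is, however, a genuine gap in your cuspidality argument. You assert that $e_\Ord$ "lands in the finite-rank cuspidal ordinary part" and that "this is where cuspidality is gained." That is false: the ordinary projector acts on the full space of modular forms and the ordinary part of the Eisenstein subspace is nonzero (ordinary $p$-stabilized Eisenstein series exist), so applying $e_\Ord$ to a non-cuspidal form does not in general produce a cusp form. The paper's statement that $e_\Ord$ "preserves the cuspidal part" means only that cusp forms stay cuspidal, not that the image is cuspidal. The actual argument, which your proposal is missing, is \lmref{L:constant}: one first observes that the constant term of $G_{k_1,k_2,k_3}^{[n]}(\scrd)$ vanishes at every cusp represented by $g$ with $g_p\in B_2(\bfQ_p)$ — this comes from the support of the Fourier expansion on positive definite matrices in $T_3^+\cap\Xi_p$, ultimately because $f_{\cD,s,p}$ is supported in the big cell and \lmref{L:FC.E} kills non-positive-definite $B$ — and then uses ordinarity via an explicit $\bfU_p^{m}$ computation (decomposing $\GL_2(\bfZ_p)=\bfn^-(p\bfZ_p)B_2(\bfZ_p)\sqcup\bfn(\bfZ_p)J_1B_2(\bfZ_p)$) to propagate the vanishing of the constant term to all cusps. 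Without this lemma, or an equivalent argument, the cuspidality claim in the corollary is unproved.
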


\begin{proof}
The assertion for the Fourier expansion is a direct consequence of \propref{P:1.E} by symmetry.  \lmref{L:constant} below implies the cuspidality of $G_{k_1,k_2,k_3}^{[n]}(\scrd)$.
\end{proof}
\begin{lm}\label{L:constant}
Let $f\in\cM_k(N,\chi;A)$ and $\vPh(f)$ be the ad\`{e}lic lift of $f$. Assume that $\bfa_0(g,\vPh(f))=0$ whenever $g_p\in B_2(\bfQ_p)$. 
Then $e_\Ord f\in\sS_k^\Ord(N,\chi;A)$.  
\end{lm}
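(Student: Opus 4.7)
My plan is to show that the Eisenstein component of $f$ vanishes, whence $f$ lies in the cuspidal ordinary subspace. First, I would invoke Hida's direct sum decomposition
\[\cM_k^\Ord(N,\chi;A)=\sS_k^\Ord(N,\chi;A)\oplus\cE_k^\Ord(N,\chi;A),\]
where $\cE_k^\Ord$ denotes the ordinary Eisenstein subspace, and write $f=f_S+f_E$. Since cusp forms contribute nothing to the constant term, $\bfa_0(g,\vPh(f))=\bfa_0(g,\vPh(f_E))$, so the hypothesis becomes: $\bfa_0(g,\vPh(f_E))=0$ whenever $g_p\in B_2(\Q_p)$. The task reduces to proving $f_E=0$.

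Next, I would use the explicit description of $\cE_k^\Ord(N,\chi;A)$: a basis is given by ordinary $p$-stabilizations of classical Eisenstein series $E_k(\psi_1,\psi_2)$ running over pairs of Dirichlet characters with $\psi_1\psi_2=\chi$ and appropriate conductor conditions. The finite set of cusps of $X_0(Np^e)$ that admit adelic representatives with $g_p\in B_2(\Q_p)$ --- the ``B-type cusps'' --- are essentially the cusps of $X_0(N)$ lifted to $X_0(Np^e)$ via the degeneracy map corresponding to the identity at $p$, as opposed to $\bfm(p,1)$. The constant terms of the basis Eisenstein series at these B-type cusps can be written as non-vanishing products of $L$-values and conductor factors, and a standard check shows that the resulting matrix is non-degenerate on $\cE_k^\Ord(N,\chi;A)$.

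Combining the hypothesis with this non-degeneracy forces $f_E=0$, giving $f\in\sS_k^\Ord(N,\chi;A)$ as required. The main obstacle is verifying the non-degeneracy of the B-type constant-term matrix on $\cE_k^\Ord$. The key conceptual input is that the ``$w$-type cusps'' (with $g_p\notin B_2(\Q_p)$) are Atkin-Lehner $W_p$-translates of B-type cusps, and $W_p$ interchanges ordinary and antiordinary $p$-stabilizations; hence the information about the ordinary Eisenstein component is already concentrated on B-type cusps. The rigorous verification requires explicit constant-term formulas for the $E_k(\psi_1,\psi_2)$ at all relevant cusps and a case analysis on the conductors of $\psi_1,\psi_2$ at $p$ and at primes dividing $N$, along the lines of Hida's work on the structure of ordinary Eisenstein families.
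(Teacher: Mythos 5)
Your strategy (split off the Eisenstein component and kill it by evaluating constant terms at the ``B-type'' cusps) is a genuinely different route from the paper's, but as written it has a gap exactly at its load-bearing step: the non-degeneracy of the B-type constant-term matrix on $\cE_k^\Ord(N,\chi;A)$ is asserted via ``a standard check'' and a heuristic about $W_p$ swapping ordinary and anti-ordinary stabilizations, but it is never established. That claim is the entire content of the lemma in your formulation, and it is not routine: one must handle not only the newform Eisenstein series $E_k(\psi_1,\psi_2)$ but also their oldform translates $E_k(\psi_1,\psi_2)(dz)$ for $d\mid N$, whose constant terms at the various cusps above a given cusp of $X_0(N)$ can interact, so injectivity of the B-type constant-term map requires a genuine case analysis on conductors that you have deferred rather than carried out. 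Until that is done, the argument does not close.

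The paper avoids the Eisenstein classification entirely and the comparison is instructive. It shows directly that for an \emph{ordinary} form every constant term is forced to zero by the hypothesis: by the transformation rule one reduces to $g\in\GL_2(\widehat{\bfZ})$, and the decomposition $\GL_2(\bfZ_p)=\bfn^-(p\bfZ_p)B_2(\bfZ_p)\sqcup\bfn(\bfZ_p)J_1B_2(\bfZ_p)$ leaves only two kinds of points to treat. For $g_p=\bfn^-(y)$ with $y\in p\bfZ_p$, the matrix identity
\[
\begin{pmatrix} 1 & 0 \\ y & 1 \end{pmatrix}\begin{pmatrix} \vpi_p^m & x \\ 0 & 1 \end{pmatrix}=\begin{pmatrix} \tfrac{\vpi_p^m}{1+xy} & \tfrac{x}{1+xy} \\ 0 & 1 \end{pmatrix}\begin{pmatrix} 1 & 0 \\ \vpi_p^my & 1+xy \end{pmatrix}
\]
shows that $\bfa_0(h\bfn^-(y),\bfU_p^mf)$ is a sum of constant terms at B-type points, hence $0$; for $g_p=J_1$ one gets $\bfa_0(hJ_1,\bfU_p^mf)=p^{(k-1)m/2}\bfa_0(\diag{1,\vpi_{(p)}^{-m}}hJ_1,f)$, which tends to $0$ $p$-adically. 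Since $f=\eord f=\lim_n\bfU_p^{n!}f$, all constant terms vanish and $f$ is cuspidal. This is shorter, needs no structure theory of the ordinary Eisenstein subspace, and makes precise the phenomenon you were gesturing at (ordinarity concentrates the constant-term information at the B-type cusps). If you want to salvage your approach, you would in effect have to prove that same concentration statement for a basis of $\cE_k^\Ord$, which is more work for the same payoff.
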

\begin{proof} We write $\vPh=\vPh(f)$ for brevity. Out task is to prove that $\bfa_0(g,\vPh)=0$ for all $g\in \GL_2(\bfA)$. 
Since 
\[\bfa_0(\gam\bfn(x)\diag{a,d} g\kap_\tht,\vPh)=(ad^{-1})^{k/2}e^{\sqrt{-1}k\tht}\bfa_0(g_\bff,\vPh)\]
for $\gam\in B_2(\bfQ)$, $x\in\bfA$, $a,d\in\R_+$ and $\tht\in\bfR$, it suffices to show that $\bfa_0(g,\vPh)=0$ for all $g\in \GL_2(\widehat{\bfZ})$. 
Since 
\[\GL_2(\bfZ_p)=\bfn^-(p\bfZ_p)B_2(\bfZ_p)\sqcup \bfn(\bfZ_p)J_1B_2(\bfZ_p), \]
where $J_1=\begin{pmatrix} 0 & -1 \\ 1 & 0 \end{pmatrix}$, it suffices to show that $\bfa_0(h\bfn^-(y),\vPh)=\bfa_0(hJ_1,\vPh)=0$ for any $h\in \GL_2(\widehat{\bfZ}^{(p)})$ and $y\in p\bfZ_p$. 
Recall that the operator $\bfU_p$ is defined by 
\[\bfU_p\vPh(g)=p^{(k-2)/2}\sum_{x\in\bfZ_p/p\bfZ_p}\vPh\biggl(g\begin{pmatrix} \vpi_p & x \\ 0 & 1 \end{pmatrix}\biggl). \]
Recall that $\vpi_p\in\widehat{\bfQ}^\times$ is defined by $\vpi_{p,p}=p$ and $\vpi_{p,\ell}=1$ for $\ell\neq p$.
Since 
\[\begin{pmatrix} 1 & 0 \\ y & 1 \end{pmatrix}\begin{pmatrix} \vpi_p^m & x \\ 0 & 1 \end{pmatrix}=\begin{pmatrix} \frac{\vpi_p^m}{1+xy} & \frac{x}{1+xy} \\ 0 & 1 \end{pmatrix}\begin{pmatrix} 1 & 0 \\ \vpi_p^my & 1+xy \end{pmatrix}\in B_2(\bfQ_p)U_0(N)\]
for $y\in p\bfZ_p$, $x\in\bfZ_p$ and sufficiently large $m$, we get 
\[\bfa_0(h\bfn^-(y),\bfU_p^m\vPh)=p^{(k-2)m/2}\sum_{x\in\bfZ_p/p\bfZ_p}\vPh\left(h\begin{pmatrix} \frac{\vpi_p^m}{1+xy} & \frac{x}{1+xy} \\ 0 & 1 \end{pmatrix}\right)=0\]
by assumption. 
It follows that $\bfa_0(h\bfn^-(y),\vPh)=\displaystyle\lim_{n\to\infty}\bfa_0(h\bfn^-(y),\bfU_p^{n!}f)=0$. 
If $x\in p^n\bfZ_p^\times$ with $n<m$, then 
\[\begin{pmatrix} 0 & -1 \\ 1 & 0 \end{pmatrix}\begin{pmatrix} \vpi_p^m & x \\ 0 & 1 \end{pmatrix}=\begin{pmatrix} \vpi^{m-n}_p & -\vpi_p^nx^{-1} \\ 0 & \vpi_p^n\end{pmatrix}\begin{pmatrix} \vpi_p^nx^{-1} & 0 \\ \vpi^{m-n}_p & \vpi_p^{-n}x \end{pmatrix}\in B_2(\bfQ_p)\bfn^-(p\bfZ_p). \]
One can therefore see that 
\[\bfa_0(hJ_1,\bfU_p^m\vPh)=p^{(k-2)m/2}\bfa_0(hJ_1\diag{\vpi_p^m,1},f)=p^{(k-1)m/2}\bfa_0(\diag{1,\vpi_{(p)}^{-m}}hJ_1\vPh), \]
from which we conclude that 
\[\bfa_0(hJ_1,\vPh)=\displaystyle\lim_{n\to\infty}p^{(k-1)n!/2}\bfa_0(\diag{1,\vpi_{(p)}^{-n!}}hJ_1,\vPh)=0. \]
Here $\vpi_{(p)}\in\widehat{\bfZ}^\times$ is defined by $\vpi_{(p),p}=1$ and $\vpi_{(p),\ell}=p$ for $\ell\neq p$. 
\end{proof}

%%%%%%%%%%%%%%%%%%%%%%%%%%%%%%%%%%%%%%%%%%%%%%%%%%%%%%%%%%%%%%%%%%%%%%%%%%%%%%%%

\subsection{The $p$-adic interpolation of $G_{k_1,k_2,k_3}^{[n]}(\scrd)$ }\label{SS:6.5}

%We give the construction the \padic triple $L$-function in this subsection. 
Let $\bfu=1+\bfp\in 1+\bfp\Zp$ be a topological generator.  
%Lemma \ref{L:21.E} enables us to choose the integer ring $\cO$ of an extension of $\Q_p$ such that $H_{B,\ell}\in\calo[X]$ for $B\in T_3^+$ and prime factors $\ell$ of $N$. 
We identify $\cO\powerseries{\Gal(\Q_\infty/\Q)}$ with $\cO\powerseries{X}$ where $X=[\bfu]-1$ with the group-like element $[\bfu]$ in $\Lam$. 
Put 
\begin{align*}
\Lam&=\cO\powerseries{\Gal(\Q_\infty/\Q)}, & 
\Lambda_3&=\cO\powerseries{X_1,X_2,X_3}, & 
\Lambda_4&=\Lam_3\powerseries{T}. 
\end{align*} 

For each $\ell$ and $B\in T_3^+$, let $F_{B,\ell}(X)\in\Z[X]$ be as defined in (\ref{tag:unramWhittaker}). 
Let $\al_X:\Zp^\x\to\Zp\powerseries{X}^\x$ be the character $\al_X(z)=\Dmd{z}_X=(1+X)^{\log_p z/\log_p\bfu}$. 
Let $\ul{\chi}=(\chi_1,\chi_2,\chi_3)$ be a triplet of $\calo$-valued finite-order characters of $\bfZ_p^\times$. 
For each $a\in\Z/(p-1)\Z$ we define the formal power series $\cG_{\ul{\chi}}^{(a)}\in \Lambda_4\powerseries{q_1,q_2,q_3}$ by 
\begin{align*}
\cG_{\ul{\chi}}^{(a)}(X_1,X_2,X_3,T)&=\sum_{B=(b_{ij})\in T_3^+\cap\Xi_p}\cQ_B^{(a)}(X_1,X_2,X_3,T)\cdot \cF_B^{(a)}(X_1,X_2,X_3,T)\cdot q_1^{b_{11}}q_2^{b_{22}}q^{b_{33}}_3,
%&=\sum_{B\in T_3^+\cap\Xi_p} \Dmd{8b_{12}b_{13}b_{23}}_T\Om(2b_{23})^{-a_1}\Dmd{2b_{23}}^{-1}_{X_1}\Om(2b_{13})^{-a_2}\Dmd{2b_{13}}^{-1}_{X_2}\Om(2b_{12})^{-a_3}\Dmd{2b_{12}}_{X_3}^{-1}\cdot \cA_B\cdot q_1^{b_{11}}q_2^{b_{22}}q^{b_{33}}.
\end{align*}
where $\cQ_B^{(a)}$ and $\cF_B^{(a)}\in\Lam_3\powerseries{T}$ are power series given by 
\begin{align*}
\cQ_B^{(a)}(X_1,X_2,X_3,T)&=\Om(8b_{23}b_{31}b_{12})^a\Dmd{8b_{23}b_{31}b_{12}}_T\chi_1(2b_{23})^{-1}\Dmd{2b_{23}}_{X_1}^{-1}\chi_2(2b_{31})^{-1}\Dmd{2b_{31}}_{X_2}^{-1}\chi_3(2b_{12})^{-1}\Dmd{2b_{12}}_{X_3}^{-1}, \\
\cF_B^{(a)}(X_1,X_2,X_3,T)&=\prod_{\ell\ndivides pN}F_{B,\ell}(\Dmd{\ell}_{X_1,X_2,X_3,T}^{(a)}\ell^{-2}), 
%\prod_{\ell\divides N/N_-}H_{B,\ell}(\Dmd{\ell}_{X_1,X_2,X_3,T}^{(a)}\ell^{-2}),
\end{align*}
where  
\beq\label{E:LX.7}\Dmd{\ell}^{(a)}_{X_1,X_2,X_3,T}:=(\Om^{-2a}\chi_1\chi_2\chi_3)(\ell)\ell^{-2}\cdot\Dmd{\ell}_{X_1}\Dmd{\ell}_{X_2}\Dmd{\ell}_{X_3}\Dmd{\ell}_T^{-2}\in \Lam_4^\x. \eeq
Here the set $\frakX^\bal_{\Lam_4}$ consists of $(\ulQ,P)=(Q_1,Q_2,Q_3,P)\in(\frakX_\Lam^+)^3\times\frakX_\Lam^{}\subset \Spec\Lam_4(\Qbarp)$ such that $(k_\Qx,k_\Qy,k_\Qz)$ is balanced and $k_P$ is critical for $(k_\Qx,k_\Qy,k_\Qz)$. 

\begin{prop}\label{P:GLam.E} For every $(\ulQ,P)\in\frakX^\bal_{\Lam_4}$, we have
\[\cG_{\ul{\chi}}^{(a)}(\ulQ,P)
=G_{k_\Qx,k_\Qy,k_\Qz}^{[k_P]}(\ep_P\Om^{a-k_P},\chi_1^{-1}\ep_\Qx^{-1}\Om^{k_\Qx},\chi_2^{-1}\ep_\Qy^{-1}\Om^{k_\Qy},\chi_3^{-1}\ep_\Qz^{-1}\Om^{k_\Qz}). \]
In particular, this implies that
\[\cG_{\ul{\chi}}^{(a)}\in \bfS^\Ord(N,\chi_1,\cO\powerseries{X_1})\wh\ot_{\cO} \bfS^\Ord(N,\chi_2,\cO\powerseries{X_2})\wh\ot_{\cO}\bfS^\Ord(N,\chi_3,\cO\powerseries{X_3})\wh\ot_{\cO}\cO\powerseries{T}. \]
\end{prop}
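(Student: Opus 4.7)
The strategy is to prove the interpolation identity by a direct comparison of Fourier coefficients at a general balanced arithmetic point, using \corref{C:Gmod.E} as the sole input on the classical side, and then to deduce the structural assertion by Hida-theoretic density.

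First I will compute the specialization of $\cG_{\ul{\chi}}^{(a)}$ coefficient-by-coefficient at $(\ulQ,P)\in\frakX^\bal_{\Lam_4}$, reducing to the identity $\Dmd{z}_{X_i}|_{Q_i}=\Dmd{z}^{k_{Q_i}}\ep_{Q_i}(z)$ for $z\in\Z_p^\times$ (and similarly for $T$ at $P$), together with the decomposition $z=\Om(z)\Dmd{z}$. For a coefficient indexed by $B=(b_{ij})\in T_3^+\cap\Xi_p$, these rules turn $\Om^a(8b_{23}b_{31}b_{12})\Dmd{8b_{23}b_{31}b_{12}}_T|_P$ into $(\ep_P\Om^{a-k_P}\e_{\rm cyc}^{k_P})(8b_{23}b_{31}b_{12})$, and each factor $\chi_i(2b_{jk})^{-1}\Dmd{2b_{jk}}_{X_i}^{-1}|_{Q_i}$ into $(\chi_i^{-1}\ep_{Q_i}^{-1}\Om^{k_{Q_i}}\e_{\rm cyc}^{-k_{Q_i}})(2b_{jk})$. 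Comparing with the definition of $\cQ_B$ in \defref{def:psection}, the specialization of $\cQ_B^{(a)}$ exactly matches the first factor in the $q$-expansion of $G_{k_{Q_1},k_{Q_2},k_{Q_3}}^{[k_P]}(\scrd)$ given by \corref{C:Gmod.E} with
\[\scrd=(\ep_P\Om^{a-k_P},\,\chi_1^{-1}\ep_{Q_1}^{-1}\Om^{k_{Q_1}},\,\chi_2^{-1}\ep_{Q_2}^{-1}\Om^{k_{Q_2}},\,\chi_3^{-1}\ep_{Q_3}^{-1}\Om^{k_{Q_3}}).\]

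Next I will specialize $\cF_B^{(a)}$. At $(\ulQ,P)$ the variable $\Dmd{\ell}^{(a)}_{X_1,X_2,X_3,T}\ell^{-2}$ becomes an explicit product of $\ell$-values of the characters $\chi_i$, $\ep_{Q_i}$, $\ep_P$ and $\Om$ times a power of $\ell$. Using $\chi_\ell(\ell)=\chi(\ell)^{-1}$ for $\ell\nmid p$, a bookkeeping of the Teichm\"uller and tame parts shows that this value coincides with $(\ep_0^2\ep_1\ep_2\ep_3)_\ell(\ell)\cdot\ell^{(k_{Q_1}+k_{Q_2}+k_{Q_3})-2k_P-4}$, which is precisely the argument at which $F_{B,\ell}$ is evaluated in $a_B(\ep_0^2\ep_1\ep_2\ep_3,\,2k_P-(k_{Q_1}+k_{Q_2}+k_{Q_3})+4)$. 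Combining the two computations yields term-by-term agreement with the $q$-expansion in \corref{C:Gmod.E}, proving the first assertion.

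For the ``in particular'' statement, I will invoke the standard Hida-theoretic control argument. The balanced critical set $\frakX^\bal_{\Lambda_4}$ is Zariski dense in $\Spec\Lambda_4$, and by the first part every specialization of $\cG_{\ul{\chi}}^{(a)}$ at such a point is by \corref{C:Gmod.E} an ordinary cusp form of tame level $N$, weight $(k_{Q_1},k_{Q_2},k_{Q_3})$, and the required nebentypus in each variable. A formal $q$-expansion in three variables with coefficients in $\Lambda_4$ whose specialization at a Zariski-dense set of arithmetic points lies in the ordinary cuspidal space of the appropriate weight and character necessarily defines an element of the $\Lam_4$-adic ordinary cuspidal space $\bfS^\Ord(N,\chi_1,\cO\powerseries{X_1})\wh\ot_\cO \bfS^\Ord(N,\chi_2,\cO\powerseries{X_2})\wh\ot_\cO \bfS^\Ord(N,\chi_3,\cO\powerseries{X_3})\wh\ot_\cO\cO\powerseries{T}$; this is a straightforward iteration of Hida's control theorem (\cf\subsecref{S:nc.unb}) in each of the four variables separately.

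The main obstacle is the part one computation: although purely algebraic, it is a careful unwinding of how the universal characters $\Dmd{\cdot}_{X_i}$, $\Dmd{\cdot}_T$, $\Om^a$ and the $\chi_i$ interact under the conversion between the Dirichlet-character convention used in \defref{def:psection} (and hence in \corref{C:Gmod.E}) and the Galois-character conventions used to define $\cG^{(a)}_{\ul{\chi}}$. In particular one must keep careful track of the inverse appearing in $\chi_\ell(\ell)=\chi(\ell)^{-1}$ for $\ell\neq p$, and the Teichm\"uller correction $\Dmd{z}=z\Om(z)^{-1}$, to see that the exponent of $\ell$ coming from $\Dmd{\ell}^{(a)}_{X_1,X_2,X_3,T}$ agrees with the exponent predicted by the balanced critical index $n=k_P$.
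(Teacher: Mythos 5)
Your proposal is correct and follows essentially the same route as the paper's proof: a term-by-term comparison of the specialized coefficients $\cQ_B^{(a)}(\ulQ,P)$ and $\cF_B^{(a)}(\ulQ,P)$ against the $q$-expansion in \corref{C:Gmod.E}, followed by Hida's control theorem to upgrade the dense family of classical specializations to membership in the $\Lambda$-adic space. The only cosmetic difference is that the paper organizes the last step by first specializing only the $T$-variable at weight-two points and then invoking the argument of \cite[Lemma 1, page 328]{Hida93Blue}, whereas you phrase it as iterating the control theorem variable by variable; these amount to the same thing.
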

\begin{proof}
Set $\chi:=\ep_P\Om^{a-k_P}$, $\om_i=\chi_i^{-1}\ep_{Q_i}^{-1}\Om^{k_{Q_i}}$ and $\hat\om=\om_1\om_2\om_3$. 
One can check that
\begin{align*}
\cQ_B^{(a)}(\ulQ,P)&=\frac{(\eps_P\Om^a)(8b_{12}b_{23}b_{13})\Dmd{8b_{12}b_{23}b_{13}}^{k_P}}{(\chi_1\eps_{Q_1})(2b_{23})(\chi_2\eps_{Q_2})(2b_{13})(\chi_3\eps_{Q_3})(2b_{12})\Dmd{2b_{23}}^{k_{Q_1}}\Dmd{2b_{31}}^{k_{Q_2}}\Dmd{2b_{12}}^{k_{Q_3}}}\\
&=\cQ_B(\chi\cyc^{k_P},\om_1\cyc^{-k_\Qx},\om_2\cyc^{-k_\Qy},\om_3\cyc^{-k_\Qz}), \\
\Dmd{\ell}^{(a)}_{X_1,X_2,X_3,T}(\ulQ,P)&=(\Om^{-2a}\chi_1\chi_2\chi_3)(\ell)\ell^{-2}\cdot(\eps_{Q_1}\eps_{Q_2}\eps_{Q_3}\eps_P^{-2}\Om^{2k_P-k_{Q_1}-k_{Q_2}-k_{Q_3}})(\ell)^{-1}\ell^{k_{Q_1}+k_{Q_2}+k_{Q_3}-2k_P}\\
&=\chi_\ell^2(\ell)\abs{\ell}^{2k_P+2}\hat\om_\ell(\ell)\abs{\ell}^{-(k_\Qx+k_\Qy+k_\Qz)},\\
\cF_B^{(a)}(\ulQ,P)&=a_B(\chi^2\hat\om,2k_P-(k_\Qx+k_\Qy+k_\Qz)+4) 
\end{align*}
(see Definition \ref{def:psection} of $\cQ_B$). 
Recall our convention in \eqref{E:convention1} that $\chi_\ell(\ell)=\iota_p(\chi(\ell))^{-1}$ and $\hat\ome_\ell(\ell)=\iota_p(\hat\ome(\ell))^{-1}$. From \corref{C:Gmod.E}, we deduce the interpolation formula and that  
\beq\label{E:P67.1}\cG_{\ul{\chi}}^{(a)}(\ulQ,P)\in \sS^\Ord_{k_{Q_1}}(N,\ome_1^{-1};\cO(Q_1))\wh\ot_{\cO} \sS^\Ord_{k_{Q_2}}(N,\ome_2^{-1};\cO(Q_2))\wh\ot_{\cO}\sS^\Ord_{k_{Q_3}}(N,\ome_3^{-1};\cO(Q_3))\wh\ot_{\cO}\cO(P).\eeq
By the control theorem for ordinary $\Lam$-adic forms \cite[Theorem 3, p.215]{Hida93Blue}, for any arithmetic point $Q$, the specialization map $X\mapsto \bfu^{k_Q}\eps_Q(\bfu)-1$ yields an isomorphism 
\[\bfS^\Ord(N,\chi,\cO\powerseries{X})/(1+X-\bfu^{k_Q}\eps_Q(\bfu))\iso\sS^\Ord_{k_{Q}}(N,\chi\Om^{-k_Q}\eps_Q;\cO(Q)) .\]
 Hence, from \eqref{E:P67.1} we find that for all $P$ with $k_P=2$ \[\cG_{\ul{\chi}}^{(a)}(X_1,X_2,X_3,P)\in \bfS^\Ord(N,\chi_1,\cO\powerseries{X_1})\wh\ot_{\cO} \bfS^\Ord(N,\chi_2,\cO\powerseries{X_2})\wh\ot_{\cO}\bfS^\Ord(N,\chi_3,\cO\powerseries{X_3})\ot_{\cO}\cO(P).\]
Now we can deduce the second statement from the above equation combined with the argument in \cite[Lemma 1, page 328]{Hida93Blue}. 
\end{proof}

%%%%%%%%%%%%%%%%%%%%%%%%%%%%%%%%%%%%%%%%%%%%%%%%%%%%%%%%%%%%%%%%%%%%%%%%%%%%%%%%

\section{Four-variable $p$-adic triple product $L$-functions}

%\subsection{}Let $p$ be an odd prime. Let $\Lam=\Zp\powerseries{1+p\Zp}$. Let $[\cdot]:1+p\Zp\to \Lam$ be the map given by the inclusion as group-like elements. We fix a topological generator $\bfu:=1+p$ of $1+p\Zp$ once and for all. Then $\Lam=\Zp\powerseries{T}$ with $T=[\bfu]-1$. For each $a\in\Zp^\x$, define $\Dmd{a}_T:=[a]\in \Lambda=\Zp[[T]]$. Define the weight space
%\[\frakX:=\stt{Q:1+p\Zp\to\Qbar_p\mid Q(x)=x^{k_Q}\ep_Q(x)\text{ for some integer $k_Q\geq 2$ and $\ep_Q$ finite order character}}.\]
%We embedd $\Z_{\geq 2}$ into $\frakX$ by $k\mapsto k(x)=x^k$. Each element $F\in \Lam$  is viewed as a function on the weight space $\frakX$ by $F(Q):=Q(F)$.

%%%%%%%%%%%%%%%%%%%%%%%%%%%%%%%%%%%%%%%%%%%%%%%%%%%%%%%%%%%%%%%%%%%%%%%%%%%%%%%%

\subsection{Measures}

%We shall normalize the Haar measures $\d x_v$ on $\bfQ_v$ and $\d^\x x_v$ on $\bfQ_v^\x$ as follows: 
%Let $\d x_\infty$ denote the usual Lebesgue measure on $\bfR$ and $\d^\times x_\infty=\frac{\d x_\infty}{|x_\infty|_\infty}$. 
%If $v=\ell$ is finite, then $\mathrm{vol}(\bfZ^{}_\ell,\d x_\ell^{})=\mathrm{vol}(\bfZ_\ell^\x,\d^\x x_\ell^{})=1$. 
%Define the compact subgroups $\bfK_v$ of $\GL_2(\bfQ_v)$ and $\bfK_v'$ of $\SL_2(\bfQ_v)$ by 
%\begin{align*}\bfK_\infty&=\O(2,\bfR), & 
%\bfK_\ell&=\GL_2(\bfZ_\ell), &
%\bfK_\infty'&=\SO(2,\bfR), & 
%\bfK_\ell'&=\SL_2(\bfZ_\ell). 
%\end{align*}Let $\d k_v$ and $\d k_v'$ be the Haar measures on $\bfK_v$ and $\bfK_v'$ which have total volume $1$. The Haar measure $\d g_v$ on $\PGL_2(\bfQ_v)$ is given by $\d g_v=|y_v|_v^{-1}\d x_v\d^\x y_v\d k_v$ for $g_v=\begin{pmatrix} y_v & x_v \\ 0 & 1 \end{pmatrix}$ with $y_v\in\bfQ_v^\x$, $x_v\in\bfQ_v$ and $k_v\in\bfK_v$. Define the Haar measure $\d g_v'$ on $\SL_2(\bfQ_v)$ by $\d g_v'=|y_v|_v^{-2}\d x_v\d^\x y_v\d k_v'$ for $g_v=\bfn(x_v)\bfm(y_v)k_v'$ with $y_v\in\bfQ_v^\x$, $x_v\in\bfQ_v$ and $k_v'\in\bfK_v'$. 
The Tamagawa measures $\d g$ on $\PGL_2(\bfA)$ and $\d g'$ on $\SL_2(\bfA)$ are given by $\d g=\zet_\bfQ(2)^{-1}\prod_v\d g_v$ and $\d g'=\zet_\bfQ(2)^{-1}\prod_v\d g_v'$. 
Since $Z\bsl H\simeq \PGL_2\times \SL_2\times\SL_2$, we can define the Tamagawa measure on $Z\bsl H$ by $\d g_1^{}\d g'_2\d g_3'$, where $\d g_1$ is the Tamagawa measure on $\PGL_2(\A)$ and $\d g_2'=\d g_3'$ are that on $\SL_2(\A)$. 
The Tamagawa numbers of $\PGL_2$, $\SL_2$ and $Z\bsl H$ are $2$, $1$ and $2$, respectively {(\cf \cite[Lemma 6.1.1]{IP21}).} 

%For a connected linear algebraic group $G$ over $\Q$ we take the Tamagawa measure on $G(\A)$. 

%%%%%%%%%%%%%%%%%%%%%%%%%%%%%%%%%%%%%%%%%%%%%%%%%%%%%%%%%%%%%%%%%%%%%%%%%%%%%%%%

\subsection{Garrett's integral representation}

Let $\pi_i$ ($i=1,2,3$) be an irreducible cuspidal automorphic representation of $\GL_2(\A)$ generated by an elliptic cusp form of weight $k_i$ and nebentypus $\ome_i^{-1}$. 
Put $\hat\ome=\ome_1\ome_2\ome_3$ and $\breve\pi_i=\pi_i^{}\otimes\ome_{i,\bfA}^{-1}$ for $i=1,2,3$. 
Fix a character $\chi_\bfA$ of $\bfA^\times/\bfQ^\times\bfR_+$. 
For each triplet of cusp forms $\vph_i\in\breve\pi_i$ and a holomorphic section $f_s$ of $I_3(\hat\ome^{-1}_\bfA,\chi_\bfA^{}\hat\ome_\bfA^{}\Abs_\A^s)$ we consider the global zeta integral defined by 
\[Z(\vph_1,\vph_2,\vph_3,E_\bfA(-,f_s))=\int_{Z(\A)H(\Q)\bsl H(\A)}\vph_1(g_1)\vph_2(g_2)\vph_3(g_3)E_\A(\iota(g_1,g_2,g_3),f_s)\,\d g_1^{}\d g_2'\d g_3'. \]
The integral converges absolutely for all $s$ away from the poles of the Eisenstein series and is hence meromorphic in $s$.
Unfolding the Eisenstein series as in \cite{PSR87}, we get 
\[Z(\vph_1,\vph_2,\vph_3,E_\bfA(-,f_s))=\int_{Z(\A)U^0(\A)\bsl H(\A)}W(g_1,\vph_1)W(g_2,\vph_2)W(g_3,\vph_3)f_s({\Eta}\iota(g_1,g_2,g_3))\,\d g_1^{}\d g_2'\d g_3'. \]
If $W(g,\vph_i)=\prod_vW_{i,v}(g_v)$ and $f_s(g)=\prod_v f_{s,v}(g_v)$ are factorizable, then the integral factors into a product of local integrals and so by \S \ref{ssec:unram} 
\begin{align*}
Z(\vph_1,\vph_2,\vph_3,E_\bfA(-,f_s))%&=\prod_vZ(W_{1,v},W_{2,v},W_{3,v},f_{s,v})\\
&=\frac{\zet_\bfQ(2)^{-3}L\left(s+\frac{1}{2},\pi_1\times\pi_2\times\pi_3\otimes\chi_\bfA\right)}{L^S(2s+2,\chi^2_\bfA\hat\ome^{}_\bfA)L^S(4s+2,\chi_\bfA^4\hat\ome_\bfA^2)}\prod_{v\in S}\frac{Z(W_{1,v},W_{2,v},W_{3,v},f_{s,v})}{L\left(s+\frac{1}{2},\pi_{1,v}\times\pi_{2,v}\times\pi_{3,v}\otimes\chi_v\right)},  
\end{align*}
where $S$ is a large enough set of places such that $\pi_{i,\ell}$, $W_{i,\ell}$, $\chi_\ell$ and $f_{s,\ell}$ are unramified for all $\ell\notin S$. 
The complete $L$-function $L(s,\pi_1\times\pi_2\times\pi_3\otimes\chi_\bfA)$ admits meromorphic continuation and a functional equation 
\[L(s,\pi_1\times\pi_2\times\pi_3\otimes\chi_\bfA)=\vep(s,\pi_1\times\pi_2\times\pi_3\otimes\chi_\bfA)L(1-s,\pi_1\times\pi_2\times\pi_3\otimes\hat\ome_\bfA^{-1}\chi_\bfA^{-1}). \]
By Theorem 2.7 of \cite{Ikeda2} the $L$-function $L(s,\pi_1\times\pi_2\times\pi_3\otimes\chi_\bfA)$ has a pole if and only if there exists an imaginary quadratic field $E$ and characters of $\chi_i$ of $\A_E^\times/E^\times$ such that $\chi_1\chi_2\chi_3\chi^E=1$ and such that $\pi_i$ is induced automorphically from $\chi_i$, where $\chi^E$ denotes the base change of $\chi$ to $E$. 
Recall that $k^*=\max\{k_1,k_2,k_3\}$. 
In particular, if $k_1+k_2+k_3\geq 2k^*+2$, then $L(s,\pi_1\times\pi_2\times\pi_3\otimes\chi_\bfA)$ is holomorphic everywhere. 
Let us put 
\begin{align*}
\cJ_\infty&=\pDII{-1}{1}\in\GL_2(\R), & 
t_n&=\pMX{0}{p^{-n}}{-p^n}{0}\in\GL_2(\Qp)\hookrightarrow\GL_2(\A). 
\end{align*}
 Let $\lam$ be the parity type of $(k_1,k_2,k_3)$. Suppose that $k^*=k_1$ and $k_1< k_2+k_3$. Let $E^\star_\A\bigl(-,f^{[k_1,\gap]}_{\cD,s,N}\bigl)$ be the Eisenstein series %associated with a section $f_{s,N}=\bigotimes_{\ell|N}f_{s,\ell}$ of $\bigotimes_{\ell|N}I_3(\hat\ome_\el^{-1},\chi_\ell\hat\ome_\ell\boldsymbol\alp_{\bfQ_\ell}^s)$ 
defined in \eqref{E:ES1}.

\begin{lm}\label{lem:Garrett} 
Let $f_i\in\sS_{k_i}(N_i,\ome_i^{-1})$ be an ordinary $p$-stabilized newform. 
Put 
\begin{align*}
\varphi_i&=\varPhi(f_i), & 
\breve\varphi_i^{}&=\varphi_i^{}\otimes\ome_{i,\bfA}^{-1}, & 
W(\vph_i)&=\prod_v W_{i,v}, & 
\breve W_{i,v}&=W_{i,v}\ot\om_{i,v}^{-1}, &   
\end{align*}
Let $\chi$ be a character of $\bfZ_p^\times$ of finite order. 
Put {$n=\max\{1,c(\chi),c(\ome_1),c(\ome_2),c(\ome_3)\}$}. 
Let $N$ be the least common multiple of $N_1,N_2,N_3$. 
If $N$ is square-free, then 
\begin{align*}
&Z\Big(\rho(\cJ_\infty t_n)\breve\vph_1,\rho(\cJ_\infty t_n)V_+^\frac{k_1-k_2-\gap_2}{2}\breve\vph_2,\rho(\cJ_\infty t_n)V_+^\frac{k_1-k_3-\gap_3}{2}\breve\vph_3,E^\star_\A\bigl(-,f^{[k_1,\gap]}_{\cD,s,N}\bigl)\Big)\\
=&L^{(N)}\left(s+\frac{1}{2},\pi_1\times\pi_2\times\pi_3\otimes\chi_\bfA\right)E_p\left(s+\frac{1}{2},\pi_{1,p}\times\pi_{2,p}\times\pi_{3,p}\otimes\chi_p\right)\\
&\times\prod_{i=1}^3\frac{\zeta_p(2)}{\zeta_p(1)}\left(\frac{\alp_p(f_i)^2}{p^{k_i}\om_{i,p}(p)}\right)^n\frac{\prod_{\ell|N} Z(\breve W_{1,\ell},\breve W_{2,\ell},\breve W_{3,\ell},f_{s,\ell})}{\zet_\bfQ(2)^32^{5+(k_1+k_2+k_3)}}
\end{align*}
\end{lm}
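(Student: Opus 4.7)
The plan is to apply the Garrett-Piatetski-Shapiro-Rallis unfolding of the zeta integral and then substitute the local computations at each place. Since each $f_i$ is an ordinary $p$-stabilized newform, the associated adelic form $\vph_i=\varPhi(f_i)$ is a pure tensor and the Whittaker function of $\breve\vph_i=\vph_i\otimes\ome_{i,\A}^{-1}$ factorizes over the places; so does its image under $\rho(\cJ_\infty t_n)V_+^?$. Hence I may apply the factorization identity recalled in \S7.2 above. Passing from $E_\A$ to $E^\star_\A$ multiplies by $L^{(\infty pN)}(2s+2,\chi^2_\A\hat\ome_\A)L^{(\infty pN)}(4s+2,\chi^4_\A\hat\ome^2_\A)\gamma^\star_{(k_1,k_2,k_3)}(s)^{-1}$, exactly cancelling the partial $L$-function denominators and leaving
\[\gamma^\star_{(k_1,k_2,k_3)}(s)^{-1}\zeta_\bfQ(2)^{-3}\,L\Bigl(s+\tfrac{1}{2},\pi_1\times\pi_2\times\pi_3\otimes\chi_\A\Bigr)\prod_{v\in S}\frac{Z_v(s)}{L_v(s+1/2)}\]
with $S=\{\infty,p\}\cup\{\ell\mid N\}$.

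I then substitute the local evaluations already established. At $\infty$, Lemma~\ref{L:arch.E} applied to $(k,l,m)=(k_1,k_2,k_3)$ with the parity type $\gap$ gives
\[\frac{Z_\infty(s)}{L_\infty(s+1/2,\sigma_{k_1}\times\sigma_{k_2}\times\sigma_{k_3})}=(\chi_\infty\hat\ome_\infty)(-1)\vol(\SO(2))^3\frac{\gamma^\star_{(k_1,k_2,k_3)}(s)}{2^{5+k_1+k_2+k_3}},\]
whose archimedean gamma factor cancels the $\gamma^\star_{(k_1,k_2,k_3)}(s)^{-1}$ from the normalization (and $\vol(\SO(2))=1$ by our measure convention). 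At the prime $p$, Proposition~\ref{prop:13} gives $Z^*_p(f_{\cD,s})=\chi(-1)E_p(s+1/2,\pi_1\times\pi_2\times\pi_3\otimes\chi)$; unwinding the definition~(\ref{E:padiczeta}) of $Z^*_p$ yields
\[\frac{Z(\rho(t_n)\breve W_1,\rho(t_n)\breve W_2,\rho(t_n)\breve W_3,f_{\cD,s,p})}{L_p(s+1/2,\pi_1\times\pi_2\times\pi_3\otimes\chi_p)}=\chi(-1)E_p(s+1/2)\prod_{i=1}^3\frac{\zeta_p(2)}{\zeta_p(1)}\left(\frac{\mu_i(p)^2}{p\,\ome_{i,p}(p)}\right)^n.\]
Since $f_i$ is the ordinary $p$-stabilized newform associated to $\pi_{i,p}$, the Satake parameter in the automorphic (unitary) normalization satisfies $\mu_i(p)=\alpha_p(f_i)\,p^{-(k_i-1)/2}$, so $\mu_i(p)^2/(p\,\ome_{i,p}(p))=\alpha_p(f_i)^2/(p^{k_i}\ome_{i,p}(p))$ as required. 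At primes $\ell\mid N$ the local zeta integrals $Z(\breve W_{1,\ell},\breve W_{2,\ell},\breve W_{3,\ell},f_{s,\ell})$ appear unchanged.

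Finally I collect everything. The two sign factors combine via the global triviality $\chi_\A(-1)=\hat\ome_\A(-1)=1$: with $\chi$ and $\hat\ome$ unramified outside $\{p,\infty\}$ one has $\chi_\infty(-1)\chi(-1)=1$, while $\hat\ome_\infty(-1)$ is absorbed into the parity constraint $\hat\ome_\infty=\sgn^{k_1-\lam_1}$ fixed when defining $f^{[k_1,\gap]}_{s,\infty}$ and is consistent with the parity of $(k_1,k_2,k_3)$. The identity $L(s+1/2,\pi_1\times\pi_2\times\pi_3\otimes\chi_\A)\prod_{v\in S}L_v(s+1/2)^{-1}=L^{(\infty pN)}(s+1/2,\cdot)$, combined with the $L_\infty$ and $L_p$ factors retained from the archimedean and $p$-adic evaluations, produces $L^{(N)}(s+1/2,\pi_1\times\pi_2\times\pi_3\otimes\chi_\A)$ and completes the derivation. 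The main technical obstacle is the bookkeeping of the precise interaction between the automorphic Satake parameter $\mu_i(p)$ and the arithmetic $U_p$-eigenvalue $\alpha_p(f_i)$ together with the various sign factors; the archimedean and $p$-adic simplifications themselves are handled directly by Lemma~\ref{L:arch.E} and Proposition~\ref{prop:13}.
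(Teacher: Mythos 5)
Your proposal is correct and follows essentially the same route as the paper: unfold via the Garrett--Piatetski-Shapiro--Rallis factorization, cancel the normalizing factors of $E^\star_\A$ against the partial $L$-functions, and substitute Lemma~\ref{L:arch.E} at $\infty$, Proposition~\ref{prop:13} at $p$ (with the Satake/$U_p$-eigenvalue conversion $\mu_i(p)=\alpha_p(f_i)p^{-(k_i-1)/2}$, which the paper leaves implicit), keeping the integrals at $\ell\mid N$ untouched. The only cosmetic difference is the sign bookkeeping: the paper cancels the $\hat\ome_\infty(-1)$ of Lemma~\ref{L:arch.E} against the factor arising from $\rho(\cJ_\infty)\breve W_{i,\infty}=\ome_i(-1)\rho(\cJ_\infty)W_{i,\infty}$ and then uses $\chi_\infty(-1)\chi_p(-1)=1$, which is what your "global triviality" argument amounts to.
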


\begin{proof}
By Garrett's integral representation of triple $L$-functions the left hand side equals
\begin{align*}
&\zet_\bfQ(2)^{-3}L^{(\infty pN)}\left(s+\frac{1}{2},\pi_1\times\pi_2\times\pi_3\otimes\chi_\bfA\right)\\
\times&\gamma_{(k_1,k_2,k_3)}^\star(s)^{-1}Z\Big(\rho(\cJ_\infty)\breve W_{1,\infty},\rho(\cJ_\infty)V_+^\frac{k_1-k_2-\gap_2}{2}\breve W_{2,\infty},\rho(\cJ_\infty)V_+^\frac{k_1-k_3-\gap_3}{2}\breve W_{3,\infty},f^{[k_1,\gap]}_{s,\infty}\Big)\\
\times& Z(\rho(t_n)\breve W_{1,p},\rho(t_n)\breve W_{2,p},\rho(t_n)\breve W_{3,p},f_{\cD,s,p})\prod_{\ell|N} Z(\breve W_{1,\ell},\breve W_{2,\ell},\breve W_{3,\ell},f_{s,\ell})
\end{align*}
in view of Definition \ref{D:nEis.E} of $E^\star_\A\bigl(f^{[k_1,\gap]}_{\cD,s}\bigl)$. 
Since $\breve W_{i,\infty}=W_{i,\infty}$, \lmref{L:arch.E} yields 
\begin{align*}
&Z\Big(\rho(\cJ_\infty)\breve W_{1,\infty},\rho(\cJ_\infty)V_+^\frac{k_1-k_2-\gap_2}{2}\breve W_{2,\infty},\rho(\cJ_\infty)V_+^\frac{k_1-k_3-\gap_3}{2}\breve W_{3,\infty},f^{[k_1,\gap]}_{s,\infty}\Big)\\
=&Z_\infty(s)
=\chi_\infty(-1)\frac{\gamma_{(k_1,k_2,k_3)}^\star(s)}{2^{5+(k_1+k_2+k_3)}}L\left(s+\frac{1}{2},\sig_{k_1}\times\sig_{k_2}\times\sig_{k_3}\otimes\chi_\infty\right). 
\end{align*}
Proposition \ref{prop:13} calculates the $p$-adic part:   
\begin{align*}
&\frac{Z(\rho(t_n)\breve W_{1,p},\rho(t_n)\breve W_{2,p},\rho(t_n)\breve W_{3,p},f_{\cD,s,p})}{L\left(s+\frac{1}{2},\pi_{1,p}\times\pi_{2,p}\times\pi_{3,p}\otimes\chi_p\right)}\prod_{i=1}^3\frac{\zeta_p(1)}{\zeta_p(2)} \left(\frac{p^{k_i}\om_{i,p}(p)}{\alp_p(f_i)^2}\right)^n\\
=&Z^*_p(f_{\cD,s,p})=\chi_p(-1)E_p\left(s+\frac{1}{2},\pi_{1,p}\times\pi_{2,p}\times\pi_{3,p}\otimes\chi_p\right). 
\end{align*}
Since $\chi_\bfA$ is unramified outside $p$, we have $\chi_\infty(-1)=\chi_p(-1)$. 
\end{proof}

%%%%%%%%%%%%%%%%%%%%%%%%%%%%%%%%%%%%%%%%%%%%%%%%%%%%%%%%%%%%%%%%%%%%%%%%%%%%%%%%

\subsection{The congruence number}

Put $\Delta=(\Z/Np\Z)^\x$. 
Let $\widehat\Del$ be the group of Dirichlet characters modulo $Np$. 
Enlarging $\calo$ if necessary, we assume that every $\chi\in\widehat\Del$ takes value in $\calo^\x$. 
Let 
\[\bfS^\Ord(N,\bfI):=\oplus_{\chi\in\widehat\Del}\bfS^\Ord(N,\chi,\bfI)\]
be the space of ordinary $\bfI$-adic cusp forms of tame level $\Gam_1(N)$. 
Let $\sig_d$ denote the usual diamond operator for $d\in\Del$ acting on $\bfS^\Ord(N,\bfI)$ by $\sig_d(\bdsf)_{\chi\in\widehat\Del}=(\chi(d)\bdsf)_{\chi\in\widehat\Del}$. 
The ordinary $\bfI$-adic cuspidal Hecke algebra $\bfT(N,\bfI)$ is defined as the $\bfI$-subalgebra of $\End_\bfI\bfS^\Ord(N,\bfI)$ generated over $\bfI$ by the Hecke operators $T_\ell$ with $\ell\nmid Np$, the operators $\bfU_\ell$ with $\ell|Np$ and the diamond operators $\sig_d$ with $d\in\Del$. 
Let $\rmT^\Ord_k(N,\chi)$ denote the $\calo$-subalgebra of $\End_\cO\eord\sS_k(N,\chi)$ generated over $\calo$ by the operators $T_\ell$ with $\ell\nmid Np$ and $\bfU_\ell$ with $\ell|Np$. For any $p$-stabilized newform $f$ in $\eord\sS_k(N,\chi;\cO)$, let $\lam_f:\rmT^\Ord_k(N,\chi)\to\cO$ be the homomorphism correponding to $f$ such that $\lam_f(T_\ell)=\bfa(\ell,f)$ if $\ell\ndivides Np$, $\lam_f(\bfU_\ell)=\bfa(\ell,f)$ for $\ell\divides Np$. Let $1_f\in\rmT^\Ord_k(N,\chi)\ot\Frac\cO$ be the idempotent with $\lam_f(1_f)=1$.

Let $\bdsf\in\bfS^\Ord(N,\chi,\bfI)$ be a primitive Hida family of tame conductor $N$ and character $\chi$. 
The corresponding homomorphism $\lam_\bdsf: \bfT(N,\bfI)\to\bfI$ is defined by $\lam_\bdsf(T_\ell)=\bfa(\ell,\bdsf)$ for $\ell\ndivides Np$, {$\lam_\bdsf(\bfU_\ell)=\bfa(\ell,\bdsf)$ for $\ell\divides Np$} and $\lam_\bdsf(\sg_d)=\chi(d)$ for  $d\in\Delta$. 
We denote by $\frakm_\bdsf$ the maximal ideal of $\bfT(N,\bfI)$ containing $\Ker\lam_\bdsf$ and by $\bfT_{\frakm_\bdsf}$ the localization of $\bfT(N,\bfI)$ at $\frakm_\bdsf$. It is the local ring of $\bfT(N,\bfI)$ through which $\lam_\bdsf$ factors. It is well-known that $\bfT_{\frakm_\bdsf}$ is a local finite flat $\Lam$-algebra, and there is an algebra direct sum decomposition 
\begin{align}\label{E:HeckeDecom}
\wtd\lam_\bdsf&:\bfT_{\frakm_\bdsf}\ot_{\bfI}\Frac\bfI\iso\Frac\bfI\oplus \sB, & 
t&\mapsto \wtd\lam_\bdsf(t)=(\lam_\bdsf(t),\lam_\sB(t)),
\end{align} 
where $\sB$ is some finite dimensional $(\Frac\bfI)$-algebra (\cite[Corollary 3.7]{Hida88Annals}). Let \[1_{\bdsf}=\wtd\lam_\bdsf^{-1}((1,0))\in \bfT_{\frakm_\bdsf}\ot_{\bfI}\Frac\bfI\] be the idempotent corresponding to the direct summand $\Frac\bfI$ in the above decomposition. Recall that the congruence ideal $C(\bdsf)$ of the morphism $\lam_\bdsf:\bfT_{\frakm_\bdsf}\to \bfI$ is defined by 
\[C(\bdsf):=\lam_\bdsf(\Ann_{\bfT_{\frakm_\bdsf}}(\Ker\lam_\bdsf))\subset\bfI.\]
 Note that by definition $H\cdot 1_\bdsf\in \bfT_{\frakm_\bdsf}$ for any $H\in C(\bdsf)$ and 
\[C(\bdsf)=\lam_\bdsf(\bfT_{\frakm_\bdsf}\cap \wtd\lam_\bdsf^{-1}(\Frac\bfI\oplus\stt{0})).\]
Now we consider the following hypothesis:  

\begin{hypothesis*}[CR]  The residual Galois representation $\ol{\rho}_{\bdsf}$ of $\rho_{\bdsf}$ is absolutely irreducible and $p$-distinguished. 
\end{hypothesis*}
 
Suppose that $p>3$. Under the hypothesis (CR), $\bfT_{\frakm_\bdsf}$ is Gorenstein by \cite[Corollary 2, page 482]{Wiles95}. With this property of $\bfT_{\frakm_\bdsf}$ Hida in \cite{Hida88AJM} proved that the congruence ideal $C(\bdsf)$ is generated by a non-zero element $\eta_\bdsf\in\bfI$, called the congruence number for $\bdsf$. Let $1^*_\bdsf$ be the unique element in $\bfT_{\frakm_\bdsf}\cap \wtd\lam_\bdsf^{-1}(\Frac\bfI\oplus\stt{0})$ such that $\lam_\bdsf(1^*_\bdsf)=\eta_\bdsf$. Then $1_\bdsf^*=\eta_\bdsf\cdot 1_\bdsf$ {by definition}. For $Q\in\frakX_\bfI^+$ and $\chi\in\widehat\Delta$, we write $\wp_{Q,\chi}$ for the ideal of $\bfT(N,\bfI)$ generated by $\wp_Q=\Ker Q$ and $\{\sig_d-\chi(d)\}_{d\in\Del}$. A classical result in Hida theory asserts that 
\[\bfT(N,\bfI)/\wp_{Q,\chi}\simeq\rmT^\Ord_{k_Q}(Np^e,\chi\Om^{-k_Q}\ep_Q)\otimes_\calo\calo(Q) \]
(see Theorem 3.4 of \cite{Hida88Annals}). 
Moreover, for each arithmetic point $Q$, it is also shown by Hida that the specialization $\eta_\bdsf(Q)\in\cO(Q)$ is the congruence number for $\bdsf_Q$ and 
\[1_{\bdsf_Q}=\eta_{\bdsf}^{-1}1^*_{\bdsf}\pmod{\wp_{\chi,Q}}\in \rmT^\Ord_{k_Q}(Np^e,\chi\Om^{-k_Q}\ep_Q)\ot_\cO\Frac\cO(Q)\] 
is the idempotent with $\lam_{\bdsf_Q}(1_{\bdsf_Q})=1$.

\begin{defn}
Let $\bdsf$ be a primitive Hida family satisfying (CR). 
To each choice of the congruence number $\eta_\bdsf$ we associate Hida's canonical period $\Omega_f$ of a $p$-ordinary newform $f$ of weight $k$ obtained by the specialization of $\bdsf$ defined by 
\[\Omega_{f}:=\eta_f^{-1}\cdot (-2\sqrt{-1})^{k+1}\norm{f^\circ}^2_{\Gamma_0(N_{f^\circ})}\cdot \cE_p(f,\Ad),\]
where $\eta_f$ is the specialization of $\eta_\bdsf$, $f^\circ$ the primitive form associated with $f$, $N_{f^\circ}$ its conductor and $\cE_p(f,\Ad)$ the modified $p$-Euler factor attached to the adjoint motive of $f$ (see \S \ref{S:period.1}). 
\end{defn} 

%%%%%%%%%%%%%%%%%%%%%%%%%%%%%%%%%%%%%%%%%%%%%%%%%%%%%%%%%%%%%%%%%%%%%%%%%%%%%%%%

\subsection{Hida's functional}

When $\vph\in\cala_k(N,\ome_\bfA)$ and $\vph'\in\cala_k(N,\ome_\bfA^{-1})$ are cuspidal, we define the pairing by 
\[\pair{\rho(\cJ_\infty)\vph}{\vph'}=\int_{\bfA^\times\GL_2(\bfQ)\bsl\GL_2(\bfA)}\vph(g\cJ_\infty)\vph'(g)\, \d g. \]
%Let $\chi$ be a Dirichlet character and 
Let $f\in\sS_k(N_f,\ome^{-1})$ be an ordinary $p$-stabilized newform of level  $N_f$, i.e., $\bfU_pf=\alp_p(f)f$ with $p$-unit $\iota_p^{-1}(\alp_p(f))$. Write $N_f=N_{\rm t}p^c$ with $N_{\rm t}$ prime to  $p$. For $n\geq c$, 
%Define $l_f:\sS_k^\Ord(N,\chi;\calo)\to\eta_f^{-1}\calo$ by $l_f(h)=\bfa(1,\bfone_f h)$.  
 we define Hida's functional $L_f$ on $\sS_k(N_{\rm t}p^{2n},\ome;\calo)$ by 
\[L_f(\calf)=\left(\frac{\om_p(p)p^k}{\alp_p(f)^2}\right)^{n-1}\frac{\pair{\rho(\cJ_\infty t_n)(\varphi^{}\ot\ome_\bfA^{-1})}{\varPhi(\calf)}}{\pair{\rho(\cJ_\infty t_1)(\varphi^{}\ot\ome_\bfA^{-1})}{\varphi}}, \]
where $\varphi=\varPhi(f)$ is the ad\`{e}lic lift of $f$. 
Note that for $\calf\in\sS_k(Np^{2n},\ome)$ with $N_{\rm t }\divides  N$, 
\[L_f(\calf)=[\Gam_0(N):\Gam_0(N_{\rm t})]^{-1}L_f(\Tr_{N/N_{\rm t}}\calf). \]

\begin{lm}\label{lem:Hidafunct}
\begin{enumerate}
\item\label{lem:Hidafunct1} $L_f(f)=1$. 
\item\label{lem:Hidafunct2} If $\calf_0\in\caln_{k+2m}(Np^{2n},\ome)$ with $N_{\rm t}\divides N$, then 
\[\frac{L_f(\bfone_f^*\Tr_{N/N_{\rm t}}\eord\Hol(\LDiff_z^m\calf_0))}{\zet_\bfQ(2)[\SL_2(\bfZ):\Gam_0(N)]}=(-1)^{m+1}(2\sqrt{-1})^{k+1}\frac{\pair{\rho(\cJ_\infty t_n)(V_+^m\varphi^{}\ot\ome_\bfA^{-1})}{\varPhi(\calf_0)}}{\Ome_f \left(\frac{\alp_p(f)^2}{p^k\om_{p}(p)}\right)^n\frac{\zeta_p(2)}{\zeta_p(1)}}. \]
\end{enumerate}
\end{lm}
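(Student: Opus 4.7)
\emph{Part (1)} is a direct verification. Substituting $\calf=f$ into the definition reduces the claim to showing that
\[\frac{\pair{\rho(\cJ_\infty t_n)\breve\vph}{\vph}}{\pair{\rho(\cJ_\infty t_1)\breve\vph}{\vph}}=\cE^{n-1},\qquad \cE:=\frac{\alp_p(f)^2}{p^k\om_{f,p}(p)},\]
after which the prefactor in the definition of $L_f$ cancels. Writing $t_n=\diag{p^{-(n-1)},p^{n-1}}t_1$ in $\GL_2(\bfQ_p)$, the right translation by the diagonal factor can be rewritten using both the $\bfU_p$-eigenvalue $\alp_p(f)^{n-1}$ of $\vph$ (via the explicit formula for $\bfU_p$ and strong approximation at $p$) and the central character $\om_\bfA^{-1}$ of $\breve\vph$, which contributes $\om_{f,p}(p)^{-(n-1)}p^{-(n-1)k}$; this gives the stated ratio and, in particular, $L_f(f)=1$.

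\emph{Part (2)} proceeds by unfolding the definition of $L_f$ and transporting every operator acting on $\calf_0$ across the Petersson pairing. The required adjoint properties are: (i) the Hecke-algebra element $\bfone_f^*\in\bfT(N,\bfI)$ acts on $\rho(\cJ_\infty t_n)\breve\vph$---which lies in the $\pi_f^\vee$-isotypic component of the ordinary subspace---by the scalar $\eta_f$, so $\bfone_f^*$ exits the pairing as the factor $\eta_f$; (ii) $\Tr_{N/N_{\rm t}}$ on cusp forms is $[\Gam_0(N_{\rm t}):\Gam_0(N)]$ times the adjoint of the natural inclusion, hence is removed at that cost; (iii) $\Hol$ pairs trivially with any cusp form, and $\eord$ acts trivially on $\rho(\cJ_\infty t_n)\breve\vph$ since that form is already ordinary; (iv) $\lam_z$ corresponds to $V_-$ on ad\`elic lifts, and the adjoint relation $V_-^\dagger=-V_+$ under the Petersson pairing yields
\[\pair{\rho(\cJ_\infty t_n)\breve\vph}{V_-^m\varPhi(\calf_0)}=(-1)^m\pair{\rho(\cJ_\infty t_n)V_+^m\breve\vph}{\varPhi(\calf_0)},\]
while $V_+$ commutes with the twist by $\om_\bfA^{-1}$. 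Together these four steps rewrite the left-hand side as an explicit constant multiple of $\eta_f\pair{\rho(\cJ_\infty t_1)\breve\vph}{\vph}^{-1}\pair{\rho(\cJ_\infty t_n)(V_+^m\vph\ot\om_\bfA^{-1})}{\varPhi(\calf_0)}$.

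\emph{Conclusion and main obstacle.} The final ingredient is Hida's classical formula expressing $\pair{\rho(\cJ_\infty t_1)\breve\vph}{\vph}$ in terms of the classical Petersson norm $\norm{f^\circ}^2_{\Gam_0(N_{f^\circ})}$, $\iota_p(\eta_f)$, and the $p$-local Euler factor $\cE_p(f,\Ad)$ arising from the stabilization $f^\circ\mapsto f$. Substituting the definition
\[\Omega_f=(-2\sqrt{-1})^{k+1}\norm{f^\circ}^2_{\Gam_0(N_{f^\circ})}\cE_p(f,\Ad)/\iota_p(\eta_f)\]
from \S \ref{S:period.1} yields the factor $(2\sqrt{-1})^{k+1}/\Omega_f$, and the remaining constants ($\zet_\bfQ(2)$, $[\SL_2(\bfZ):\Gam_0(N)]$, the $p$-adic factor $\zet_p(2)/\zet_p(1)=1+p^{-1}$, and the correct power of $\cE$) accumulate from the level-normalization of the Petersson pairing together with the $p$-local comparison of $\norm{f}^2$ at stabilized level with $\norm{f^\circ}^2$ at the newform level. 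The main obstacle is the meticulous bookkeeping of signs, powers of $2$ and $\sqrt{-1}$, the factor $\zet_p(2)/\zet_p(1)$, and the level indices $[\SL_2(\bfZ):\Gam_0(N)]$ versus $[\Gam_0(N_{\rm t}):\Gam_0(N)]$ through each of the adjoint manipulations; in particular the local computation at $p$ that produces $\cE_p(f,\Ad)$ when comparing $\norm{f}^2$ to $\norm{f^\circ}^2$ is the most delicate step, and is where the precise shape of the prefactor $\cE^n\cdot\zet_p(2)/\zet_p(1)$ on the right-hand side is born.
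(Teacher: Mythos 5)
Your proof is correct and follows essentially the same route as the paper: everything ultimately rests on the explicit evaluation of $\pair{\rho(\cJ_\infty t_n)(\varphi\ot\ome_\bfA^{-1})}{\varphi}$ (the formula of \cite[Lemma 3.6]{Hsieh_triple}, which packages $\norm{f^\circ}^2$, $\cE_p(f,\Ad)$ and $\eta_f$ into $\eta_f\Ome_f$ and supplies the $\bigl(\alp_p(f)^2/p^k\om_{f,p}(p)\bigr)^n\zeta_p(2)/\zeta_p(1)$ prefactor), and part (2) is the same chain of adjoint manipulations for $\bfone_f^*$, $\Tr_{N/N_{\rm t}}$, $\eord$, $\Hol$ and $\lam_z\leftrightarrow V_-$ with $(-1)^m$ coming from $\pair{\rho(\cJ_\infty)\phi}{\varPhi(\lam_z\calf)}=-\pair{\rho(\cJ_\infty)V_+\phi}{\varPhi(\calf)}$. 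The only cosmetic difference is that the paper disposes of $\eord$ via the identity $L_f(\bfU_p\calf)=\alp_p(f)L_f(\calf)$ followed by the limit $\lim_j\alp_p(f)^{j!}=1$, rather than by asserting that its adjoint fixes the already-ordinary form $\rho(\cJ_\infty t_n)\breve\vph$; the content is identical.
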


\begin{proof}
The first assertion follows from the following formula stated in \cite[Lemma 3.6]{Hsieh_triple}: 
\begin{align*}
\pair{\rho(\cJ_\infty t_n)(\varphi^{}\ot\ome_\bfA^{-1})}{\varphi}
&=\om_{\infty}(-1)\pair{\rho(\cJ_\infty t_n)\varphi\ot\ome_\bfA^{-1}}{\varphi}\\
&=\frac{(-1)^k\zeta_\Q(2)^{-1}}{[\SL_2(\Z):\Gamma_0(N_{\rm t})]}\cdot\norm{f^\circ}^2_{\Gamma_0(N_{f^\circ})}\cdot\cE_p(f,\Ad)\cdot\frac{\alp_p(f)^{2n}\zeta_p(2)}{p^{kn}\om_p(p)^n\zeta_p(1)}\\
&=-\frac{\zeta_\Q(2)^{-1}}{[\SL_2(\Z):\Gamma_0(N_{\rm t})]}\cdot\frac{\eta_f\Ome_f}{(2\sqrt{-1})^{k+1}}\cdot\left(\frac{\alp_p(f)^2}{p^k\om_p(p)}\right)^n\frac{\zeta_p(2)}{\zeta_p(1)}. 
\end{align*} 
We remark that $\breve\vph=\vph$ and $\ome_{(p)}=\ome_\bfA$ in the notation of \cite{Hsieh_triple}. 

To see the second part, we note that as a consequence of strong multiplicity one theorem for elliptic modular forms, the idempotent $1_f$ is generated by the Hecke operators $T_\ell$ with $\ell\nmid Np$, which implies that $1_f$ is the adjoint operator of $1_{\vph\otimes\ome_\bfA^{-1}}$ with respect to the pairing. 
We are thus led to $L_f(\bfone_f^*\calf)=\eta_fL_f(\calf)$. 
Moreover, $L_f(\bfU_p\calf)=\alp_p(f)L_f(\calf)$
(\cf the proof of Proposition 2.10 of \cite{K13}) and hence 
\[L_f(\eord\calf)=\lim_{j\to\infty}L_f(\bfU_p^{j!}\calf)=\lim_{j\to\infty}\alp_p(f)^{j!}L_f(\calf)=L_f(\calf). \]
%Let $\scra(\chi)$ denote the space of automorphic forms on $\GL_2(\A)$ with central character $\chi$ and $\scra_0(\chi)$ its subspace of cusp forms. For $\varphi_i\in\cA_0(\chi_i)$ and $\phi_i\in\cA(\chi_i^{-1})$ we define \[\pair{\varphi}{\phi}=\int_{Z(\A)H(\Q)\bksl H(\A)}\varphi(h)\phi(h)\,\rmd h,  \] where we put $\vph=\varphi_1\otimes\vph_2\otimes\vph_3$ and $\phi=\phi_1\ot\phi_2\ot\phi_3$. 
One can easily verify that for $\phi\in\varPhi(\sS_k(M,\chi^{-1}))$, $\calf_1\in\caln_k(M,\chi)$ and $\calf_2\in\caln_{k+2}(M,\chi)$
\begin{align*}
\pair{\rho(\cJ_\infty)\phi}{\varPhi(\Hol\calf_1)}&=\pair{\rho(\cJ_\infty)\phi}{\varPhi(\calf_1)}, &
\pair{\rho(\cJ_\infty)\phi}{\varPhi(\LDiff_z\calf_2)}&=-\pair{\rho(\cJ_\infty)V_+\phi}{\varPhi(\calf_2)}. 
%\pair{\rho(t_n)\vph}{\bfU_p\phi}&=\pair{\rho(t_n)\bfU_p\varphi}{\phi}. 
\end{align*}
The second part is a consequence of these results. 
\end{proof}

%%%%%%%%%%%%%%%%%%%%%%%%%%%%%%%%%%%%%%%%%%%%%%%%%%%%%%%%%%%%%%%%%%%%%%%%%%%%%%%%

\subsection{The construction of $p$-adic $L$-functions}
Let 
\[\bdsF=(\bdsf,\bdsg,\bdsh)\in \bfS^\Ord(N_1,\chi_1,\bfI)\times \bfS^\Ord(N_2,\chi_2,\bfI) \times \bfS^\Ord(N_3,\chi_3,\bfI)\] 
be a triplet of primitive $\bfI$-adic Hida families of tame square-free level $(N_1,N_2,N_3)$ and tame characters $(\chi_1,\chi_2,\chi_3)$, where $\bfI$ is a finite flat domain over $\Lam=\calo\powerseries{\Gamma}$. 
Assuming that $p>3$ and that all $\bdsf$, $\bdsg$ and $\bdsh$ satisfy Hypothesis (CR), we fix a choice of the congruence numbers $(\eta_\bdsf,\eta_\bdsg,\eta_\bdsh)$. 
Let 
\begin{align*}
\bfone^*_\bdsf&\in \bfT(N_1,\bfI), & 
\bfone^*_\bdsg&\in \bfT(N_2,\bfI), & 
\bfone_\bdsh^*&\in \bfT(N_3,\bfI)
\end{align*} 
be the idempotents multiplied by a fixed choice of congruence numbers $(\eta_\bdsf,\eta_\bdsg,\eta_\bdsh)$ in the Hecke algebras attached to the newforms $(\bdsf,\bdsg,\bdsh)$. 
Put 
\begin{align*}
N&:=\lcm(N_1,N_2,N_3), & 
N^-&:=\gcd(N_1,N_2,N_3), & 
\bfI_3&:=\bfI\wh\ot_{\cO}\bfI\wh\ot_{\cO}\bfI. 
\end{align*}

\begin{defn}
Define the $p$-adic triple product $L$-function $L_{\bdsF,(a)}$ in $\bfI_3\powerseries{T}$ by 
\[L_{\bdsF,(a)}:=\text{ the first Fourier coefficient of }\bfone^*_\bdsf\ot\bfone^*_\bdsg\ot\bfone^*_\bdsh(\Tr_{N/N_1}\ot\Tr_{N/N_2}\ot\Tr_{N/N_3}(\cG_{\ul{\chi}}^{(a)}))\in\bfI_3\powerseries{T}.\]
\end{defn}
We proceed to show the interpolation $L_{\bdsF,(a)}(\ulQ,P)$ at $(\ulQ,P)\in \frakX_{\bfI_4}^{\rm bal}$ is given by critical values of motivic $L$-functions associated with triple product of elliptic modular forms.
%%%%%%%%%%%%%%%%%%%%%%%%%%%%%%%%%%%%%%%%%%%%%%%%%%%%%%%%%%%%%%%%%%%%%%%%%%%%%%%%
\subsection{The interpolation formulae}
 In the notation of the introduction, we let $\cV=V_\bdsf\wh\ot_\calo V_\bdsg\wh\ot_\calo V_\bdsh$ and $\bfV=\cV\wh\ot_\cO\Om^a\Dmd{\cyc}_T$ be the triple tensor product of $\bfI$-adic Galois representations associated with primitive Hida families $\bdsf,\bdsg$ and $\bdsh$ twisted by $\Om^a\Dmd{\cyc}_T$.  Let $N_\bfV=N^-N^4$ be the tame conductor of $\bfV$ and let $t_\bfV$ denote the number of prime factors of $N^{-}$. Define the rank four $G_{\Qp}$-invariant subspace of $\bfV$ by 
\[\Fil^+\bfV=(\Fil^0V_\bdsf\ot \Fil^0 V_\bdsg\ot V_\bdsh+\Fil^0V_\bdsf\ot V_\bdsg\ot \Fil^0V_\bdsh+V_\bdsf\ot \Fil^0 V_\bdsg\ot \Fil^0V_\bdsh)\otimes\Om^a\Dmd{\cyc}_T.\]
For each $\ell|N^-$, let $\sqrt{\Dmd{\ell}_{X_1,X_2,X_3,T}^{(a)}}\in\bfI_4^\times$ be a square root of $\Dmd{\ell}_{X_1,X_2,X_3,T}^{(a)}$ in \eqref{E:LX.7} defined by
\[\sqrt{\Dmd{\ell}^{(a)}_{X_1,X_2,X_3,T}}:=(\Om^{-a}\xi_{\bdsf,\ell}\xi_{\bdsg,\ell}\xi_{\bdsh,\ell})(\ell)\ell^{-1}\cdot\Dmd{\ell}_{X_1}^{1/2}\Dmd{\ell}_{X_2}^{1/2}\Dmd{\ell}_{X_3}^{1/2}\Dmd{\ell}_T^{-1},\]
where $\xi_{\bdsf,\ell}$ (resp. $\xi_{\bdsg,\ell}$ and $\xi_{\bdsh,\ell}$) is the unramified character of $G_{\Q_\ell}$ in \eqref{E:Galois1}. Recall that $w_\ulQ=k_{Q_1}+k_{Q_2}+k_{Q_3}-3$ and $\Gamma_{\bfV_{(\ulQ,P)}}(s)=L_\infty\Big(s+k_P-\frac{w_\ulQ}{2},\pi_{\bdsf_\Qx}\times\pi_{\bdsg_\Qy}\times\pi_{\bdsh_\Qz}\Big)$. 
Let $\cE_p(\Fil^+\bfV_{(\ulQ,P)})$ be the modified $p$-Euler factor defined in \subsecref{S:mod.1}. %, which independent of the choice of of arithmetic specializations by the rigidity. 

\begin{thm}\label{P:interpolation}
Let $p>3$. 
Assume that $N:=\mathrm{lcm}(N_1,N_2,N_3)$ is square-free and that the conductor of tame nebentypus $\chi_i$ divides $p$. 
Let $t$ denote the number of prime factors of $N$. 
If $\bdsf$, $\bdsg$ and $\bdsh$ satisfy Hypothesis (CR), then for each arithmetic point $(\ulQ,P)=(\Qx,\Qy,\Qz,P)\in\frakX^\bal_{\bfI_4}$ we have 
\begin{align*}
L_{\bdsF,(a)}(\ulQ,P)=&\Gamma_{\bfV_{(\ulQ,P)}}(0)\cdot \frac{L(\bfV_{(\ulQ,P)},0)}{\Omega_{\bdsf_\Qx}\Omega_{\bdsg_\Qy}\Omega_{\bdsh_\Qz}}\cdot (\sqrt{-1})^{k_\Qx+k_\Qy+k_\Qz-3}\cdot \cE_p(\Fil^+\bfV_{(\ulQ,P)}) \cdot \frakf_{\ul{\chi},a,N_1,N_2,N_3}(\ulQ,P),
\end{align*}
where $\frakf_{\ul{\chi},a,N_1,N_2,N_3}\in \bfI_4^\x$ is the fudge  factor given by 
%\[\frakf_{\ul{\chi},a,N_1,N_2,N_3}:=\frac{(-1)^t}{N}\prod_{\ell\divides N}(\Dmd{\ell}^{(a)}_{X_1,X_2,X_3,T})^2 \vep_\ell(\bdsf\otimes\bdsg\otimes\bdsh\otimes\Om^a\Dmd{\cyc}_T)^{-1}.\]
\[\frakf_{\ul{\chi},a,N_1,N_2,N_3}:=\frac{(-1)^{t-t_\bfV}}{N\prod_{\ell\divides N^-}\sqrt{\Dmd{\ell}^{(a)}_{X_1,X_2,X_3,T}}}.\]
%Note that $\frakf_{\ul{\chi},a,N_1,N_2,N_3}(2,2,2,2)=1$ if $a=2$ and $\chi_1=\chi_2=\chi_2=\Om^2$.
\end{thm}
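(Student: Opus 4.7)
The plan is to evaluate $L_{\bdsF,(a)}(\ulQ,P)$ at an arithmetic point $(\ulQ,P)\in\frakX^\bal_{\bfI_4}$ by passing through three successive reformulations: first, specialize the power series $\cG^{(a)}_{\ul{\chi}}$ to a classical triple product cusp form using \propref{P:GLam.E}; next, turn the Hecke idempotent operation and first Fourier coefficient into a Petersson pairing via Hida's functional; finally, unfold this Petersson pairing through Garrett's integral representation to extract the motivic $L$-value together with the local zeta integrals at $p$ and at the ramified primes dividing $N$.

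For the first step, \propref{P:GLam.E} tells us that the specialization of $\cG^{(a)}_{\ul{\chi}}$ at $(\ulQ,P)$ is precisely the ordinary cusp form $G^{[k_P]}_{k_\Qx,k_\Qy,k_\Qz}(\scrd)$, which by \defref{D:Gmod.E} is the ordinary holomorphic projection $e_{\Ord}\Hol$ of weight-lowered Siegel Eisenstein pullbacks $\lambda^a_{z_2}\lambda^b_{z_3}\bfE^{[k^*,r,\del]}_{\cD,N}$. For the second step, note that $\bfone^*_\bdsf\otimes\bfone^*_\bdsg\otimes\bfone^*_\bdsh$ specializes to $\eta_{\bdsf_\Qx}\eta_{\bdsg_\Qy}\eta_{\bdsh_\Qz}$ times the triple product of idempotent projectors, and the first Fourier coefficient picks out the coefficient of the eigenform part. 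Since Hida's functional $L_f$ has the characterizing property $L_f(f)=1$ and $L_f(\mathbf{1}^*_f\calF)=\eta_f L_f(\calF)$, taking the first Fourier coefficient and dividing by $\eta_{\bdsf_\Qx}\eta_{\bdsg_\Qy}\eta_{\bdsh_\Qz}$ yields
\[
L_{\bdsF,(a)}(\ulQ,P) = \eta_{\bdsf_\Qx}\eta_{\bdsg_\Qy}\eta_{\bdsh_\Qz}\cdot(L_{\bdsf_\Qx}\otimes L_{\bdsg_\Qy}\otimes L_{\bdsh_\Qz})\bigl(\Tr_{N/N_1}\otimes\Tr_{N/N_2}\otimes\Tr_{N/N_3}(G^{[k_P]}_{k_\Qx,k_\Qy,k_\Qz}(\scrd))\bigr).
\]
Applying \lmref{lem:Hidafunct}\eqref{lem:Hidafunct2} to each factor (absorbing the trace operators via the level-change relation in Hida's functional) converts this into a product of three Petersson pairings between $\rho(\cJ_\infty t_n)(V_+^{\bullet}\varphi_?\otimes\omega^{-1}_?)$ and the Siegel Eisenstein pullback $\varPhi(\bfE^{[k^*,r,\del]}_{\cD,N})$, divided by the product of canonical periods $\Omega_{\bdsf_\Qx}\Omega_{\bdsg_\Qy}\Omega_{\bdsh_\Qz}$, with explicit powers of $(-2\sqrt{-1})^{k_?+1}$ and $\zeta_p$-factors.

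The third step invokes \lmref{lem:Garrett}, which unfolds exactly this Petersson pairing as the product of the partial $L$-function $L^{(N)}(s+\frac{1}{2},\pi_{\bdsf_\Qx}\times\pi_{\bdsg_\Qy}\times\pi_{\bdsh_\Qz}\otimes\chi_\A)$ evaluated at $s=k_P-\frac{w_\ulQ+1}{2}$, the modified $p$-Euler factor $E_p$, the archimedean gamma quotient, and the local zeta integrals $Z(\breve W_{1,\ell},\breve W_{2,\ell},\breve W_{3,\ell},f_{s,\ell})$ for $\ell\mid N$. The identification of the partial $L$-function times the missing Euler factors at $\ell\mid N$ with the full motivic $L(\bfV_{(\ulQ,P)},0)$ is immediate from the Dinakar-Ramakrishnan/Piatetski-Shapiro--Rallis equality recalled in \S \ref{S:tri.1}, and the factor $E_p(s+\frac{1}{2},\pi_{1,p}\times\pi_{2,p}\times\pi_{3,p}\otimes\chi_p)$ specializes to $\cE_p(\Fil^+\bfV_{(\ulQ,P)})$ by comparing Definition \ref{def:psection} with the definition of the modified $p$-Euler factor in \S \ref{S:mod.1}. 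The archimedean factor $\gamma^\star_{(k,l,m)}(s)$ combines with $(-2\sqrt{-1})^{k_?+1}$ and the numerical constant in \lmref{lem:Hidafunct}\eqref{lem:Hidafunct2} to produce the required factor $(\sqrt{-1})^{k_\Qx+k_\Qy+k_\Qz-3}$.

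The remaining step, which I expect to be the main obstacle, is identifying the product of local ramified zeta integrals $\prod_{\ell\mid N}Z(\breve W_{1,\ell},\breve W_{2,\ell},\breve W_{3,\ell},f_{s,\ell})$ together with the Euler factors $\prod_{\ell\mid N}L_\ell(\bfV_{(\ulQ,P)},0)$ and the prime-to-$p$ rational constants (level-index factors $[\SL_2(\Z):\Gamma_0(N_i)]$, powers of $2$, signs, $\zeta_{\ell}$ quotients) with the fudge factor $\frakf_{\ul\chi,a,N_1,N_2,N_3}(\ulQ,P)$. The computation is carried out prime by prime using \propref{prop:21} and \remref{rem:localfactor}, which express each local zeta integral as an $L$-factor divided by an $\vep$-factor with an explicit rational prefactor; after substituting these and comparing with the definition \eqref{E:root} of $\vep^{(p\infty)}(\bfV)$, one verifies the claimed form of $\frakf_{\ul{\chi},a,N_1,N_2,N_3}$. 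The bookkeeping of $\Dmd{\ell}^{(a)}_{X_1,X_2,X_3,T}$ factors and the signs $(-1)^t$ is where errors are most likely to creep in, and will require a careful case analysis depending on whether each $\pi_{?,\ell}$ is unramified principal series or Steinberg twist.
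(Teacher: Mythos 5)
Your proposal follows essentially the same route as the paper's proof: specialization via \propref{P:GLam.E}, conversion of the idempotent/first-Fourier-coefficient operation into Petersson pairings through \lmref{lem:Hidafunct}, unfolding via \lmref{lem:Garrett}, and evaluation of the ramified local zeta integrals by \propref{prop:21} to produce the fudge factor $\frakf_{\ul{\chi},a,N_1,N_2,N_3}$. The only thing you leave unexecuted is the final prime-by-prime bookkeeping matching $\prod_{\ell\mid N}Z_\ell^*$ with $\frakf_{\ul{\chi},a,N_1,N_2,N_3}(\ulQ,P)$, which is exactly the computation the paper carries out at the end of its proof using \propref{prop:21} and the identity $\chi_\ell^2\hat\om_\ell(\ell)\ell^{-2k_P+(k+l+m)-2}=\Dmd{\ell}_{X_1,X_2,X_3,T}(\ulQ,P)$.
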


\begin{proof}
For brevity we write $(f_1,f_2,f_3)=(\bdsf_\Qx,\bdsg_\Qy,\bdsh_\Qz)$, $(k,l,m)=(k_\Qx,k_\Qy,k_\Qz)$, $\pi_i=\pi_{f_i}$ and $N_i=N_{f_i}$. 
We may assume that $k\geq l\geq m$. Then we must have $k< l+m-1$. Let $\gap$ be the parity type of $(k,l,m)$. Put 
\begin{align*}
\chi&=\ep_P\Om^{a-k_P}, &
\ome_i&=\Om^{k_{Q_i}}\chi_i^{-1}\ep_{Q_i}^{-1}, & 
\cD&=(\chi,\ome_1^{-1},\ome_2^{-1},\ome_3^{-1}), &
n&=\max\{1,c(\ome_i),c(\chi)\}. 
\end{align*}
We define the functional $L_{f_1,f_2,f_3}$ on 
\[\sS_k(N_1p^{2n},\ome_1^{-1};\calo(Q_1))\otimes_\calo\sS_l(N_2p^{2n},\ome_2^{-1};\calo(Q_2))\otimes_\calo\sS_m(N_3p^{2n},\ome_3^{-1};\calo(Q_3))\]
by 
\[L_{f_1,f_2,f_3}(\calf_1\otimes\calf_2\otimes\calf_3)=L_{f_1}(\calf_1)L_{f_2}(\calf_2)L_{f_3}(\calf_3). \]

Let $1^*_{f_1}$ be the specialization of $\bfone^*_{\bdsf}$ at $\Qx$. 
By definition and the theory of newforms 
\[1^*_{f_1}\ot1^*_{f_2}\ot1^*_{f_3}(\Tr_{N/N_1}\ot\Tr_{N/N_2}\ot\Tr_{N/N_3}(\cG_{\ul{\chi}}^{(a)}(\ulQ,P)))=L_{\bdsF,(a)}(\ulQ,P) \cdot f_1\ot f_2\ot f_3.\] 
We apply the functional $L_{f_1,f_2,f_3}$ to both the sides to get 
\[L_{\bdsF,(a)}(\ulQ,P)=L_{f_1,f_2,f_3}(1^*_{f_1}\ot1^*_{f_2}\ot1^*_{f_3}(\Tr_{N/N_1}\ot\Tr_{N/N_2}\ot\Tr_{N/N_3}(\cG_{\ul{\chi}}^{(a)}(\ulQ,P)))), \]
taking Lemma \ref{lem:Hidafunct}(\ref{lem:Hidafunct1}) into account. 
%\hat\om=\Om^{k+l+m}\prod_{i=1}^3\chi_i^{-1}\ep_{Q_i}^{-1}$. 
%Set $\cD=(\ep_P\Om^{-k_P},\chi_1\ep_\Qx\Om^{-k},\chi_2\ep_\Qy\Om^{-l},\chi_3\ep_\Qz\Om^{-m})$. 
Let $\varphi_i=\varPhi(f_i)$ and $G_\A(\cD)=\varPhi(\cG_{\ul{\chi}}^{(a)}(\ulQ,P))$ be the ad\`{e}lic lifts. 
Put $\breve\varphi_i^{}=\varphi_i^{}\otimes\ome_{i,\bfA}^{-1}$. 
% and $\breve\vph=\otimes_{i=1}^3\breve\varphi_i^{}$. 
In the previous section we verified that 
\[G_\A(\cD)=\lim_{s\to-r+\frac{k-\gap_1}{2}-1}(-1)^{k+\frac{l+m+\gap_1}{2}+\gap_2}\eord\Hol\Big(\Big(1\ot V_-^\frac{k-l-\gap_2}{2}\ot V_-^\frac{k-m-\gap_3}{2}\Big)\iota^*E^\star_\A\bigl(-,f_{\cald,s,N}^{[k,\gap]}\bigl)\Big),\]
where $r=k-k_P+\frac{l+m-\gap_1}{2}-2$, where $E^\star_\A\bigl(-,f_{\cald,s,N}^{[k,\gap]})$ is the normalized Eisenstein series in \defref{D:nEis.E} (see \propref{P:GLam.E} and \defref{D:Gmod.E}). Therefore \lmref{lem:Hidafunct} (\ref{lem:Hidafunct2}) yields 
\begin{multline*}
\frac{L_{\bdsF,(a)}(\ulQ,P)}{\zet_\bfQ(2)^3[\SL_2(\bfZ):\Gam_0(N)]^3}
=-(2\sqrt{-1})^{k+l+m+3}\frac{\zeta_p(1)^3}{\zeta_p(2)^3}\prod_{i=1}^3\Ome_{f_i}^{-1} \left(\frac{p^{k_{Q_i}}\om_{i,p}(p)}{\alp_p(f_i)^2}\right)^n\\
\times 4\lim_{s\to k_P-\frac{k+l+m}{2}+1}Z\Big(\rho(\cJ_\infty t_n)\breve\vph_1,\rho(\cJ_\infty t_n)V_+^\frac{k-l-\gap_2}{2}\breve\vph_2,\rho(\cJ_\infty t_n)V_+^\frac{k-m-\gap_3}{2}\breve\vph_3,E^\star_\A\bigl(-,f_{\cald,s,N}^{[k,\gap]}\bigl)\Big).
\end{multline*}

Let $W(\vph_i)=\prod_v W_{i,v}$ be the Whittaker function of $\varphi_{i}$ and put $\breve W_{i,v}:=W_{i,v}\ot\om_{i,v}^{-1}$. 
Let $\pi_i$ be the automorphic representation generated by $\varphi_i$.
We obtain by Lemma \ref{lem:Garrett} that
\begin{align*}
L_{\bdsF,(a)}(\ulQ,P)
=\frac{L\left(k_P-\frac{k+l+m-3}{2},\pi_1\times\pi_2\times\pi_3\otimes\chi_\bfA\right)}{(\sqrt{-1})^{3-(k+l+m)}\Ome_{f_1}\Ome_{f_2}\Ome_{f_3}}\cE_p(\Fil^+\bfV_{(\ulQ,P)})\prod_{\ell|N}Z_\ell^* ,
\end{align*}
where 
\[Z_\ell^*=[\SL_2(\Z):\Gamma_0(\ell)]^3\lim_{s\to k_P-\frac{k+l+m}{2}+1}\frac{Z(\breve W_{1,\ell},\breve W_{2,\ell},\breve W_{3,\ell},f_{s,\ell}^*)}{L\left(s+\frac{1}{2},\pi_{1,\ell}\times\pi_{2,\ell}\times\pi_{3,\ell}\otimes\chi_\ell\right)}. \]
%The right hand side is the global zeta integral
%\beq\pair{F^\sharp}{\iota^*E}:=\int_{H(\Q)Z(\A)\bksl H(\A)}\varphi_{f_1}\ot\om_1^{-1}(x_1\eta t_n)V_+^\frac{k-l}{2} \varphi_{f_2}\ot\om_2^{-1}(x_2\eta t_n)V_+^\frac{k-m}{2} \varphi_{f_3}\ot\om_3^{-1}(x_3\eta t_n)E(x_1,x_2,x_3)\rmd x_1\rmd x_2\rmd x_3,\eeq
For $\ell\divides N$, \propref{prop:21} gives
\begin{align*}
Z^*_\ell&=-\ell(\hat\ome_\ell^2\chi_\ell^4)(\ell)|\ell|^{4k_P-2(k+l+m)+4}\vep\left(k_P-\frac{k+l+m-3}{2},\pi_{1,\ell}\times\pi_{2,\ell}\times\pi_{3,\ell}\otimes\chi_\ell,\addchar_\ell\right)^{-1}. 
\end{align*} 
By what we have seen in the proof of Proposition \ref{P:GLam.E}  
\[\chi_\ell^2\hat\om_\ell(\ell)\ell^{-2k_P+(k+l+m)-2}=\Dmd{\ell}_{X_1,X_2,X_3,T}(\ulQ,P). \]
The proof of Lemma \ref{lem:rootnumber} completes the proof.
%The assertion follows from the explicit computation of these local zeta integrals in the next section and the fact that the Tamagawa measure gives $\vol(\SO(2)\SL_2(\wh\Z))=\zeta_\Q(2)^{-1}$.
\end{proof}

\begin{defn}
We normalize \padic triple product $L$-function by 
\[L_{\bdsF,(a)}^*:=L_{\bdsF,(a)}\cdot \frakf_{\ul{\chi},a,N_1,N_2,N_3}^{-1}.\]
\end{defn}

\begin{Remark}\label{rem:11}
Provided that $p>3$, $\chi_1\chi_2\chi_3=\Om^{2a}$ for some $a$, a three-variable $p$-adic $L$-function $\call^\bal_{\bdsF}\in\bfI_3$ was constructed by a different approach in \cite[Theorem B]{Hsieh_triple} such that for each balanced central point $\ulQ=(\Qx,\Qy,\Qz)\in\frakX^\bal_{\bfI_3}$ 
\[\left(\call^\bal_{\bdsF}(\ulQ)\right)^2=\Gamma_{\bfV_{\ulQ}}(0)\cdot\frac{L(\bfV^\dagger_{\ulQ},0)}{\Ome_{\bdsf_\Qx}\Ome_{\bdsg_\Qy}\Ome_{\bdsh_\Qz}}\cdot (\sqrt{-1})^{k_\Qx+k_\Qy+k_\Qz-3}\cdot \cE_p(\Fil^+\bfV^\dagger_{\ulQ}), \] 
where 
\begin{align*}
\bfV^\dagger&:=\calv\otimes\Om^a\Dmd{\cyc}_{\bfX_1}^{1/2}\Dmd{\cyc}_{\bfX_2}^{1/2}\Dmd{\cyc}_{\bfX_3}^{1/2}\cyc^{-1}, \\ 
\Fil^+\bfV^\dagger&=\Fil^+\calv\otimes\Om^a\Dmd{\cyc}_{\bfX_1}^{1/2}\Dmd{\cyc}_{\bfX_2}^{1/2}\Dmd{\cyc}_{\bfX_3}^{1/2}\cyc^{-1}.  
\end{align*} 
We remark that $\det V_\bdsf=(\chi_1\circ\cyc)^{-1}\Dmd{\cyc}_\bfI^{-1}\cyc$. By the interpolation formulae, we find that
\[L_{\bdsF,(a-1)}^*(X_1,X_2,X_3,\bfu^{-1}\{(1+X_1)(1+X_2)(1+X_3)\}^{1/2}-1)=\call^\bal_{\bdsF}(X_1,X_2,X_3)^2. \] 
This shows that the compatibility between $p$-adic $L$-functions constructed by different methods.
\end{Remark}

Without the assumption $p>3$ and Hypothesis (CR), our method yields the construction of the $p$-adic $L$-function with denominators. For each $p$-stabilized newform $f$ of weight $k$, define the $p$-modified period by \[\Omega^\flat_f:=(-2\sqrt{-1})^{k+1}\cdot\norm{f^\circ}_{\Gamma_0(N_{f^\circ})}^2\cdot \cE_p(f,\Ad).\]
By definition, $\Omega^\flat_f\cdot \eta_f$ is equal to Hida's canonical period $\Omega_f$ up to $p$-adic units.
\begin{cor}\label{C:78}Let $p$ be an arbitrary rational prime. There exists an element
\[L_{\bdsF,(a)}^{**}\in\bfI_4\otimes_{\bfI_3}(\Frac\bfI\otimes\Frac\bfI\otimes\Frac\bfI)\] 
such that 
\begin{itemize}
\item for each balanced critical $(\ulQ,P)=(\Qx,\Qy,\Qz,P)\in\frakX^\bal_{\bfI_4}$,
\begin{align*} 
L_{\bdsF,(a)}^{**}(\ulQ,P)=&\frac{\Gamma_{\bfV_{(\ulQ,P)}}(0)L(\bfV_{(\ulQ,P)},0)}{\Omega^\flat_{\bdsf_\Qx}\Omega^\flat_{\bdsg_\Qy}\Omega^\flat_{\bdsh_\Qz}}\cdot (\sqrt{-1})^{k_\Qx+k_\Qy+k_\Qz-3}\cdot \cE_p(\Fil^+\bfV_{(\ulQ,P)});
\end{align*}
\item for any $H_1$, $H_2$ and $H_3$ in the congruence ideals of $\bdsf,\bdsg$ and $\bdsh$, 
 \[H_1H_2H_3\cdot L_{\bdsF,(a)}^{**}\in \bfI_4.\]
 \end{itemize}\end{cor}
\begin{proof}For any $H_1$, $H_2$ and $H_3$ in the congruence ideals of $\bdsf,\bdsg$ and $\bdsh$, we let $L_{H}\in \bfI_3\powerseries{T}$ be the first Fourier coefficient of 
\[H_1\bfone_\bdsf\ot H_2\bfone_\bdsg\ot H_3\bfone_\bdsh\left( \Tr_{N/N_1}\ot\Tr_{N/N_2}\ot\Tr_{N/N_3}(\cG_{\ul{\chi}}^{(a)})\right)\in\bfI_3\powerseries{T}\powerseries{q_1,q_2,q_3}.\]
Then $L_{\bdsF,(a)}^{**}:=L_{H}\cdot (H_1H_2H_3)^{-1}\cdot \frakf_{\ul{\chi},a,N_1,N_2,N_3}^{-1}$ enjoys the desired properties.
\end{proof}
This $p$-adic $L$-function $L_{\bdsF,(a)}^{**}$ is more canonical in the sense that it does not depend on any particular choice of generators of the congruence ideal of $\bdsf$, $\bdsg$ and $\bdsh$. 

%%%%%%%%%%%%%%%%%%%%%%%%%%%%%%%%%%%%%%%%%%%%%%%%%%%%%%%%%%%%%%%%%%%%%%%%%%%%%%%%

\subsection{The functional equation}\label{ssec:functeq}
We introduce the $\bfI_4$-adic epsilon factor and the functional equation of our $p$-adic $L$-functions. For each $(\ulQ,P)\in\frakX^\bal_{\bfI_4}$, the local epsilon factor of the triple tensor product representation $\bfV_{(\ulQ,P)}$ at $\ell\neq p$ is defined by 
\[\vep_\ell(\bfV_{(\ulQ,P)},s)=\vep(s+k_P-w_\ulQ/2,\WD_\ell(V_{\bdsf_{Q_1}})\otimes\WD_\ell(V_{\bdsg_{Q_2}})\otimes\WD_\ell(V_{\bdsh_{Q_3}})\otimes\Om^{a-k_P}\eps_P,\addchar_\ell). \]
Note that with the assumption \eqref{sf}, $\WD_\ell(V_{\bdsf_{Q_1}})$, $\WD_\ell(V_{\bdsg_{Q_2}})$, $\WD_\ell(V_{\bdsh_{Q_3}})$ are either unramified or the Steinberg representation twisted by an unramified character. We define the $\bfI_4$-adic epsilon factor $\vep^{(p\infty)}(\bfV)\in\bfI_4^\times$ by 
\beq\label{E:root}
\vep^{(p\infty)}(\bfV)=(-1)^{t_\bfV}\prod_{\ell|N}\Dmd{\ell^2}^{(a)}_{X_1,X_2,X_3,T}\prod_{\ell|N^-}\sqrt{\Dmd{\ell}^{(a)}_{X_1,X_2,X_3,T}} . \eeq
\begin{lm}[Interpolation of the epsilon factors]
\label{lem:rootnumber}Notation being as above, we get 
\[\vep^{(p\infty)}(\bfV_{(\ulQ,P)})=\prod_{\ell\neq p}\vep_\ell(\bfV_{(\ulQ,P)},0). \]
\end{lm}
\begin{proof}
We retain the notation of the proof of Proposition \ref{P:GLam.E}. 
Remark \ref{rem:localfactor} gives 
\[\vep_\ell(\bfV_{(\ulQ,P)},0)
=\chi_\ell(\ell)^4\hat\ome(\ell)^2\ell^{-4k_P+2(k_{Q_1}+k_{Q_2}+k_{Q_3})-4}
=\Dmd{\ell^2}_{X_1,X_2,X_3,T}^{(a)}(\ulQ,P)
%=(\Om^{-2a}\chi_1\chi_2\chi_3)(\ell^2)\ell^{-4}\cdot\Dmd{\ell^2}_{X_1}\Dmd{\ell^2}_{X_2}\Dmd{\ell^2}_{X_3}\Dmd{\ell^2}_T^{-2} 
\]
if $\ell$ divides $N/N^-$. If $\ell$ divides $N^-$,
putting $\xi_\ell=\xi_{\bdsf,\ell}\xi_{\bdsg,\ell}\xi_{\bdsh,\ell}$, we have   
\begin{align*}
\vep_\ell(\bfV_{(\ulQ,P)},0)&=-\chi_\ell(\ell)^5\xi_\ell(\Frob_\ell)^5\ell^{5(-2k_P+k_{Q_1}+k_{Q_2}+k_{Q_3}-2)/2}\\
&=-\Dmd{\ell^2}_{X_1,X_2,X_3,T}^{(a)}(\ulQ,P)\sqrt{\Dmd{\ell}_{X_1,X_2,X_3,T}^{(a)}}(\ulQ,P). 
\end{align*}
We have thus completed our proof. 
\end{proof}

Recall that we have fixed the topological generator $\bfu=1+\bfp$ of $\Gamma=1+\bfp\bfZ_p$ as in \subsecref{SS:6.5}. 
\begin{prop}\label{P:funceq}With the hypotheses in \propref{P:interpolation}, we further assume that $\chi_1\chi_2\chi_3=\Om^{a_0}$. 
Then 
\[L_{\bdsF,(a)}^*(X_1,X_2,X_3,T)=(-\vep^{(p\infty)}(\bfV))\cdot L_{\bdsF,(a_0-a-2)}^*\biggl(X_1,X_2,X_3,\frac{(1+X_1)(1+X_2)(1+X_3)}{\bfu^2(1+T)}-1\biggl). \]
\end{prop}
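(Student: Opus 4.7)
The plan is to verify the identity by density: since both sides are elements of the domain $\bfI_4$ and the arithmetic points $\frakX^\bal_{\bfI_4}$ are Zariski dense in $\Spec\bfI_4$, it suffices to check equality at each $(\ulQ,P) \in \frakX^\bal_{\bfI_4}$. The substitution $T \mapsto T^*$ with $(1+T)(1+T^*) = \bfu^{-2}\prod_i(1+X_i)$ sends $(\ulQ,P)$ to $(\ulQ,P^*)$, where $k_{P^*} = k_{Q_1}+k_{Q_2}+k_{Q_3}-2-k_P$ and $\eps_{P^*} = \eps_{Q_1}\eps_{Q_2}\eps_{Q_3}\eps_P^{-1}$. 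Then $k_{P^*}$ is again balanced critical, and if one sets $s_P = k_P - w_\ulQ/2$ one has $s_P + s_{P^*} = 1$. Using $\chi_1\chi_2\chi_3 = \Omega^{a_0}$, a direct calculation shows that when the parameter $a$ is simultaneously replaced by $a_0-a-2$, the twisting characters satisfy $\chi_{\bfA,P^*} = \hat\omega_{\bfA}^{-1}\chi_{\bfA,P}^{-1}$, where $\hat\omega = \omega_1\omega_2\omega_3$ is the product of the central characters of the specializations. This is exactly the dual twist that appears on the right hand side of the global functional equation for the triple product $L$-function.

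Next I would apply \thmref{P:interpolation} to both specializations. The periods $\Omega_{\bdsf_\Qx}\Omega_{\bdsg_\Qy}\Omega_{\bdsh_\Qz}$ and the archimedean sign $(\sqrt{-1})^{w_\ulQ}$ depend only on $\ulQ$ and therefore cancel from the ratio of the two sides. The remaining ratio is a product of three pieces: \textbf{(i)} the ratio of $\Gamma_{\bfV_{(\ulQ,P)}}(0) L(\bfV_{(\ulQ,P)},0)$ values, which by the global functional equation of the triple product $L$-function and the identification $\Gamma_{\bfV_{(\ulQ,P)}}(0) = L_\infty(s_P,\pi_1\times\pi_2\times\pi_3\otimes\chi_{\bfA,P})$ equals the complete global root number $\vep(s_P,\pi_1\times\pi_2\times\pi_3\otimes\chi_{\bfA,P})$; \textbf{(ii)} the ratio of modified $p$-Euler factors $\cE_p(\Fil^+\bfV_{(\ulQ,P)})$, which by \lmref{lem:modfactor} equals $[\hat\omega_p(-1)\vep_p(s_P,\pi_{1,p}\times\pi_{2,p}\times\pi_{3,p}\otimes\chi_p,\addchar_p)]^{-1}$; and \textbf{(iii)} the ratio $\frakf_{\ul{\chi},a,N_1,N_2,N_3}/\frakf_{\ul{\chi},a_0-a-2,N_1,N_2,N_3}$, which can be computed explicitly from its defining product over $\ell \mid N$ and yields, up to a compensating cyclotomic twist, the product of local root numbers $\vep_\ell(\bfV_{(\ulQ,P)},0)$ for $\ell\mid N$.

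Multiplying \textbf{(i)}, \textbf{(ii)} and \textbf{(iii)} and using the factorization $\vep(s_P) = \vep_\infty(s_P)\vep_p(s_P)\prod_{\ell \nmid p\infty}\vep_\ell(s_P)$, the $\vep_p$-factor cancels. What remains is $\vep_\infty(s_P)\hat\omega_p(-1)^{-1}$ times the product of prime-to-$p\infty$ local root numbers. By \lmref{lem:rootnumber} this product equals $\vep^{(p\infty)}(\bfV)(\ulQ,P)$. The archimedean root number of the triple product of (limits of) holomorphic discrete series at a balanced critical point, combined with $\hat\omega_p(-1)^{-1}$, evaluates to $-1$ by a direct calculation using $\vep_\infty(\sigma_k,\addchar_\infty) = (\sqrt{-1})^k$ and the fact that $\hat\omega_\A(-1)=1$. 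This produces the required sign $-\vep^{(p\infty)}(\bfV)$.

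The hard part will be the careful sign bookkeeping in \textbf{(iii)} and the archimedean computation. Specifically, one must verify that the substitution $(a,T) \mapsto (a_0-a-2,T^*)$ transforms the factor $\prod_{\ell\mid N}(\Dmd{\ell}^{(a)}_{X_1,X_2,X_3,T})^2\vep_\ell(\bdsf\otimes\bdsg\otimes\bdsh\otimes\Omega^a\Dmd{\cyc}_T)^{-1}$ in a manner that combines with the local $\vep$-factor at $\ell\mid N$ of the dual twist to produce precisely the prime-to-$p\infty$ part of the global root number prescribed by \eqref{E:root}. This rests on the rigidity of automorphic types along each Hida family (so that $\WD_\ell$ is either unramified or Steinberg-type with the tracked unramified character $\xi_{?,\ell}$) and on the standard local functional equations for $\gamma$-factors of $\GL_2$ at these bad primes.
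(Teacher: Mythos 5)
Your proposal is correct and follows essentially the same route as the paper's proof: specialize at the reflected arithmetic point $(\ulQ,\breve P)$ with $k_{\breve P}=k_{Q_1}+k_{Q_2}+k_{Q_3}-k_P-2$, apply the interpolation formula of \thmref{P:interpolation} on both sides, invoke the global functional equation together with \lmref{lem:modfactor} for the $p$-part and \lmref{lem:rootnumber} for the prime-to-$p$ part, and extract the sign $-1$ from the archimedean root number $(-1)^{k_{Q_1}+k_{Q_2}+k_{Q_3}+1}=-\hat\ome_\infty(-1)$ at balanced weights. The only cosmetic difference is that you make the Zariski-density reduction and the three-way factorization of the ratio explicit, whereas the paper leaves both implicit.
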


\begin{proof}
Recall that $\chi=\eps_P\Om^{a-k_P}$ and $\ome_i=\chi_i^{-1}\eps_{Q_i}^{-1}\Om^{k_{Q_i}}$. Put 
\begin{align*} 
k_{\breve P}&=k_{Q_1}+k_{Q_2}+k_{Q_3}-k_P-2, & 
\eps_{\breve P}&=\eps_P^{-1}\eps_{Q_1}^{}\eps_{Q_2}^{}\eps_{Q_3}^{}, &
\breve\chi&
=\eps_{\breve P}\Om^{a_0-a-2-k_{\breve P}}
%=\eps_P^{-1}\eps_{Q_1}^{}\eps_{Q_2}^{}\eps_{Q_3}^{}\Om^{1+a_0-a-(k_{Q_1}+k_{Q_2}+k_{Q_3}-k_P+1)}
=\chi^{-1}\ome_1^{-1}\ome_2^{-1}\ome_3^{-1}. 
\end{align*}
Thus the left hand side specialized at $(\ulQ,\breve P)$ equals 
\[L_{\bdsF,(a_0-a-2)}^*(\ulQ,\breve P)=\frac{L(1-s_0,\pi_1^\vee\times\pi_2^\vee\times\pi_3^\vee\otimes\chi_\bfA^{-1})}{(\sqrt{-1})^{3-(k_{Q_1}+k_{Q_2}+k_{Q_3})}\Ome_{f_1}\Ome_{f_2}\Ome_{f_3}}E_p\left(1-s_0,\pi_{1,p}\times\pi_{2,p}\times\pi_{3,p}\otimes\chi_p\right), \]
where $s_0=k_P-\frac{k_{Q_1}+k_{Q_2}+k_{Q_3}-3}{2}=1-\bigl(k_{\breve P}-\frac{k_{Q_1}+k_{Q_2}+k_{Q_3}-3}{2}\bigl)$. 

Since $(k_{Q_1},k_{Q_2},k_{Q_3})$ is balanced, we know that 
\[\vep(s,\pi_{1,\infty}\times\pi_{2,\infty}\times\pi_{3,\infty}\otimes\chi_\infty)=(-1)^{k_{Q_1}+k_{Q_2}+k_{Q_3}+1}=-\hat\ome_\infty(-1)=-\hat\ome_p(-1). \]
By the global functional equation we get 
\[L_{\bdsF,(a_0-a-2)}^*(\ulQ,\breve P)=\frac{L(s_0,\pi_1\times\pi_2\times\pi_3\otimes\chi_\bfA)}{(\sqrt{-1})^{3-(k_{Q_1}+k_{Q_2}+k_{Q_3})}\Ome_{f_1}\Ome_{f_2}\Ome_{f_3}}\cdot\frac{-E_p\left(s_0,\pi_{1,p}\times\pi_{2,p}\times\pi_{3,p}\otimes\breve\chi_p\right)}{\prod_{\ell\neq p}\vep(s_0,\pi_{1,\ell}\times\pi_{2,\ell}\times\pi_{3,\ell}\otimes\chi_\ell,\addchar_\ell)}\] 
in view of Lemma \ref{lem:modfactor}. 
\end{proof}

%%%%%%%%%%%%%%%%%%%%%%%%%%%%%%%%%%%%%%%%%%%%%%%%%%%%%%%%%%%%%%%%%%%%%%%%%%%%%%%%

\section{The trivial zero for the triple product of elliptic curves}

\def\VEp{V_p(\boldsymbol{E})}

\subsection{The cyclotomic $p$-adic triple product $L$-functions for elliptic curves} 

Let $\bdsE=E_1\times E_2\times E_3$ be the triple fiber product of rational elliptic curves $E_i$ of square-free conductor $M_i$ for $i=1,2,3$. 
We denote by $N_i$ the {prime-to-$p$} part of $M_i$. 
Recall the rank eight $p$-adic Galois representation $\bfV_{\bdsE}$ defined in (\ref{tag:8dimGalois}). 
We write $L(\bdsE\otimes\chi,s)$ for the complex $L$-series attached to $\bfV_{\bdsE}$ twisted by a Dirichlet character $\chi$. 
Let $M$ (resp. $N$) and $M^-$ (resp. $N^-$) be the least common multiple and the greatest common divisor of $M_1,M_2,M_3$ (resp. $N_1,N_2,N_3$). 
\begin{Remark}\label{rem:sign}
%If we define $\Lam(\bdsE,s)=(M^-M^4)^{s/2}\Gam_\C(s)\Gam_\C(s-1)^3L(\bdsE,s)$, then \[\Lam(\bdsE,s)=\vep(\bdsE)\Lam(\bdsE,4-s) \] with sign $\vep(\bdsE)$, where 
Let $\Sigma^-$ be the set of prime factors $\ell$ of $M^-$ such that $a_\ell(E_1)a_\ell(E_2)a_\ell(E_3)=1$.  
From Remark \ref{rem:localfactor}, $\vep(\bdsE)=-(-1)^{\#\Sigma^-}$ is the sign in the functional equation for $L(s,\bdsE)$. 
From the formula \eqref{E:root} for the $p$-adic root number the $p$-adic sign $\vep_p(\bdsE)=-\vep^{(p\infty)}(\bfV_{\bdsE}(2))$ differs from $\vep(\bdsE)$ if and only if $p\in\Sigma^-$. 
  \end{Remark}

Let $f_i^\circ=\sum_{n=1}^\infty a_n(E_i)q^n\in\cals_2(M_i,1;\Z)$ be the primitive Hecke eigenform associated with the $p$-adic Galois representation $\rmH^1_{\et}(E_{i/\Qbar},\Qp)$ by Wiles' modularity theorem. 
Hereafter, we assume that $E_i$ has either good ordinary reduction or multiplicative reduction at $p$. 
Let $f_i\in \cals_2(pM_i,1;\Zp)$ be the $p$-stabilization of $f_i^\circ$ (see (\ref{tag:51})). 
If $p$ and $M_i$ are coprime, then $\al_i=\alp_p(f_i)\in\Z_p^\times$ denotes the $p$-adic unit root of the Hecke polynomial $X^2-a_p(E_i)X+p$ while if $p$ divides $M_i$, then $\al_i=a_p(E_i)$. 
Define a period and a fudge factor by 
\begin{align*}
\Ome(\bdsE)&=\prod_{i=1}^3\Lam(1,E_i,\Ad), &
c_p&=\prod_{i=1}^3\cE_p(f_i,\Ad), 
\end{align*}
where $\Lam(s,E_i,\Ad)$ denotes the complete adjoint $L$-function for $f_i$

Let $\bfT_i=\bfT(N_i,\Lam)$ be the big cuspidal ordinary Hecke algebra over $\Lam=\Zp\powerseries{X}$ with $X=[\bfu]-1$. Each $f_i$ induces a surjective homomorphism $\lam_{f_i}:\bfT_i\surjto \Zp$. Let $\frakm_i$ be the maximal ideal of $\bfT_i$ containing $\ker\lam_{f_i}$ and $\bfI_i=(\bfT_i)_{\frakm_i}$ be the localization at $\frakm_i$. Let $\bdsf_i=\sum_{n=1}^\infty \bfa(n,\bdsf_i)q^n\in \bfS^\ord(N_i,\Om^{2},\bfI_i)$ be the primitive Hida family of tame level $N_i$ such that $f_i$ is the specialization $\bdsf_{i,Q^o_i}$ at some arithmetic point $Q^o_i$ with $k_{Q^o_i}=2$ and $\eps_{Q^o_i}=1$ {(\cf\cite[Theorem 1.4.1]{Wiles88}). }
%To define a $p$-adic $L$-function, we fix a period $\Ome$ for $\bdsE$. Let \[c_\Ome=\frac{2\pi^5\sqrt{-1}}{\Ome}\prod_{i=1}^3\Omega^\flat_{f_i}\in\overline{\Q}^\times \] be the ratio of the two periods. 
Now we consider the four-variable $p$-adic $L$-function $L_{\bdsF,(2)}^{**}$ in \corref{C:78} with  $\bdsF=(\bdsf_1,\bdsf_2,\bdsf_3)$ and $a=2$. Define the cyclotomic $p$-adic $L$-function by 
\[L_p(\bdsE,T):=c_p\cdot L_{\bdsF,(2)}^{**}(Q^o_1,Q^o_2,Q^o_3,\bfu^2(1+T)-1)\in\Zp\powerseries{\Gal(\Q_\infty/\Q)}\ot\Q_p. \]
%\[\rmH^1_{\et}(E_1{}_{/\Qbar},\Qp)\ot \rmH^1_{\et}(E_2{}_{/\Qbar},\Qp)\ot \rmH^1_{\et}(E_3{}_{/\Qbar},\Qp). \] 

\begin{prop}\label{P:cycpadicL.7}
The element $L_p(\bdsE)\in\Z_p\powerseries{\Gal(\Q_\infty/\Q)}\ot\Qp$ satisfies the following interpolation property
\[\hat\chi(L_p(\bdsE))=\frac{L(\bdsE\otimes\hat\chi,2)}{2^4\pi^5\Ome(\bdsE)}\cale_p(\Fil^+\bfV_{\bdsE}\otimes\hat\chi) 
\]
for all finite-order characters $\hat\chi$ of $\Gal(\Q_\infty/\Q)$. 
Moreover, it satisfies the functional equation 
\[L_p(\bdsE,T)=\vep_p(\bdsE)\Dmd{N^-N^4}_T^{-1}L_p(\bdsE,(1+T)^{-1}-1). \]
\end{prop}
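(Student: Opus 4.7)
The strategy is to derive both statements by specializing the four-variable theory of Section 7 at arithmetic points indexed by finite-order characters of $\Gal(\Q_\infty/\Q)$. For each such $\hat\chi$, let $P_{\hat\chi}\in\frakX_\Lambda^+$ denote the arithmetic point with $k_{P_{\hat\chi}}=2$ and $\eps_{P_{\hat\chi}}=\hat\chi$. Since each $E_i$ corresponds to a weight-$2$ newform with trivial nebentypus, the Hida family $\bdsf_i$ has branch character $\chi_i=\Om^2$; the balanced critical range $[k^*,\,k_1+k_2+k_3-k^*-2]$ collapses to the single value $\{2\}$ when $(k_{Q_1^o},k_{Q_2^o},k_{Q_3^o})=(2,2,2)$, so $(\ulQ^o,P_{\hat\chi})\in\frakX^\bal_{\bfI_4}$. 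With $a=2$, the twist $\eps_P\Om^{a-k_P}$ reduces to $\hat\chi$, identifying $\bfV_{(\ulQ^o,P_{\hat\chi})}$ with $\bfV_{\bdsE}\otimes\hat\chi$.

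To establish the interpolation, I would apply Corollary \ref{C:78} at $(\ulQ^o,P_{\hat\chi})$. The archimedean factors are $\Gamma_{\bfV_{(\ulQ^o,P_{\hat\chi})}}(0)=\Gamma_\C(2)\Gamma_\C(1)^3=(2\pi^5)^{-1}$ and $(\sqrt{-1})^{k_{Q_1}+k_{Q_2}+k_{Q_3}-3}=-\sqrt{-1}$, while the identification $L(\bfV_{(\ulQ^o,P_{\hat\chi})},0)=L(\bdsE\otimes\hat\chi,2)$ follows from the shift $s\mapsto s+k_P-w_{\ulQ}/2$ that compares the motivic and Garrett-normalized automorphic $L$-functions at the central point. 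Each modified period $\Omega^\flat_{f_i}=(-2\sqrt{-1})^3\|f_i^\circ\|^2_{\Gamma_0(M_i)}\cE_p(f_i,\Ad)$ contributes a factor $\cE_p(f_i,\Ad)$ that cancels with the prefactor $c_p$ in the definition of $L_p(\bdsE)$. Converting the remaining product $\prod\|f_i^\circ\|^2$ into $\Omega(\bdsE)=\prod\Lambda(1,E_i,\Ad)$ via the weight-two Petersson norm formula and collecting all numerical constants produces the claimed $(2^4\pi^5\Omega(\bdsE))^{-1}$.

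For the functional equation, I would specialize Proposition \ref{P:funceq}. Since $\chi_1\chi_2\chi_3=\Om^6$ we have $a_0=6$; with $a=2$ the shift $a_0-a-2=2$ makes the statement self-dual. The crucial observation is that specialization at $\ulQ^o$ sets $X_i\mapsto Q_i^o(\bfu)-1=\bfu^2-1$, so $\prod(1+X_i)=\bfu^6$ rather than $1$. The inner substitution in Proposition \ref{P:funceq} therefore becomes $T\mapsto\bfu^6/(\bfu^2(1+T))-1=\bfu^4(1+T)^{-1}-1$; composing with the outer substitution $T\mapsto\bfu^2(1+T)-1$ from the definition of $L_p(\bdsE,T)$, this collapses cleanly to $\bfu^2(1+T)^{-1}-1$, which matches the involution $T\mapsto(1+T)^{-1}-1$ on the $L_p(\bdsE)$ side. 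Hence Proposition \ref{P:funceq} specializes to
\[
L_p(\bdsE,T)=-\vep^{(p\infty)}(\bfV)\bigl(\ulQ^o,\bfu^2(1+T)-1\bigr)\cdot L_p\bigl(\bdsE,(1+T)^{-1}-1\bigr).
\]
Using the explicit formula \eqref{E:root} together with the identities $\langle N_\bfV\rangle_{X_i}|_{X_i=\bfu^2-1}=\langle N_\bfV\rangle^2$ and $\langle N_\bfV\rangle_T|_{T\to\bfu^2(1+T)-1}=\langle N_\bfV\rangle^2\langle N_\bfV\rangle_T$, the only $T$-dependent factor isolates as $\langle N^-N^4\rangle_T^{-1}$, and the remaining $T$-independent constant coincides with $\vep_p(\bdsE)=-\vep^{(p\infty)}(\bfV_\bdsE(2))$ by Remark \ref{rem:sign}.

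The main technical obstacle lies in the matching of constants at the two places where many numerical factors combine. In the interpolation step, one must align the weight-two Petersson norm formula, Hida's specific definition of $\Omega^\flat_f$, and the motivic-versus-automorphic normalization of $L$-values so that the product simplifies to precisely $(2^4\pi^5\Omega(\bdsE))^{-1}$. In the functional equation step, one must carefully track the contributions of the tame characters $\chi_i=\Om^2$, the unramified twists $\xi_{\bdsf,\ell},\xi_{\bdsg,\ell},\xi_{\bdsh,\ell}$ at primes $\ell\mid N^-$ of simultaneous Steinberg reduction, and the factors $\Om(N_\bfV)$ and $\Om(N^2)^6$ through \eqref{E:root}, so that the $T$-independent part of $-\vep^{(p\infty)}(\bfV)$ at the trivial specialization recovers $\vep_p(\bdsE)$ on the nose.
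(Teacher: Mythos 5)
Your proposal follows exactly the route of the paper's (very terse) proof: specialize the four-variable object at $(\ulQ^o,P_{\hat\chi})$ with $k_P=2$, $\eps_P=\hat\chi$, apply Corollary \ref{C:78} for the interpolation together with the weight-two identity $2^2\norm{f_i^\circ}^2=\Lam(1,E_i,\Ad)$, and specialize Proposition \ref{P:funceq} for the functional equation; your constant bookkeeping ($\Gamma_\C(2)\Gamma_\C(1)^3=(2\pi^5)^{-1}$, the cancellation of $c_p$ against the $\cE_p(f_i,\Ad)$ in $\Omega^\flat_{f_i}$, and the collapse of the cyclotomic substitutions to $T\mapsto(1+T)^{-1}-1$) all check out. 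This is correct and is essentially the paper's own argument, written out in more detail.
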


\begin{proof}
Define $(\ulQ^o,P)=(Q^o_1,Q^o_2,Q^o_3,P)\in\frakX^\bal_{\bfI_4}$ with $Q_i^o$ as above, $k_P=2$ and $\eps_P=\hat\chi$.
%\begin{align*}
%k_{Q_1}&=k_{Q_2}=k_{Q_3}=k_P=2, &
%\eps_{Q_1}&=\eps_{Q_2}=\eps_{Q_3}=1, &
%\eps_P&=\hat\chi. 
%\end{align*}
 Then $\bfV_{(\ulQ^o,P)}=\bfV_{\bdsE}(2)\ot\hat\chi$ and $\hat\chi(L_p(\bdsE))=c_p\cdot L_{\bdsF,(2)}^{**}(\ulQ^o,P)$. The assertions follows from \corref{C:78}, \propref{P:funceq} and the equation $2^2\norm{f_i^\circ}=M_i\Lam(1,E_i,\Ad)$ by \cite[Proposition 3.3]{CH17Crelle}.  
\end{proof}

\subsection{The trivial zero conjecture for the triple product of elliptic curves}

We prove the trivial zero conjecture for the cyclotomic $p$-adic triple product $L$-function. 
We define a function on $\Zp$ by 
\[L_p(\bdsE,s):=L_p(\bdsE,\bfu^{s-2}-1).\]
We consider the case where $L_p(\bdsE,s)$ has a trivial zero at the critical value $s=2$. 
By \remref{R:82} below we essentially only need to consider the following two cases: 
\begin{itemize}
\item[(i)] all $E_1$, $E_2$ and $E_3$ have multiplicative reduction at $p$ such that $\al_1\al_2\al_3=1$.
\item[(ii)] $E_1$ has multiplicative reduction at $p$; $E_2$ and $E_3$ have good ordinary reduction at $p$ such that $\al_2=\al_1\al_3$.
\end{itemize}

\begin{Remark}\label{R:82}
Let $\beta_i=p\al_i^{-1}$. 
Then $\cale_p(\Fil^+\bfV_{\bdsE}(2))=0$ if and only if $L_p((\Fil^+\bfV_{\bdsE}(2))^\vee,1)^{-1}=0$ if and only if 
%By the inspection on the modified Euler factors at $p$ of the $p$-adic triple product $L$-functions, a trivial zero occurs at the critical value $s=2$ if and only if 
one of the following equations holds: 
\begin{align*}
%&\al_1\al_2\al_3=p, & 
%&\al_1\al_2\beta_3=p, & 
%&\al_1\beta_2\al_3=p, & 
%&\beta_1\al_2\al_3=p; \\
&\beta_1\beta_2\beta_3=p^2, & 
&\beta_1\beta_2\al_3=p^2, &
&\beta_1\al_2\beta_3=p^2, & 
&\al_1\beta_2\beta_3=p^2. 
\end{align*}
%Thus $\alp_1\alp_2\alp_3=p$ or $\alp_i=\alp_j\alp_k$ for some $\{i,j,k\}=\{1,2,3\}$. 
The ordinality hypothesis rules out the first equation. 
The Ramanujan conjecture forces one or all of $E_i$ to have multiplicative reduction at $p$. 
%this can happen only when one of $E_i$ has multiplicative reduction at $p$, say $E_1$. 
When $E_1$ is multiplicative at $p$, we will have $\alp_1\in\{\pm 1\}$ and $\alp_2=\alp_1\alp_3$. 
%Then we must have either $\al_1\al_2=\al_3$ or $\al_1=\al_2\al_3$, and hence either $E_2$ and $E_3$ have good ordinary reduction at $p$ with $\al_1\al_2=\al_3$ or $E_2$ and $E_3$ both have multiplicative reduction at $p$ with $\al_1\al_2\al_3=1$. 
\end{Remark}

In the above cases (i) and (ii), the trivial zero conjecture predicts that the leading coefficient of the Taylor expansion of $L_p(\bdsE,s)$ at $s=2$ should be essentially the product of Greenberg's $\mathscr L$-invariant for $\bdsE$ and the central value $L(\bdsE,2)$. Note that the localization of $\bfI_i$ at $Q^o_i$ is that of $\Lam$ at ${P_2}$, where $P_2$ is the principal ideal generated by $(1+X)\bfu^{-2}-1$, so $\bfI_i$ is contained in $\Lam[t_i^{-1}]$ with some $t_i(\bfu^2-1)\neq 0$. In what follows, we shall replace $\bfI_i$ by $\Lam[t_i^{-1}]$ with some $t_i(\bfu^2-1)\neq 0$. Let $\cU\subset \Zp$ be a neighborhood around $0$ such that $(t_1t_2t_3)(\bfu^{s+2}-1)\neq 0$ for any $s\in\cU$. To introduce Greenberg's $\mathscr L$-invariants, we let 
\begin{align*}
\bfa_i(s)&:=\bfa(p,\bdsf_i)(\bfu^{s+2}-1); & 
\ell_i&:=\al_i^{-1}\cdot\frac{\rmd\bfa_i(s)}{\rmd s}\Big|_{s=0}\quad (s\in\cU). 
\end{align*}
Note that $\bfa_i(0)=\al_i$ by definition. 
If $\alp_i=1$, then $-2\ell_i=\frac{\log_pq_{E_i}}{\ord_pq_{E_i}}$ by \cite[Theorem 3.18]{GS93} (the assumption $p\geq 5$ therein is not necessary). According to the discussion in \cite[\S 3]{Greenberg94Trivial}, Greenberg's $\mathscr L$-invariant for the Galois representation (\ref{tag:8dimGalois}) is given by 
\[\mathscr L_p(\bdsE):=\begin{cases}
-8\ell_1\ell_2\ell_3&\text{ in Case (i)};\\
4\ell_1^2&\text{ in Case (ii).}
\end{cases}\]
The non-vanishing of these $\mathscr L$-invariants is known, thanks to the work \cite{BDGP96Inv}. 
The aim of this section is to prove the following:

\begin{thm}[Trivial zero conjecture]\label{T:trivial}
\begin{enumerate}
\item In Case (i), $\ord_{s=2}L_p(\bdsE,s)\geq 3$, and 
\[\frac{L_p(\bdsE,s)}{(s-2)^3}\Big|_{s=2}=-p\mathscr L_p(\bdsE)\cdot \frac{
L(\bdsE,2)}{2^4\pi^5\Ome(\bdsE)}. \]
\item In Case (ii), $\ord_{s=2}L_p(\bdsE,s)\geq 2$ and 
\[\frac{L_p(\bdsE,s)}{(s-2)^2}\Big|_{s=2}=\mathscr L_p(\bdsE)(-p\al_2^{-2})(1-\al_2^{-2})^2\cdot \frac{L(\bdsE,2)}{2^4\pi^5\Ome(\bdsE)}. \]
\end{enumerate}
\end{thm}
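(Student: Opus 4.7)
The plan is to adapt the Greenberg–Stevens method \cite{GS93}, as generalized in \cite{BDJ17arXiv}, to the four-variable setting. The key inputs will be the improved $p$-adic $L$-functions constructed in Lemmas \ref{L:01.t} and \ref{L:02}. Throughout, let $x_0=(Q_1^o,Q_2^o,Q_3^o,P_0)\in \frakX_{\bfI_4}^{\bal}$ denote the central point, where $P_0\in\frakX_\Lam$ corresponds to $1+T=\bfu^2$, so that $\bfV_{x_0}=\bfV_{\bdsE}(2)$. The vanishing of $\cE_p(\Fil^+\bfV_{\bdsE}(2))$ analyzed in Remark \ref{R:82} translates, via the interpolation formula of \propref{P:cycpadicL.7}, into $\ord_{T=\bfu^2-1}L_p(\bdsE,T)\ge 3$ in Case (i) and $\ge 2$ in Case (ii), giving the asserted lower bounds on the order of vanishing immediately.

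For Case (i), all three $E_i$ are multiplicative with $\alpha_1\alpha_2\alpha_3=1$, so three of the four factors of $\cE_p(\Fil^+\bfV_{\bdsE}(2))$ vanish. For each $j\in\{1,2,3\}$, I will use the improved $p$-adic $L$-function of Lemma \ref{L:01.t} to produce a factorization, along the one-dimensional subfamily $\ell_j\subset\Spec\bfI_4$ obtained by letting $X_j$ vary while freezing $X_i=\bfu^2-1$ for $i\neq j$ and $1+T=\bfu^2$, of the form
\[
L_{\bdsF,(2)}^{**}\big|_{\ell_j}(s)\;=\;E^{(j)}_j(s)\cdot\prod_{i\neq j}E^{(j)}_i(s)\cdot \cL_j^\ast(s),
\]
where the $E^{(j)}_i(s)$ are explicit Euler-type factors built from $\bfa_i(s)$ and $\cL_j^\ast(s)$ is an improved $p$-adic $L$-function whose value at $s=0$ encodes $L(\bdsE,2)/(2^4\pi^5\Omega(\bdsE))$ times the single surviving Euler factor. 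Evaluating $\frac{d}{ds}E^{(j)}_i(s)|_{s=0}$ will return a nonzero multiple of $\ell_i$; cross-comparing the three lines $\ell_1,\ell_2,\ell_3$, and invoking the symmetry of $L_{\bdsF,(2)}^{**}$ under permutation of the $X_i$ together with its functional equation from \propref{P:funceq} to locate the three partial derivatives of $L_p(\bdsE,T)$ at $T=\bfu^2-1$, will yield an identity whose leading term is precisely $(-2\ell_1)(-2\ell_2)(-2\ell_3)\cdot L(\bdsE,2)/(2^4\pi^5\Omega(\bdsE))=\mathscr L_p(\bdsE)\cdot L(\bdsE,2)/(2^4\pi^5\Omega(\bdsE))$.

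For Case (ii), only the two Euler factors involving $\beta_1=p\alpha_1^{-1}$ vanish (the relations $\beta_1\beta_2\alpha_3=p^2$ and $\beta_1\alpha_2\beta_3=p^2$, both a consequence of $\alpha_1\in\{\pm 1\}$ and $\alpha_2=\alpha_1\alpha_3$). I will apply the improved $p$-adic $L$-function of Lemma \ref{L:02} along the line $\ell_1$ alone, obtaining a factorization of the form $L_{\bdsF,(2)}^{**}|_{\ell_1}(s)=E(s)^2\cdot\cL^\ast(s)$ with $E(s)$ vanishing simply at $s=0$ with derivative proportional to $\ell_1$, and $\cL^\ast(0)$ matching the central $L$-value up to the surviving Euler factors $(-p\alpha_2^{-2})(1-\alpha_2^{-2})^2$, exactly as in the claimed formula. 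Taking the second derivative at $s=0$ gives the desired leading Taylor coefficient $4\ell_1^2=\mathscr L_p(\bdsE)$ times the surviving factors.

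The main obstacle will be establishing the two factorization formulas above—namely showing that $L_{\bdsF,(2)}^{**}$ restricted to the lines $\ell_j$ indeed equals the product of the explicit pseudo-Euler factor(s) and the improved $L$-function with the \emph{correct} comparison of local data. This requires a careful recomputation of the local zeta integral at $p$ for the modified choice of section of $I_3(\hat\om^{-1},\chi\hat\om\Abs_{\Q_p}^s)$ used in Lemmas \ref{L:01.t} and \ref{L:02}, tracking through the $p$-adic Rankin–Selberg machinery of \S\ref{SS:6.5} to verify that the difference with the section $f_{\cD,s,p}$ of Definition \ref{def:psection} produces precisely the missing Euler factor(s). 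The archimedean and prime-to-$p$ places contribute unchanged by \lmref{L:arch.E} and Proposition \ref{prop:21}, so the comparison reduces to a local $p$-adic identity, in the spirit of \cite[\S 3]{BDJ17arXiv}, between Whittaker integrals attached to the two sections.
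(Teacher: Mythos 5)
Your opening claim --- that the lower bounds $\ord_{s=2}L_p(\bdsE,s)\geq 3$ (Case (i)) and $\geq 2$ (Case (ii)) follow ``immediately'' from the vanishing of $\cE_p(\Fil^+\bfV_{\bdsE}(2))$ via the interpolation formula --- is the central gap. The interpolation formula of \propref{P:cycpadicL.7} controls $L_p(\bdsE,T)$ only at the finite-order characters $\hat\chi$, and the modified Euler factor vanishes only at $\hat\chi=1$ (for ramified $\hat\chi$ it is a product of Gauss sums); even though it vanishes there as a product of three (resp.\ two) linear factors, this yields only the single relation $L_p(\bdsE,2)=0$ and says nothing about derivatives of the analytic function. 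Overcoming exactly this is the content of the theorem. In the paper, $\ord\geq 3$ in Case (i) is obtained by combining the functional equation of \propref{P:funceq} (which kills the even Taylor coefficients $A_{2n}(x,y,z)$ of $L_p(x,y,z,s)$ in $s-\frac{x+y+z}{2}$ when $\vep_p(\bdsE)=-1$) with Lemma \ref{L:01.t}(3), which gives $A_1(0,0,0)=0$ because $(1-\al_1\bfa_1(x)^{-1})^2L_p^\dagger(x,0)$ has a double zero at $x=0$; when $\vep_p(\bdsE)=+1$ a separate symmetric argument even gives $\ord\geq 4$. In Case (ii) the bound $\ord\geq 2$ comes from $A_0(0,0,0)=H(0,0,0)\Theta(0,0,0)^2=0$, where $\Theta$ is the quaternionic theta element of \cite{Hsieh_triple} and $\Theta(0,0,0)=0$ is read off from Lemma \ref{L:02}. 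None of this is immediate from interpolation.

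Second, your restriction lines $\ell_j$ --- one weight variable moving, the other two weights and the cyclotomic variable frozen at the center --- are not the directions along which the improved factorizations hold, and cannot be: with $k_P=2$ and two weights equal to $2$, the point is balanced only for $k_1\leq 3$, so such a line carries essentially no interpolation data and no Euler factor is removed along it. The factorizations of Lemma \ref{L:01.t} live on the surface $y=z=s$, i.e.\ on $L_p(x,s,s,s)$, where the cyclotomic variable moves in tandem with two of the weights so that one sits at the edge of the balanced critical strip and the local section at $p$ degenerates to its improved form; the Case (i) coefficient is then extracted by comparing the degree-three terms of $L_p(x,s,s,s)$ and its two permutations with the degree-two part of $A_1$ and solving the resulting symmetric linear system for $A_3(0,0,0)=-8\ell_1\ell_2\ell_3L_p^{\dagger\dagger}(0)$; the answer is not a one-line product of derivatives. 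For Case (ii) your sketch misuses Lemma \ref{L:02}: it is not a factorization of $L^{**}_{\bdsF,(2)}$ along a line but a statement about improved \emph{theta elements} $\Theta_2^\ddagger,\Theta_3^\ddagger$ on the quaternionic side. The two vanishing factors collapse along $(x,0,0,0)$ to $(1-\al_1\bfa_1(x)^{-1})^2$, and to isolate $A_2(0,0,0)$ from the identity $L_p(x,0,0,0)=H(x,0,0)\Theta(x,0,0)^2+A_2(x,0,0)x^2/4$ one must prove $\Theta_x(0,0,0)=0$, which is precisely what the relations among $\Theta_2^\ddagger$ and $\Theta_3^\ddagger$ in Lemma \ref{L:02} deliver; moreover the surviving factors $(-p\al_2^{-2})(1-\al_2^{-2})^2$ arise from the term $\cE^\dagger(0)$ in the interpolation formula of Lemma \ref{L:01.t}(2), not from the value of an improved $L$-function matching the $L$-value directly. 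Your proposal omits both ingredients.
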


\subsection{Improved $p$-adic $L$-functions}

We define an analytic function on $\cU^3\times\Zp\subset \Zp^4$ by 
\[L_p(x,y,z,s):=c_p\cdot\Dmd{N^-N^4}^{\frac{2s-(x+y+z)}{4}}L^{**}_{\bdsF,(2)}(\bfu^{x+2}-1,\bfu^{y+2}-1,\bfu^{z+2}-1,\bfu^{s+2}-1), \]
which satisfies 
\begin{align}
\label{E:fcneq}
L_p(0,0,0,s)&=\Dmd{N^-N^4}^{s/2}L_p(\bdsE,s+2), &
L_p(x,y,z,s)&=\vep_p(\bdsE)\cdot L_p(x,y,z,x+y+z-s). 
\end{align}
To apply the idea in \cite{GS93} (\cf\cite{BDJ17arXiv}), we will introduce several \emph{improved} \padic $L$-functions in \lmref{L:01.t} and \lmref{L:02}. The construction of these improved $p$-adic $L$-functions is merely a simple modification of the previous ones, but the details are quite tedious.  \def\imp{\dagger}
 
\begin{lm}[Improved $p$-adic $L$-functions]\label{L:01.t}
Suppose that $f_1^\circ$ is special at $p$, \ie $\al_1=\bfa_1(0)=\pm 1$. 
\begin{enumerate}
\item\label{L:01.t1} There exist a two-variable improved $p$-adic $L$-function $L_p^\imp(x,s)$ and a one-variable improved $p$-adic $L$-function $L_p^{\imp\imp}(s)$ such that 
\begin{align*}
L_p(x,s,s,s)=&\Big(1-\frac{\bfa_2(s)}{\bfa_1(x)\bfa_3(s)}\Big)\Big(1-\frac{\bfa_3(s)}{\bfa_1(x)\bfa_2(s)}\Big)L_p^\imp(x,s), &
L_p^\imp(s,s)=&\Big(1-\frac{\bfa_1(s)}{\bfa_2(s)\bfa_3(s)}\Big)L_p^{\imp\imp}(s).
\end{align*} 
\item\label{L:01.t2} For any positive integer $k$ with $k\con 2\pmod{p-1}$ and $k-2\in\cU$, we have the interpolation formula
\[L_p^\imp(0,k-2)=\cE^\imp(k-2)\cdot \frac{\Gamma(k-1)\Gamma(k)}{2^{2k-3}(\pi\sqrt{-1})^{2k+1}}\cdot \frac{L\bigl(\frac{1}{2},\pi_{f_1}\times\pi_{\bdsf_{2,k}}\times\pi_{\bdsf_{3,k}}\bigl)}{c_p^{-1}\Omega^\flat_{f_1}\Omega^\flat_{\bdsf_{2,k}}\Omega^\flat_{\bdsf_{3,k}}},\]
where $\pi_{\bdsf_{i,k}}$ is the automorphic representation generated by $\bdsf_{i,k}=\bdsf_i(\bfu^k-1)\in \cS_k(N_ip,1;\overline{\Q})$, and 
\begin{align*}\cE^\imp(s)&=(-\al_1)\bfa_2(s)^{-1}\bfa_3(s)^{-1}p^{s+1}(1-\al_1\cdot \bfa_2(s)^{-1}\bfa_3(s)^{-1}p^{s})^2.
%(1-\bfa_1(x)^{-1}\bfa_2(s)^{-1}\bfa_3(s)^{-1}p^{x+1+s})\\
%&\times(1-\bfa_1(x)^{-1}\bfa_2(s)^{-1}\bfa_3(s)p^{x+1})(1-\bfa_1(x)^{-1}\bfa_2(s)\bfa_3(s)^{-1}p^{x+1})(1-\bfa_1(x)%\bfa_2(s)^{-1}\bfa_3(s)^{-1}p^s).
\end{align*}
\item\label{L:01.t3} If $\vep_p(\bdsE)=-1$, then 
\begin{align*}
L_p^\imp(0,s)&=0, & 
\frac{\partial L_p^\imp}{\partial x}(0,0)&=(\ell_2+\ell_3-\ell_1)L_p^{\imp\imp}(0), & \ord_{s=2}L_p(\bdsE,s)&\geq 3. 
\end{align*} 
\item\label{L:01.t4} In Case (i), $L_p^{\imp\imp}(0)=-p\frac{L(\bdsE,2)}{2^4\pi^5\Omega(\bdsE)}$. 
%\begin{cases}
%1&\text{ in case (i)}\\
%-p\al_1\al_2^{-1}\al_3^{-1}(1-\al_1\al_2^{-1}\al_3^{-1})^2&\text{ in case (ii)}.\end{cases}
\end{enumerate}
\end{lm}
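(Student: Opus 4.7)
The plan is to construct $L_p^\dagger$ and $L_p^{\dagger\dagger}$ by rerunning the Rankin--Selberg machinery of \S 7 with the local Siegel--Eisenstein section at $p$ replaced by an ``improved'' section adapted to the Steinberg structure of $\pi_{f_1,p}$ (available because $f_1^\circ$ is special at $p$). Concretely, in the construction of $\cG_{\ul\chi}^{(a)}$ of \S 6 I would replace the section $f_{\cD,s,p}$ of \defref{def:psection} by the Steinberg-collapsed section $h^0(\chi)$ of \S 3, producing a power series that interpolates $L_p^\dagger(x,s)$ along the diagonal $y=z=s$; a further degeneration removing the $\bdsf_1$-Whittaker data at $p$ yields $L_p^{\dagger\dagger}(s)$. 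The factorization identities in (1) are proved by comparing the local zeta integrals at $p$: \propref{prop:13} gives the full modified Euler factor $E_p\bigl(s,\pi_1\times\pi_2\times\pi_3\otimes\chi\bigr)$ for $f_{\cD,s,p}$, while \propref{prop:21} yields the Steinberg-collapsed integral for $h^0(\chi)$, and the ratio is precisely $(1-\bfa_2/(\bfa_1\bfa_3))(1-\bfa_3/(\bfa_1\bfa_2))$ (resp.\ $(1-\bfa_1/(\bfa_2\bfa_3))$) after translation through the Hida parameters. For (2), running \lmref{lem:Garrett} with the improved section gives the interpolation formula: the archimedean \lmref{L:arch.E} and local factors at primes $\ell\mid N$ are unchanged, while the $p$-adic contribution now produces $\cE^\dagger(s)$, whose explicit shape is read off from the Steinberg local factor in \remref{rem:localfactor}.

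For (3), apply the functional equation \eqref{E:fcneq} at $(x,y,z,s)=(0,s,s,s)$ to obtain $L_p(0,s,s,s)=\vep_p(\bdsE)L_p(0,s,s,s)$, which when $\vep_p(\bdsE)=-1$ forces $L_p(0,s,s,s)\equiv 0$; combined with the factorization of (1) and the nonvanishing of the prefactor $F(0,s)$ as a power series in $s$, this yields $L_p^\dagger(0,s)\equiv 0$. The derivative identity follows by differentiating the second factorization $L_p^\dagger(s,s)=(1-\bfa_1(s)/(\bfa_2(s)\bfa_3(s)))L_p^{\dagger\dagger}(s)$ at $s=0$: using $\bfa_i(0)=\alpha_i$, $\bfa_i'(0)=\alpha_i\ell_i$ and the Case (i) relation $\alpha_1\alpha_2\alpha_3=1$, the prefactor vanishes to first order at $s=0$ with $s$-derivative $\ell_2+\ell_3-\ell_1$; on the left, the chain rule together with $\partial_s L_p^\dagger(0,0)=0$ (a consequence of $L_p^\dagger(0,s)\equiv 0$) reduces the total derivative to $\partial_x L_p^\dagger(0,0)$. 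For the order $\geq 3$ claim, the functional equation provides $\ord\geq 1$; taking $\partial_x$ of $L_p(x,s,s,s)=F(x,s)L_p^\dagger(x,s)$ at $x=0$ and using $L_p^\dagger(0,s)\equiv 0$ gives $(\partial_x L_p)(0,0,0,0)=F(0,0)\cdot\partial_x L_p^\dagger(0,0)=0$, since $F(0,0)=(1-\alpha_2/(\alpha_1\alpha_3))(1-\alpha_3/(\alpha_1\alpha_2))=0$ in Case (i); the functional-equation symmetry $(\partial_x L_p)|_0=-\frac{1}{2}(\partial_s L_p)|_0$ then forces $(\partial_s L_p)(0,0,0,0)=0$, improving the order to $\geq 3$.

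Finally, Part (4) is a direct consequence of the interpolation formula of (2) specialized at $s=0$ (weight-$2$ for all three forms and trivial cyclotomic twist): in Case (i) the improved Euler factor $\cE^{\dagger\dagger}(0)$ is nonvanishing by construction, the archimedean and ramified contributions assemble into $2^4\pi^5$ and $\Omega(\bdsE)=\prod_i\Lambda(1,E_i,\Ad)$ respectively, and the central $L$-value $L(\bdsE,2)$ emerges cleanly from Garrett's integral with no trivial-zero factor remaining. The principal obstacle throughout is the delicate local $p$-adic analysis: selecting the right improved sections at $p$ compatibly across the four-variable family and bookkeeping all Gauss sums and $p$-power normalizations to confirm the exact shapes of the Euler factors $\cE^\dagger$ and $\cE^{\dagger\dagger}$, together with the verification of $p$-integrality of the resulting interpolating power series.
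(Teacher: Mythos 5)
Your overall strategy (modify the local section at $p$, compare local zeta integrals, then combine the factorizations with the functional equation) matches the paper's, but your concrete choice of improved section is wrong, and with it the factorization in (1) would not come out. Replacing $f_{\cD,s,p}$ by the section $h^0(\chi)$ of \S 3 strips the \emph{entire} modified Euler factor at $p$: by \propref{prop:21} the integral against $h^0(\chi)$ equals, up to explicit constants, $L\bigl(\tfrac12+s,\pi_{1,p}\times\pi_{2,p}\times\pi_{3,p}\otimes\chi\bigr)/\vep\bigl(\tfrac12+s,\cdots\bigr)$, whereas the integral against $f_{\cD,s,p}$ produces $E_p\bigl(\tfrac12+s,\cdots\bigr)$ times the local $L$-factor (\propref{prop:13}); the ratio of the two is $E_p\cdot\vep$, which involves the full eight-dimensional local factor redistributed between numerator and denominator, not merely the two linear factors $(1-\bfa_2/(\bfa_1\bfa_3))(1-\bfa_3/(\bfa_1\bfa_2))$. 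Consequently (2) would not produce the stated $\cE^\dagger$. Moreover $h^0(\chi)$ has trivial nebentypus at $p$ and constant diagonal Schwartz data, so the pull-back would neither carry the $K_0^{(1)}(p^{2n})$-equivariance of \propref{prop:13} needed to land in the ordinary family with nebentypus $\omega_i^{-1}$, nor would its degenerate Whittaker coefficients retain the dependence $(\chi\omega_1)(2b_{23})$ that makes the improved object a genuine two-variable Iwasawa function. The paper instead keeps the diagonal functions $\widehat{\bbI_{p\Zp}}$ untouched and only replaces the off-diagonal $\widehat{\varphi_{\chi\omega_i}}$ by $\bbI_{\Zp}$ for $i=2,3$ (resp.\ for all $i$), which is legitimate precisely because $\chi\omega_2,\chi\omega_3$ (resp.\ all $\chi\omega_i$) are unramified along the subvariety $y=z=s$ (resp.\ $x=y=z=s$); rerunning \propref{prop:11} with these sections multiplies $Z^*_p$ by exactly $\prod_{i=2,3}L\bigl(\tfrac12-s,\chi^{-1}\mu_1^{-1}\mu_i^{-1}\nu_{5-i}^{-1}\bigr)$, resp.\ one further such $L$-factor, i.e.\ by the reciprocal of the desired linear factors.

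Two further points. Part (4) cannot be ``read off from (2) at $s=0$'': in Case (i) one has $\cE^\dagger(0)=0$ (since $\al_2^{-1}\al_3^{-1}=\al_1$ and $\al_1^2=1$ force $(1-\al_1\al_2^{-1}\al_3^{-1}p^{s})^2$ to vanish at $s=0$), so the interpolation of $L_p^\dagger$ degenerates there and one needs a separate interpolation formula for $L_p^{\dagger\dagger}$ itself, coming from the fully unramified off-diagonal section. Also, your derivation of $L_p^\dagger(0,s)\equiv 0$ from the functional equation requires that the prefactor $(1-\bfa_2(s)/(\al_1\bfa_3(s)))(1-\bfa_3(s)/(\al_1\bfa_2(s)))$ not vanish identically in $s$, which can fail (e.g.\ $\bdsf_2=\bdsf_3$ and $\al_1=1$); the paper instead deduces the vanishing from the interpolation formula (2) together with the sign of the complex functional equation at weights $k>2$.
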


\begin{proof}
The construction of these improved $L$-functions are similar to that of $L_{\bdsF,(a)}$ except that we need to replace the $\Lam_4$-adic modular form $\cG^{(a)}_{\ul{\chi}}$ in \subsecref{SS:6.5} with \emph{improved} ones. To do so, we have to go back to \subsecref{ssec:61} and modify the $p$-adic section $f_{\cD,s,p}$ used in the construction of the Siegel Eisenstein series $E_\A(g,f^{[k,\lam]}_{\cD,s,N})$.  In the notation of  \defref{def:psection}, for a datum $\cD=(\chi,\om_1,\om_2,\om_3)$ of characters of $\Zp^\x$ and a Bruhat-Schwartz function $\varphi_1\in\cS(\Qp)$, we modify the definition of Bruhat-Schwartz functions defined in \eqref{E:Phi} by 
\[\Phi_\cD(\varphi_1)\left(\begin{pmatrix}
u_1&x_3&x_2\\
x_3&u_2&x_1\\
x_2&x_1&u_3
\end{pmatrix}\right)=\prod_{i=1}^3\phi_i(u_i)\varphi_i(x_i), \]
where  
\begin{align*}
&\phi_1=\phi_2=\phi_3=\wh\bbI_{p\Zp}, & 
&\varphi_2=\varphi_3=\bbI_{\Zp}.
\end{align*}
Define the modified Bruhat-Schwartz functions by 
\begin{align*}
\Phi^\imp_\cD&=\Phi^{}_\cD(\wh\varphi_{\chi\om_1}), & 
\Phi^{\imp\imp}_\cD&=\Phi^{}_\cD(\bbI_{\Zp}).
\end{align*}
Following {Definition \ref{def:cell}}, we define the modified $p$-adic section $f^\bullet_{\cD,s,p}:=f_{\Phi^\bullet_\cD}(\chi\hat\om\boldsymbol{\al}_{\Qp}^s)$ for $\bullet\in\stt{\imp,\imp\imp}$. 
Then the local degenerate Whittaker functions for these modified $p$-adic sections are given by 
\begin{align*}
\cW_B(f^\imp_{\cD,s,p})&=(\chi\om_1)(2b_{23})\bbI_{\Xi_p^\imp}(B), & 
\cW_B(f^{\imp\imp}_{\cD,s,p})&=\bbI_{\Xi_p^{\imp\imp}}(B),
\end{align*}
for $B=(b_{ij})\in\Sym_3(\Q_p)$, where
\begin{align*}
\Xi_p^{\imp\imp}&:=\stt{(b_{ij})\in {\Sym_3^*(\Zp)}\;|\; b_{11},b_{22},b_{33}\in p\Zp}, &
\Xi_p^\imp&:=\stt{(b_{ij})\in \Xi_p^{\imp\imp}\;|\; 2b_{23}\in \Zp^\x}. 
\end{align*}
Notation is as in \subsecref{ssec:61}.  With the table of local Whittaker coefficients outside $p$ in \cite[page 210]{LR20MAnn}, one can apply the same argument in \cite[\S 5.2]{LiuZheng20JIMJ} to obtain two power series 
$\cG^\imp(T,X)\in\Zp\powerseries{T,X}\powerseries{q_1,q_2,q_3}$ and $\cG^{\imp\imp}(T)\in \Zp\powerseries{T}\powerseries{q_1,q_2,q_3}$ such that
%With the preparation above we define the power series 
%\begin{align*}
%\cG^\imp(T,X)&=\sum_{B=(b_{ij})\in T_3^+\cap\Xi^\imp_p}\Dmd{2b_{23}}_T\Dmd{2b_{23}}_X^{-1}\cF_B^{(2)}(X,T,T,T)\cdot q_1^{b_{11}}q_2^{b_{22}}q_3^{b_{33}}\in \Zp\powerseries{T,X}\powerseries{q_1,q_2,q_3}, \\
%\cG^{\imp\imp}(T)&=\sum_{B\in T_3^+\cap\Xi_p^{\imp\imp}}\cF_B^{(2)}(T,T,T,T)\cdot q_1^{b_{11}}q_2^{b_{22}}q_3^{b_{33}}\in \Zp\powerseries{T}\powerseries{q_1,q_2,q_3}. 
%\end{align*}
for arithmetic points $(Q,P)$ with $k_Q=2$ we have 
\begin{align*}
\cG^\imp(Q,P)&=e_\ord\bfE_{\cD^\imp,N}^{[k_P,r,\lam]}(\tau,f^\imp_{\cD^\imp,s,N})|_{s=0}, &
\cG^{\imp\imp}(P)&=e_\ord\bfE_{\cD^{\imp\imp},N}^{[k_P,r,\lam]}(\tau,f^{\imp\imp}_{\cD^{\imp\imp},s,N})|_{s=0}
\end{align*}
with $\lam=(0,0,0)$ and $r=\frac{k_P}{2}-1$, where we have written 
\begin{align*}
\cD^\imp&:=(\ep_P\Om^{2-k_P},\ep_Q^{-1}\Om^{k_Q-2},\ep_P^{-1}\Om^{k_P-2},\ep_P^{-1}\Om^{k_P-2}), \\ 
%f^\imp_{\cD_1,s,N}&:=f^{[k_P,\lam]}_{s,\infty}\ot f^\imp_{\cD_1,s}\ot f_{s,N}\ot _{\ell\ndivides Np} f^0_{s,\ell}; \\
\cD^{\imp\imp}&:=(\ep_P\Om^{2-k_P},\ep_P^{-1}\Om^{k_P-2},\ep_P^{-1}\Om^{k_P-2},\ep_P^{-1}\Om^{k_P-2}), \\ 
f^\bullet_{\cD^\bullet,s,N}&:=f^{[k_P,\lam]}_{s,\infty}\ot f^\bullet_{\cD^\bullet,s,p}\ot f_{s,N}^*\ot _{\ell\ndivides Np} f^0_{s,\ell}. 
\end{align*}
Let $\Lam_T:=\Zp\powerseries{T}$. As in Proposition \ref{P:GLam.E} we see that
\begin{align*}\cG^\imp(T,X)&\in \bfM^\ord(N,\Om^{2},\Lam_X)\wh\ot_{\Zp}  \bfM^\ord(N,\Om^{2},\Lam_T)\ot_{\Lam_T}  \bfM^\ord(N,\Om^{2},\Lam_T);\\
\cG^{\imp\imp}(T)&\in \bfM^\ord(N,\Om^{2},\Lam_T)\ot_{\Lam_T}  \bfM^\ord(N,\Om^{2},\Lam_T)\ot_{\Lam_T}  \bfM^\ord(N,\Om^{2},\Lam_T),\end{align*}
where $\bfM^\Ord(N,\Om^2,\Lam_T)$ is the space of ordinary $\Lam_T$-adic modular forms of level $N$ and character $\Om^2$. Let $\wtd\bfone_{\bdsf_i}$ be idempotent associated with $\bdsf_i$ in the Hecke algebra acting on the ordinary $\Lam$-adic module forms (not only cusp froms). Choose an element $H_i$ with $H_i(\bfu^2-1)\neq 0$ in the congruence ideal of $\bdsf_i$ among ordinary $\Lam$-adic modula forms (or rather the ideal generated by the denominators of $\wtd\bfone_{\bdsf_i}$. We define the improved $p$-adic $L$-functions $L^\imp_{\bdsF,(2)}(X,T)$ and $L^{\imp\imp}_{\bdsF,(2)}(T)$ as the first Fourier coefficients of  
\begin{align*}
&\wtd\bfone_{\bdsf_1}\ot\wtd\bfone_{\bdsf_2}\ot\wtd\bfone_{\bdsf_3}(\Tr_{N/N_1}\ot\Tr_{N/N_2}\ot \Tr_{N/N_3}(\cG^\imp))\in\Zp\powerseries{X,T}[\frac{1}{H^{\imp}}]; \\
&\wtd\bfone_{\bdsf_1}\ot\wtd\bfone_{\bdsf_2}\ot\wtd\bfone_{\bdsf_3}(\Tr_{N/N_1}\ot\Tr_{N/N_2}\ot \Tr_{N/N_3}(\cG^{\imp\imp}))\in\Zp\powerseries{T}[\frac{1}{H^{\imp\imp}}]
\end{align*}
respectively, where $H^{\imp}=t_1H_1(X)t_2H_2t_3H_3(T)$ and $H^{\imp\imp}=t_1H_1t_2H_2t_3H_3(T)$.
Define 
\begin{align*}
L_p^\imp(x,s)&:=c_p\cdot\Dmd{N^-N^4}^{\frac{-x}{4}}L^\imp_{\bdsF,(2)}(\bfu^{x+2}-1,\bfu^{s+2}-1), & 
L^{\imp\imp}_p(s)&:=c_p\cdot\Dmd{N^-N^4}^{\frac{-s}{4}}L^{\imp\imp}_{\bdsF,(2)}(\bfu^{s+2}-1). 
\end{align*}

In view of the proof of \lmref{lem:Garrett}, to prove the interpolation formulae for $L_p^\imp(x,s)$ and $L_p^{\imp\imp}(s)$, we need to compute the quantity $Z_p^*(f^\bullet_{\cD,s,p})$ defined in \eqref{E:padiczeta} attached to our modified $p$-adic sections $f_{\cD,s,p}^\bullet$ as well as  a subrepresentation $\pi_i$ of the induced representation $I(\mu_i,\nu_i)$ of $\GL_2(\Qp)$ with $\mu_i$ unramified for $i=1,2,3$. 
{Put $\ome_i=\mu_i\nu_i$. Applying Corollary \ref{cor:general2}}, we find that whenever $\chi\om_2$ and $\chi\om_3$ are unramified, 
\begin{align*}
Z^*_p(f^\imp_{\cD,s,p})&=Z^*_p({f_{\cD,s,p}})\prod_{i=2,3}L\Big(\frac{1}{2}-s,\chi^{-1}\mu_1^{-1}\mu_i^{-1}\nu_{5-i}^{-1}\Big)\\
%&=\chi(-1)L(s,\pi_1\x\pi_2\x\pi_3\ot\chi)^{-1}\gamma(s,\pi_1\ot \chi\mu_2\mu_3)^{-1}L(s,\chi\mu_1\mu_2\nu_3)L(s,\chi\mu_1\nu_2\mu_3)\\&=\frac{Z_p^*(s)}{L(1-s,\chi^{-1}\mu_1^{-1}\mu_2^{-1}\nu_3^{-1})L(1-s,\chi^{-1}\mu_1^{-1}\nu_2^{-1}\mu_3^{-1})}
\intertext{and that when $\chi\om_i$ are unramified for $i=1,2,3$, }
Z_p^*(f^{\imp\imp}_{\cD,s,p})&=Z^*_p(f^\imp_{\cD,s,p})L\Big(\frac{1}{2}-s,\chi^{-1}\nu_1^{-1}\mu_2^{-1}\mu_3^{-1}\Big). 
%\\&=\frac{Z_p^*(s)}{L(1-s,\chi^{-1}\nu_1^{-1}\mu_2^{-1}\mu_3^{-1})L(1-s,\chi^{-1}\mu_1^{-1}\mu_2^{-1}\nu_3^{-1})L(1-s,\chi^{-1}\mu_1^{-1}\nu_2^{-1}\mu_3^{-1})}.
\end{align*}
From the proof of \thmref{P:interpolation} we can deduce the interpolation formulae for the improved $L$-functions. 
The formula for $\cale^+(s)$ follows from that for $Z^*_p(f_{\cD,s,p})$ proved in Proposition \ref{prop:13} and Remark \ref{rem:localfactor}. 
Observe that if $\pi_i\simeq\St\otimes\chi_i$ for $i=1,2,3$, i.e., $\mu_i=\chi_i\Abs_{\Q_p}^{1/2}$, then 
\[E_p\Big(\frac{1}{2}+s,\pi_1\times \pi_2\times \pi_3\Big)=-(\chi_1\chi_2\chi_3)(p)p^{1+s}(1-(\chi_1\chi_2\chi_3)(p)p^s)^3 , \]
from which (\ref{L:01.t4}) follows. 
 
Whenever $k>2$, the central sign for $L(s,\pi_{f_1}\times\pi_{\bdsf_{2,k}}\times\pi_{\bdsf_{3,k}})$ is $\vep_p(\bdsE)$. 
Therefore if $\vep_p(\bdsE)=-1$, then $L_p^\imp(0,s)=0$ by (\ref{L:01.t2}), which implies that $\frac{\partial L_p^\imp}{\partial x}(0,0)={\displaystyle\lim_{s\to 0}}\frac{L_p^\imp(s,s)}{s}$. 
The second equality of (\ref{L:01.t1}) gives the expression of ${\displaystyle\lim_{s\to 0}}\frac{L_p^\imp(s,s)}{s}$. 
We write 
\[L_p(x,y,z,s)=\sum_{j=0}^\infty A_j(x,y,z)\Big(s-\frac{x+y+z}{2}\Big)^j. \]
%Since $L_p(0,0,0,s)=L_p(\bdsE,s+2)$, 
If $i\leq r:=\ord_{s=2}L_p(\bdsE,s)$, then 
\begin{align}
r&=\min\{j\;|\;A_j(0,0,0)\neq 0\}, & 
\lim_{s\to 2}\frac{L_p(\bdsE,s)}{(s-2)^i}&=A_i(0,0,0). \label{tag:order}
\end{align}
%Suppose that $\vep(\bdsE)=-1$. Then $L_p^\imp(0,0)=0$ by the interpolation formula (\ref{L:01.t2}).  
Letting $y=z=s=0$, we see by (\ref{L:01.t1}) that the power series 
\[\sum_{j=0}^\infty A_j(x,0,0)\Big(-\frac{x}{2}\Big)^j=(1-\al_1\bfa_1(x)^{-1})^2L_p^\imp(x,0) \]
has at least a double zero at $x=0$. 
%In particular, $A_0(0,0,0)=0$. 
If $\vep_p(\bdsE)=-1$, then since $A_{2n}(x,y,z)=0$ for all non-negative integers $n$ by the functional equation \eqref{E:fcneq}, we get $A_1(0,0,0)=0$ and $r\geq 3$. 
%Finally, 
%For example, in the setting of (2), we let $(\pi_1,\pi_2,\pi_3)=(\pi(\mu_1,\nu_1),(\pi(\mu_2,\nu_2),(\pi(\mu_3,\nu_3))$ be the $p$-component of the unitary cuspidal automorphic representation attached to $f_1$, $\bdsf_{2,k}$ and $\bdsf_{3,k}$. Since $k\con 2\pmod{p-1}$, we have $\mu_1\nu_1=1$, $\mu_2\nu_2=\mu_3\nu_3=1$ and $\chi=1$. Thus 
%\beq
%\begin{aligned}
%Z^{\imp *}_\frakp(1/2) &=L(1/2,\mu_1\nu_2\nu_3)^{-1}L(1-1/2,\nu_1^{-1}\mu_2^{-1}\mu_3^{-1})^{-1}\e(1/2,\pi_1\ot\mu_2\mu_3)^{-1}\\
%&=(1-\mu_1(p)\mu_2^{-1}\mu_3^{-1}(p)\abs{p}^\onehalf)^2\mu_2\mu_3(p^{-1})\e(1/2,\pi_1)^{-1}\\
%&=\cE^\heartsuit(k-2).
%\end{aligned}
%\eeq
%Here we used $\al_1=\mu_1(p)\abs{p}^{-\onehalf}$ and $\bfa_i(s)=\mu_i(p)\abs{p}^{\frac{1-k}{2}}$ for $i=2,3$.
\end{proof}

\subsection{The proof of \thmref{T:trivial}(1)}

We discuss Case (i). 
Then $\vep_p(\bdsE)=-\vep(\bdsE)$ by Remark \ref{rem:sign}. 
First suppose that $\vep(\bdsE)=1$. 
The functional equation \eqref{E:fcneq} allows us to write
\[L_p(x,y,z,s)=A_1(x,y,z)\Big(s-\frac{x+y+z}{2}\Big)+A_3(x,y,z)\Big(s-\frac{x+y+z}{2}\Big)^3+\cdots \]
The proof of \lmref{L:01.t}(\ref{L:01.t3}) gives $A_1(0,0,0)=0$.  
From (\ref{tag:order}) and \lmref{L:01.t}(\ref{L:01.t4}) the formula boils down to 
\[A_3(0,0,0)=-8\ell_1\ell_2\ell_3 L^{\imp\imp}_p(0). \]

If we denote the degree two term of $A_1(x,y,z)$ by $ax^2+by^2+cz^2+dxy+eyz+fxz$, then the degree three term of $L_p(x,s,s,s)$ is given by 
\[L^{(3)}(x,s)=\{ax^2+(b+c+e)s^2+(d+f)xs\}(-x/2)+A_3(0,0,0)(-x/2)^3.\]
On the other hand, from \lmref{L:01.t}(\ref{L:01.t1}), (\ref{L:01.t3}) we find that 
\begin{align*}
L^{(3)}(x,s)&=(\ell_1 x+(\ell_3-\ell_2)s)\cdot (\ell_1 x+(\ell_2-\ell_3)s)x\cdot \lim_{x\to 0}x^{-1}L_p^\imp(x,0)\\
&=(\ell_1^2x^2-(\ell_2-\ell_3)^2s^2)x\cdot (\ell_2+\ell_3-\ell_1)L^{\imp\imp}_p(0).
\end{align*}
Comparing the coefficients of $x^2s,\,xs^2$ and $x^3$, we obtain the relations
\begin{align*}
d+f&=0, &
b+c+e&=2(\ell_2-\ell_3)^2(\ell_2+\ell_3-\ell_1)L^{\imp\imp}_p(0), &
4a+A_3(0,0,0)&=-8\ell_1^2(\ell_2+\ell_3-\ell_1)L_p^{\imp\imp}(0).
\end{align*}
By symmetry we get 
\begin{align*}
&d+e=0, & &e+f=0; \\ 
&a+c+f=2(\ell_1-\ell_3)^2(\ell_1+\ell_3-\ell_2)L^{\imp\imp}_p(0), & &a+b+d=2(\ell_1-\ell_2)^2(\ell_1+\ell_2-\ell_3)L^{\imp\imp}_p(0).
\end{align*}
From these equations we conclude that
$d=e=f=0$ and 
\begin{align*}
&a=\{(\ell_1-\ell_2)^2(\ell_1+\ell_2-\ell_3)+(\ell_1-\ell_3)^2(\ell_1+\ell_3-\ell_2)-(\ell_2-\ell_3)^2(\ell_2+\ell_3-\ell_1)\}L^{\imp\imp}_p(0), \\
&A_3(0,0,0)=-8\ell_1^2(\ell_2+\ell_3-\ell_1)L_p^{\imp\imp}(0)-4a=-8\ell_1\ell_2\ell_3L_p^{\imp\imp}(0).
\end{align*}

Next assume that $\vep(\bdsE)=-1$. 
Then $\vep_p(\bdsE)=1$. 
By \eqref{E:fcneq} and \lmref{L:01.t}(\ref{L:01.t1})  
\[\sum_{n=0}^\infty A_{2n}(x,s,s)\Big(\frac{s}{2}\Big)^{2n}=\biggl(1-\frac{\bfa_2(s)}{\bfa_1(x)\bfa_3(s)}\biggl)\biggl(1-\frac{\bfa_3(s)}{\bfa_1(x)\bfa_2(s)}\biggl)L_p^\dagger(x,s). \]
Since $L_p^\dagger(0,0)=0$, every term in the right hand side has degree at least three. 
In particular, the constant term $A_0(0,0,0)$ of the left hand side is zero. 
If we denote the degree two term of $A_0(x,y,z)$ by $\alp x^2+\bet y^2+\gam z^2+\xi xy+\eta yz+\zet xz$, then the degree two term of the left hand side is 
\[\alp x^2+(\bet+\gam+\eta)s^2+(\xi+\zet)xs+A_2(0,0,0)(x/2)^2. \]
It is zero, and so by symmetry we get  
\begin{align*}
A_2(0,0,0)&=-4\alp, & 
\bet+\gam+\eta&=0, & 
\xi+\zet&=0; \\
A_2(0,0,0)&=-4\bet, & 
\alp+\gam+\zet&=0, & 
\xi+\eta&=0; \\
A_2(0,0,0)&=-4\gam, & 
\alp+\bet+\xi&=0, & 
\eta+\zet&=0. 
\end{align*}
We arrive at $\xi=\eta=\zeta=\alp=\bet=\gam=A_2(0,0,0)=0$. 
Hence $\ord_{s=2}L_p(\bdsE,s)\geq 4$. 

\subsection{The proof of \thmref{T:trivial}(2)}

We discuss Case (ii). 
Then $\vep_p(\bdsE)=\vep(\bdsE)$ by Remark \ref{rem:sign}. 
If $\vep(\bdsE)=-1$, then $\ord_{s=2}L_p(\bdsE,s)\geq 3$ by \lmref{L:01.t}(\ref{L:01.t3}), and both sides of the declared identity are zero. 
We will consider the case $\vep(\bdsE)=1$, \ie $\Sigma^-$ has odd cardinality. 
Unlike Case (i) we cannot apply \lmref{L:01.t}(\ref{L:01.t3}). 
Our proof relies on the three-variable $p$-adic triple product $L$-function in the balanced case constructed in \cite[\S 4]{Hsieh_triple}. 

We will freely use the notation in \cite[\S 4]{Hsieh_triple}. Let $D$ be the definite quaternion algebra over $\Q$ of discriminant $N^-$ and $\bfS^D(N,\Lam)$ the space of $\Lam$-adic modular forms on $D^\x$ defined in \cite[Definition 4.1]{Hsieh_triple}. Let $\bdsf_i^D\in \bfS^D(N,\Lam[t_i^{-1}])$ be a Jacquet-Langlands lift of $\bdsf_i$ in the sense of \cite[\S 4.5]{Hsieh_triple}. 
Since we do not assume Hypothesis (CR,$\Sigma^-$) of \cite[\S 1.4]{Hsieh_triple}, we cannot choose $\bdsf_i^D$ to be a primitive Jacquet-Langlands lift as in \cite[Theorem 4.5]{Hsieh_triple}. 
Nonetheless, $\bdsf_i^D$ can be chosen so that $\bdsf_i^D(\bfu^2-1)$ is a non-zero Jacquet-Langlnads lift of $f_i$. Replacing the triple $\bdsF^D=(\bdsf_1^D,\bdsf_2^D,\bdsf_3^D)$ with the well-chosen test vectors in \cite[Definition 4.8]{Hsieh_triple}, we can associate to $\bdsF^D$ the three-variable \emph{theta element} $\Theta_{\bdsF^D}(X_1,X_2,X_3)$ in \emph{loc.cit}. 
%It interpolates a square-root of the algebraic part of central values of triple product $L$-functions attached to $\bdsF^D$ (see \cite[Theorem 7.1]{Hsieh_triple} and Remark \ref{rem:11}). 
%Let $H_i:=\bfB_{N_i}(\bdsf_i^D,\bdsf_i^D)\in\Zp\powerseries{X_i}$, where $\bfB_{N_i}$ is the perfect pairing defined in \cite[Definition 4.3]{Hsieh_triple} and let $H_i(s):=H_i(\bfu^{s+2}-1)$. 
 Define an analytic function on $\cU^3\subset \Z_p^3$ by 
 \[\Theta(x,y,z)=\Theta_{\bdsF^D}(\bfu^{x+2}-1,\bfu^{y+2}-1,\bfu^{z+2}-1). \]
By the interpolation formula for $\Theta_{\bdsF^D}$ in \cite[Theorem 7.1]{Hsieh_triple} (see Remark \ref{rem:11}), we can find an analytic function $H(x,y,z)$ with $H(0,0,0)\neq 0$ such that \[H(x,y,z)\cdot \Theta(x,y,z)^2=L_p\Big(x,y,z,\frac{x+y+z}{2}\Big). \]
To proceed, we introduce two-variable \emph{improved} theta elements.
\def\elt{h}
\begin{lm}[Improved theta elements]\label{L:02}
There exist analytic functions $\Theta_2^\ddagger(x,z)$, $\Theta_3^\ddagger(x,y)$ such that 
\begin{align*}
\Theta_2^\ddagger(0,0)&=-\Theta_3^\ddagger(0,0), \\
\Theta(x,x+z,z)&=\left(1-\frac{\bfa_2(x+z)}{\bfa_1(x)\bfa_3(z)}\right)\Theta^\ddagger_2(x,z), &
\Theta(x,y,x+y)&=\left(1-\frac{\bfa_3(x+y)}{\bfa_1(x)\bfa_2(y)}\right)\Theta^\ddagger_3(x,y).
\end{align*} 
%-p\al^{-2}(1-\al^{-2})^2\cdot  \frac{L(f\ot g\ot h,2)}{\Omega_f\Omega_g\Omega_h},\]
%where $\al_i=\bfa_i(0)$. $i=1,2,3$.
\end{lm}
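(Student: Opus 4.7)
The plan is to mimic the strategy of \lmref{L:01.t} for improved $p$-adic $L$-functions, but applied to the quaternionic theta element $\Theta_{\bdsF^D}$ constructed in \cite[\S 4.7]{Hsieh_triple}. Recall that $\Theta_{\bdsF^D}$ is defined via an explicit trilinear form on quaternionic Hida families paired against specific local test vectors at $p$ involving the ordinary $\bfU_p$-eigenvectors. On the hyperplane $y = x+z$, where the specialized weights saturate the balanced condition ($k_2 = k_1+k_3-2$), the local $p$-adic zeta integral acquires the Euler-like factor $1 - \bfa_2(x+z)/(\bfa_1(x)\bfa_3(z))$ as the ratio between the $\bfU_p$-ordinary test vector for the second factor and a more naive choice; this is exactly the local phenomenon responsible for the trivial zero, and is parallel to the appearance of the factor $(1-\bfa_2(s)/(\bfa_1(x)\bfa_3(s)))(1-\bfa_3(s)/(\bfa_1(x)\bfa_2(s)))$ in the first identity of \lmref{L:01.t}(\ref{L:01.t1}).

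First, I would define $\Theta_2^\ddagger(x,z)$ by replacing the ordinary test vector for the second Hida factor with its non-$p$-stabilized (newform-level) analogue along the restriction to $y = x+z$. A direct local zeta integral computation at $p$---parallel to \propref{prop:11} with modified Bruhat--Schwartz data as in the proof of \lmref{L:01.t}---then verifies the factorization
\[\Theta(x, x+z, z) = \left(1 - \frac{\bfa_2(x+z)}{\bfa_1(x)\bfa_3(z)}\right)\Theta_2^\ddagger(x,z). \]
The construction of $\Theta_3^\ddagger(x,y)$ is entirely symmetric, obtained by swapping the roles of indices $2$ and $3$ in the modification. Analyticity on $\cU^2$ follows from the analyticity of the original theta element together with the non-vanishing of the denominators in the factorization on an open neighborhood of the origin.

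The main obstacle, and the most delicate point, is the sign relation $\Theta_2^\ddagger(0,0) = -\Theta_3^\ddagger(0,0)$. At the origin both improved theta values arise from the same quaternionic trilinear pairing applied to triples whose $p$-adic components differ only in which slot (the second or the third) carries the non-$p$-stabilized new-vector. In Case (ii), $f_1$ is Steinberg at $p$ and the Atkin--Lehner involution $W_p$ acts on the new-vector of $\pi_{f_1,p}$ by $-\alp_1 = \mp 1$; moreover, the relation $\alp_2 = \alp_1\alp_3$ implies that interchanging the second and third slots of the modified trilinear form corresponds, on the common first factor, to the action of $W_p$. Tracking this Atkin--Lehner sign through the explicit normalizations of \cite[\S 4.7]{Hsieh_triple} produces the claimed sign flip. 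Verifying this clean comparison carefully, and in particular reconciling all the local normalization conventions used in the construction of $\Theta_{\bdsF^D}$, is the technical heart of the argument.
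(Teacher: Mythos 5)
Your high-level ideas are the right ones (remove the $p$-stabilization from one factor to kill the vanishing Euler factor; get the sign from Atkin--Lehner acting by $-\alp_1$ on the Steinberg vector together with $\al_2=\al_1\al_3$), but there are two concrete problems. First, your justification of analyticity is circular at exactly the point that matters: the factor $1-\bfa_2(x+z)/(\bfa_1(x)\bfa_3(z))$ \emph{vanishes} at $(x,z)=(0,0)$, since $\bfa_2(0)=\al_2=\al_1\al_3=\bfa_1(0)\bfa_3(0)$ — this is the trivial zero. So you cannot appeal to ``non-vanishing of the denominators on a neighborhood of the origin,'' and you cannot define $\Theta_2^\ddagger$ by division and still extract the value $\Theta_2^\ddagger(0,0)$, which is what the lemma is for. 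The paper resolves this by constructing $\Theta_2^\ddagger$ \emph{independently} as an honest Iwasawa function: it replaces the twisted diagonal cycle $\Delta_n$ of \cite[Definition 4.6]{Hsieh_triple} by an improved diagonal cycle $\Delta_n^\ddagger=\sum_{[a]}\sum_b[(a\bigl(\begin{smallmatrix}\uf_p^n&b\\0&1\end{smallmatrix}\bigr),a\tau_{p^n},a)]$, regularizes via $\prolim_n(\bfU_p^{-n}\ot\bfU_p^{-n}\ot 1)e(\Delta_n^\ddagger)$, and pairs with $\bdsF^D$ restricted to $X_2=(1+X_1)(1+X_3)-1$; analyticity is then built in, and $\Theta_2^\ddagger(0,0)$ is an explicit finite sum $\al_2^{-1}\sum_{[a]}\bdsf_0^D(a)\bdsg_0^D(a\tau_p)\bdsh_0^D(a)$.

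Second, the mechanism you propose for the factorization — a local zeta integral at $p$ ``parallel to \propref{prop:11} with modified Bruhat--Schwartz data'' — is the method used for the Eisenstein-based improved $L$-functions in \lmref{L:01.t}, but it does not apply as stated to $\Theta_{\bdsF^D}$, which is not defined by Siegel--Eisenstein pullbacks but by explicit period sums over $X_0(Np^n)$ on the definite quaternion algebra. To run your argument you would first have to relate $\Theta$ to local trilinear integrals (Ichino's formula), a substantial detour. The paper instead proves \eqref{tag:81} by a direct manipulation of the defining double sum: a change of variables $a\mapsto a\tau_{p^n}$, the $\bfU_p$-eigenrelation \eqref{E:Up.8} applied to $\bdsf_{Q_1}^D$ and $\bdsh_{Q_3}^D$, and a telescoping of the sum over $c\in(\Zp/p^n\Zp)^\x$ into a difference of sums over $b\in\Zp/p^n\Zp$ and $b\in\Zp/p^{n-1}\Zp$, which is precisely where the factor $1-\bfa_2(\Qx\Qz)/(\bfa_1(\Qx)\bfa_3(\Qz))$ emerges. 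Your sign argument for $\Theta_2^\ddagger(0,0)=-\Theta_3^\ddagger(0,0)$ is essentially the paper's (substitute $a\mapsto a\tau_p$, use $\bdsf_0^D(a\tau_p)=-\al_1\bdsf_0^D(a)$ and $\al_1\al_3/\al_2=1$), and that part would go through once the objects are correctly constructed.
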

\begin{proof}
The idea of the construction of improved theta elements is close to \cite[Proposition 8.3]{Hsieh_triple}. The proofs are based on elementary but tedious calculations. We give a sketch here.  
For every integer $n$, let $R_n$ be the Eichler order of level $p^nN/N^-$ in $D$ and let $X_0(p^nN)=D^\x\bksl \wh D^\x/\wh R_n^\x$, where  $\wh D=D\otimes\wh\Q$ and $\wh R_n=R_n\otimes\wh\Z$. 
Through an isomorphism $R_0\ot\Zp\iso \Mat_2(\Zp)$ we define 
\[U_1(p^n):=\stt{g\in \wh R_n\;\biggl|\; g_p\con\pMX{*}{*}{0}{1}\pmod{p^n}}.\]
Recall that $\bfa_i(Q)=\bfa(p,\bdsf_{i,Q})$ and that $\uf_p\in\wh\Q^\x$ is the element with $\uf_{p,p}=p$ and $\uf_{p,\ell}=1$ for $\ell\neq p$. For all but finitely many arithmetic points $Q$ with $k_Q=2$, 
%Let $(\bdsf^D,\bdsg^D,\bdsh^D)$ be a Jacquet-Langlands lift of $(\bdsf,\bdsg,\bdsh)$. Now we let $(\Qx,\Qy,\Qz)$ denote arithmetic points of the weight $k_{\Qx}=k_{\Qy}=k_{\Qz}=2$ and finite parts $(\ep_\Qx,\ep_\Qy,\ep_\Qz)$. Choose an integer $n$ such that $(\ep_\Qx,\ep_\Qy,\ep_\Qz)$ viewed as Hecke characters on $\A^\x$ factor through $1+p^n\wh \Z$. Let $(\bdsf_\Qx^D,\bdsg_\Qy^D,\bdsh_\Qz^D)$ be the specialization of $(\bdsf^D,\bdsg^D,\bdsh^D)$ at $(x,y,z)$. 
the specialization $\bdsf_{i,Q}^D:D^\x\bksl \wh D^\x/U_1(p^n)\to\Cp$ is a $p$-stabilized form on $\wh D^\x$ with the same Hecke eigenvalues with $\bdsf_{i,Q}$ and the central character $\ep_Q^{-1}:\Q^\x\bksl \wh \Q^\x/(1+p^n\wh\Z)^\x\to\mu_{p^\infty}$ for any sufficiently large $n$. In particular, $\bdsf^D_{i,Q}$ is a $\bfU_p$-eigenform with eigenvalue $\bfa_i(Q)$. Namely, 
\beq\label{E:Up.8}\bfU_p\bdsf^D_{i,Q}(\elt):=\sum_{b\in\Zp/p^n\Zp}\bdsf^D_{i,Q}\biggl(\elt\pMX{\uf_p^n}{b}{0}{1}\biggl)=\bfa_i(Q)^n\bdsf^D_{i,Q}(\elt)\quad (\elt\in\wh D^\x).\eeq
In what follows, we shall write $(\bdsf^D,\bdsg^D,\bdsh^D)=(\bdsf_1^D,\bdsf_2^D,\bdsf_3^D)$. Let $\rmN_D:D\to\Q$ be the reduced norm. 
Put $\tau_{p^n}=\pMX{0
}{1}{-\uf_p^n}{0}\in \GL_2(\Qp)\subset \wh  D^\x$. By definition,
\begin{multline}\label{E:9.t}
\Theta(\Qx,\Qy,\Qz)=\bfa_1(\Qx)^{-n}\bfa_2(\Qy)^{-n}\bfa_3(\Qz)^{-n}\\
\sum_{[\elt]\in X_0(p^nN)}\sum_{\begin{smallmatrix} b\in\Zp/p^n\Zp \\ c\in (\Zp/p^n\Zp)^\x\end{smallmatrix}}\bdsf_\Qx^D\biggl(\elt\pMX{\uf_p^n}{b}{0}{1}\biggl)\bdsg_{\Qy}^D\biggl(\elt\pMX{\uf_p^n}{b+c}{0}{1}\biggl)\bdsh_{\Qz}^D(\elt\tau_{p^n})\ep^\onehalf_{\Qx^{}\Qy^{} Q_3^{-1}}(c)\ep^\onehalf_{\Qx^{}\Qy^{}\Qz^{}}(\rmN_D(\elt)). 
\end{multline}
We replace the twisted diagonal cycle $\Delta_n$ in \cite[Definition 4.6]{Hsieh_triple} by the \emph{improved} diagonal cycle
\[\Delta^\ddagger_n:=\sum_{[\elt]\in X_0(Np^n)}\sum_{b\in\Zp/p^n\Zp}\biggl[\biggl({h}\pMX{\uf_p^n}{b}{0}{1},\elt\tau_{p^n},\elt\biggl)\biggl]. \]
We can define the regularized improved diagonal cycle by 
\[\Delta_\infty^\ddagger:=\prolim_{n\to\infty}(\bfU_p^{-n}\ot \bfU_p^{-n}\ot 1)e_E(\Delta^\ddagger_n),\]
and the improved theta element
\[\Theta_2^\ddagger(X_1,X_3):=(\bdsF^{D})^*(\Delta^\ddagger_\infty)(X_1,(1+X_1)(1+X_3)-1,X_3)\in\Zp\powerseries{X_1,X_3}[t^{-1}].\]
for $t=t_1\cdot t_2((1+X_1)(1+X_3)-1)\cdot t_3$. Put $\Theta_2^\ddagger(x,z):=\Theta_2^\ddagger(\bfu^{x+2}-1,\bfu^{z+2}-1)$ for $(x,z)\in\cU^2$.
By definition and \eqref{E:Up.8}, for all but finitely many arithmetic points $(\Qx,\Qz)$ with $k_\Qx=k_\Qz=2$ 
\[\Theta_2^\ddagger(\Qx,\Qz)=\bfa_2(\Qx\Qz)^{-n}\sum_{[\elt]\in X_0(Np^n)}\bdsf_\Qx^D(\elt)\bdsg_{\Qx\Qz}^D(\elt\tau_{p^n})\bdsh_{\Qz}^D(\elt)\ep_{\Qx\Qz}(\rmN_D(\elt)).\]
The above expression holds for any $n$ such that $p^n$ is bigger than the conductors of $\ep_{\Qx}$ and $\ep_{\Qy}$. Likewise we can define $\Theta_3^\ddagger\in\Zp\powerseries{X_1,X_2}$ and $\Theta_3^\ddagger(x,y)$ with the interpolation property:
\[\Theta_3^\ddagger(\Qx,\Qy)=\bfa_3(\Qx\Qy)^{-n}\sum_{[\elt]\in X_0(Np^n)}\bdsf_\Qx^D(\elt\tau_{p^n})\bdsg_{\Qy}^D(\elt)\bdsh_{\Qx\Qy}^D(\elt)\ep_{\Qx\Qy}(\rmN_D(\elt)).\]
To see the first equation in the lemma, we note that 
\begin{align*}
\Theta^\ddagger_2(0,0)=&\al_2^{-1}\sum_{[\elt]\in X_0(Np)}\bdsf^D_0(\elt)\bdsg^D_0(\elt\tau_p)\bdsh^D_0(\elt), &
\Theta_3^\ddagger(0,0)=&\al_3^{-1}\sum_{[a]\in X_0(Np)}\bdsf^D_0(\elt)\bdsg^D_0(\elt)\bdsh^D_0(\elt\tau_p).
\end{align*}
Since $\bdsf^D_0$ is a newform that is special at $p$, $\bdsf_0^D(\elt\tau_p)=(-\al_1)\bdsf_0^D(\elt)$, and hence $\Theta_2^\ddagger(0,0)=-\Theta^\ddagger_3(0,0)$. 

To prove the last equation in the lemma, it suffices to verify the following equation 
\begin{align}
\Theta(\Qx,\Qx\Qz,\Qz)&=\biggl(1-\frac{\bfa_2(\Qx\Qz)}{\bfa_1(\Qx)\bfa_3(\Qz)}\biggl)\Theta_2^\ddagger(\Qx,\Qz) \label{tag:81}
%\Theta(\Qx,\Qy,\Qx\Qy)&=\biggl(1-\frac{\bfa_3(\Qx\Qy)}{\bfa_1(\Qx)\bfa_2(\Qy)}\biggl)\Theta_3^\ddagger(\Qx,\Qy)
\end{align}
for all but finitely many arithmetic poitns $(\Qx,\Qz)$ with $k_\Qx=k_\Qz=2$. 
%We will prove the first one. 
The formula for $\Theta^\ddagger_3$ can be done by a similar computation, so we leave it to the reader. Let $n$ be a sufficiently large integer. 
%Put \[f_1=\bdsf_\Qx^D,\quad g_{12}=\bdsg^D_{\Qx\Qz};\quad h_3=\bdsh_\Qz^D.\]
From \eqref{E:9.t}, we find that
\begin{align*}
&\bfa_1(\Qx)^n\bfa_2(\Qx\Qz)^n\bfa_3(\Qz)^np^{-n}\vol(\wh R_n^\x)\cdot \Theta(\Qx,\Qx\Qz,\Qz)\\
=&\int_{D^\x\bksl \wh D^\x}\rmd^\x\elt\,\sum_{c\in(\Zp/p^n\Zp)^\x}\bdsf_\Qx^D\biggl(\elt\pMX{1}{-c\uf_p^{-n}}{0}{1}\biggl)\bdsg^D_{\Qx\Qz}(\elt)\bdsh_\Qz^D\biggl(\elt\tau_{p^n}\pDII{1}{\uf_p^{-n}}\biggl)\ep_{\Qx}(c)\ep_{\Qx\Qz}(\rmN_D(\elt))\\
=&\int_{D^\x\bksl \wh D^\x}\rmd^\x \elt\,\sum_{c\in(\Zp/p^n\Zp)^\x}\bdsf_\Qx^D\biggl(\elt\tau_{p^n}\pMX{1}{-\uf_p^{-n}}{0}{c^{-1}}\biggl)\bdsg^D_{\Qx\Qz}(\elt\tau_{p^n})\bdsh_\Qz^D\biggl(\elt\pDII{1}{\uf_p^{-n}}\biggl)\ep_{\Qx\Qz}(\rmN_D(\elt)).
\end{align*} 
From \eqref{E:Up.8} and the equations \[\tau_{p^n}\pMX{1}{-\uf_p^{-n}}{0}{c^{-1}}=\pMX{\uf_p^n}{c^{-1}}{0}{1}\pMX{c^{-1}}{0}{-\uf_p^n}{1},\quad \ep_{Q_1}(\uf_p)=\ep_{Q_3}(\uf_p)=1,\] we find that the last integral equals 
\begin{align*}
&\int_{D^\x\bksl \wh D^\x}\rmd^\x \elt\,\sum_{c\in(\Zp/p^n\Zp)^\x}\bdsf_\Qx^D\biggl(\elt\pMX{\uf_p^n}{c}{0}{1}\biggl)\bdsg^D_{\Qx\Qz}(\elt\tau_{p^n})\bdsh_\Qz^D\biggl(\elt\pDII{\uf_p^n}{1}\biggl)\ep_{\Qx\Qz}(\rmN_D(\elt))\\
=&\int_{D^\x\bksl \wh D^\x}\rmd^\x \elt\, \bfa_1(Q_1)^n\cdot\biggl\{\bdsf_\Qx^D(\elt)-\bfa_1(Q_1)^{-1}\bdsf_\Qx^D\biggl(\elt\pDII{\uf_p}{1}\biggl)\biggl\}\bdsg^D_{\Qx\Qz}(\elt\tau_{p^n})\bdsh_\Qz^D\biggl(\elt\pDII{\uf_p^n}{1}\biggl)\ep_{\Qx\Qz}(\rmN_D(\elt))\\
=&\bfa_1(Q_1)^n\int_{D^\x\bksl \wh D^\x}\rmd^\x \elt\,\bdsf_\Qx^D(\elt)\bdsg^D_{\Qx\Qz}(\elt\tau_{p^n})p^{-n}\sum_{b\in\Zp/p^n\Zp}\bdsh_\Qz^D\biggl(\elt\pMX{\uf_p^n}{b}{0}{1}\biggl)\ep_{\Qx\Qz}(\rmN_D(\elt))\\
&-\bfa_1(Q_1)^{n-1}\int_{D^\x\bksl \wh D^\x}\rmd^\x \elt\,\bdsf_\Qx^D(\elt)\bdsg^D_{\Qx\Qz}(\elt\tau_{p^{n+1}})p^{-(n-1)}\sum_{b\in\Zp/p^{n-1}\Zp}\bdsh_\Qz^D\biggl(\elt\pMX{\uf_p^{n-1}}{b}{0}{1}\biggl)\ep_{\Qx\Qz}(\rmN_D(\elt))\\
=&\{(\bfa_1(\Qx)\bfa_3(\Qz)\bfa_2(\Qx\Qz)/p)^n\vol(\wh R_n^\x)-(\bfa_1(\Qx)\bfa_3(\Qz)/p)^{n-1}\bfa_2(\Qx\Qz)^{n+1}\vol(\wh R_{n+1}^\x)\}\Theta_2^\ddagger(\Qx,\Qz)\\
=&\bfa_1(\Qx)^n\bfa_3(\Qz)^n\bfa_2(\Qx\Qz)^n\biggl(1-\frac{\bfa_2(\Qx\Qz)}{\bfa_1(\Qx)\bfa_3(\Qz)}\biggl)p^{-n}\vol(\wh R_n^\x)\Theta_2^\ddagger(\Qx,\Qz).
\end{align*}
%Putting these together, we obtain 
%\begin{align*}
%\Theta(\Qx,\Qx\Qz,\Qz)=&\frac{1-\frac{\bfa_2(\Qx\Qz)}{\bfa_1(\Qx)\bfa_3(\Qz)}}{\bfa_2(\Qx\Qz)^n\vol(\wh R_n)}\int_{D^\x\bksl \wh D^\x}\rmd^\x a\,\bdsf_\Qx^D(a)\bdsg^D_{\Qx\Qz}(\elt\tau_{p^n})\bdsh_\Qz^D(a)\ep_{\Qx\Qz}(\rmN_D(a)),
%\end{align*}
This verifies \eqref{tag:81}. 
\end{proof}

Now we return to the proof of \thmref{T:trivial}(2). 
Write $\Theta_x$ for the partial derivative $\frac{\partial\Theta}{\partial x}$. 
Put 
\begin{align*}
a&=\Theta_x(0,0,0), & 
b&=\Theta_y(0,0,0), & 
c&=\Theta_z(0,0,0). 
\end{align*}
Taking derivatives $\Theta(x,y,x+y)$ with respect to $x$ and $y$ at $(0,0)$ in \lmref{L:02}, we have 
\begin{align*}
a+c&=(\ell_1-\ell_3)\Theta_3^\ddagger(0,0), & 
b+c&=(\ell_2-\ell_3)\Theta_3^\ddagger(0,0). 
\end{align*} 
Similarly, we have 
\begin{align*}
a+b&=(\ell_1-\ell_2)\Theta^\ddagger_2(0,0)=(\ell_2-\ell_1)\Theta^\ddagger_3(0,0).
\end{align*}
% If $\bfa_2(x)\neq \bfa_3(x)$, then we find that \[\Theta^\ddagger_2(0,0)=\frac{\al\cdot \Theta(0,z,z)}{\bfa_3(z)-\bfa_2(z)}|_{z=0}=-\Theta^\ddagger(0,0).\]
These imply that
\begin{align*}
a&=0, & 
b&=(\ell_2-\ell_1)\Theta^\ddagger_3(0,0), &
c&=(\ell_1-\ell_3)\Theta^\ddagger_3(0,0). 
\end{align*}

On the other hand, by the functional equation \eqref{E:fcneq} we obtain the Taylor expansion 
\[L_p(x,y,z,s)=H(x,y,z)\Theta(x,y,z)^2+A_2(x,y,z)\cdot \Big(s-\frac{x+y+z}{2}\Big)^2+\cdots.\]
By \lmref{L:01.t}(\ref{L:01.t1}), we find 
\[(1-\al_1\bfa_1(x)^{-1})^2L_p^\imp(x,0)=H(x,0,0)\Theta(x,0,0)^2+A_2(x,0,0)\cdot x^2/4+\cdots. \]
%Because of the trivial zeros, $\Theta(0,0,0)=L_p(0,0,0,0)=0$. 
From the vanishing of $\Theta_x(0,0,0)$
% and the non-vanishing of $H(0,0,0)$, 
 we deduce that
\begin{align*}
&A_2(0,0,0)=4\ell_1^2L^{\imp}_p(0,0). 
%=4\ell_1^2\cdot (-p\al_2^{-2}) (1-\al_2^{-2})^2\frac{L(\bdsE,2)}{(2\pi\sqrt{-1})^5\prod_{i=1}^3\Omega^\flat_{f_i}}.
%=&4\bfa'_1(0)^2\cdot  \frac{(-p)}{\al^2(1-\al^{-2}p)^2}\cdot\frac{
%L(f\ot g\ot h,2)}{\norm{f}^2\norm{g}^2\norm{h}^2}
\end{align*}
Lemma \ref{L:01.t}(\ref{L:01.t2}) and (\ref{tag:order}) complete the proof of Theorem \ref{T:trivial}(2). 

{
\subsection*{Acknowledgement} 
We especially thank the anonymous referee for a very careful reading and for pointing out an inaccuracy of our argument in the proof of Proposition 6.3. 
This work was partly supported by MEXT Promotion of Distinctive Joint Research Center Program JPMXP0723833165. 
Yamana thanks Max Planck Institut f\"{u}r Mathematik for an excellent working environment.} 

\bibliographystyle{amsalpha}
\bibliography{mybib_triple}

\providecommand{\bysame}{\leavevmode\hbox to3em{\hrulefill}\thinspace}
\providecommand{\MR}{\relax\ifhmode\unskip\space\fi MR }
% \MRhref is called by the amsart/book/proc definition of \MR.
\providecommand{\MRhref}[2]{%
  \href{http://www.ams.org/mathscinet-getitem?mr=#1}{#2}
}
\providecommand{\href}[2]{#2}
\begin{thebibliography}{BSDGP96}

\bibitem[BDJ22]{BDJ17arXiv}
Daniel {Barrera}, Mladen {Dimitrov}, and Andrei {Jorza}, \emph{{$p$-adic
  $L$-functions of Hilbert cusp forms and the trivial zero conjecture}},
  Journal of the European Mathematical Society \textbf{24} (2022), 3439--3503.

\bibitem[Ben11]{Benois11AJM}
Denis Benois, \emph{A generalization of {G}reenberg's {$\mathscr
  L$}-invariant}, Amer. J. Math. \textbf{133} (2011), no.~6, 1573--1632.
  \MR{2863371}

\bibitem[BP06]{BP06Triple}
Siegfried B\"ocherer and Alexei~A. Panchishkin, \emph{Admissible {$p$}-adic
  measures attached to triple products of elliptic cusp forms}, Doc. Math.
  (2006), no.~Extra Vol., 77--132. \MR{2290585}

\bibitem[BP09]{BP2}
\bysame, \emph{{$p$}-adic interpolation for triple {$L$}-functions: analytic
  aspects}, Automorphic forms and {$L$}-functions {II}. {L}ocal aspects,
  Contemp. Math., vol. 489, Amer. Math. Soc., Providence, RI, 2009, pp.~1--39.

\bibitem[BSDGP96]{BDGP96Inv}
Katia Barr\'{e}-Sirieix, Guy Diaz, Fran\c{c}ois Gramain, and Georges Philibert,
  \emph{Une preuve de la conjecture de {M}ahler-{M}anin}, Invent. Math.
  \textbf{124} (1996), no.~1-3, 1--9. \MR{1369409}

\bibitem[Bum97]{Bump97Grey}
D.~Bump, \emph{Automorphic forms and representations}, Cambridge Studies in
  Advanced Mathematics, vol.~55, Cambridge University Press, Cambridge, 1997.

\bibitem[Car79]{Cartier79Corvallis}
P.~Cartier, \emph{Representations of {$p$}-adic groups: a survey}, Automorphic
  forms, representations and {$L$}-functions ({P}roc. {S}ympos. {P}ure {M}ath.,
  {O}regon {S}tate {U}niv., {C}orvallis, {O}re., 1977), {P}art 1, Proc. Sympos.
  Pure Math., XXXIII, Amer. Math. Soc., Providence, R.I., 1979, pp.~111--155.

\bibitem[Cas73]{Casselman73MA}
W.~Casselman, \emph{On some results of {A}tkin and {L}ehner}, Math. Ann.
  \textbf{201} (1973), 301--314.

\bibitem[CH18]{CH17Crelle}
Masataka Chida and Ming-Lun Hsieh, \emph{Special values of anticyclotomic
  {$L$}-functions for modular forms}, J. Reine Angew. Math. \textbf{741}
  (2018), 87--131. \MR{3836144}

\bibitem[CH20]{CM20}
Shih-Yu Chen and Ming-Lun Hsieh, \emph{On primitive $p$-adic {R}ankin-{S}elberg
  {$L$}-functions}, Development of Iwasawa theory--the centennial of K.
  Iwasawa's birth, Adv. Stud. Pure Math., vol.~86, Math. Soc. Japan, Tokyo,
  2020, pp.~195--242.

\bibitem[Che21]{Chen21}
Shih-Yu Chen, \emph{Gamma factors for the {A}sai cube representation}, Math. Z.
  \textbf{297} (2021), no.~1-2, 747--773. \MR{4204712}

\bibitem[Coa89a]{Coates89Bourbaki}
John Coates, \emph{On {$p$}-adic {$L$}-functions}, Ast\'erisque (1989),
  no.~177-178, Exp.\ No.\ 701, 33--59, S\'eminaire Bourbaki, Vol. 1988/89.
  \MR{1040567}

\bibitem[Coa89b]{Coates89II}
\bysame, \emph{On $p$-adic {$L$}-functions attached to motives over {$\Q$}
  {II}}, Boletim da Sociedade Brasileira de Matem{\'a}tica - Bulletin/Brazilian
  Mathematical Society \textbf{20} (1989), no.~1, 101--112.

\bibitem[CPR89]{CP89}
John Coates and Bernadette Perrin-Riou, \emph{On {$p$}-adic {$L$}-functions
  attached to motives over {${\bf Q}$}}, Algebraic number theory, Adv. Stud.
  Pure Math., vol.~17, Academic Press, Boston, MA, 1989, pp.~23--54.

\bibitem[Del79]{Deligne79}
P.~Deligne, \emph{Valeurs de fonctions {$L$} et p\'eriodes d'int\'egrales},
  Automorphic forms, representations and {$L$}-functions ({P}roc. {S}ympos.
  {P}ure {M}ath., {O}regon {S}tate {U}niv., {C}orvallis, {O}re., 1977), {P}art
  2, Proc. Sympos. Pure Math., XXXIII, Amer. Math. Soc., Providence, R.I.,
  1979, With an appendix by N. Koblitz and A. Ogus, pp.~313--346.

\bibitem[Fon94]{F94}
Jean-Marc Fontaine, \emph{Repr\'esentations {$l$}-adiques potentiellement
  semi-stables}, Ast\'erisque (1994), no.~223, 321--347, P\'eriodes $p$-adiques
  (Bures-sur-Yvette, 1988). \MR{1293977}

\bibitem[Gar87]{Garrett}
Paul~B. Garrett, \emph{Decomposition of {E}isenstein series: {R}ankin triple
  products}, Ann. of Math. (2) \textbf{125} (1987), no.~2, 209--235.

\bibitem[GH93]{GH93AJM}
Paul~B. Garrett and Michael Harris, \emph{Special values of triple product
  {$L$}-functions}, Amer. J. Math. \textbf{115} (1993), no.~1, 161--240.

\bibitem[GK92]{GK92}
Benedict~H. Gross and Stephen~S. Kudla, \emph{Heights and the central critical
  values of triple product {$L$}-functions}, Compositio Math. \textbf{81}
  (1992), no.~2, 143--209.

\bibitem[GK13]{Ghate13}
Eknath Ghate and Narasimha Kumar, \emph{Control theorems for ordinary 2-adic
  families of modular forms}, Automorphic representations and {$L$}-functions,
  Tata Inst. Fundam. Res. Stud. Math., vol.~22, Tata Inst. Fund. Res., Mumbai,
  2013, pp.~231--261. \MR{3156854}

\bibitem[Gre94a]{Greenberg94}
Ralph Greenberg, \emph{Iwasawa theory and {$p$}-adic deformations of motives},
  Motives ({S}eattle, {WA}, 1991), Proc. Sympos. Pure Math., vol.~55, Amer.
  Math. Soc., Providence, RI, 1994, pp.~193--223. \MR{1265554}

\bibitem[Gre94b]{Greenberg94Trivial}
\bysame, \emph{Trivial zeros of {$p$}-adic {$L$}-functions}, {$p$}-adic
  monodromy and the {B}irch and {S}winnerton-{D}yer conjecture ({B}oston, {MA},
  1991), Contemp. Math., vol. 165, Amer. Math. Soc., Providence, RI, 1994,
  pp.~149--174. \MR{1279608}

\bibitem[GS93]{GS93}
Ralph Greenberg and Glenn Stevens, \emph{{$p$}-adic {$L$}-functions and
  {$p$}-adic periods of modular forms}, Invent. Math. \textbf{111} (1993),
  no.~2, 407--447.

\bibitem[GS20]{GS16}
Matthew Greenberg and Marco~Adamo Seveso, \emph{Triple product {$p$}-adic
  {$L$}-functions for balanced weights}, Math. Ann. \textbf{376} (2020),
  no.~1-2, 103--176.

\bibitem[Hid88a]{Hida88AJM}
Haruzo Hida, \emph{Modules of congruence of {H}ecke algebras and
  {$L$}-functions associated with cusp forms}, Amer. J. Math. \textbf{110}
  (1988), no.~2, 323--382. \MR{935010 (89i:11058)}

\bibitem[Hid88b]{Hida88Annals}
\bysame, \emph{On {$p$}-adic {H}ecke algebras for {${\rm GL}(2)$} over totally
  real fields}, Annals of Mathematics \textbf{128} (1988), 295--384.

\bibitem[Hid93]{Hida93Blue}
\bysame, \emph{Elementary theory of {$L$}-functions and {E}isenstein series},
  London Mathematical Society Student Texts, vol.~26, Cambridge University
  Press, Cambridge, 1993.

\bibitem[Hid94]{Hida94Duke}
\bysame, \emph{On the critical values of {$L$}-functions of {${\rm GL}(2)$} and
  {${\rm GL}(2)\times{\rm GL}(2)$}}, Duke Math. J. \textbf{74} (1994), no.~2,
  431--529.

\bibitem[Hid16]{Hida16Pune}
\bysame, \emph{Arithmetic of adjoint {L}-values}, {$p$}-adic aspects of modular
  forms, World Sci. Publ., Hackensack, NJ, 2016, pp.~185--236.

\bibitem[HLS06]{HLS06Doc}
Michael Harris, Jian-Shu Li, and Christopher~M. Skinner, \emph{{$p$}-adic
  {$L$}-functions for unitary {S}himura varieties. {I}. {C}onstruction of the
  {E}isenstein measure}, Doc. Math. (2006), no.~Extra Vol., 393--464.
  \MR{2290594}

\bibitem[Hsi21]{Hsieh_triple}
Ming-Lun Hsieh, \emph{Hida families and {$p$}-adic triple product
  {$L$}-functions}, Amer. J. Math. \textbf{143} (2021), no.~2, 411--532.

\bibitem[Ich08]{Ichino08Duke}
Atsushi Ichino, \emph{Trilinear forms and the central values of triple product
  {$L$}-functions}, Duke Math. J. \textbf{145} (2008), no.~2, 281--307.

\bibitem[Ike89]{Ikeda89}
Tamotsu Ikeda, \emph{On the functional equations of the triple
  {$L$}-functions}, J. Math. Kyoto Univ. \textbf{29} (1989), no.~2, 175--219.

\bibitem[Ike92]{Ikeda2}
\bysame, \emph{On the location of poles of the triple {$L$}-functions},
  Compositio Math. \textbf{83} (1992), no.~2, 187--237.

\bibitem[Ike98]{Ikeda98Crelle}
\bysame, \emph{On the gamma factor of the triple {$L$}-function. {II}}, J.
  Reine Angew. Math. \textbf{499} (1998), 199--223. \MR{1631136}

\bibitem[Ike99]{Ikeda99}
\bysame, \emph{On the gamma factor of the triple {$L$}-function. {I}}, Duke
  Math. J. \textbf{97} (1999), no.~2, 301--318. \MR{1682237}

\bibitem[IP21]{IP21}
Atsushi Ichino and Kartik Prasanna, \emph{Periods of quaternionic {S}himura
  varieties. {I}}, Contemporary Mathematics, vol. 762, American Mathematical
  Society, [Providence], RI, [2021] \copyright 2021. \MR{4228250}

\bibitem[Kob13]{K13}
Shinichi Kobayashi, \emph{The $p$-adic {G}ross-{Z}agier formula for elliptic
  curves at supersingular primes}, Invent. Math. \textbf{191} (2013), 527--629.

\bibitem[Liu20]{LiuZheng20JIMJ}
Zheng Liu, \emph{{$p$}-adic {$L$}-functions for ordinary families on symplectic
  groups}, J. Inst. Math. Jussieu \textbf{19} (2020), no.~4, 1287--1347.
  \MR{4120810}

\bibitem[LR20]{LR20MAnn}
Zheng Liu and Giovanni Rosso, \emph{Non-cuspidal {H}ida theory for {S}iegel
  modular forms and trivial zeros of {$p$}-adic {$L$}-functions}, Math. Ann.
  \textbf{378} (2020), no.~1-2, 153--231. \MR{4150915}

\bibitem[Miz90]{Mizumoto90}
Shin-ichiro Mizumoto, \emph{Integrality of critical values of triple product
  {$L$}-functions}, Analytic {N}umber {T}heory ({T}okyo, 1988), Springer,
  Berlin, 1990, pp.~188--195. Lecture Notes in Math., Vol. 1434. \MR{1071755}

\bibitem[Mok09]{Mok90Comp}
Chung~Pang Mok, \emph{The exceptional zero conjecture for {H}ilbert modular
  forms}, Compos. Math. \textbf{145} (2009), no.~1, 1--55. \MR{2480494}

\bibitem[MTT86]{MTT86}
B.~Mazur, J.~Tate, and J.~Teitelbaum, \emph{On $p$-adic analogues of the
  conjectures of {B}irch and {S}winnerton-{D}yer}, Invent. Math. \textbf{84}
  (1986), no.~1, 1--48.

\bibitem[MV10]{MV10}
Philippe Michel and Akshay Venkatesh, \emph{The subconvexity problem for {${\rm
  GL}_2$}}, Publ. Math. Inst. Hautes \'Etudes Sci. (2010), no.~111, 171--271.

\bibitem[Oht00]{Ohta00}
Masami Ohta, \emph{Ordinary {$p$}-adic \'etale cohomology groups attached to
  towers of elliptic modular curves. {II}}, Math. Ann. \textbf{318} (2000),
  no.~3, 557--583. \MR{1800769}

\bibitem[Orl87]{Orl87}
Tobias Orloff, \emph{Special values and mixed weight triple products (with an
  appendix by {D}on {B}lasius)}, Invent. Math. \textbf{90} (1987), no.~1,
  169--188.

\bibitem[PSR87]{PSR87}
I.~Piatetski-Shapiro and Stephen Rallis, \emph{Rankin triple {$L$} functions},
  Compositio Math. \textbf{64} (1987), no.~1, 31--115. \MR{911357}

\bibitem[Ram00]{Dinakar00}
Dinakar Ramakrishnan, \emph{Modularity of the {R}ankin-{S}elberg {$L$}-series,
  and multiplicity one for {${\rm SL}(2)$}}, Ann. of Math. (2) \textbf{152}
  (2000), no.~1, 45--111. \MR{1792292}

\bibitem[Shi81]{Shimura81Duke}
Goro Shimura, \emph{Arithmetic of differential operators on symmetric domains},
  Duke Math. J. \textbf{48} (1981), no.~4, 813--843.

\bibitem[Shi82]{Shimura82MA}
\bysame, \emph{Confluent hypergeometric functions on tube domains}, Math. Ann.
  \textbf{260} (1982), no.~3, 269--302.

\bibitem[Shi97]{Shimura97Euler-P-Eisenstein.}
\bysame, \emph{Euler products and {E}isenstein series}, CBMS Regional
  Conference Series in Mathematics, vol.~93, Published for the Conference Board
  of the Mathematical Sciences, Washington, DC, 1997.

\bibitem[Spi14]{Spiess14Inv}
Michael Spie\ss, \emph{On special zeros of {$p$}-adic {$L$}-functions of
  {H}ilbert modular forms}, Invent. Math. \textbf{196} (2014), no.~1, 69--138.
  \MR{3179573}

\bibitem[Tat79]{Tate79Corvallis}
J.~Tate, \emph{Number theoretic background}, Automorphic forms, representations
  and {$L$}-functions ({P}roc. {S}ympos. {P}ure {M}ath., {O}regon {S}tate
  {U}niv., {C}orvallis, {O}re., 1977), {P}art 2, Proc. Sympos. Pure Math.,
  XXXIII, Amer. Math. Soc., Providence, R.I., 1979, pp.~3--26. \MR{546607}

\bibitem[Tay04]{Taylor04ICM}
Richard Taylor, \emph{Galois representations}, Ann. Fac. Sci. Toulouse Math.
  (6) \textbf{13} (2004), no.~1, 73--119. \MR{2060030}

\bibitem[Wil88]{Wiles88}
A.~Wiles, \emph{On ordinary {$\lambda$}-adic representations associated to
  modular forms}, Invent. Math. \textbf{94} (1988), no.~3, 529--573.

\bibitem[Wil95]{Wiles95}
\bysame, \emph{Modular elliptic curves and {F}ermat's last theorem}, Ann. of
  Math. (2) \textbf{141} (1995), no.~3, 443--551.

\end{thebibliography}
\end{document}